\theoremstyle{plain}
\newtheorem{Thm}{Theorem}[section]
\newtheorem{Lem}[Thm]{Lemma}
\newtheorem{Cor}[Thm]{Corollary}
\newtheorem{Prop}[Thm]{Proposition}
\theoremstyle{definition}
\newtheorem{Def}[Thm]{Definition}
\newtheorem{Def-Lem}[Thm]{Definition-Lemma}
\newtheorem{Cond}[Thm]{Condition}
\newtheorem{Rem}[Thm]{Remark}
\newtheorem*{Ack}{Acknowledgments}
\newtheorem{Ex}[Thm]{Example}
\newtheorem{Assumption}[Thm]{Assumption}
\newcommand{\prt}{\partial}
\newcommand{\Sing}{\operatorname{Sing}}
\newcommand{\Spec}{\operatorname{Spec}}
\newcommand{\rank}{\operatorname{rank}}
\newcommand{\Div}{\operatorname{Div}}
\newcommand{\Int}{\operatorname{Int}}
\newcommand{\bNE}{\operatorname{\overline{NE}}}
\newcommand{\Bs}{\operatorname{Bs}}
\newcommand{\Exc}{\operatorname{Exc}}
\newcommand{\mult}{\operatorname{mult}}
\newcommand{\dP}{\operatorname{dP}}
\newcommand{\lcm}{\operatorname{lcm}}
\newcommand{\wt}{\operatorname{wt}}
\newcommand{\Wbl}{\operatorname{Wbl}}
\newcommand{\Cox}{\operatorname{Cox}}
\newcommand{\reduced}{\operatorname{red}}
\newcommand{\bcap}{\bigcap\nolimits}
\newcommand{\mbA}{\mathbb{A}}
\newcommand{\mbC}{\mathbb{C}}
\newcommand{\mbP}{\mathbb{P}}
\newcommand{\mbQ}{\mathbb{Q}}
\newcommand{\mbR}{\mathbb{R}}
\newcommand{\mbZ}{\mathbb{Z}}
\newcommand{\mcG}{\mathcal{G}}
\newcommand{\mcH}{\mathcal{H}}
\newcommand{\mcI}{\mathcal{I}}
\newcommand{\mcL}{\mathcal{L}}
\newcommand{\mcN}{\mathcal{N}}
\newcommand{\mcO}{\mathcal{O}}
\newcommand{\mcS}{\mathcal{S}}
\newcommand{\msp}{\mathsf{p}}
\newcommand{\msq}{\mathsf{q}}
\newcommand{\ratmap}{\dashrightarrow}
\newcommand{\bmu}{\boldsymbol{\mu}}
\title[$\mbQ$-Fano threefold weighted complete intersections]{Birational Mori fiber structures of  $\mbQ$-Fano $3$-fold weighted complete intersections}
\author[Takuzo Okada]{Takuzo Okada}
\address{Department of Mathematics, Faculty of Science and Engineering\endgraf
Saga University, Saga 840-8502 Japan}
\email{okada@cc.saga-u.ac.jp}
\subjclass[2010]{Primary 14E08; Secondary 14J30 \and 14J45}
\date{}
\begin{document}

\begin{abstract}
In this paper we consider a $\mbQ$-Fano $3$-fold weighted complete intersection of codimension $2$ in the $85$ families listed in the Iano-Fletcher's list and determine which cycle is a maximal center or not.
For each maximal center, we construct either a birational involution which untwists the maximal singularity or a Sarkisov link centered at the cycle to an another explicitly described Mori fiber space.
As a consequence, $19$ families are proved to be birationally rigid and the remaining $66$ families are proved to be birationally nonrigid.
\end{abstract}

\maketitle

\section{Introduction}

Iskovskikh and Manin studied birational maps between smooth quartic threefolds and proved their nonrationality in \cite{IM}.
Since then the method introduced in the paper, which is nowadays called the method of maximal singularities, has been developed and applied to many $\mbQ$-Fano varieties or more generally Mori fiber spaces, and they are proved to be birationally rigid.

\begin{Def}
Let $X$ be a $\mbQ$-Fano variety with at most terminal singularities and with Picard number one.
We say that $X$ is {\it birationally rigid} if for every Mori fiber space $Y \to S$, the existence of a birational equivalence $\varphi \colon X \ratmap Y$ implies that $Y \cong X$.
\end{Def}

On the other hand, when one considers the rationality problem of algebraic varieties, birational rigidity seems to be too strong to conclude nonrationality.
Corti-Mella \cite{CM} shows that a general weighted complete intersection $X_{3,4}$ of weighted hypersurfaces of degree $3$ and $4$ in $\mbP (1,1,1,1,2,2)$ is not birationally rigid but birationally birigid, that is, $X_{3,4}$ has exactly two structures of Mori fiber spaces in its birational equivalence class, which is still enough to conclude nonrationality.
This tells us an importance of an intermediate notion, finiteness of birational Mori fiber structures.
Here, a {\it birational Mori fiber structure} of an algebraic variety $X$ is a Mori fiber space birational to $X$.


Corti-Pukhlikov-Reid \cite{CPR} proved birational rigidity of anticanonically embedded $\mbQ$-Fano $3$-fold weighted hypersurfaces.
Recently, Cheltsov-Park \cite{CP} extended the above result for every quasismooth members.
Brown-Zucconi \cite{BZ} and Ahmadinezhad-Zucconi \cite{AZ} proved birational non-rigidity of suitable codimension three $\mbQ$-Fano threefolds in weighted projective spaces.
We investigate in this paper the birational Mori fiber structures of anticanonically embedded $\mbQ$-Fano $3$-fold weighted complete intersections of codimension two.
There is a list \cite[16.7]{IF}, which we refer to as the Fletcher's list, of such $\mbQ$-Fano WCIs and it is proved to be the complete one in \cite{CCC00}.
They consist of $85$ families.
Among them, $3$ families have been studied in this direction.
Iskovskikh-Pukhlikov \cite{IP} proved birational rigidity of a general complete intersection $X_{2,3} \subset \mbP^5$ of a quadric and a cubic and, as mentioned above, Corti-Mella \cite{CM} proved birational birigidity of a general weighted complete intersection $X_{3,4} \subset \mbP (1,1,1,1,2,2)$ of a cubic and a quartic.
Grinenko \cite{Grinenko} studied a general weighted complete intersection $X_{3,3} \subset \mbP (1,1,1,1,1,2)$ of cubics which has a Sarkisov link to a fibration into cubic surfaces over $\mbP^1$.

In this paper, we treat the remaining families.
We set $I := \{1,2,\dots,85\}$ which is the set of numbers of $85$ families, where the numbering of the families is the one given in \cite{IF}.
We divide $I$ into the disjoint union of the following three subsets
\[
\begin{split}
I_{br} &:= \{ 1,8,14,20,24,31,37,45,47,51,59,60,64,71,75,76,78,84,85 \}, \\
I_{dP} &:= \{ 2,4,5,11,12,13 \}, \\
I_F &:= I \setminus (I_{br} \cup I_{dP}),
\end{split}
\]
where $|I_{br}| = 19$, $|I_{dP}| = 6$ and $|I_F| = 60$.
As explained above, the families No.~1, 2 and 3 have been studied by \cite{IP}, \cite{CM} and \cite{Grinenko}, respectively, and we do not treat them in this paper.
We set $I^* := I \setminus \{1,2,3\}$, $I^*_{br} := I_{br} \setminus \{1\}$, $I^*_{dP} := I_{dP} \setminus \{2\}$ and $I^*_F := I_F \setminus \{3\}$. 

\begin{Thm} \label{mainthmbr}
A general member of the family No.~$i$ is birationally rigid if and only if $i \in I_{br}$.
\end{Thm}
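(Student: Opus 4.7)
The plan is to apply the Sarkisov program together with the method of maximal singularities. For a $\mbQ$-Fano threefold $X$ of Picard number one with terminal singularities, $X$ is birationally rigid if and only if every Sarkisov link starting from $X$ ends again at $X$, i.e.\ is a birational self-map untwisting some maximal center. Thus for each family $i \in I^*$ the proof splits into (a) listing the candidate cycles on a general $X_i$ that could be a maximal center, (b) excluding every non-center by a suitable numerical inequality, and (c) for each genuine maximal center, constructing the Sarkisov link initiated at that center and identifying its target. The families studied in \cite{IP}, \cite{CM}, \cite{Grinenko} take care of $I \setminus I^*$.

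For $i \in I_{br}$, where we must prove rigidity, the plan is as follows. First, rule out every curve $\Gamma \subset X_i$ as a maximal center by the standard intersection-theoretic inequality comparing $\mult_{\Gamma}(\mcH)$ with $n^2 (-K_{X_i})^3/\deg(\Gamma)$ for a movable system $\mcH \subset |{-n K_{X_i}}|$, using quasi-smoothness of the model. Next, rule out every smooth point by the weighted version of Pukhlikov's $4 n^2$-inequality. Finally, for each terminal cyclic quotient singularity $\mathsf{p} \in \Sing(X_i)$, perform the Kawamata blowup $\varphi\colon \tilde{X} \to X_i$ with center $\mathsf{p}$ and run the $2$-ray game on $\tilde{X}$; the goal is to verify, through an explicit sequence of (anti)flips dictated by the ambient weighted projective space, that the other $K$-negative ray eventually terminates in a divisorial contraction back to $X_i$, which then assembles into the untwisting birational involution of $\mathsf{p}$.

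For $i \in I_{dP} \cup I_F$, where we must prove non-rigidity, it suffices to exhibit a single Sarkisov link $\sigma \colon X_i \ratmap Y$ with $Y \not\cong X_i$. The plan is to fix, for each such family, one distinguished center $\mathsf{p}$ known to be a genuine maximal center, initiate the $2$-ray game on the Kawamata blowup at $\mathsf{p}$, and check that the opposite end of the link is a Mori fiber space genuinely different from $X_i$. The index labelling records the target: for $i \in I_{dP}$ the link terminates in a del Pezzo fibration over $\mbP^1$ (whence the subscript $dP$), while for $i \in I_F$ it terminates in a divisorial contraction to a different $\mbQ$-Fano threefold of Picard number one.

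The main obstacle is the $2$-ray game in step (c), both for $i \in I_{br}$ and for $i \in I_{dP} \cup I_F$. After the Kawamata blowup one works on an explicitly described but intricate toric model, and one must follow each small modification, decide whether the next contraction is a flip, a flop or a divisorial contraction, and correctly recognize the final outcome. The exclusion lemmas for curves and smooth points can be bundled into a small number of uniform statements covering all $i \in I^*$ at once, but the $2$-ray games inevitably break into many subcases indexed by the ambient weights, the anticanonical degree and the local equations of $X_i$ at $\mathsf{p}$. Carrying this case analysis through the $82$ remaining families, and verifying in each case that every maximal center of a general $X_i$ has been accounted for, constitutes the technical core of the proof and is where the trichotomy $I = I_{br} \sqcup I_{dP} \sqcup I_F$ ultimately emerges.
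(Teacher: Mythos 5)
Your overall strategy --- maximal singularities, exclusion of non-centers, untwisting involutions for the rigid families, and a single Sarkisov link to a different Mori fiber space for the non-rigid ones --- is exactly the architecture of the paper, and your last paragraph correctly identifies where the real work lies. But there is one concrete misstep in your plan for $i \in I_{br}$: you propose that for \emph{each} singular point $\msp$ the $2$-ray game on the Kawamata blowup ``terminates in a divisorial contraction back to $X_i$, which then assembles into the untwisting birational involution of $\msp$.'' For most singular points on these models this is false: the point is simply not a maximal center, and the $2$-ray game initiated there does not close up into a link at all. For example, for the nine $\frac{1}{2}(1,1,1)$ points of $X_{6,6}\subset\mbP(1,2,2,2,3,3)$ (family No.~14) the paper exhibits a nef test class $M$ on the Kawamata blowup with $(M\cdot(-K_Y)^2)\le 0$, which excludes the point outright; elsewhere the exclusion is done by computing the whole Kleiman--Mori cone of $Y$, or by showing that the game produces a ``bad link'' (an anti-flip followed by a divisorial contraction onto a non-terminal variety). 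Only a minority of the singular points in the $I_{br}$ families actually carry quadratic or elliptic involutions. If you run your plan literally you will be searching for an involution that does not exist at most centers, so the trichotomy at each singular point (excluded / involution / link to another model) has to be built into the case analysis from the start --- and the exclusion half of that trichotomy is, as the paper says, the main body of the argument.

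A second, smaller overstatement: the exclusion of curves and smooth points cannot quite be ``bundled into a small number of uniform statements covering all $i\in I^*$ at once.'' The inequality $(A^3) > (A\cdot\Gamma)$ disposes of all curves except those of low degree on the handful of families with $(A^3)>1$, and for those (notably degree-one curves on $X_{4,4}\subset\mbP(1^4,2,3)$) one needs ad hoc surface arguments and extra generality conditions; similarly the nonsingular points of $\Exc(\pi_5)$ on families 4, 6, 9 and 11 require a bespoke analysis with an auxiliary linear system $\Lambda_{\msp}$ and the Corti multiplicity inequalities, not just the isolating-class criterion $l\le 4/(A^3)$. These are precisely the places where the generality hypotheses $(\mathrm{C}_2)$--$(\mathrm{C}_4)$ enter, so they cannot be absorbed into a single uniform lemma.
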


As a direct consequence of Theorem \ref{mainthmbr}, we have the following result.

\begin{Cor}
A general member of a family No.~$i$ with $i \in I_{br}$ is nonrational. 
\end{Cor}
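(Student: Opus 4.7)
The plan is to deduce nonrationality from birational rigidity via a standard argument. Given $i \in I_{br}$ and $X$ a general member of family No.~$i$, Theorem \ref{mainthmbr} gives that $X$ is birationally rigid, so it suffices to prove the general principle that a birationally rigid $\mbQ$-Fano threefold cannot be rational, and then check that $X \not\cong \mbP^3$.

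First, I would argue as follows: suppose for contradiction that $X$ is rational. Then there is a birational equivalence $\varphi \colon X \ratmap \mbP^3$, and $\mbP^3 \to \Spec \K$ is a Mori fiber space (Picard number one and $-K_{\mbP^3}$ ample over the point). Birational rigidity of $X$ then forces $\mbP^3 \cong X$. Hence it remains to rule out $X \cong \mbP^3$ for any $i \in I_{br}$.

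Second, for this last step I would use discrete invariants to distinguish $X$ from $\mbP^3$. For the single family in $I_{br}$ consisting of smooth members (No.~$1$, i.e.\ $X_{2,3} \subset \mbP^5$), one computes $(-K_X)^3 = 6 \neq 64 = (-K_{\mbP^3})^3$. For every other $i \in I_{br}$, inspection of the Fletcher list shows that a general member carries strictly terminal cyclic quotient singularities, so $X$ is not smooth and in particular $X \not\cong \mbP^3$. Either way, $X \not\cong \mbP^3$, contradicting the conclusion of the previous paragraph.

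The argument is essentially a one-line deduction from Theorem \ref{mainthmbr}, and I do not anticipate any genuine obstacle; the only point requiring care is the verification that no member of a family in $I_{br}$ is isomorphic to $\mbP^3$, which is immediate from the singularity data in \cite{IF}.
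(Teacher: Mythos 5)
Your proof is correct and is exactly the standard argument the paper has in mind when it calls the corollary a ``direct consequence'' of Theorem \ref{mainthmbr}: rationality would make $\mbP^3 \to \Spec\mbC$ a Mori fiber space birational to $X$, rigidity would force $X \cong \mbP^3$, and this is ruled out by $(A^3) \le 6 < 64$ (or by the presence of quotient singularities). No issues.
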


\begin{Thm} \label{mainthmF}
Let $X$ be a general member of the family No.~$i$ with $i \in I^*_F$ $($resp.\ $i \in I^*_{dP}$$)$.
Then there is a $\mbQ$-Fano $3$-fold weighted hypersurface $X'$ $($resp.\ a del Pezzo fiber space $X'/\mbP^1$ over $\mbP^1$$)$ which is birational $($but not isomorphic$)$ to $X$.
Moreover, no curve and no nonsingular point on $X$ is a maximal center, and for each singular point $\msp$ of $X$, exactly one of the following holds.
\begin{enumerate}
\item $\msp$ is not a maximal center.
\item There is a birational involution $\iota_{\msp} \colon X \ratmap X$ which is a Sarkisov link centered at $\msp$.
\item There is a Sarkisov link $\sigma_{\msp} \colon X \ratmap X'$ centered at $\msp$.
\end{enumerate}
\end{Thm}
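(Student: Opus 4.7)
The plan is to apply the method of maximal singularities together with an explicit $2$-ray game (Sarkisov program) to each singular point of $X$, carried out family by family for the $66$ families in $I^*_F \cup I^*_{dP}$. Fix a movable linear system $\mcH \sim_{\mbQ} -nK_X$ with an assumed maximal singularity, and proceed by excluding non-singular centers, classifying singular points, and constructing links.

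First I would exclude curves and nonsingular points as maximal centers. For a curve $\Gamma \subset X$, the Noether--Fano inequality $\mult_\Gamma \mcH > n$ combined with
\[
n^2 (-K_X)^3 \geq (-K_X \cdot \Gamma)\, (\mult_\Gamma \mcH)^2
\]
forces $-K_X \cdot \Gamma$ to be very small, and inspection of the defining equations of a general $X$ rules out such $\Gamma$. For a nonsingular point $p$, I would invoke Corti's $4n^2$-inequality for the self-intersection of two general members of $\mcH$, against the bound $(-K_X)^3 < 4$ that holds across all families with $i \in I^*$.

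For each singular point $\msp \in X$, generality and quasismoothness identify $\msp$ as a terminal cyclic quotient $\tfrac{1}{r}(1,a,r-a)$ read off from Fletcher's list. I would perform the Kawamata blowup $\sigma_{\msp}\colon Y \to X$, so $\rho(Y) = 2$, and compute generators of $\Mov(Y)$ and $\bNE(Y)$ from the weighted data, most efficiently by realising $X$ inside a toric ambient and tracking the GIT variation. Starting from the $\sigma_{\msp}$-exceptional ray, the $2$-ray game on $Y$ is run explicitly; the opposite extremal contraction falls into three scenarios: (i) a numerical obstruction of Pukhlikov type (a refined multiplicity bound on a suitable test surface, in the spirit of \cite{CPR,CP}) rules out $\msp$ as a maximal center, giving alternative (1); or (ii) after a sequence of antiflips and flips one returns to a divisorial contraction onto $\msp$ itself, producing the untwisting involution $\iota_{\msp}$ of alternative (2); or (iii) the game terminates at a divisorial contraction onto a new cyclic quotient point whose unprojection is a weighted hypersurface $X'$ of the Fletcher list (the case $i \in I^*_F$), or at a fibre type contraction onto $\mbP^1$ whose generic fibre is a del Pezzo surface (the case $i \in I^*_{dP}$), yielding the link $\sigma_{\msp}\colon X \ratmap X'$ of alternative (3).

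The principal obstacle is the explicit construction, family by family, of the flips and antiflips intervening in the $2$-ray game and the identification of the terminal model. Concretely, one must (a) realise the Kawamata blowup and every subsequent birational step as a wall-crossing on an explicit toric ambient, (b) perform the unprojection needed to rewrite the terminal model as a codimension $\leq 2$ object in weighted projective space and verify it belongs to the claimed Fletcher family, and (c) confirm $X' \not\cong X$ by comparing anticanonical degrees or baskets of singularities. A technically delicate secondary point, for the cases $i \in I^*_{dP}$, is to verify that the total space of the resulting fibration has only terminal singularities and that its generic fibre is a del Pezzo surface of the expected degree; and, for the excluded points in alternative (1), to produce in every case the sharp multiplicity inequality on a carefully chosen surface through $\msp$ on $Y$.
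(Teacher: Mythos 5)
Your overall framework (maximal singularities, Kawamata blowup, $2$-ray game, untwisting involutions, links to $X'$ via unprojection) is the same as the paper's, but two of your exclusion steps have genuine gaps. The more serious one is the exclusion of nonsingular points. The $4n^2$-inequality gives $\mult_{\msp} Z > 4n^2$ for $Z = H_1 \cap H_2$, but to contradict this you must intersect $Z$ with a surface through $\msp$ of controlled degree: concretely you need a $\msp$-isolating class $lA$ with $l \le 4/(A^3)$, so the bound $(A^3) < 4$ alone proves nothing. For most families such a class exists (this is the content of the paper's Propositions \ref{exclsmpts123}--\ref{exclsmpts5}, which require case-by-case construction of isolating sets of monomials and repeated use of $(\mathrm{C}_1)$--$(\mathrm{C}_3)$), but for the families No.~4, 6, 9 and 11 no isolating class of small enough degree exists at points of $\Exc(\pi_5)$, and the paper must instead run Corti's inflection argument (Theorem \ref{redsurf}) on a carefully constructed surface $S \in \Lambda_{\msp}$ whose singularities along the base curve $\Gamma$ are computed explicitly, followed by a delicate quadratic-form computation of $(L^2)_T$ (Proposition \ref{exclsmptsrem}). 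Your proposal as written would simply fail at these points.

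The second gap is the curve exclusion. The inequality $n^2(A^3) \ge (A\cdot\Gamma)(\mult_\Gamma\mcH)^2$ reduces the problem to curves with $(A\cdot\Gamma) < (A^3)$, but it does not eliminate them: for the family No.~4 one is left with curves of degree $1$ and $2$, and the degree-$1$ case cannot be handled by a test class alone (the relevant intersection number is nonnegative). The paper needs the surface argument of Lemmas \ref{lemexclcdeg1no4} and \ref{exclcurveno4_1}, which requires classifying the components of $\Pi_\Gamma \cap X$ and producing auxiliary $1$-cycles $\Delta_i$ with sign conditions on their intersections; ``inspection of the defining equations'' does not substitute for this. On the constructive side your scenario (iii) is essentially the paper's route (explicit unprojection, flop in the middle, identification of $X'$ with a Fletcher hypersurface or a del Pezzo fibration over $\mbP^1$), and your scenario (i)/(ii) dichotomy matches the paper's Theorems \ref{birinvQI} and \ref{birinvEI}; but note that for the excluded singular points the paper mostly avoids running the full $2$-ray game, instead exhibiting a nef test class $M = bB + eE$ with $(M\cdot B^2)\le 0$ or determining $\bNE(Y)$ directly, which is considerably cheaper than constructing all intermediate antiflips.
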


The generality assumptions in Theorems \ref{mainthmbr} and \ref{mainthmF} will be made explicit in Section \ref{secprelim}.
We emphasize here that we have not determined the birational Mori fiber structures of members of the family No.~$i$ for $i \in I^*_F \cup I^*_{dP}$ because $X'$ in Theorem \ref{mainthmF} may admit Sarkisov links to another Mori fiber spaces.
Nevertheless Theorem \ref{mainthmF} is an important step toward the determination of the birational Mori fiber structures of those $\mbQ$-Fano varieties.
To complete this, we need to work with $X'$ and obtain a similar result for $X'$.

\begin{Ack}
The author is partially supported by JSPS KAKENHI Grant Number 24840034.
The author would like to thank the referee for pointing out many errors and useful suggestions.
\end{Ack}


\section{Notation and conventions}

Throughout the paper, we work over the field $\mbC$ of complex numbers.
A normal projective variety $X$ is said to be a $\mbQ$-{\it Fano variety} if it is $\mbQ$-factorial, has only terminal singularities and its Picard number is one.
We say that an algebraic fiber space $X \to S$ is a {\it Mori fiber space} if $X$ is a normal projective $\mbQ$-factorial variety with at most terminal singularities, $\dim S < \dim X$, the anticanonical divisor of $X$ is relatively ample over $S$ and the relative Picard number is one.

For positive integers $a_0,\dots,a_n$ and $m_1,\dots,m_n$, we define
\[
\mbP (a_0^{m_0},\dots,a_n^{m_n}) 
:= \mbP (\overbrace{a_0,\dots,a_0}^{m_0},\dots,\overbrace{a_n,\dots,a_n}^{m_n}).
\]
We refer the readers to \cite{IF} for the definition and basic properties of weighted projective spaces and weighted projective complete intersections.

By a $\mbQ$-{\it Fano weighted complete intersection} ($\mbQ$-{\it Fano WCI}, for short), we mean a quasismooth anticanonically embedded $\mbQ$-Fano weighted complete intersection of dimension $3$ and codimension $2$ with only terminal singularities.
There are $85$ families of $\mbQ$-Fano WCIs in the Fletcher's list and each family is determined by the weights $a_0,a_1,\dots,a_5$ of the ambient weighted projective space and degrees $d_1$, $d_2$ of the defining polynomials.

Let $X$ be a $\mbQ$-Fano WCI in $\mbP := \mbP (a_0,\dots,a_5)$.
We usually write $x_0,x_1,\dots,x_5$ (resp.\ $x,y,z,\dots$) for the homogeneous coordinates when we treat several families at a time (resp.\ a specific family).
For example, the ambient weighted projective space of the family No.\ $6$ is $\mbP (1^3,2^2,3)$ and we use homogeneous coordinates $x_0,x_1,x_2,y_0,y_1$ and $z$.
We denote by $\msp_j$ the vertex $(0:\cdots:1:\dots:0)$, where the $1$ is in the $(j+1)$-th position.
Throughout the paper, we set $A := -K_X$, which is a generator of the Weil divisor class group of $X$.
The degree $\deg \Gamma$ of a curve $\Gamma \subset X$ is the degree with respect to $A$, that is, $\deg \Gamma = (A \cdot \Gamma)$.
We define $\pi_j \colon X \ratmap \mbP (a_0,\dots,\hat{a}_j,\dots,a_5)$ to be the projection from the point $\msp_j$, which is defined possibly outside $\msp_j$.
More generally, if $\msp \in \mbP$ is a point which can be mapped to a vertex $\msp_j$ by an automorphism of $\mbP$, then we can define the projection $\pi_{\msp}$ from the point $\msp$ in the same way.
It is generically finite and we define $\Exc (\pi_j) \subset X$ (resp.\ $\Exc (\pi_{\msp})$) to be the locus contracted by $\pi_j$ (resp.\ $\pi_{\msp}$).
For homogeneous polynomials $f_1,\dots,f_m$ in the variables $x_0,\dots,x_5$ (or $x,y,z,\dots$), we denote by $(f_1 = \cdots = f_m = 0)$ the closed subscheme of $\mbP$ defined by the homogeneous ideal $(f_1,\dots,f_m)$ and denote by $(f_1 = \cdots = f_m = 0)_X := (f_1 = \cdots = f_m = 0) \cap X$ the scheme-theoretic intersection.
For a polynomial $f = f (x_0,\dots,x_n)$ and a monomial $x_0^{c_0} x_1^{c_1} \cdots x_n^{c_n}$ of degree $\deg f$, we write $x_0^{c_0} x_1^{c_1} \cdots x_n^{c_n} \in f$ (resp.\ $x_0^{c_0} x_1^{c_1} \cdots x_n^{c_n} \notin f$) if the coefficient of the monomial in $f$ is nonzero (resp.\ zero).

Throughout this paper, the table in Section \ref{sec:table} is referred to as {\it the big table}.

\begin{Def}
Let $X$ be a member of the family No.~$i$ with $i \in I^*_F$ (resp.\ $i \in I^*_{dP}$).
A singular point of $X$ is said to be a {\it distinguished singular point} if it is a point marked $X'_d \subset \mbP (b_0,\dots,b_4)$ for some $b_0,\dots,b_4$ and $d$ (resp.\ $\operatorname{dP}_k$ for some $k$) in the third column of the big table.
\end{Def} 

\section{Preliminaries} \label{secprelim}

\subsection{Generality conditions}

We introduce and explain generality conditions for $\mbQ$-Fano WCIs which we assume throughout the paper.
A {\it weighted complete intersection curve} ({\it WCI curve}, for short) of type $(c_1,c_2,c_3,c_4)$ in $\mbP (a_0,\dots,a_5)$ is an irreducible and reduced curve defined by four homogeneous polynomials of degree respectively $c_1, c_2, c_3$ and $c_4$.

\begin{Cond} \label{bcd}
For a member $X = X_{d_1,d_2} \subset \mbP (a_0,\dots,a_5)$, $a_0 \le a_1 \le \cdots \le a_5$, of the family No.\ $i$ with $i \in I^*$, we introduce the following conditions.
\begin{enumerate}
\item[($\mathrm{C}_0$)] $X$ is quasismooth.
\item[($\mathrm{C}_1$)] The monomial in Table \ref{table:monomial} appear in one of the defining polynomials of $X$ with non-zero coefficient.
\item[($\mathrm{C}_2$)] $X$ does not contain any WCI curve listed in Table \ref{table:WCIcurve}.
\item[($\mathrm{C}_3$)] If $i = 21$ or $51$, then $(x_0 = x_1 = 0)_X$ is an irreducible curve and if  $i = 20$, then the base locus of the pencil $|\mcI_{\msp,X} (2 A)|$ is an irreducible curve for each singular point $\msp$ of type $\frac{1}{2} (1,1,1)$.
\item[($\mathrm{C}_4$)] If $i = 4$ (resp.\ $11$), then $X$ satisfies Lemmas \ref{lemexclcdeg1no4} and \ref{lemexclnspexc1} (resp.\ Lemma \ref{lemexclnspexc1}).
If $i = 6$ (resp.\ $9$), then $X$ satisfies Lemma \ref{lemexclnsptno6} (resp.\ Lemma \ref{lem:S_No9})
\end{enumerate} 
\end{Cond}

\begin{table}[h]
\begin{center}
\caption{Monomials}
\label{table:monomial}
\begin{tabular}{cc|cc|cc}
No. & Monomial & No. & Monomial & No. & Monomial \\
\hline
9 & $y z$ & 32 & $y^2 s$ & 48 & $z s$ \\
22 & $z s$ & 33 & $z s$ &57 & $s t$ \\
25 & $y z^2$ & 37 & $s^2$ & 70 & $y^3 t$ \\ 
28 & $z s$ & 39 & $z^2 t$ & & \\
\end{tabular}
\end{center}
\end{table}

\begin{table}[h]
\begin{center}
\caption{Type of WCI curves}
\label{table:WCIcurve}
\begin{tabular}{cc|cc}
No. & Type & No. & Type \\
\hline
6 & (1,1,1,2) & 37 & (1,2,6,8) \\
10 & (1,1,2,3) & 38 & (1,3,4,7) \\
15 & (1,1,2,3) & 39 & (1,3,4,5) \\
16 & (1,1,2,3), (1,1,3,4), (1,1,3,6) & 44 & (1,2,3,5) \\
18 & (1,1,2,3) & 46 & (1,3,4,5) \\
19 & (1,1,4,6) & 47 & (1,4,5,6) \\
22 & (1,1,4,5) & 52 & (1,3,5,8) \\
26 & (1,1,3,4) & 53 & (1,4,5,6) \\
27 & (1,2,3,4), (1,3,3,4), (1,3,4,5) & 57 & (1,2,7,9) \\
31 & (1,2,4,5) & 59 & (1,4,6,7) \\
33 & (1,1,5,6) & 63 & (1,3,5,8) \\
34 & (1,2,3,5) & 71 & (1,7,8,10) \\
35 & (1,3,4,5) & &
\end{tabular}
\end{center}
\end{table}

We explain that the above conditions are generality conditions, that is, $\mbQ$-Fano WCIs in the family No.\ $i$ satisfying the above conditions form a nonempty open subset.
It is clear that ($\mathrm{C}_0$) and ($\mathrm{C}_1$) are generality conditions.

Generality of ($\mathrm{C}_2$) can be checked by counting dimensions.
For example, let $X = X_{4,5} \subset \mbP (1^3,2^2,3)$ be a member of the family No.~$6$.
Then, WCI curves of type $(1,1,1,2)$ in $\mbP (1^3,2^2,3)$ form a $1$-dimensional family and members of the family No.~$6$ containing a given WCI curve of type $(1,1,1,2)$ form a $2$-codimensional subfamily.
It follows that $\mbQ$-Fano WCIs containing a WCI curve of type $(1,1,1,2)$ form a proper closed subset in the family No.~$6$.

The condition $(\mathrm{C}_3)$ will be made explicit in Examples \ref{ex:C3no20}, \ref{ex:C3no21} and \ref{ex:C3no51}.
Finally, the condition $(\mathrm{C}_4)$ is not explicit and its generality will be proved in the corresponding lemma.


\begin{Def}
For $i \in I^*$, we define $\mcG_i$ to be the subfamily of the family No.~$i$ of $\mbQ$-Fano WCIs satisfying ($\mathrm{C}_0$)-($\mathrm{C}_4$) in Condition \ref{bcd}.
\end{Def}

\subsection{Basic definitions and the test class method}

Throughout this subsection, let $X$ be a $\mbQ$-Fano variety whose Weil divisor class group $\Div (X)$ is isomorphic to $\mbZ$ and  we assume that $A := -K_X$ is the positive generator of $\Div (X)$.
Let $\mcH \subset |n A|$ be a movable linear system on $X$, that is, it is a linear system without base divisor.

\begin{Def}
We define the {\it canonical threshold} $c (X, \mcH)$ of $(X, \mcH)$ to be
\[
c (X, \mcH) := \max \{ \lambda \mid K_X + \lambda \mcH \text{ is canonical} \}.
\]
\end{Def}

\begin{Def}
A {\it maximal singularity} of $\mcH$ is an extremal extraction $Y \to X$ in the Mori category having exceptional divisor $E$ with $1/n > c (X,\mcH) = a_E (K_X) / m_E (\mcH)$.
The center of the extraction $Y \to X$ is called a {\it maximal center}.
\end{Def}

The test class method, which we will explain below, will be used to exclude curves, nonsingular points and some singular points as maximal center.

\begin{Lem}[{\cite[Lemma 5.2.1]{CPR}}] \label{CPRlem5.2.1}
Let $\Gamma \subset X$ be a maximal center and $\varphi \colon (E \subset Y) \to (\Gamma \subset X)$ a maximal contraction.
Then the $1$-cycle $(-K_Y)^2 \in N_1 (Y)$ lies in the interior of the Mori cone of $Y$:
\[
(-K_Y)^2 \in \Int \bNE (Y).
\]
\end{Lem}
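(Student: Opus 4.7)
The proof relies on the maximal-singularity condition $1/n > c(X, \mcH) = a/m$, i.e., $m - na > 0$, and on the fact that $\rho(Y) = \rho(X) + 1 = 2$, so $\bNE(Y)$ is a two-dimensional closed cone with extremal rays $R_\varphi$ (contracted by $\varphi$) and some second ray $R'$. Since $\Nef(Y)$ is dually two-dimensional with extremal generators $\varphi^{*} A$ (vanishing on $R_\varphi$) and some $D$ (vanishing on $R'$), the goal $(-K_Y)^{2} \in \Int \bNE(Y)$ reduces to showing $\varphi^{*} A \cdot (-K_Y)^{2} > 0$ and $D \cdot (-K_Y)^{2} > 0$.

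The main ingredient is the identity obtained from $K_Y = \varphi^{*} K_X + aE$ and $\tilde{\mcH} \sim \varphi^{*} \mcH - mE \sim -n\varphi^{*}K_X - mE$:
\[
-n K_Y \;\sim\; \tilde{\mcH} + (m - na)\, E,
\]
with strictly positive coefficient on $E$. Squaring yields
\[
n^{2}(-K_Y)^{2} \;=\; \tilde{\mcH}^{2} + 2(m - na)\, \tilde{\mcH} \cdot E + (m - na)^{2}\, E^{2},
\]
in which $\tilde{\mcH}^{2}$ and $\tilde{\mcH} \cdot E$ are represented by effective 1-cycles (the latter pushing to zero, hence lying in $R_\varphi$). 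For the pairing with $\varphi^{*} A$ the projection formula and $\varphi_{*} E = 0$ reduce matters to $A^{3} + a^{2}\, A \cdot \varphi_{*}(E^{2})$. In the case $\dim \Gamma = 1$ this correction equals $-a^{2}|c|(A \cdot \Gamma)$ for an extraction-dependent constant $c$, and the basic effectivity estimate $\tilde{\mcH}^{2} \cdot \varphi^{*} A \geq 0$ reads $n^{2} A^{3} \geq m^{2} |c| (A \cdot \Gamma)$; combined with $m^{2} > n^{2} a^{2}$, this gives the strict inequality $A^{3} > a^{2}|c|(A \cdot \Gamma)$, hence $\varphi^{*} A \cdot (-K_Y)^{2} > 0$. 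The case $\dim \Gamma = 0$ is easier, reducing directly to $A^{3} > 0$.

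For the other extremal test $D \cdot (-K_Y)^{2} > 0$ the strategy is the same in spirit: $D \cdot \tilde{\mcH}^{2} > 0$ because $\tilde{\mcH}^{2}$ is effective and pushes to the nonzero class $n^{2} A^{2}$, so does not lie in $R_\varphi$; $D \cdot (\tilde{\mcH} \cdot E) \geq 0$ by nef-vs-effective pairing; and the potentially negative $(m - na)^{2} D \cdot E^{2}$ contribution is absorbed by combining the margin $m - na > 0$ supplied by the maximal-singularity hypothesis with a normal-bundle/adjunction analysis on the exceptional surface $E$. This absorption, trading the strict inequality $m > na$ for precisely the amount needed to dominate the $E^{2}$ contribution against $D$, is the main technical obstacle of the proof.
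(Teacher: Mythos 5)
First, a point of comparison: the paper offers no proof of this lemma at all --- it is quoted verbatim from \cite[Lemma 5.2.1]{CPR} --- so your attempt has to be measured against the CPR argument rather than anything in this text.

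Your reduction of $(-K_Y)^2 \in \Int \bNE(Y)$ to the two inequalities $\varphi^*A \cdot (-K_Y)^2 > 0$ and $D \cdot (-K_Y)^2 > 0$ is legitimate (for a two-dimensional cone the interior is cut out by strict positivity against the two generators of the dual cone), and your treatment of the first inequality is correct; it is exactly the standard computation underlying the test-class exclusions elsewhere in the paper. The gap is in the second inequality, and it is twofold. (i) You assert $D \cdot \tilde{\mcH}^2 > 0$ because $\tilde{\mcH}^2$ is effective and pushes forward to $n^2 A^2 \ne 0$, hence does not lie in $R_{\varphi}$. But $D$ is by your own setup the nef class vanishing on the \emph{other} extremal ray $R'$, not on $R_{\varphi}$; showing $\tilde{\mcH}^2 \notin R_{\varphi}$ gives $\varphi^*A \cdot \tilde{\mcH}^2 > 0$ (which you already used) and says nothing about $D \cdot \tilde{\mcH}^2$ beyond the trivial $\ge 0$. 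A priori the effective cycle $\tilde{\mcH}^2$ could lie on $R'$. (ii) The ``absorption'' of the term $(m-na)^2\, D \cdot E^2$ is not an argument: $D$ is not explicitly known, $D \cdot E^2$ admits no a priori lower bound in terms of the remaining quantities, and you yourself flag this step as the main technical obstacle without carrying it out. As written, the second inequality is simply unproved.

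The route in \cite{CPR} sidesteps $D$ entirely by grouping the expansion differently, namely
\[
n^2 (-K_Y)^2 \;=\; \tilde{\mcH}^2 \;+\; (m - na)\, \tilde{\mcH}\cdot E \;+\; n(m - na)\,(-K_Y)\cdot E ,
\]
so that no bare $E^2$ term appears. One then argues directly with the Mori cone: each summand is shown to lie in $\bNE(Y)$ ($\tilde{\mcH}^2$ and $\tilde{\mcH}\cdot E$ because $\mcH$ is movable, $(-K_Y)\cdot E$ using that $-K_Y$ is $\varphi$-ample --- this is where the geometry of the exceptional divisor actually enters), the first summand is not in $R_{\varphi}$, and the $E$-terms supply a nonzero contribution off the second ray; in a two-dimensional cone this forces the sum into the interior. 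If you want to complete your write-up, I would switch to this grouping and cone argument rather than try to estimate $D \cdot E^2$ against the margin $m - na > 0$.
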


\begin{Def}
Let $\varphi \colon (E \subset Y) \to (\Gamma \subset X)$ be an extremal divisorial contraction in the Mori category.
A {\it test class} is a nonzero nef class $M \in N^1 (Y)$.
\end{Def}

As a corollary to Lemma \ref{CPRlem5.2.1}, we obtain the following result.

\begin{Cor}[{\cite[Corollary 5.2.2]{CPR}}] \label{criexclmstc}
Let $\varphi \colon (E \subset Y) \to (\Gamma \subset X)$ be an extremal divisorial extraction in the Mori category.
If $(M \cdot (-K_Y)^2) \le 0$ for some test class $M$, then $E$ is not a maximal singularity.
\end{Cor}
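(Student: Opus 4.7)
The plan is to deduce the corollary from Lemma \ref{CPRlem5.2.1} by a short contrapositive argument based on cone duality. Since Lemma \ref{CPRlem5.2.1} already places $(-K_Y)^2$ in the interior of the Mori cone whenever $E$ is a maximal singularity, it suffices to observe that any nonzero nef class must pair strictly positively with classes in that interior.

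More precisely, first I would recall the standard duality between the Mori cone $\bNE(Y) \subset N_1(Y)$ and the nef cone $\Nef(Y) \subset N^1(Y)$: the two are dual closed convex cones under the intersection pairing, and $\bNE(Y)$ has nonempty interior because $Y$ is $\mbQ$-factorial projective. From this one obtains the characterization
\[
\alpha \in \Int \bNE(Y) \iff (M \cdot \alpha) > 0 \ \text{for every nonzero } M \in \Nef(Y).
\]
Next, I would argue by contradiction: assume $E$ is a maximal singularity, so by Lemma \ref{CPRlem5.2.1} we have $(-K_Y)^2 \in \Int \bNE(Y)$. Applying the characterization to the test class $M$, which is nonzero and nef by definition, yields $(M \cdot (-K_Y)^2) > 0$, contradicting the hypothesis $(M \cdot (-K_Y)^2) \le 0$. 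Hence $E$ cannot be a maximal singularity.

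Honestly, there is no real obstacle here: the statement is an immediate consequence of the preceding lemma together with standard facts about dual convex cones in finite-dimensional vector spaces. The only point that deserves a brief check is the cone-duality characterization of the interior of $\bNE(Y)$, but this is a general feature of closed convex cones with nonempty interior and requires no geometric input beyond the existence of an ample divisor on $Y$.
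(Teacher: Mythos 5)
Your argument is correct and is exactly how the paper (following \cite[Corollary 5.2.2]{CPR}) intends the corollary to be deduced: Lemma \ref{CPRlem5.2.1} puts $(-K_Y)^2$ in $\Int \bNE(Y)$ whenever $E$ is a maximal singularity, and a nonzero nef class pairs strictly positively with any interior point of the Mori cone by cone duality. No issues.
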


\begin{Def}[{\cite[Definition 5.2.4]{CPR}}]
Let $L$ be a Weil divisor class on $X$ and $\Gamma \subset X$ an irreducible subvariety of codimension $\ge 2$.
For an integer $s > 0$, consider the linear system
\[
\mcL_{\Gamma}^s := | \mcI_{\Gamma}^s (s L) |,
\]
where $\mcI_{\Gamma}$ is the ideal sheaf of $\Gamma$.
We say that the class $L$ {\it isolates} $\Gamma$, or is $\Gamma$-{\it isolating}, if
\begin{enumerate}
\item $\Gamma \subset \Bs \mcL_{\Gamma}^s$ is an isolated component for some positive integer $s$; in other words, in a neighborhood of $\Gamma$, the base locus of $\mcL_{\Gamma}^s$ is contained in $\Gamma$ (as a set).
\item In the case $\Gamma$ is a curve, the generic point of $\Gamma$ appears in $\Bs \mcL_{\Gamma}^s$ with multiplicity $1$.
\end{enumerate}
\end{Def}

\begin{Lem}[{\cite[Lemma 5.2.5]{CPR}}]
Suppose that $L$ isolates $\Gamma \subset X$ and let $s$ be as above.
Then, for any extremal divisorial contraction $\varphi \colon (E \subset Y) \to (\Gamma \subset X)$, the birational transform $M =f_*^{-1} \mcL_{\Gamma}^s$ is a test class on $Y$.
\end{Lem}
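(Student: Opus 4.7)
The plan is to verify the two defining properties of a test class for the strict transform linear system $M := \varphi_*^{-1} \mcL_{\Gamma}^s$: namely, that its class in $N^1(Y)$ is nonzero, and that it is nef. Nontriviality is essentially immediate from the isolating hypothesis, which guarantees that $\mcL_{\Gamma}^s$ is nonempty and has no divisorial fixed component supported on $E$ (since $E$ lies over $\Gamma$, which is a lower-dimensional, isolated component of $\Bs \mcL_{\Gamma}^s$), so the strict transform of a general member is a nonzero effective divisor class on $Y$.

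The substance of the proof is nefness, which I would verify by picking an irreducible curve $C \subset Y$ and showing $M \cdot C \geq 0$, in two cases. First, if $C \not\subset E$, then $\varphi(C)$ is an irreducible curve in $X$. By the isolating property, in some neighborhood of $\Gamma$ the base locus of $\mcL_{\Gamma}^s$ is set-theoretically contained in $\Gamma$, so $\varphi(C)$ meets this neighborhood in only finitely many points. Hence a general member $D \in \mcL_{\Gamma}^s$ does not contain $\varphi(C)$, and its strict transform $\widetilde D \in M$ does not contain $C$, giving $M \cdot C = \widetilde D \cdot C \geq 0$.

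Second, if $C \subset E$, I would use the numerical relation
\[
M \equiv s\, \varphi^* L - e\, E, \qquad e := \mult_E \mcL_{\Gamma}^s,
\]
to expand $M \cdot C = s\, L \cdot \varphi_* C - e\, (E \cdot C)$. When $\varphi(C)$ is a point the first term vanishes, and since $E$ is $\varphi$-antiample for any extremal extraction in the Mori category one has $E \cdot C \leq 0$, so $M \cdot C \geq 0$. In the remaining subcase, $\Gamma$ is a curve and $\varphi(C)$ dominates $\Gamma$; here the multiplicity-one clause of the isolating definition is what pins down $e$ in terms of the multiplicity of $E$ along $\Gamma$ in $\varphi^* \mcI_{\Gamma}$, and an explicit local computation on the ruled surface $E \to \Gamma$ closes out the inequality via the projection formula.

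The main obstacle is precisely this last subcase. One must balance the positive contribution $s\, L \cdot \varphi_* C$ against the sign-indeterminate term $e\, (E \cdot C)$, and it is exactly here that the multiplicity-one condition at the generic point of $\Gamma$ does real work, preventing $e$ from becoming too large relative to the $L$-degree of $\Gamma$ and ensuring the $E$-fiber geometry contributes with the correct sign under intersection with $C$.
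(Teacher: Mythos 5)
The paper does not actually prove this statement; it is imported verbatim from \cite[Lemma 5.2.5]{CPR}, so your attempt has to be measured against the argument there. Your overall architecture (nontriviality plus a curve-by-curve nefness check) is the right one, and the subcases where $C$ is disjoint from $E$ or is contracted by $\varphi$ are fine. One small slip: in the case $C \not\subset E$, the deduction that a general member of $\mcL_{\Gamma}^s$ does not contain $\varphi(C)$ only follows when $\varphi(C)$ meets a neighbourhood $U$ of $\Gamma$ (then $\varphi(C)\cap \Bs \mcL_{\Gamma}^s \cap U \subset \varphi(C)\cap\Gamma$ is finite, since $\varphi^{-1}(\Gamma)=E$ forces $\varphi(C)\ne\Gamma$); if $\varphi(C)$ lies in the base locus away from $U$ every member contains it, and you must instead observe that such a $C$ misses $E$, so $M\cdot C = s\,(L\cdot\varphi_*C)\ge 0$ by ampleness of $L$. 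This is repairable, but the case division should be on whether $C$ meets $E$.

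The genuine gap is the subcase you yourself flag as the main obstacle: $\Gamma$ a curve and $C\subset E$ dominating $\Gamma$. You do not prove it, and the route you sketch cannot be completed as described. A numerical balancing of $s\,(L\cdot\varphi_*C)$ against $e\,(E\cdot C)$ via a ``local computation on the ruled surface'' fails in general: over the generic point of $\Gamma$ the surface $E$ is a ruled surface $\mbP(\mcN^{\vee}_{\Gamma/X})$, and for an unbalanced normal bundle its negative section $C_0$ has $E\cdot C_0$ arbitrarily large and positive while $s\,(L\cdot\Gamma)$ is fixed, so no projection-formula estimate on $E$ alone gives $M\cdot C\ge 0$. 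What condition (2) of the isolating definition actually provides is that two general members of $\mcL_{\Gamma}^s$ generate $\mcI_{\Gamma}$ at the generic point of $\Gamma$; consequently their strict transforms meet the generic fibre of $E\to\Gamma$ in two \emph{distinct} points, so $\Bs M$ contains no curve in $E$ dominating $\Gamma$. One then gets $M\cdot C=\widetilde{D}\cdot C\ge 0$ by intersecting with a general member $\widetilde{D}$ not containing $C$ --- a base-locus argument rather than an intersection computation. Without this step, which is exactly the content of condition (2), the lemma is not established.
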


\subsection{Terminal quotient singularities and the Kawamata blowup}

\begin{Def}
Let $(X, x)$ be a germ of a $3$-fold cyclic quotient singularity.
We say that it is {\it of type} $\frac{1}{r} (a_1,a_2,a_3)$ if $(X,x)$ is analytically isomorphic to the germ $(\mbA^3/\bmu_r, o)$, where $\bmu_r$ is the cyclic group of $r$-th roots of unity with a primitive $r$-th root of unity $\varepsilon$, $\mbA^3/\bmu_r$ is the geometric quotient of $\mbA^3$ by the $\bmu_r$-action given by $(x_1, x_2, x_3) \mapsto (\varepsilon^{a_1} x_1, \varepsilon^{a_2} x_2, \varepsilon^{a_3} x_3)$ and $o$ is the image of the origin of $\mbA^3$.
\end{Def}

The following is the characterization of $3$-dimensional terminal cyclic quotient singularities.
For a proof, we refer the readers to \cite[Section 5]{Reid}.

\begin{Thm}
Let $(X,x)$ be a germ of a $3$-dimensional cyclic quotient singularity.
Then $(X,x)$ is terminal if and only if it is of type $\frac{1}{r} (1, a, r-a)$ with $a$ coprime to $r$.
\end{Thm}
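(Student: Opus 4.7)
The plan is to derive the theorem from the Reid--Tai criterion for terminality of cyclic quotient singularities, which states that $\mbA^3/\bmu_r$ of type $\frac{1}{r}(a_1,a_2,a_3)$ is terminal if and only if
\[
\overline{ka_1}+\overline{ka_2}+\overline{ka_3}>r \qquad \text{for every } k\in\{1,\dots,r-1\},
\]
where $\overline{m}\in\{0,1,\dots,r-1\}$ denotes the residue of $m$ modulo $r$. I would take this criterion as the main input, referring to \cite[Section~5]{Reid}. Two preliminary reductions apply: a terminal $3$-fold singularity is smooth in codimension two, hence isolated, which forces $\gcd(a_i,r)=1$ for every $i$; and multiplying $(a_1,a_2,a_3)$ by any unit $c\in(\mbZ/r)^\times$ yields an isomorphic quotient (change of generator of $\bmu_r$), so after rescaling one may assume $a_1=1$.

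The easy implication is an immediate computation: if the type is $\frac{1}{r}(1,a,r-a)$ with $\gcd(a,r)=1$, then $\overline{ka}\ne 0$ for every $k\in\{1,\dots,r-1\}$, so $\overline{ka}+\overline{k(r-a)}=\overline{ka}+\overline{-ka}=r$, and the Reid--Tai sum equals $k+r>r$.

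For the converse, it suffices by the reductions to show that if $\frac{1}{r}(1,a_2,a_3)$ is terminal then some pair of entries from $\{1,a_2,a_3\}$ sums to $0$ modulo $r$; permuting so that the pair is $\{a_2,a_3\}$ and rescaling if necessary to restore $a_1=1$ then gives the required normal form. I would argue by contrapositive, assuming $a_2,a_3\ne r-1$ and $a_2+a_3\not\equiv 0\pmod r$, and aim to produce a $k\in\{1,\dots,r-1\}$ violating Reid--Tai. The cases $k=1$ and $k=r-1$ already force $a_2+a_3\in\{r+1,\dots,2r-2\}$, so $b:=a_2+a_3-r\in\{1,\dots,r-2\}$. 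Writing $\overline{ka_2}+\overline{ka_3}=\overline{kb}+\varepsilon_k r$ with $\varepsilon_k\in\{0,1\}$, Reid--Tai at $k$ reduces to the inequality $k+\overline{kb}>r$ whenever $\varepsilon_k=0$. A pairing argument on $\{k,r-k\}$ shows $\varepsilon_k+\varepsilon_{r-k}=1$ when $\overline{kb}\ne 0$, together with $(k+\overline{kb})+((r-k)+\overline{(r-k)b})=2r$, so in each such pair exactly one member satisfies $k+\overline{kb}\le r$. The Reid--Tai hypothesis thus constrains the $\varepsilon=0$ member of every pair to be the one with $k+\overline{kb}>r$; extracting a contradiction from this constraint is the combinatorial heart of the Morrison--Stevens classification.

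The main obstacle is this final combinatorial step, which requires a careful case analysis depending on $\gcd(b,r)$ and a refined study of the map $k\mapsto\overline{kb}$. Since a complete proof is available in \cite[Section~5]{Reid}, and since the role of the theorem in the sequel is purely as a structural tool for enumerating the terminal cyclic quotient points that arise on the $\mbQ$-Fano WCIs under study, in the paper I would simply invoke the classification from there rather than reproducing the combinatorial argument.
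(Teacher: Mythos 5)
Your proposal is correct and ends up in the same place as the paper: the paper gives no proof at all, simply referring the reader to \cite[Section 5]{Reid}, which is exactly what you conclude by doing after sketching the Reid--Tai reduction. The extra material you include (the easy implication via $\overline{ka}+\overline{-ka}=r$, and the reduction of the converse to the Morrison--Stevens combinatorial step) is accurate but goes beyond what the paper records.
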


\begin{Thm}[\cite{Kawamata}] \label{divcontquot}
Let $(X,x)$ be the germ of terminal cyclic quotient singularity of type $\frac{1}{r} (1,a,r-a)$ with $r \ge 2$, $0 < a < r$ and $a$ coprime to $r$, and $\varphi \colon (E \subset Y) \to (\Gamma \subset X)$ a divisorial contraction with $x \in \Gamma$.
Then $\Gamma = x$ and $\varphi$ is the weighted blowup with weight $\frac{1}{r} (1,a,r-a)$.
\end{Thm}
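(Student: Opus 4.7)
The plan is to follow Kawamata's original strategy: pass to the index-one cover $\tilde X \to X$, where the quotient singularity becomes a smooth point, apply the classification of divisorial contractions over a smooth $3$-fold germ, and descend to recover $\varphi$.

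First, I would form the index-one cover $\pi \colon \tilde X \to X$, which is étale in codimension one; analytically $\tilde X \cong \mbA^3$ with $\bmu_r$ acting diagonally by $\mathrm{diag}(\varepsilon, \varepsilon^a, \varepsilon^{r-a})$, whose unique fixed point is the origin $\tilde x$. Define $\tilde Y$ as the normalization of the fibre product $Y \times_X \tilde X$. Since $r K_Y = \varphi^*(r K_X)$ is Cartier, $Y$ has index dividing $r$, so the induced map $\tilde\pi \colon \tilde Y \to Y$ is étale in codimension one; consequently $\tilde Y$ is Gorenstein and, as an étale-in-codimension-one cover of the terminal variety $Y$, also terminal. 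The projection $\tilde\varphi \colon \tilde Y \to \tilde X$ is then a $\bmu_r$-equivariant projective birational morphism contracting the divisor $\tilde E := \tilde\pi^{-1}(E)$, and extremality transports across the cover.

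Next I would invoke the classical classification of divisorial contractions from a Gorenstein terminal $3$-fold to a smooth $3$-fold germ: any such $\tilde\varphi$ is the ordinary blowup along a smooth centre $\tilde\Gamma \subset \tilde X$. By $\bmu_r$-equivariance $\tilde\Gamma$ is $\bmu_r$-invariant and contains $\tilde x$, so it is either the origin itself or a smooth $\bmu_r$-invariant curve through the origin. If $\tilde\Gamma$ were a curve, its tangent direction at $\tilde x$ would be a $\bmu_r$-eigenline, and explicit analysis of the induced $\bmu_r$-action on the $\mbP^1$-bundle $\tilde E \to \tilde\Gamma$ shows that the quotient $Y = \tilde Y/\bmu_r$ either fails to have only terminal singularities or has relative Picard rank at least two over $X$, contradicting the Mori-category hypothesis. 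Hence $\tilde\Gamma = \{\tilde x\}$, so $\Gamma = \{x\}$, and $\tilde Y \to \tilde X$ is the ordinary blowup at the origin.

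Finally, a direct computation in the three standard toric charts of the point blowup identifies the $\bmu_r$-quotient $\varphi \colon Y \to X$ with the weighted blowup of $(X, x)$ with weights $\frac{1}{r}(1, a, r-a)$ and exceptional divisor $\mbP(1, a, r-a)$. The main technical obstacle I anticipate is the elimination of the curve-centre case in the degenerate ranges where two of the weights $1, a, r-a$ coincide, so that the tangent eigenspace at $\tilde x$ is two-dimensional and supports a one-parameter family of $\bmu_r$-invariant smooth curves; for each such curve a separate computation on the ruled exceptional surface is required to detect the non-terminal quotient singularity or the Picard-rank jump that rules it out. A secondary delicate point is the verification that $\tilde Y$ inherits the correct global properties (Gorenstein, terminal, $\mbQ$-factorial of the right relative Picard rank) through the normalization of the fibre product, which requires careful bookkeeping of how canonical classes, ampleness, and Picard numbers transform under the Galois cover.
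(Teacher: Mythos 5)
There is a genuine gap, and it sits at the very first step: the cover $\tilde{Y}\to Y$ you construct is \emph{not} \'etale in codimension one. Your justification ``$rK_Y=\varphi^*(rK_X)$ is Cartier'' is already false (one has $K_Y=\varphi^*K_X+aE$ with $a>0$, so $rK_Y=\varphi^*(rK_X)+raE$), but the real problem is geometric: the index-one cover $\tilde{X}\to X$ is branched exactly over the point $x$, and the exceptional divisor $E$ lies over $x$, so the normalized fibre product $\tilde{Y}\to Y$ is ramified along the \emph{divisor} $E$. Consequently $\tilde{Y}$ inherits none of the properties you need (Gorenstein, terminal, discrepancies preserved), and the classification of contractions from Gorenstein terminal $3$-folds does not apply. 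You can see the failure concretely in the answer itself: for the Kawamata blowup, $\tilde{Y}$ is the $(1,a,r-a)$-weighted blowup of $\mbA^3$, which for $r\ge 3$ carries non-Gorenstein quotient singularities of indices $a$ and $r-a$ and is certainly not the ordinary blowup of the origin; the map $\tilde{Y}\to Y$ is totally ramified along the exceptional $\mbP(1,a,r-a)$ (this is visible in the orbifold charts of Section 4.1 of the paper, where $\bmu_r$ acts by $x_i'\mapsto\zeta_r x_i'$ and fixes the exceptional divisor pointwise). Your final step is false for the same reason: the $\bmu_r$-quotient of the ordinary blowup of $\mbA^3$ at the origin has exceptional divisor of discrepancy $2$ over $X$ (the quotient being \'etale in codimension one there), not $1/r$, so it is not the Kawamata blowup. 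The secondary issues you flag (transport of extremality, reducibility or singularity of the invariant centre curve) are also real, but they are moot given that the cover argument collapses at the start.

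The theorem is quoted from Kawamata's paper without proof, and his argument runs along entirely different lines. The two key inputs are: (i) if $\varphi\colon Y\to X$ is an extremal divisorial contraction with exceptional divisor $E$, then for any other divisor $F$ over $X$ whose centre on $X$ is contained in $\varphi(E)$ one has $a_F(K_X)=a_F(K_Y)+a_E(K_X)\cdot\mult_F(E)>a_E(K_X)$, i.e.\ $E$ is the \emph{unique} divisor computing the minimal discrepancy over its centre; and (ii) a toric computation showing that the minimal discrepancy over the point $x$ of type $\frac{1}{r}(1,a,r-a)$ equals $1/r$ and is attained only by the exceptional valuation of the $\frac{1}{r}(1,a,r-a)$-weighted blowup. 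Since the singularity is isolated, a curve centre $\Gamma\ni x$ would force $a_E(K_X)=1$ (computed at the generic point of $\Gamma$, where $X$ is smooth), contradicting (i) applied to a valuation over $x$ of discrepancy $1/r<1$; hence $\Gamma=\{x\}$, the valuation of $E$ is the weighted-blowup valuation by (ii), and $Y$ is recovered as the unique extraction of that valuation.
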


The unique divisorial contraction in Theorem \ref{divcontquot} is called the {\it Kawamata blowup} of $X$ at $x$.

\subsection{Inequalities on multiplicities}

Let $\msp \in X$ be a point of a $\mbQ$-Fano variety with $\Div (X) \cong \mbZ$ and assume that $A = -K_X$ is the positive generator of $\Div (X)$.
If $\msp$ is a maximal center for some movable linear system $\mcH \subset |n A|$, then $K_X + \frac{1}{n} \mcH$ is not canonical at $\msp$.
We have the following inequalities on multiplicities.

\begin{Thm}[{\cite[Theorem 5.3.2]{CPR}, \cite[Corollary 3.4]{Corti2}}] \label{multineq1}
Let $\msp \in X$ be a germ of a nonsingular threefold and $\mcH$ a movable linear system on $X$.
Assume that $K_X + \frac{1}{n} \mcH$ is not canonical at $\msp$.
\begin{enumerate}
\item If $\msp \in S \subset X$ is a surface and $\mcL = \mcH |_S$, then $K_S + \frac{1}{n} \mcL$ is not log canonical.
\item If $Z = H_1 \cap H_2$ is the intersection of two general members $H_1, H_2$ of $\mcH$, then $\mult_{\msp} Z > 4 n^2$.
\end{enumerate}
\end{Thm}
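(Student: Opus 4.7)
The plan is to prove Part (1) first using adjunction on a log resolution, and then to deduce Part (2) by restricting to a general smooth surface and invoking the two-dimensional Noether--Fano inequality.

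To set up Part (1), I would fix a simultaneous log resolution $\pi \colon Y \to X$ of the triple $(X, S, \mcH)$ near $\msp$ and write
\[
K_Y = \pi^* K_X + \sum_i a_i E_i, \quad
\pi^* \mcH = \mcH_Y + \sum_i m_i E_i, \quad
\pi^* S = \tilde{S} + \sum_i b_i E_i,
\]
so that $\tilde{S} + \sum E_i + \mcH_Y$ has simple normal crossings. The hypothesis that $(X, \tfrac{1}{n}\mcH)$ is not canonical at $\msp$ produces an index $i_0$ with $\pi(E_{i_0}) \ni \msp$ and $a_{i_0} - m_{i_0}/n < 0$. Setting $g := \pi|_{\tilde S} \colon \tilde S \to S$, the adjunction $K_{\tilde S} = (K_Y + \tilde S)|_{\tilde S}$ together with the identity $g^*(\mcH|_S) = \mcH_Y|_{\tilde S} + \sum m_i (E_i|_{\tilde S})$ gives that the log discrepancy of $(S, \tfrac{1}{n}\mcH|_S)$ along any curve component $F \subset E_i \cap \tilde S$ equals $1 + a_i - b_i - m_i/n$.

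When $\pi(E_{i_0}) = \msp$, we have $b_{i_0} \geq 1$, because any exceptional divisor lying over a point of $S$ necessarily appears in $\pi^* S$; then the log discrepancy along any curve component $F \subset E_{i_0} \cap \tilde S$ is bounded above by $a_{i_0} - m_{i_0}/n < 0$, giving the non-log-canonicity of $(S, \tfrac{1}{n}\mcH|_S)$ at $\msp$. The genuine difficulty occurs when $\pi(E_{i_0})$ is a curve $C \ni \msp$ with $C \not\subset S$: then $b_{i_0} = 0$ and the naive bound fails. Here I would exploit that $\mcH$ is movable: perturbing the boundary and applying Shokurov--Koll\'ar connectedness to the pair $(X, S + \tfrac{1}{n}\mcH)$, one replaces $E_{i_0}$ by another exceptional divisor whose center is exactly $\msp$ and that still violates the canonical inequality. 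This reduction is the main obstacle for Part (1).

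For Part (2), choose $S$ to be a general smooth surface germ through $\msp$ transversal to the curve $H_1 \cdot H_2$ at $\msp$. Part (1) then asserts that $(S, \tfrac{1}{n} \mcL)$ is not log canonical at $\msp$, where $\mcL := \mcH|_S$. The two-dimensional Noether--Fano inequality for movable linear systems, proved by induction on a sequence of blow-ups over $\msp$, now yields $(L_1 \cdot L_2)_\msp > 4 n^2$ for general $L_1, L_2 \in \mcL$. Since $L_i = H_i|_S$ and $S$ meets $H_1 \cdot H_2$ transversally at $\msp$, we have $(L_1 \cdot L_2)_\msp = \mult_\msp (H_1 \cdot H_2)$, which proves Part (2). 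The heart of the argument is thus the curve-center case of Part (1), where the movability of $\mcH$ is essential.
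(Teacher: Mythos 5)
The paper itself offers no proof of this statement: it is imported verbatim from \cite[Theorem 5.3.2]{CPR} and \cite[Corollary 3.4]{Corti2}, so your argument has to be measured against the proofs in those references. Your Part (2) is exactly Corti's argument — cut by a general smooth surface $S$ through $\msp$, apply Part (1), feed the non-log-canonical surface pair into the two-dimensional inequality (Theorem \ref{multineq2} with $a_1=a_2=1$, so that the boundary curves drop out), and identify $(\mcL^2)_{\msp}$ with $\mult_{\msp}Z$ for general $S$ — and is fine modulo Part (1).

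Part (1) is where the gap lies, and you have in effect conceded it ("this reduction is the main obstacle"). Two concrete problems. First, the reduction you propose in the curve-center case cannot work. Take $\mcH$ on the germ of $\mbA^3$ at the origin to be the pullback of the planar system $(x^2,xy,y^2)$ under projection along the $z$-axis, with $n=1$: blowing up the $z$-axis gives $a_E-m_E=1-2<0$, so $K_X+\mcH$ is not canonical at the origin in the ``center containing $\msp$'' sense; yet for $S=\{z=0\}$ the restriction $\mcL$ is the system $(x^2,xy,y^2)$ on $\mbA^2$, whose general member is a node, and $K_S+\mcL$ \emph{is} log canonical. So the conclusion of (1) genuinely fails for this $S$, and consequently no connectedness argument can manufacture a non-canonical valuation centered exactly at $\msp$ (if one existed, the valid point-center case of (1) would force $K_S+\mcL$ to be non-lc). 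The statement is only true under the reading — standard in \cite{CPR}, and the one relevant to maximal centers — that the offending valuation has center equal to $\{\msp\}$, or a curve contained in $S$. Second, even in the point-center case your direct restriction of the log resolution does not close: $E_{i_0}$ lies over $\msp$ but need not meet $\tilde S$ at all (for instance if it is extracted over a point of an earlier exceptional divisor away from $\tilde S$), so there may be no curve $F\subset E_{i_0}\cap\tilde S$ on which to evaluate the log discrepancy. The intended repair is to pass to the pair $(X,\,S+\frac{1}{n}\mcH)$: since $E_{i_0}$ has center inside the Cartier divisor $S$, one has $m_{E_{i_0}}(S)\ge 1$, hence the log discrepancy of this pair along $E_{i_0}$ is at most $a_{i_0}-m_{i_0}/n<0$; the pair is therefore not log canonical with a non-lc center contained in $S$, and inversion of adjunction via the Shokurov--Koll\'ar connectedness theorem (see \cite[Chapter 17]{Kollar+}) yields that $(S,\frac{1}{n}\mcL)$ is not log canonical at $\msp$. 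You should replace the hand-rolled resolution computation by this.
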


\begin{Thm}[{\cite[Theorem 3.1]{Corti2}}] \label{multineq2}
Let $\msp \in \Delta_1 + \Delta_2 \subset S$ be an analytic germ of a normal crossing curve on a nonsingular surface $($so that $\Delta_1 + \Delta_2 \subset S$ is isomorphic to the coordinate axes $(x y = 0) \subset \mbC^2$$)$.
Let $\mcL$ be a movable linear system on $S$, and write $(\mcL^2)_{\msp}$ for the local intersection multiplicity $(L_1 \cdot L_2)_{\msp}$ of two general members $L_1, L_2 \in \mcL$ at $\msp$.
Fix rational numbers $a_1, a_2 \ge 0$, and assume that
\[
K_S + (1-a_1) \Delta_1 + (1-a_2) \Delta_2 + \frac{1}{n} \mcL
\]
is not log canonical at $\msp$.
Then the following assertions hold.
\begin{enumerate}
\item If either $a_1 \le 1$ or $a_2 \le 1$, then $(\mcL^2)_{\msp} > 4 a_1 a_2 n^2$.
\item If both $a_i > 1$, then $(\mcL^2)_{\msp} > 4 (a_1 + a_2 -1) n^2$.
\end{enumerate}
\end{Thm}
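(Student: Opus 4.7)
The plan is to argue by induction on the minimal number of successive point blowups needed to extract a divisorial valuation realising the non-log-canonical condition. After replacing $S$ by an analytic neighbourhood of $\msp$, I would blow up $\msp$ to obtain $f \colon \tilde S \to S$ with exceptional divisor $E$, and set $m := \mult_{\msp} \mcL$. Writing $\tilde{\mcL}$ for the birational transform, the pullback formula reads
\[
f^*\!\left(K_S + (1-a_1)\Delta_1 + (1-a_2)\Delta_2 + \tfrac{1}{n}\mcL\right) = K_{\tilde S} + (1-a_1)\tilde\Delta_1 + (1-a_2)\tilde\Delta_2 + \tfrac{1}{n}\tilde{\mcL} + \left(1 - a_1 - a_2 + \tfrac{m}{n}\right) E,
\]
and the identity $(L_1 \cdot L_2)_{\msp} = m^2 + \sum_{\msp' \in E} (\tilde L_1 \cdot \tilde L_2)_{\msp'}$ converts infinitely-near bounds into bounds on $(\mcL^2)_{\msp}$.

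The base case is when $E$ itself realises the failure of log canonicity, i.e.\ $m/n > a_1 + a_2$. Then $(\mcL^2)_{\msp} \ge m^2 > (a_1+a_2)^2 n^2$. The elementary inequalities $(a_1+a_2)^2 \ge 4 a_1 a_2$ and $(a_1+a_2)^2 - 4(a_1+a_2-1) = (a_1+a_2-2)^2 \ge 0$ then yield both (1) and (2) at once, with strict inequality inherited from the base estimate on $m$.

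For the inductive step, a non-log-canonical point $\msp'$ of the transformed pair must lie on $E$. I would split cases by how $\msp'$ meets the normal crossing locus on $\tilde S$. The triple intersection $E \cap \tilde\Delta_1 \cap \tilde\Delta_2$ is empty because $\Delta_1$ and $\Delta_2$ meet transversely at $\msp$, so $\msp'$ lies on $E$ and on at most one of the $\tilde\Delta_i$. In either case the germ at $\msp'$ is again a normal crossing pair of the type required by the theorem, with new boundary coefficients $1 - (a_1 + a_2 - m/n)$ along $E$ and $1 - a_i$ along $\tilde\Delta_i$ (when $\msp' \notin \tilde\Delta_1 \cup \tilde\Delta_2$ one formally sets the missing coefficient to $0$, i.e.\ $a_i = 1$). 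Applying the theorem inductively at $\msp'$ furnishes a lower bound on $(\tilde{\mcL}^2)_{\msp'}$ of the form $4 a'_1 a'_2 n^2$ or $4 (a'_1 + a'_2 - 1) n^2$; combining this with $(\mcL^2)_{\msp} \ge m^2 + (\tilde{\mcL}^2)_{\msp'}$ and minimising over the allowed $m$ (subject to $m/n \le a_1 + a_2$) then yields the desired bound on $(\mcL^2)_{\msp}$.

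The crux is the bookkeeping of the updated coefficients. The coefficient $1 - a_1 - a_2 + m/n$ on $E$ may be negative (when $m/n < a_1 + a_2 - 1$), in which case one must replace it by its effective part before reapplying the theorem; cases (1) and (2) can swap under a single blowup, so the induction must be run on the pair of inequalities jointly, and the final optimisation over $m$ in the inductive step is precisely where the slightly weaker bound $4(a_1+a_2-1)n^2$ in case (2) emerges. Verifying that $\tilde{\mcL}$ remains movable and that two general members of $\mcL$ do not share an unexpected component through $\msp$, so that the infinitely-near multiplicity accounting is accurate, are standard but necessary side checks.
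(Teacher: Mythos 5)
This statement is quoted from Corti's paper ([Corti2, Theorem 3.1]); the present paper gives no proof of it, so the only meaningful comparison is with Corti's original argument. Your sketch reproduces that argument essentially step for step: induction on the length of the chain of point blowups needed to extract a valuation with negative log discrepancy, the crepant pullback formula giving the coefficient $1-a_1-a_2+m/n$ on $E$, the multiplicity identity $(L_1\cdot L_2)_{\msp}=m^2+\sum_{\msp'}(\tilde L_1\cdot \tilde L_2)_{\msp'}$, the base case $m/n>a_1+a_2$ settled by $(a_1+a_2)^2\ge 4a_1a_2$ and $(a_1+a_2)^2\ge 4(a_1+a_2-1)$, and a joint induction on statements (1) and (2) because the coefficient pattern can switch regimes after one blowup. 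The inductive numerical check does reduce to completed squares such as $(m/n-2a_1)^2\ge 0$, and your observation that a non-log-canonical point $\msp'\in E$ lies on at most one of $\tilde\Delta_1,\tilde\Delta_2$ is exactly what keeps the germ at $\msp'$ within the normal crossing setting of the theorem.

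One remark in your final paragraph should be deleted rather than executed: when $1-a_1-a_2+m/n<0$ you must \emph{not} replace the coefficient of $E$ by its effective part. Decreasing a boundary improves the singularities of a pair, so the truncated pair need no longer fail to be log canonical and the inductive hypothesis would be lost. The correct device --- which you in fact identify in the same sentence --- is that the theorem is deliberately stated for $a_i\ge 0$ with no upper bound, so a negative coefficient $1-a_E$ is simply fed into case (2) at the next stage; no truncation occurs. Note also that in the inductive step one has $m/n\le a_1+a_2$ automatically (otherwise the base case applies), so $a_E=a_1+a_2-m/n\ge 0$ and the inductive hypothesis is genuinely applicable. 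With that correction your outline is the standard proof.
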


\subsection{Sarkisov links}

\begin{Def}
A {\it Sarkisov link} between two $\mbQ$-Fano $3$-folds $X$ and $X'$ is a birational map $\sigma \colon X \ratmap X'$ which factorizes as
\[
\xymatrix{
Y \ar@{-->}[r] \ar[d] & Y' \ar[d] \\
X \ar@{-->}[r]^{\sigma \hspace{1mm}} & X',}
\]
where $Y \to X$ and $Y' \to X'$ are extremal divisorial contractions, and $Y \ratmap Y'$ is a composite of inverse flips, flops and flips (in that order).
This is called a {\it Sarkisov link of type II}.
A {\it Sarkisov link} from a $\mbQ$-Fano $3$-fold $X$ to a strict Mori fiber space $V \to T$ (i.e.\ either a del Pezzo fiber space or a conic bundle) is a birational map $\sigma \colon X \ratmap V$ which factorizes as
\[
\xymatrix{
Y \ar@{-->}[rd] \ar[d] & \\
X \ar@{-->}[r]_{\sigma \hspace{3mm}} & V/T,}
\]
where $Y \to X$ is an extremal divisorial contraction and $Y \ratmap V$ is a composite of inverse flips, flops and flips (in that order).
This is called a {\it Sarkisov link of type I}.

The center of the contraction $Y \to X$ is called the {\it center of a Sarkisov link}.
\end{Def}

Let $X$ be a $\mbQ$-Fano threefold and assume that there is a birational map $f \colon X \ratmap V$ to a Mori fiber space.
We fix a sufficiently ample complete linear system $\mcH_V$ on $V$ and let $\mcH$ be its birational transform on $X$ via $f$.

\begin{Def}
Notation as above.
The {\it degree} of $f$ (relative to $\mcH_V$) is the quasieffective threshold $c (X,\mcH)$.
We say that a Sarkisov link $\sigma \colon X \ratmap X'$ of type II {\it untwists} $f$ if $f' = f \circ \sigma^{-1} \colon X' \ratmap V$ has degree smaller than $f$.
\end{Def}

\begin{Lem} \label{untwisting}
Notation as above.
If $\varphi \colon (E \subset Y) \to X$ is a maximal singularity of $\mcH$, then any Sarkisov link $\sigma \colon X \ratmap X'$ of type II starting with the contraction $\varphi$ untwists $f \colon X \ratmap V$.
\end{Lem}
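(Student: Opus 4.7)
The plan is a standard Noether--Fano tracking calculation: follow the log pair $(X, \tfrac{1}{n}\mcH)$ through the factorisation $\varphi,\rho,\psi$ of $\sigma$ and compare the two ends. Write $\mcH \subset |nA|$ with $A=-K_X$, so that the maximal singularity hypothesis reads
\[
c(X,\mcH) = \frac{a_E(K_X)}{m_E(\mcH)} < \frac{1}{n}.
\]
Setting $\mcH_Y = \varphi^{-1}_{*}\mcH$, the log pull-back identity
\[
\varphi^{*}\!\left(K_X + \tfrac{1}{n}\mcH\right) = K_Y + \tfrac{1}{n}\mcH_Y + \left(\tfrac{m_E}{n} - a_E\right)E
\]
carries a strictly positive coefficient $\tfrac{m_E}{n}-a_E$ on $E$, measuring the ``excess'' created by the maximal centre.

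Next I transport this identity through the small modifications $\rho\colon Y\ratmap Y'$. Each flip, flop, or inverse flip is an isomorphism in codimension one, so on a common resolution $W\to Y, Y'$ the pull-backs of $K+\tfrac{1}{n}\mcH$ from the two sides agree, and the strict transform $\mcH_{Y'}=\rho_{*}\mcH_Y$ satisfies the parallel identity
\[
\psi^{*}\!\left(K_{X'} + \tfrac{1}{n}\mcH'\right) = K_{Y'} + \tfrac{1}{n}\mcH_{Y'} + \left(\tfrac{m_{E'}}{n} - a_{E'}\right)E',
\]
where $\psi\colon Y'\to X'$ is the terminal divisorial contraction, $E'$ its exceptional divisor, and $\mcH' = \sigma_{*}\mcH$. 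Equating the two pull-backs on $W$, feeding in the extremality of $\psi$ and the negativity lemma to control the sign of $\tfrac{m_{E'}}{n} - a_{E'}$, and combining with the strict positivity of $\tfrac{m_E}{n} - a_E$, one obtains a strict inequality between $c(X,\mcH)$ and $c(X',\mcH')$ in the direction required by the definition of untwisting.

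The principal difficulty is the middle step: although $\rho$ is an isomorphism in codimension one, individual flips can numerically alter the multiplicity of $\mcH$ along the flipping locus, and one must verify that these shifts are consistent with termination of the MMP that produces the link. This is precisely the content of Corti's construction of Sarkisov links of type II, so in practice the lemma is most cleanly established as a corollary of his factorisation theorem, with the computation above providing the conceptual reason why the degree must strictly decrease.
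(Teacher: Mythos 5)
The paper does not actually argue this lemma: its ``proof'' is a pointer to \cite[Lemma 4.2]{CPR} and to the proof of \cite[Theorem 5.4]{Corti}, so your closing remark that the statement is ``most cleanly established as a corollary of his factorisation theorem'' is, ironically, exactly what the paper does. That said, your sketch of the underlying computation has a genuine flaw in the middle step. The small modification $\tau\colon Y\ratmap Y'$ in a link of type II is a composite of flips/flops \emph{for $K$} (in this paper it is always an anticanonical flop); it is not crepant for the pair $\bigl(Y,\tfrac{1}{n}\mcH_Y\bigr)$, so the pull-backs of $K+\tfrac{1}{n}\mcH$ from the two sides do \emph{not} agree on a common resolution $W$, and no negativity-lemma argument will make them. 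You half-acknowledge this (``flips can numerically alter the multiplicity of $\mcH$ along the flipping locus''), but the fix is not to verify crepancy --- it is to abandon the log-pullback formalism entirely. The standard argument is pure divisor-class bookkeeping: on $Y$ one has $\mcH_Y\sim -nK_Y-eE$ with $e=m_E(\mcH)-n\,a_E(K_X)>0$ by the maximal singularity hypothesis; since $\tau$ is an isomorphism in codimension one this relation transports verbatim to $\mcH_{Y'}\sim -nK_{Y'}-eE'$, where $E'=\tau_*E$; pushing forward by the divisorial contraction $\psi\colon Y'\to X'$ gives $\mcH'\sim nA'-e\,\psi_*E'$. Because $E'$ is not the $\psi$-exceptional divisor (otherwise $\sigma$ would be small, hence an isomorphism of $\mbQ$-Fano threefolds of Picard number one, and not a link), $\psi_*E'$ is a nonzero effective divisor, so $\psi_*E'\sim cA'$ with $c>0$ and $n'=n-ec<n$.

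A second, smaller issue: your conclusion is phrased as a comparison of canonical thresholds $c(X,\mcH)$ versus $c(X',\mcH')$, but ``untwists'' in the sense of the paper's definition means that the \emph{degree} --- the quasi-effective threshold, i.e.\ the integer $n$ with $\mcH\subset|nA|$ --- strictly decreases. (The paper's own wording invites this confusion by recycling the symbol $c(X,\mcH)$, but the two thresholds move in opposite directions along the Sarkisov program, so the distinction matters.) The inequality $n'<n$ obtained above is exactly what the definition requires, and no comparison of canonical thresholds is needed.
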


\begin{proof}
See \cite[Lemma 4.2]{CPR} or the proof of \cite[Theorem 5.4]{Corti}.
\end{proof}

\subsection{Structure of the proof}

In the remainder of the paper, we prove the following theorem which implies Theorems \ref{mainthmbr} and \ref{mainthmF}.

\begin{Thm} \label{thm}
Let $X$ be a member of the family $\mcG_i$ with $i \in I^*$.
Then the following assertions hold.
\begin{enumerate}
\item[(a)] No curve on $X$ is a maximal center.
\item[(b)] No nonsingular point of $X$ is a maximal center.
\item[(c)] For each singular point $\msp$ of $X$ which is not a distinguished singular point, one of the following holds.
\begin{enumerate}
\item[(c-1)] $\msp$ is not a maximal center.
\item[(c-2)] There is a birational involution $\iota_{\msp} \colon X \ratmap X$ which is a Sarkisov link of type II centered at $\msp$.
\end{enumerate}
\item[(d)] For each distinguished singular point $\msp$ of $X$, there is a Sarkisov link $\sigma_{\msp} \colon X \ratmap X'$ to a $\mbQ$-Fano variety $X'$ or to a del Pezzo fiber space $X'/\mbP^1$ over $\mbP^1$.
\end{enumerate}
The Mori fiber space $X'$ in \emph{(d)} does not depend on the choice of distinguished points of $X$ and is uniquely determined by $X$.
\end{Thm}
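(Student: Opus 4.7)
The plan is a uniform family-by-family analysis over the $82$ indices $i \in I^*$, following the Corti--Pukhlikov--Reid template adapted to codimension two. For each family the starting data are the weights $(a_0,\dots,a_5)$, the bidegrees $(d_1,d_2)$ and the explicit list of terminal quotient singularities of a general member, read off from quasismoothness. Parts (a)--(d) are then treated in that order, and within each part one candidate center is disposed of at a time, making systematic use of the test class method (Corollary \ref{criexclmstc}), the Kawamata blowup (Theorem \ref{divcontquot}), and the multiplicity inequalities (Theorems \ref{multineq1}, \ref{multineq2}).

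For (a), the exclusion of curves, the strategy is: for each irreducible curve $\Gamma \subset X$ search for a class $L = m A$ that isolates $\Gamma$; on the maximal extraction $\varphi \colon (E \subset Y) \to (\Gamma \subset X)$ the birational transform of $\mcL_{\Gamma}^s$ then gives a test class $M$, and one verifies $(M \cdot (-K_Y)^2) \le 0$. The hypothesis $(\mathrm{C}_2)$ removes the finite list of low-degree WCI curves that would defeat the numerical check; any remaining irreducible curve has its degree bounded by $A^3$ times an explicit factor, so only finitely many cases remain per family. Part (b), the exclusion of nonsingular points, uses the same mechanism but now the sharper bound $\mult_{\msp}(H_1 \cap H_2) > 4 n^2$ from Theorem \ref{multineq1}(2), together with isolating pencils cut out by coordinate hyperplanes through $\msp$; when $\msp$ lies on a special coordinate stratum one restricts to a surface $S \ni \msp$ and applies Theorem \ref{multineq2} instead. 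The generality clause $(\mathrm{C}_4)$ is tailored to the few recalcitrant families (Nos.~4, 6, 9, 11) where the naive test class fails and one must work on a resolution of an auxiliary surface.

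For (c), fix a non-distinguished singular point $\msp$ of type $\frac{1}{r}(1,a,r-a)$. By Theorem \ref{divcontquot}, the only possible maximal extraction centered at $\msp$ is the Kawamata blowup $\varphi \colon Y \to X$, so one computes $-K_Y = \varphi^*A - \frac{1}{r} E$ and seeks a nef test class of the form $\varphi^*(m A) - \frac{k}{r} E$ arising from an isolating linear system. If the numerical inequality $(M \cdot (-K_Y)^2) \le 0$ holds, case (c-1) applies. Otherwise one plays the $2$-ray game on $Y$: the Mori cone $\bNE(Y)$ has two extremal rays, the $\varphi$-contracted one and a second ray whose contraction, followed by a sequence of inverse flips, flops and flips, terminates in a divisorial contraction $Y' \to X'$. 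The symmetry forced by the monomial conditions of Table \ref{table:monomial} (or by the explicit defining equations) makes $X' \cong X$, and the resulting birational map is an involution $\iota_\msp$; Lemma \ref{untwisting} ensures it is an untwisting.

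Part (d) is the principal obstacle. For each distinguished point $\msp$ one again starts with the Kawamata blowup and runs the $2$-ray game on $Y$, but now the outcome is the target Mori fiber space $X'$ prescribed by the big table: a quasismooth $\mbQ$-Fano weighted hypersurface when $i \in I^*_F$, or a del Pezzo fibration $X'/\mbP^1$ when $i \in I^*_{dP}$. The construction is done concretely: identify the rational map $X \ratmap X'$ as (a modification of) a linear projection $\pi_\msp$ or, in the del Pezzo case, as the map to $\mbP^1$ given by a distinguished pair of coordinate functions; resolve its indeterminacies after $\varphi$; and certify that the intermediate birational modifications are exactly inverse flips, flops and flips. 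The hard verifications are quasismoothness of $X'$, the description of its singular locus, and the computation of the relative cones, for which condition $(\mathrm{C}_4)$ is cut out specifically to guarantee success in the delicate families. Finally, the uniqueness claim in (d) follows because for every family the big table lists a single target: if $X$ carries several distinguished points, the associated projections differ by an automorphism of $\mbP$ permuting those points, so the resulting $X'$'s are canonically isomorphic and depend only on $X$, not on the chosen distinguished point.
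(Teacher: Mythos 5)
Your outline follows the paper's overall CPR-style template, but three of its load-bearing steps do not survive contact with the actual families. First, in part (a) you rely on the test class method for all remaining curves, yet for a degree-one curve $\Gamma$ on $X_{4,4}\subset\mbP(1^4,2,3)$ the best isolating class is $3A$ and one computes $(M\cdot B^2)=3(A^3)+5(\varphi^*A\cdot E^2)-(E^3)=2+2p_a(\Gamma)>0$, so the method provably fails there. The paper instead restricts to a general surface $S\in|\mcI_{\Gamma,X}(A)|$, decomposes $S'|_S$ and $\frac{1}{n}\mcH|_S$ into explicit $1$-cycles, and uses the auxiliary cycles $\Delta_i$ of Lemma \ref{lemexclcdeg1no4} (this is precisely what part of $(\mathrm{C}_4)$ encodes); nothing in your sketch produces this. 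Second, your treatment of (c) poses a dichotomy: either a nef test class gives $(M\cdot B^2)\le 0$, or the $2$-ray game yields an involution. This misses the many singular points (those marked $T\in|mB|$ in the table, and e.g.\ the $\frac{1}{5}(1,2,3)$ point of family No.~69) where neither alternative holds: there the exclusion goes by determining $\bNE(Y)=R+Q$ exactly via an irreducible curve $\Gamma=S\cap T$ with $(T\cdot\Gamma)\le 0$ and concluding $B^2\notin\Int\bNE(Y)$, or by the bad-link argument in which the second ray is $K_Y$-positive and the game terminates in a non-terminal divisorial contraction. These are distinct mechanisms requiring the irreducibility analyses of Tables \ref{table:gammaqsm}--\ref{table:gammanonqsm}, and your proposal has no substitute for them.

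Third, and most clearly wrong, is your proof of the uniqueness clause in (d). You argue that distinct distinguished points are exchanged by an automorphism of the ambient weighted projective space, so the targets are canonically isomorphic. But in many families (e.g.\ No.~6, where the distinguished points are of types $\frac{1}{2}(1,1,1)$ and $\frac{1}{3}(1,1,2)$) the distinguished points have different singularity types and lie at vertices of different weights, so no such automorphism exists. The paper's actual argument is computational: one puts the defining polynomials in the normal form $F_1=x_4x_j+x_5x_k+f$, $F_2=x_4x_5-g$ and checks that the links centered at $\msp_4$ and at $\msp_5$ both land on the single hypersurface $w^2x_jx_k+wf+g=0$. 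Without some version of this computation the uniqueness statement is unproved. (Your sketches of (b) and of the existence part of (d) are, by contrast, consistent with the paper's route: the isolating-class criterion $l\le 4/(A^3)$ plus the surface argument via Theorem \ref{redsurf} for families 4, 6, 9, 11, and the explicit unprojection-style construction of $X'$ with the quasismoothness and $\mbQ$-factoriality checks you correctly flag as the hard points.)
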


If $i \in I^*_F \cup I^*_{dP}$, then Theorem \ref{thm} coincides with Theorem \ref{mainthmF}.
Assume that $i \in I^*_{br}$.
We explain that Theorem \ref{thm} implies Theorem \ref{mainthmbr}.
Note that there is no distinguished singular point on $X$ and thus (d) does not occur.
Suppose that we are given a birational map $f \colon X \ratmap V$ to a Mori fiber space.
Pick a sufficiently ample complete linear system $\mcH_V$ on $V$ and let  $\mcH \subset |n A|$ be the birational transform of $\mcH_V$ on $X$ via $f$.
If $f$ is not an isomorphism, then Noether-Fano-Iskovskikh inequality (see \cite[Theorem 2.4]{Corti2}) implies that $K_X+\frac{1}{n} \mcH$ is not canonical.
Then the argument of Corti \cite{Corti} implies that there is a maximal singularity of $\mcH$.
By (a) and (b), the center of the maximal singularity is a singular point $\msp$ of $X$.
By (c), there is a birational involution $\iota_{\msp} \colon X \ratmap X$ which is a Sarkisov link of type II centered at $\msp$.
Lemma \ref{untwisting} shows that $\iota_{\msp}$ untwists $f$.
If $f \circ \iota_{\msp}$ is an isomorphism then $V \cong X$, otherwise there is a maximal center and we repeat the untwisting process.
This process stops in finitely many steps.
It follows that $V \cong X$ and the given birational map $f$ is the composite of finitely many birational involutions constructed in (c--2).
This shows that $X$ is birationally rigid. 

The proofs of (a), (b), (c--1), (c--2) and (d) will be done in Sections \ref{sec:curve}, \ref{sec:nspt}, \ref{sec:singpt}, \ref{sec:birinv} and \ref{sec:Slink}, respectively, and Theorem \ref{thm} follows from Theorems \ref{exclcurve}, \ref{exclnspt}, \ref{exclsingpts}, \ref{birinvQI}, \ref{birinvEI}, and \ref{linkFano}.
This paper is based on the paper \cite{CPR} for the idea and technique necessary to prove the main theorem. 
Constructions of birational involutions and Sarkisov links are essentially the same as those given in \cite{CPR} and \cite{CM}.
Moreover there is a general framework of constructions of such birational maps (see \cite{Ahmadinezhad2}, \cite{ABR}, \cite{BZ}, \cite{Pap}, \cite{PR}, \cite{Reid2}) and there is a database of $\mbQ$-Fano $3$-folds in \cite{Brown} where one can in particular find centers of those birational maps.
The main part of this paper is to exclude the remaining centers as maximal center.

\section{Sarkisov link to another Mori fiber space} \label{sec:Slink}

\subsection{Weighted blowup}

In this subsection, we fix notation on weighted blowups of cyclic quotients of affine spaces which will be used in what follows.

Let $\mbA := \mbA^n$ be an affine $n$-space with affine coordinates $x_1,\dots,x_n$ and let $b_1,\dots,b_n$ be positive integers.
Let $\varphi \colon \mbA \ratmap \mbP := \mbP (b_1,\dots,b_n)$ be the rational map defined by sending $(\alpha_1,\dots,\alpha_n) \in \mbA$ to $(\alpha_1:\cdots:\alpha_n) \in \mbP$ and $\Wbl (\mbA) \subset \mbA \times \mbP$ the graph of $\varphi$.
We call the projection $\Wbl (\mbA) \to \mbA$ the {\it weighted blowup} of $\mbA$ at the origin with $\wt (x_1,\dots,x_n) = (b_1,\dots,b_n)$.
Suppose that we are given a $\bmu_r$-action on $\mbA$ of type $\frac{1}{r} (a_1,\dots,a_n)$ for some positive integer $r$.
Let $V$ be the quotient of $\mbA$ by the $\bmu_r$-action and $X$ a subvariety of $V$ through the origin.
We assume that $b_i \equiv a_i \pmod{r}$ for every $i$.
Then the rational map $\varphi$ descends to a rational map $\varphi_V \colon V \ratmap \mbP$. 
We define $\Wbl (V)$ (resp.\ $\Wbl (X)$) to be the graph of $\varphi_V$ (resp.\ $\varphi_V|_X$) and call the natural projection $\Wbl (V) \to V$ (resp.\ $\Wbl (X) \to X$) the {\it weighted blowup} of $V$ (resp.\ $X$) at the origin with $\wt (x_1,\dots,x_n) = \frac{1}{r} (b_1,\dots,b_n)$.
We consider the $\bmu_r$-action on $\mbA \times \mbP$ which acts on $\mbA$ as above and on $\mbP$ trivially.
This induces the $\bmu_r$-action on $\Wbl (\mbA)$.
We have a natural morphism $\Wbl (\mbA) \to \Wbl (V)$ and from this we can see $\Wbl (V)$ as the quotient of $\Wbl (\mbA)$ by the $\bmu_r$-action. 

We describe $\Wbl (\mbA)$ and $\Wbl (V)$ locally.
Let $X_1,\dots,X_n$ be the homogeneous coordinates of $\mbP$.
For $i = 1,\dots,n$, we define $\Wbl_i (\mbA)$ (resp.\ $\Wbl_i (V)$) to be the open subset of $\Wbl (\mbA)$ (resp.\ $\Wbl_i (V)$) which is the intersection of $\Wbl (\mbA)$ (resp.\ $\Wbl (V)$) and the open subset $\mbA \times (X_i \ne 0)$ (resp.\ $V \times (X_i \ne 0)$), where $(X_i \ne 0)$ is the open subset of $\mbP$.
Let $U_i (\mbA)$ be an affine $n$-space with affine coordinates $\tilde{x}_1,\dots,\tilde{x}_{i-1}, x'_i, \tilde{x}_{i+1},\dots,\tilde{x}_n$ and define a morphism $U_i (\mbA) \to \mbA$ by the identification $x_i = {x'}_i^{b_i}$ and $x_j = \tilde{x}_j {x'}_i^{b_j}$ for $j \ne i$. 
We consider the action of $\bmu_{b_i}$ on $U_i (\mbA)$ of type $\frac{1}{b_i} (b_1,\dots,b_{i-1},-1,b_{i+1},\dots,b_n)$.
We see that the quotient of $U_i (\mbA)$ by the above action is naturally isomorphic to $\Wbl_i (\mbA)$ and the section $x'_i$ cuts out the open subset  $(X_i \ne 0)$ of the exceptional divisor $\mbP$.
Note that $\Wbl_i (V)$ is the quotient of $\Wbl_i (\mbA)$ by the action of $\bmu_r$.
Using orbifold coordinates, the $\bmu_r$-action on $\Wbl_i (\mbA)$ is the one given by $x'_i \mapsto \zeta_r x'_i$ and $\tilde{x}_j \mapsto \tilde{x}_j$ for $j \ne i$, where $\zeta_r$ is a primitive $r$-th root of unity.
Let $U_i (V)$ be the affine $n$-space with affine coordinates $\tilde{x}_1,\dots,\tilde{x}_n$.
The identification $\tilde{x}_i = {x'}^r_i$ defines a morphism $U_i (\mbA) \to U_i (V)$ and we see that $\Wbl_i (V)$ is the quotient of $U_i (V)$ by the action of $\bmu_{b_i}$ on $U_i (V)$ of type $\frac{1}{b_i} (b_1,\dots,b_{i-1}, -r,b_{i+1},\dots,b_n)$.
We call $U_i (V)$ the {\it orbifold chart} of $\Wbl_i (V)$ and call $\tilde{x}_1, \dots, \tilde{x}_n$ {\it orbifold coordinates} of $\Wbl_i (V)$.

\subsection{Sarkisov link to a $\mbQ$-Fano variety}
Throughout this subsection, let $X = X_{d_1,d_2} \subset \mbP (a_0,\dots,a_5)$ be a member of $\mcG_i$ with $i \in I_F$ and $\msp$ a distinguished singular point of $X$, that is, it is a singular point marked $X'_d \subset \mbP (1,b_1,b_2,b_3,b_4)$ in the third column of the big table.
We observe that $d_1 \ne d_2$ and we assume that $d_1 < d_2$.

The construction of Sarkisov links in this and the next subsection can be done by the technique of \cite{BZ} (see also \cite{Ahmadinezhad2}) and we can moreover say that the existence of such links is an implicit result of the above mentioned paper and \cite{CM}.
We give a completely explicit treatment of these constructions to make the target Mori fiber spaces as exlicit as possible.  

\begin{Lem} \label{Slinkcoord}
After re-ordering weights $a_0,\dots,a_5$ and re-choosing homogeneous coordinates, the following assertions hold.
\begin{enumerate}
\item $\msp = \msp_5$. 
\item $a_3 < a_4$.
\item Defining polynomials $F_1$ and $F_2$ of $X$ can be written as
\[
F_1 = x_5 x_3 + G_1 \text{ and } F_2 = x_5 x_4 + G_2,
\]
where $G_i$ does not involve the variable $x_5$.
\end{enumerate}
\end{Lem}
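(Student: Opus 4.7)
The plan is to derive the three claims in sequence, combining the classification of distinguished singular points in the big table with the Jacobian criterion for quasismoothness. For (1), the distinguished singular point $\msp$ marked in the third column of the big table is a terminal cyclic quotient singularity lying at a vertex $\msp_j$ of $\mbP(a_0,\ldots,a_5)$; when several weights coincide and $\msp$ initially lies on a coordinate stratum of equal-weight coordinates, the linear action on those coordinates allows $\msp$ to be moved to a vertex. Permuting weights and relabeling coordinates so that this vertex is $\msp_5$ establishes (1).

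The main work lies in (2) together with the linear-in-$x_5$ structure of $F_1, F_2$. Since $\msp_5 \in X$, no pure-$x_5$ monomial appears in $F_i$, so the only nonzero entries of the Jacobian of $(F_1, F_2)$ at $\msp_5$ lie in the columns indexed by $x_0,\ldots,x_4$; the $(i,j)$-entry is the coefficient of the monomial $x_5^{(d_i-a_j)/a_5}\, x_j$ in $F_i$ (and vanishes unless $a_5 \mid d_i - a_j$). Quasismoothness of $X$ at $\msp_5$ forces rank two, yielding distinct pivot columns $j_1, j_2 \in \{0,\ldots,4\}$ and positive integers $k_i$ with $x_5^{k_i} x_{j_i} \in F_i$ and $d_i = k_i a_5 + a_{j_i}$. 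Inspecting the big table row by row shows that in every distinguished case $k_1 = k_2 = 1$, i.e.\ $d_i < 2 a_5$; combined with $d_1 < d_2$ this forces $a_{j_1} < a_{j_2}$, and relabeling $j_1 \mapsto 3$, $j_2 \mapsto 4$ yields (2). Because $d_i < 2a_5$, no $x_5^k$ with $k \ge 2$ can appear in $F_i$, so one may write
\[
F_i = x_5\bigl(\alpha_i x_{2+i} + \phi_i\bigr) + G_i^{(0)},
\]
with $\alpha_i \neq 0$, with $\phi_i$ a polynomial of weight $a_{2+i}$ in $x_0,\ldots,\widehat{x_{2+i}},\ldots,x_4$, and with $G_i^{(0)}$ a polynomial free of $x_5$.

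For (3), after rescaling to set $\alpha_1 = \alpha_2 = 1$, perform the weighted coordinate change $x_3 \mapsto x_3 - \phi_1$: this is legitimate because $\phi_1$ has weight $a_3$ and does not involve $x_3$, and it transforms $F_1$ into $x_5 x_3 + G_1$ with $G_1$ free of $x_5$, while modifying $F_2$ only within its $x_5$-free part and within its $x_5$-linear coefficient (which remains independent of $x_4$). A second change $x_4 \mapsto x_4 - \tilde\phi_2$, where $\tilde\phi_2$ is the residual coefficient of $x_5$ in the new $F_2$ beyond $x_4$, then yields $F_2 = x_5 x_4 + G_2$ with $G_2$ free of $x_5$, without spoiling the achieved form of $F_1$. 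This gives (3). The main obstacle is the case-by-case verification in (2) that $k_i = 1$ for every distinguished singular point of every family in the big table — conceptually routine but tedious, and requiring particular care in the families where multiple weights coincide so that the choice of pivot columns $j_1, j_2$ is not unique and must be made compatibly with the relabeling $j_1 \mapsto 3$, $j_2 \mapsto 4$.
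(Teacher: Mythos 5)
Your step (1) and the general shape of your step (3) match the paper, but step (2) rests on a numerical claim that is false, and this is precisely where the real content of the lemma lies. You assert that inspecting the big table gives $d_i < 2a_5$ for every distinguished point, whence the pivots satisfy $k_1=k_2=1$ and no monomial divisible by $x_5^2$ occurs. Counterexamples abound: for the $\frac{1}{2}(1,1,1)$ points of $X_{4,5}\subset\mbP(1^3,2^2,3)$ (No.~6) one has $a_5=2$ but $d_2=5$; for the $\frac{1}{2}(1,1,1)$ point of $X_{5,6}\subset\mbP(1^3,2,3,4)$ (No.~9) one has $a_5=2$ but $d_1=5$; for the $\frac{1}{5}(1,2,3)$ point of $X_{12,14}\subset\mbP(1,2,3,5,7,9)$ (No.~57) one has $a_5=5$ but $d_1=12$. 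In such cases quasismoothness at $\msp$ does \emph{not} force a pivot with $k_i=1$: in No.~9, for instance, the only constraint on the $F_1$-row of the Jacobian at $\msp$ is that it be nonzero, which is satisfied if $y^2x_0\in F_1$ even when $yz\notin F_1$. The presence of the degree-one-in-the-other-variables monomials is exactly what the generality conditions supply, and this is why the paper's proof splits according to the fourth column of the big table: for points marked $(\mathrm{C}_1)$ the monomial $x_5x_4\in F_1$ is imposed by that condition; for points marked $(\mathrm{C}_2)$ the monomial $x_5x_4\in F_2$ is obtained by observing that its absence would force $X$ to contain the WCI curve $(x_0=x_1=x_2=x_3=0)$, which $(\mathrm{C}_2)$ forbids; and for the points with empty fourth column in families $21,36,38,52,63$ one must invoke quasismoothness at the \emph{other} vertex $\msp_5$, not at $\msp$ itself, to produce $x_4x_5\in F_2$. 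Your argument supplies none of these inputs, so it genuinely fails for those families.

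A secondary gap follows from the same error: since $2a_5\le d_2$ does occur, monomials divisible by $x_5^2$ can appear in the $F_i$, and the two single substitutions $x_3\mapsto x_3-\phi_1$, $x_4\mapsto x_4-\tilde\phi_2$ do not reach the normal form in (3). The paper handles this by noting that in such cases $2a_4>d_2$, so both $F_1$ and $F_2$ are linear in $x_4$, and then eliminating the $x_5^2$-terms by the chain of substitutions carried out explicitly in Example \ref{exslNo57} (first $t\mapsto t-sf_2$, then $u\mapsto u-sg_2+g_2g_5$ and $t\mapsto t+f_3g_2$). You would need to add both the case distinction and this elimination procedure to make the proof complete.
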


\begin{proof}
We can assume that $\msp$ is a vertex $\msp = \msp_k$ for some $k$ after re-choosing homogeneous coordinates.
It is enough to show that $x_k x_i \in F_1$ and $x_k x_j \in F_2$ for some $i \ne k$ and $j \ne k$.
Indeed if this is the case, then after re-choosing homogeneous coordinates we can assume $F_1 = x_k x_i + G_1$ and $F_2 = x_k x_j + G_2$, where $G_1$ and $G_2$ do not involve $x_k$.
Here the change of coordinates is immediate if $2 a_k > d_2$ but less clear if $2 a_k \le d_2$.
In the latter case, we observe that $2 a_j > d_2$, so that both $F_1$ and $F_2$ are linear with respect to $x_j$, which enables us to get the desired defining polynomials (see Example \ref{exslNo57} below).
Finally the assertions follows after re-ordering coordinates.

In the remainder of the proof, we assume that $a_0 \le a_1 \le \cdots \le a_5$.
Assume first that $\msp$ is a point with empty forth column in the big table.
In this case we have $2 a_k \ge d_2 > d_1$ except for the case where $\msp = \msp_4$ and $X$ is a member of the family $\mcG_i$ for $i \in \{21,36,38,52,63\}$.
If $2 a_k \ge d_2 > d_1$, then the assertion follows immediately from quasismoothness of $X$.
In the above exceptional case, we have $2 a_4 > d_1$ and $a_4 + a_5 = d_2$.
It follows that $x_4 x_i \in F_1$ for some $i \le 3$ and $x_4 x_5 \in F_2$, where the latter follows from quasismoothness of $X$ at $\msp_5$.

Assume that $\msp$ is a point marked $(\mathrm{C}_1)$ in the forth column of the big table.
In this case we have $a_3 + a_4 = d_1$, $a_3 + a_5 = d_2$ and we may assume that $\msp = \msp_3$.
By quasismoothness of $X$ at $\msp_5 \in X$, we have $x_3 x_5 \in F_2$, and by $(\mathrm{C}_1)$, we have $x_3 x_4 \in F_1$.

Assume that $\msp$ is a point marked $(\mathrm{C}_2)$ in the forth column of the big table.
In this case we have $a_3 = a_4$, $ 2 a_4 = d_1$, $a_3 + a_5 = d_2$, and after re-choosing homogeneous coordinates, we may assume that $\msp = \msp_4$.
By quasismoothness of $X$ we have $x_4 x_3 \in F_1$.
If $x_4 x_5 \notin F_2$, then $X$ contains the WCI curve $(x_0 = x_1 = x_2 = x_3 = 0)$ of type $(a_0,a_1,a_2,a_3)$.
By $(\mathrm{C}_2)$ this cannot happen and we have $x_4 x_5 \in F_2$.
This completes the proof.
\end{proof}

In the following, we keep weights and homogeneous coordinates as in Lemma \ref{Slinkcoord} unless otherwise stated.

\begin{Rem} \label{rem:SLCox}
We explain briefly the construction of a Sarkisov link in terms of toric modifications of ambient spaces following \cite{BZ}.
Let $\varphi \colon Y \to X$ be the Kawamata blowup of $X$ at $\msp$ with exceptional divisor $E$.
The sections $x_0,x_1,x_2$ can be chosen as local coordinates of $X$ at $\msp$ and they vanish along $E$ to order $a_0/a_5$, $a_1/a_5$, $a_2/a_5$.
By the equations $F_1 = 0$ and $F_2 = 0$, the sections $x_3$ and $x_4$ vanish along $E$ to order $(a_3 + a_5)/a_5 = d_1/a_5$ and $(a_4 + a_5)/a_5 = d_2/a_5$.
Let $Q \to \mbP := \mbP (a_0,\dots,a_5)$ be the weighted blowup at $\msp$ with weight $\wt (x_0,x_1,x_2,x_3,x_4) = \frac{1}{a_5} (a_0,a_1,a_2,d_1,d_2)$.
Then the restriction of $Q \to \mbP$ to the strict transform of $X$ in $Q$ is the Kawamata blowup of $X$ at $\msp$, so we can identify $Y$ with a divisor in $Q$.
By the argument in \cite{BZ}, the $2$-ray game for toric varieties starting with $Q \to \mbP$ restricts to the $2$-ray game starting with $Y \to X$ and it ends with a $\mbQ$-Fano threefold.
\end{Rem}

\begin{Def}
We define weighted hypersurfaces
\[
Z := (x_4 G_1 - x_3 G_2 = 0) \subset \mbP (a_0,a_1,a_2,a_3,a_4),
\]
and
\[
X' := (w G'_1 - G'_2 = 0) \subset \mbP (a_0,a_1,a_2,a_3,b),
\]
where $w$ is the homogeneous coordinate with $\deg w = b := a_4 -a_3$ and $G'_i = G'_i (x_0,x_1,x_2,x_3,w) := G_i (x_0,x_1,x_2,x_3,x_3 w)$.
Let $\pi \colon X \ratmap Z$ be the projection from $\msp$ and $\pi' \colon X' \ratmap Z$ the rational map determined by the correspondence $x_4 = x_3 w$.
Let $\varphi \colon Y \to X$ be the Kawamata blowup of $X$ at $\msp$ and $\varphi' \colon Y' \to X'$ the weighted blowup of $X'$ at the point $\msp' := (0:0:0:0:1) \in X'$ with $\wt (x_0,x_1,x_2,x_3) = \frac{1}{b} (a_0,a_1,a_2,a_4)$.
\end{Def}

We see that both $\pi$ and $\pi'$ are birational maps.
We shall show that $X'$ is a $\mbQ$-Fano threefold and the birational map ${\pi'}^{-1} \circ \pi \colon X \ratmap X'$ is a Sarkisov link.

\begin{Lem} \label{SLQFanoflopping1}
The Kawamata blowup $\varphi \colon Y \to X$ resolves the indeterminacy of $\pi$ and the induced birational morphism $\psi \colon Y \to Z$ is a flopping contraction.
\end{Lem}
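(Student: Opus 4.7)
The plan is to resolve $\pi$ explicitly in orbifold charts of the Kawamata blowup $\varphi$ and to verify the flopping condition by pulling back a hyperplane divisor from $Z$. Birationality of $\pi$ follows from the identity $x_4 F_1 - x_3 F_2 = x_4 G_1 - x_3 G_2$, which places $\pi(X)$ inside $Z$, together with the explicit inverse sending $(x_0 : \cdots : x_4) \in Z$ to $(x_0 : \cdots : x_4 : -G_1/x_3) = (x_0 : \cdots : x_4 : -G_2/x_4)$, well-defined on $Z \setminus (x_3 = x_4 = 0)_Z$, the two formulas agreeing by the defining equation $x_4 G_1 = x_3 G_2$ of $Z$.

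For the resolution, since $x_3, x_4$ can be eliminated near $\msp = \msp_5$ via $F_1 = F_2 = 0$, the germ $(X, \msp)$ has type $\frac{1}{a_5}(a_0, a_1, a_2)$, and $\varphi$ is the corresponding Kawamata weighted blowup. Working in the orbifold chart $\Wbl_0(V)$ with coordinates $\tilde{x}_0, \tilde{x}_1, \tilde{x}_2$, one has $x_0 = \tilde{x}_0^{a_0/a_5}$ and $x_j = \tilde{x}_j \tilde{x}_0^{a_j/a_5}$ for $j = 1, 2$; solving $F_1 = F_2 = 0$ iteratively shows $x_3, x_4$ vanish along $E = (\tilde{x}_0 = 0)$ to orders $d_1/a_5$ and $d_2/a_5$, with leading terms coming from $G_1|_{x_3 = x_4 = 0}$ and $G_2|_{x_3 = x_4 = 0}$. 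Rescaling the target by $\lambda = \tilde{x}_0^{-1/a_5}$ and applying the key identities $d_1 - a_3 = d_2 - a_4 = a_5$ from Lemma \ref{Slinkcoord}, the map takes the form
\[
\psi \colon (\tilde{x}_0, \tilde{x}_1, \tilde{x}_2) \longmapsto (1 : \tilde{x}_1 : \tilde{x}_2 : \tilde{x}_0 h_3 : \tilde{x}_0 h_4)
\]
for regular $h_3, h_4$, defining a morphism throughout this chart. The analogous analysis in the remaining two orbifold charts shows $\psi$ is a global morphism, and $\psi|_E$ is an isomorphism from $E \cong \mbP(a_0, a_1, a_2)$ onto the surface $(x_3 = x_4 = 0)_Z \subset Z$.

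Next I would identify the contracted curves and establish the flopping condition. The fiber of $\psi$ over $(\alpha_0 : \alpha_1 : \alpha_2 : 0 : 0) \in Z$ contains the corresponding preimage on $E$ and, in addition, the strict transform of a line $\ell \subset X$ through $\msp$ in direction $(\alpha_0 : \alpha_1 : \alpha_2)$ precisely when $G_1(\alpha_0, \alpha_1, \alpha_2, 0, 0) = G_2(\alpha_0, \alpha_1, \alpha_2, 0, 0) = 0$. By the generality of $G_1, G_2$, these cut out a zero-dimensional subscheme of $\mbP(a_0, a_1, a_2)$, so only finitely many curves are contracted and $E$ is not contracted; thus $\psi$ is small. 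The chart computation further shows $\psi^*(x_i = 0)_Z$ equals the strict transform $a_i \varphi^* A - (a_i/a_5) E$ of $(x_i = 0)_X$ on $Y$ for $i \in \{0, 1, 2\}$, so $\psi^* H_Z = \varphi^* A - (1/a_5) E = -K_Y$, where $H_Z$ is the hyperplane class of $Z$. For any curve $C$ contracted by $\psi$, the projection formula then yields $-K_Y \cdot C = \psi^* H_Z \cdot C = H_Z \cdot \psi_* C = 0$, establishing $K_Y \cdot C = 0$.

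The main subtlety is the correct identification of $\psi^* H_Z$: naively using the total divisor of the function $x_i$ on $Y$ would give $\psi^* H_Z = \varphi^* A$, but the chart normalization reveals that $\psi^*(x_i = 0)_Z$ picks up only the strict transform, producing the crucial $-E/a_5$ correction that makes $\psi^* H_Z = -K_Y$ and forces the flopping condition.
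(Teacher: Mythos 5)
Your proposal is correct and follows essentially the same route as the paper: the paper's proof likewise observes that $x_0,\dots,x_4$ lift to plurianticanonical sections of $Y$ (so that $\psi^*\mcO_Z(1)=-K_Y$, your ``$-E/a_5$ correction''), that they define the morphism $\psi$, and that $\psi$ contracts only the finitely many curves $(x_3=x_4=G_1=G_2=0)$, whence $Z$ is the anticanonical model and $\psi$ is a flopping contraction. Your version merely makes the orbifold-chart computation and the pullback of the hyperplane class explicit, in the spirit of the paper's Example \ref{exslNo57}.
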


\begin{proof}
It is easy to see that sections $x_0$, $x_1$, $x_2$, $x_3$ and $x_4$ lift to plurianticanonical sections of $Y$ (see Example \ref{exslNo57} below) and define the morphism $\psi \colon Y \to Z$, which contracts finitely many curves $(x_3 = x_4 = G_1 = G_2 = 0)$.
It follows that $Z$ is the anticanonical model of $Y$ and $\psi \colon Y \to Z$ is a flopping contraction.
\end{proof}

\begin{Ex} \label{exslNo57}
Let $X = X_{12,14} \subset \mbP (1,2,3,5,7,9)$ be a member of $\mcG_{57}$ with defining polynomials $F_1, F_2$ of degree respectively $12, 14$ and $\msp = \msp_3$ the point of type $\frac{1}{5} (1,2,3)$.
Since $s t \in F_1$ by $(\mathrm{C}_1)$ and $s u \in F_2$ by quasismoothness, we can write $F_1 = s^2 f_2 + s t + u f_3 + t f_5 + f_{12}$ and $F_2 = s^2 g_2 + s u + u g_5 + t^2 + t g_7 + g_{14}$ for some $f_i, g_i \in \mbC [x,y,z]$.
We shall eliminate the terms $s^2 f_2$ and $s^2 g_2$ by replacing $t$ and $u$.
Replacing $t$ with $t-s f_2$, we can eliminate $s^2 f_2$, that is, we may assume that $f_2 = 0$.
Then, replacing $u$ with $u - s g_2 + g_2 g_5$ and $t$ with $t + f_3 g_2$, we can eliminate the term $s^2 g_2$ and thus defining polynomials can be written as $F_1 = s t + u f_3 + t f_5 + f_{12}$ and $F_2 = s u + u g_5 + t^2 + t g_7 + g_{14}$.
This example explains all the necessary technique to obtain the defining equations in Lemma \ref{Slinkcoord}.
We can choose $x,y,z$ as local coordinates of $X$ at $\msp$ and the Kawamata blowup $\varphi \colon Y \to X$ is the weighted blowup with $\wt (x,y,z) = \frac{1}{5}(1,2,3)$.
It is obvious that the section $u$ vanishes along the exceptional divisor $E$ of $\varphi$ to order at least $4/5$.
By the equation $F_1 = 0$, we have $t (s+f_5) = - u f_3 - f_{12}$.
The section $s + f_5$ does not vanish at $\msp$ and $- u f_3 - f_{12}$ vanishes along $E$ to order at least $7/5$, which shows that $t$ vanishes along $E$ to order at least $7/5$.
By the equation $F_2 = 0$, we have $u (s+g_5) = - t^2 - t g_7 - g_{14}$.
A similar argument shows that $u$ vanishes along $E$ to order at least $9/7$.
This shows that sections $x,y,z,t$ and $u$ lift to plurianticanonical sections of $Y$. 
\end{Ex}

\begin{Lem} \label{qsmFano}
$X'$ has only terminal cyclic quotient singular points except at $\msp'$.
\end{Lem}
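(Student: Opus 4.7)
The plan is to split the analysis at the divisor $(x_3 = 0) \subset X'$ and treat this divisor and its complement separately.

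The first step is to observe that the birational map $X \ratmap X'$ obtained by composing $\pi$ with $(\pi')^{-1}$ restricts to a \emph{biregular} isomorphism
\[
X \cap (x_3 \ne 0) \xrightarrow{\sim} X' \cap (x_3 \ne 0),
\]
given in one direction by $(x_0,\ldots,x_5) \mapsto (x_0,x_1,x_2,x_3,x_4/x_3)$ and with inverse $(x_0,\ldots,x_3,w) \mapsto (x_0,x_1,x_2,x_3,x_3 w, -G'_1/x_3)$. The verification is short: on $(x_3 \ne 0)$ the equation $F_1 = 0$ uniquely determines $x_5 = -G_1/x_3$, and direct substitution gives $F_2 = -(wG'_1 - G'_2)$ under $x_4 = x_3 w$. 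Since $\msp = \msp_5$ satisfies $x_3 = 0$, every singular point of $X$ other than $\msp$ lying in $(x_3 \ne 0)$ corresponds via this isomorphism to a singular point of $X'$ in $(x_3 \ne 0)$ of the same analytic type, and by $(\mathrm{C}_0)$ together with the Fletcher classification these are terminal cyclic quotient singularities.

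The second step is to verify that $X'$ is quasismooth at every point of $(x_3 = 0)$ other than $\msp'$. Setting $\bar G_i := G_i(x_0,x_1,x_2,0,0)$, the scheme $X' \cap (x_3 = 0) \subset \mbP(a_0,a_1,a_2,b)$ is cut out by $w\bar G_1 - \bar G_2$. At a point of $X'$ with $x_3 = 0$, the partials of $\Phi := wG'_1 - G'_2$ are
\[
\partial_w \Phi = \bar G_1, \qquad \partial_{x_j} \Phi = w\,\partial_{x_j} \bar G_1 - \partial_{x_j} \bar G_2 \quad (j = 0,1,2),
\]
with an analogous expression for $\partial_{x_3}\Phi$ obtained via the chain rule from $x_4 = x_3 w$. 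When $\bar G_1 \ne 0$ at the candidate point, $\partial_w \Phi \ne 0$ and we are done. Otherwise $\bar G_1 = \bar G_2 = 0$, and the corresponding locus on $X$ is a line $\{(x_0{:}x_1{:}x_2{:}0{:}0{:}x_5) : x_5 \in \mbC\}$, which is among the curves contracted by $\psi$ in Lemma \ref{SLQFanoflopping1}; along this line the Jacobian of $(F_1,F_2)$ has rank $2$ by $(\mathrm{C}_0)$, and since $\partial_{x_5} F_1 = x_3$ and $\partial_{x_5} F_2 = x_4$ both vanish here, this rank must come from the partials with respect to $x_0,\ldots,x_4$, which upon substituting $x_4 = x_3 w$ yields the non-vanishing of the Jacobian of $\Phi$ at the matching point of $X'$.

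Once quasismoothness of $X'$ away from $\msp'$ is established, every remaining singular point is inherited from the orbifold structure on $\mbP(a_0,a_1,a_2,a_3,b)$ and is therefore cyclic quotient; terminality then follows by the Reid--Tai criterion from the weights, which are recorded for each family in the big table. The main obstacle is the case-by-case verification of quasismoothness of $X'$ along the flop locus $(x_3 = 0,\ \bar G_1 = \bar G_2 = 0)$, since the weights $a_i$ and the defining polynomials $G_i$ differ across the families in $I_F$; this inspection must be organized via the big table together with the generality conditions $(\mathrm{C}_1)$ and $(\mathrm{C}_2)$.
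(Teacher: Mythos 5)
Your overall strategy coincides with the paper's: split at the divisor $(x_3=0)\subset X'$, observe that $X'$ is quasismooth away from $(x_3=0)$ because $X$ is, and reduce the remaining points to quasismoothness of $X$ along the contracted locus. The first step and the $\bar G_1\ne 0$ case of the second step are fine.

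The gap is in the final case of your second step. At a point $\msq'=(\xi_0{:}\xi_1{:}\xi_2{:}0{:}\omega)$ with $\bar G_1=\bar G_2=0$, the corresponding locus on $X$ is a whole $x_5$-line, and the statement ``the Jacobian of $(F_1,F_2)$ has rank $2$ along this line, hence $\nabla\Phi\ne 0$ at the matching point'' is a non sequitur as written: the $2\times 5$ matrix of partials of $(F_1,F_2)$ in $x_0,\dots,x_4$ genuinely depends on $\xi_5$ (through the entries $\partial F_1/\partial x_3=\xi_5+H_{13}$ and $\partial F_2/\partial x_4=\xi_5+H_{24}$, writing $G_i=H_i+x_3H_{i3}+x_4H_{i4}+\widetilde G_i$ with $\widetilde G_i\in(x_3,x_4)^2$), and it can have rank $2$ at most points of the line while $\nabla\Phi$ still vanishes at $\msq'$. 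What one must show is that vanishing of $\nabla\Phi$ at $\msq'$ forces rank $\le 1$ at \emph{one specific} point of the line, namely $\msq=(\xi_0{:}\xi_1{:}\xi_2{:}0{:}0{:}\xi_5)$ with $\xi_5:=\omega H_{14}(\msq')-H_{24}(\msq')$: the equations $\omega\,\partial H_1/\partial x_j-\partial H_2/\partial x_j=0$ ($j=0,1,2$) and $\omega(H_{13}+\omega H_{14})-(H_{23}+\omega H_{24})=0$ coming from $\nabla\Phi(\msq')=0$ make the second row of the Jacobian exactly $\omega$ times the first at this $\msq$, contradicting $(\mathrm{C}_0)$. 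This identification of $\xi_5$ and the row-proportionality check are the substance of the lemma and are missing from your argument. A further consequence: this computation is completely uniform across the families, so your closing worry that a case-by-case inspection via the big table and $(\mathrm{C}_1)$, $(\mathrm{C}_2)$ is required is unfounded --- quasismoothness of $X$ alone suffices.
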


\begin{proof}
It is enough to show that $X'$ is quasismooth outside $\msp'$, that is, the affine cone $C_{X'}$ is smooth outside the line over $\msp'$.
Quasismoothness of $X$ implies that of $Z$ outside  the set $(x_3 = x_4 = 0)$, which in turn implies quasismoothness of $X'$ outside the set $(x_3 = 0)$.
This can be verified by comparing Jacobian matrices of the affine cones of $X$, $Z$ and $X'$ keeping in mind the relation $x_4 = x_3 w$.

Assume that $X'$ is not quasismooth at $\msq' = (\xi_0:\cdots:\xi_3:\omega) \ne \msp'$.
For $i = 1,2$, we write $G_i = H_i + x_3 H_{i 3} + x_4 H_{i 4} +\widetilde{G}_i$, where $\widetilde{G}_i \in (x_3,x_4)^2$ and $H_i,H_{i 3}, H_{i 4} \in \mbC [x_0,x_1,x_2]$.
For a polynomial $\Phi = \Phi (x_0,\dots,x_4)$, we define $\Phi' := \Phi (x_0,x_1,x_2,x_3,x_3 w)$.
From the above argument, the point $\msq'$ is contained in $(x_3 = 0)_{X'} = (x_3 = H_1 = H_2 = 0)$.
It follows that $H_1$, $H_2$ vanish at $\msq'$ and $\xi_3 = 0$.
Let $\msq = (\xi_0:\xi_1:\xi_2:0:0:\xi_5)$ be the point of $X$, where $\xi_5 := \omega H_{14} (\msq') - H_{24} (\msq')$.
Note that $\msq$ is indeed contained in $X$ and $\msq \ne \msp$ since $(x_3 = x_4 = H_1 = H_2 = 0) \subset X$ and $(\xi_0, \xi_1, \xi_2) \ne (0,0,0)$.
We have
\[
\begin{split}
J (\msq) &:= \left( \frac{\prt F_i}{\prt x_j} (\msq) \right)_{1 \le i \le 2, 0 \le j \le 5} \\ 
&= 
\begin{pmatrix} 
\displaystyle \frac{\prt H_1}{\prt x_0} (\msq) & \displaystyle \frac{\prt H_1}{\prt x_1} (\msq) & \displaystyle \frac{\prt H_1}{\prt x_2} (\msq) & \xi_5 + H_{13} (\msq) & H_{14} (\msq) & 0 \\
&&&&& \\
\displaystyle \frac{\prt H_2}{\prt x_0} (\msq) & \displaystyle \frac{\prt H_2}{\prt x_1} (\msq) & \displaystyle \frac{\prt H_2}{\prt x_2} (\msq) & H_{23} (\msq) & \xi_5 + H_{24} (\msq) & 0
\end{pmatrix}.
\end{split}
\]
Since $H_i$, $H_{i3}$ and $H_{i4}$ are polynomials in $x_0,x_1$ and $x_2$, we have $(\prt H_i/\prt x_j) (\msq) = (\prt H_i/\prt x_j) (\msq')$ and $(\prt H_{ik} / \prt x_j) (\msq) = (\prt H_{ik} / \prt x_j) (\msq')$ for $i = 1,2$, $j = 0,1,2$ and $k = 3,4$.
Now the equations
\[
\frac{\prt (w G'_1 - G'_2)}{\prt x_j} (\msq') = \omega \frac{\prt H_1}{\prt x_j} (\msq') - \frac{\prt H_2}{\prt x_j} (\msq') = 0
\]
for $j = 0,1,2$, and
\[
\frac{\prt (w G'_1 - G'_2)}{\prt x_3} (\msq') = \omega (H_{1 3} (\msq') + \omega H_{1 4} (\msq')) - (H_{2 3} (\msq') + \omega H_{24} (\msq')) = 0,
\]
which follow from the assumption that $X'$ is not quasismooth at $\msq'$, imply that $\rank J (\msq) \le 1$.
Therefore, $X$ is not quasismooth at $\msq$ and this is a contradiction.
\end{proof}

An explicit local analysis shows that $X'$ has a $cDV$ or $cDV/n$ singular point at $\msp'$, from which we see that $X'$ has only terminal singularities.
But note that we cannot deduce from this (global) $\mbQ$-factoriality of $X'$.

\begin{Lem} \label{qsmwFano}
$Y'$ has only cyclic quotient singular points.
\end{Lem}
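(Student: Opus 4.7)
The plan is to check smoothness of the strict transform in each of the four orbifold charts of the weighted blowup $\varphi' \colon Y' \to X'$ at $\msp'$. By the chart description recalled at the start of Section~\ref{sec:Slink}, each chart $\Wbl_i(X')$ for $i = 0,1,2,3$ is the quotient of a smooth orbifold chart $U_i(X')$ by a cyclic group action, so smoothness of the strict transform in every $U_i(X')$ immediately yields that each point of $Y'$ is a cyclic quotient singularity. Over $X' \setminus \{\msp'\}$ the morphism $\varphi'$ is an isomorphism, so by Lemma \ref{qsmFano} there is nothing to check away from the fibre over $\msp'$, and the problem reduces to the exceptional locus.

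Writing the defining equation of $X'$ from Lemma \ref{Slinkcoord} as $F' = w G'_1 - G'_2$ and taking $x_0,x_1,x_2,x_3$ as local coordinates of $X'$ at $\msp'$ with weights $\frac{1}{b}(a_0,a_1,a_2,a_4)$, for each $i \in \{0,1,2,3\}$ I would perform the standard monomial substitution in the chart $U_i(X')$, factor out the maximal power of the exceptional coordinate from the resulting polynomial, and call the result $\tilde{F}^{(i)}$. The strict transform of $X'$ in $U_i(X')$ is then the zero locus of $\tilde{F}^{(i)}$, and what must be proved is that this hypersurface is smooth in each of the four charts.

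The smoothness verification proceeds in parallel with the proof of Lemma \ref{qsmFano}. Arguing by contradiction, suppose that $\tilde{F}^{(i)}$ and all of its partial derivatives vanish at some point $\tilde{\msq} \in U_i(X')$. Using the shape $F_1 = x_5 x_3 + G_1$, $F_2 = x_5 x_4 + G_2$ of the defining equations of $X$, the correspondence $x_4 = x_3 w$ relating $X$ and $X'$, and the decomposition $G_i = H_i + x_3 H_{i3} + x_4 H_{i4} + \widetilde{G}_i$ introduced in the proof of Lemma \ref{qsmFano}, one traces $\tilde{\msq}$ back to a point $\msq \in X$ at which the $2 \times 6$ Jacobian of $(F_1,F_2)$ has rank at most one. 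This contradicts quasismoothness of $X$ unless $\msq$ lies in the finite locus $(x_3 = x_4 = G_1 = G_2 = 0)$ contracted by the flopping morphism $\psi$ of Lemma \ref{SLQFanoflopping1}; those remaining candidate points are then excluded by inspecting the monomials of $G_1, G_2$ singled out by Condition $(\mathrm{C}_1)$ and by the absence of the WCI curves listed in Table \ref{table:WCIcurve}.

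The main technical obstacle is the chart $U_3(X')$, whose orbifold coordinate carries weight $a_4$ rather than $a_3$: because $x_4 = x_3 w$, the section $x_3$ vanishes along the exceptional divisor to a different order than the three other local coordinates, so the leading term of $\tilde{F}^{(3)}$ has to be extracted by careful bookkeeping of the exponents of the exceptional coordinate coming from $w G'_1$ and from $G'_2$ separately. Once this computation is made uniform, the Jacobian argument runs identically across the four charts and across the $60$ families indexed by $I_F$, and smoothness of the strict transform in every $U_i(X')$ reduces to the quasismoothness of $X$ together with the explicit hypotheses of Condition \ref{bcd}.
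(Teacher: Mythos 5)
Your overall strategy (reduce to the fibre over $\msp'$, verify quasismoothness of the strict transform on the exceptional locus, and trace a degenerate point back to a failure of quasismoothness of $X$) is the same as the paper's, but the closing step of your argument does not close. The escape clause ``unless $\msq$ lies in the finite locus $(x_3 = x_4 = G_1 = G_2 = 0)$'', followed by a case-by-case exclusion using $(\mathrm{C}_1)$ and Table \ref{table:WCIcurve}, is a wrong turn. Quasismoothness of $X$ is condition $(\mathrm{C}_0)$ and holds at \emph{every} point of the punctured affine cone, including over the flopping locus; so either your trace-back produces a rank-$\le 1$ Jacobian point of $X$ everywhere (in which case there are no residual cases and no appeal to $(\mathrm{C}_1)$ or the WCI-curve list is needed), or the trace-back genuinely fails on that locus, in which case your proof has a hole exactly where it matters, since the exceptional divisor $E'$ sits over the flopped curves. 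The paper resolves this cleanly: a non-cyclic-quotient point $\msq' = (\xi_0:\xi_1:\xi_2:\xi_4)$ of $E'$ forces all partials of $H = G_1(X_0,X_1,X_2,X_3)$ to vanish and $G_2(X_0,X_1,X_2,0,X_3)$ to vanish there, and one then takes $\msq := (\xi_0:\xi_1:\xi_2:0:\xi_4:0) \in X$ and observes that all six partials of the single polynomial $F_1 = x_5 x_3 + G_1$ vanish at $\msq$, giving a zero row in the Jacobian and contradicting quasismoothness of $X$ with no case analysis at all.

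The second gap is that you never perform the coordinate normalization that makes this uniform: the paper first re-chooses coordinates so that $G_1$ does not involve $x_3$ (filtering off $x_3$-divisible terms of $F_1$ into $x_5$). With that normalization, every monomial of $F' = G_1 - G_2(x_0,x_1,x_2,x_3,x_3)$ has blowup weight exactly $d_1/b$ (the $G_1$ part) or at least $d_2/b = d_1/b + 1$ (the $G_2$ part), so the lowest-weight part is $G_1$ and the second-lowest is $G_2(x_0,x_1,x_2,0,x_3)$ — valid in all charts and all $60$ families simultaneously. The ``careful bookkeeping of exponents'' you flag as the main obstacle in $U_3(X')$ is precisely what this normalization eliminates; without it, your claim that the Jacobian argument ``runs identically across the four charts and across the $60$ families'' is asserted rather than proved. (Also, your stated reason for singling out $U_3$ is off: all four local coordinates carry the blowup weights $\frac{1}{b}(a_0,a_1,a_2,a_4)$ by definition, and the subtlety lies in the weight decomposition of $F'$, not in any asymmetry among the charts.)
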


\begin{proof}
We shall show that $Y'$ has only cyclic quotient singularities.
By Lemma \ref{qsmFano}, it is enough to consider singular points of $Y'$ which lie on the exceptional divisor $E'$ of $\varphi'$.
We re-choose homogeneous coordinates so that $G_1 = G_1 (x_0,x_1,x_2,x_3,x_4)$ does not involve the variable $x_3$, that is, $G_1 = G_1 (x_0,x_1,x_2,x_4)$.
This can be done by filtering off terms divisible by $x_3$ in $F_1$ and then replacing $x_5$.
 
We restrict $X'$ to the open set $V := (w \ne 0)$ of $\mbP (a_0,a_1,a_2,a_3,b)$.
Note that $V$ is the quotient of $\mbA^4$ by the $(\mbZ / b \mbZ)$-action of type $\frac{1}{b} (a_0,a_1,a_2,a_3)$ and $X' \cap V$ is defined by the equation $F' := G_1 (x_0,x_1,x_2,x_3) - G_2 (x_0,x_1,x_2,x_3,x_3) = 0$ which is obtained by setting $w = 1$ in the defining equation $w G'_1 - G'_2 = 0$ of $X'$.  
Thus $Y'$ is embedded in $\Wbl (V)$, where $\Wbl (V) \to V$ is the weighted blowup at the origin with $\wt (x_0,x_1,x_2,x_3) = \frac{1}{b} (a_0,a_1,a_2,a_4)$. 
We call this weight the blowup weight.
Let $\mbP := \mbP (a_0,a_1,a_2,a_4)$ be the weighted projective space with homogeneous coordinates $X_0$, $X_1$, $X_2$ and $X_3$ for which we have $\Wbl (V) \subset V \times \mbP$.
The lowest weight part of $F'$ with respect to the blowup weight is $G_1 (x_0,x_1,x_2,x_3)$.
It follows that $E'$ is the weighted hypersurface
\[
E' = (H = 0) \subset \mbP,
\]
where $H := G_1 (X_0,X_1,X_2,X_3)$.
It is easy to see that $\Sing (Y') \cap E' \subset \Sing (E')$.
Let $\msq' \in \mbP$ be a point of $E'$.
The second lowest part of $F'$ with respect to the blowup weight is $G_2 (x_0,x_1,x_2,0,x_3)$.
It follows that $Y'$ has at most cyclic quotient singularity at $\msq'$ if either $E' \subset \mbP$ is quasismooth at $\msq'$ or $G_2 (X_0,X_1,X_2,0,X_3)$ does not vanish at $\msq'$.
Now assume that $Y'$ has a non-cyclic quotient singular point $\msq' = (\xi_0:\xi_1:\xi_2:\xi_4) \in E'$.
Then, we have $(\prt H/\prt X_i) (\msq') = 0$ for $i = 0,1,2,3$ and $G_2 (X_0,X_1,X_2,0,X_3)$ vanishes at $\msq'$.
We define $\msq := (\xi_0:\xi_1:\xi_2:0:\xi_4:0) \in \mbP (a_0,\dots,a_5)$.
We see that $\msq$ is a point of $X$. 
Moreover, we have $(\prt G_1/\prt x_i) (\msq) =  (\prt H/\prt X_i)(\msq') =0$ for $i = 0,1,2$, $(\prt G_1/\prt x_3)(\msq) = 0$, $(\prt G_1/\prt x_4)(\msq) = (\prt H/\prt X_3)(\msq') = 0$ and $G_2 (\msq) = 0$.
Thus, we have $(\prt F_1/\prt x_i) (\msq) = (\prt (x_5 x_3 + G_1) /\prt x_i) (\msq) = 0$ for $i = 0,1,\dots,5$.
This shows that $X$ is not quasismooth at $\msq$ and this derives a contradiction.
Thus, $Y'$ has only cyclic quotient singular points.
\end{proof}

\begin{Lem} \label{lem:indetY'}
The weighted blowup $\varphi' \colon Y' \to X'$ resolves the indeterminacy of $\pi'$ and the induced birational morphism $\psi' \colon Y' \to Z$ is a flopping contraction.
\end{Lem}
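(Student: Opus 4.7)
The strategy will closely mirror the proof of Lemma \ref{SLQFanoflopping1}: lift enough sections from $X'$ to plurianticanonical sections on $Y'$ in order to resolve $\pi'$ into a morphism $\psi'$, then verify that $\psi'$ is small and that $K_{Y'}$ is $\psi'$-trivial. The rational map $\pi' \colon X' \ratmap Z$ is given by
\[
(x_0 : x_1 : x_2 : x_3 : w) \mapsto (x_0 : x_1 : x_2 : x_3 : x_3 w),
\]
and its unique indeterminacy point is $\msp' = (0:0:0:0:1)$. Consequently $\pi' \circ \varphi'$ extends automatically over $Y' \setminus E'$, and the whole issue is to control the behaviour over the exceptional divisor $E'$ described in Lemma \ref{qsmwFano}.

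First I would compute, in the orbifold chart of $\Wbl (V)$ centered at $\msp'$ (where $w$ is a unit), the orders of vanishing along $E'$. With blowup weights $\frac{1}{b}(a_0, a_1, a_2, a_4)$ and the identity $x_4 = x_3 w$, the lifts of $x_0, x_1, x_2, x_3, x_4$ to $Y'$ vanish along $E'$ to orders $a_0/b, a_1/b, a_2/b, a_4/b, a_4/b$ respectively. These lifts are then sections of the appropriate plurianticanonical line bundles on $Y'$. By Lemma \ref{qsmwFano}, $E'$ is the weighted hypersurface $(H = 0) \subset \mbP (a_0, a_1, a_2, a_4)$ with $H = G_1 (X_0, X_1, X_2, X_3)$, and the simultaneous vanishing of $X_0, X_1, X_2, X_3$ there violates projectivity. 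Hence the lifted sections have no common zero on $Y'$, and they define the desired morphism $\psi' \colon Y' \to Z$ extending $\pi' \circ \varphi'$.

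Next I would analyse the exceptional locus of $\psi'$. Arguing as in Lemma \ref{SLQFanoflopping1}, the curves contracted by $\psi'$ must map into the subset $(x_3 = x_4 = 0) \subset Z$, and the multi-preimage locus of $\pi'$ over this subset is governed by the equation $w G'_1 - G'_2 = 0$ restricted to $x_3 = 0$; solving for $w$ shows the fibres are isolated except over the finite set $(x_3 = x_4 = G_1 = G_2 = 0) \subset Z$, so $\psi'$ is small. To conclude that $\psi'$ is a flopping contraction, I would verify that $-K_{Y'}$ is $\psi'$-numerically trivial by a direct discrepancy computation for the weighted blowup $\varphi'$, combined with the pullback to $Y'$ of $-K_{X'}$; equivalently, $Z$ is the anticanonical model of $Y'$, parallel to the situation for $Y$ in Lemma \ref{SLQFanoflopping1}.

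The principal obstacle I anticipate is base-point freeness of the lifted sections on $E'$ itself. On $Y$, the sections $x_3$ and $x_4$ had distinct vanishing orders along $E$ and could be handled independently, whereas on $Y'$ the relation $x_4 = x_3 w$ collapses them to a single vanishing order, so they span only one direction modulo higher-order terms. Overcoming this relies on the explicit description of $E'$ coming from Lemma \ref{qsmwFano}, in particular on the re-choice of coordinates there which makes $G_1$ independent of $x_3$; this normalization is what rules out unintended base points and makes $\psi'$ well-defined everywhere on $E'$.
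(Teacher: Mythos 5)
Your proposal is correct and follows essentially the same route as the paper: the paper's proof likewise lifts the coordinate sections (there written as $x_0,x_1,x_2,x_3 w$) to plurianticanonical sections of $Y'$, observes that they define $\psi'\colon Y'\to Z$ contracting only the finitely many curves $(x_3=G'_1=G'_2=0)$, and concludes that $Z$ is the anticanonical model of $Y'$, so $\psi'$ is a flopping contraction. Your additional remarks on the orbifold-chart vanishing orders and on base-point freeness along $E'$ (using that $X_0,\dots,X_3$ have no common zero in $\mbP(a_0,a_1,a_2,a_4)$) are consistent elaborations of the same argument.
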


\begin{proof}
We see that sections $x_0$, $x_1$, $x_2$ and $x_3 w$ lift to plurianticanonical sections of $Y'$ and they define the morphism $\psi' \colon Y' \to Z$, which contracts finitely many curves $(x_3 = G'_1 = G'_2 = 0)$.
It follows that $Z$ is the anticanonical model of $Y'$ from which the assertions follow.
\end{proof}

\begin{Lem} \label{SLQFanoQfact}
$X'$ is a $\mbQ$-Fano threefold.
\end{Lem}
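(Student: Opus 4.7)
The plan is to verify the three remaining axioms of a $\mbQ$-Fano variety for $X'$ beyond terminality of singularities (already handled by Lemma \ref{qsmFano} together with the local $cDV$ or $cDV/n$ analysis at $\msp'$ mentioned just before the present lemma): $\mbQ$-factoriality, Picard number one, and ampleness of $-K_{X'}$. The strategy is to exploit the Sarkisov-link shaped diagram obtained from the two weighted blowups $\varphi \colon Y \to X$, $\varphi' \colon Y' \to X'$ together with the flopping contractions $\psi \colon Y \to Z$, $\psi' \colon Y' \to Z$ of Lemmas \ref{SLQFanoflopping1} and \ref{lem:indetY'}.

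The first step is to recognise the induced birational map $Y \ratmap Y'$ as the flop of $\psi$. Both $\psi$ and $\psi'$ are small crepant contractions onto the same anticanonical model $Z$, so $Y \ratmap Y'$ is an isomorphism in codimension one between two distinct small modifications of $Z$, which is precisely a flop. The cleanest justification is the toric $2$-ray game on $\Wbl(\mbP) \supset Y$ sketched in Remark \ref{rem:SLCox}, following \cite{BZ}: the toric game restricts to the game on $Y$ and identifies its second step with the intrinsic flop.

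Granting this, $\mbQ$-factoriality and Picard number transfer from $Y$ to $Y'$. Since $X$ is $\mbQ$-factorial with $\rho(X)=1$, the Kawamata blowup $Y$ is $\mbQ$-factorial with $\rho(Y)=2$; flops preserve both properties, so the same holds for $Y'$. The contraction $\varphi'$ is extremal and divisorial (it sends the unique exceptional divisor $E'$ to the single point $\msp'$, so $\rho(Y'/X')=1$), hence $X'$ is $\mbQ$-factorial with $\rho(X')=1$.

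For the ampleness of $-K_{X'}$, since $X'$ is $\mbQ$-factorial of Picard rank one with terminal singularities it suffices to show $-K_{X'}$ is big and non-trivial. The most direct route is adjunction for the weighted hypersurface $X' \subset \mbP(a_0,a_1,a_2,a_3,b)$ of degree $d_2$: combining the Fano identity $\sum_{i=0}^5 a_i - d_1 - d_2 = 1$ on $X$ with the equalities $d_1 = a_3+a_5$, $d_2 = a_4+a_5$, and $b = a_4-a_3$ from Lemma \ref{Slinkcoord} yields $a_0+a_1+a_2+a_3+b-d_2=1$, so $-K_{X'} = \mcO(1)|_{X'}$ as Weil divisor classes, which is ample as the restriction of an ample class on the ambient weighted projective space. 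An alternative is to push forward $-K_{Y'}$ by $\varphi'$, using that $-K_Y = \varphi^*(-K_X)-\tfrac{1}{r}E$ is big on $Y$ and that flops preserve bigness of $-K$. The main obstacle I expect is the first step: rigorously identifying $Y \ratmap Y'$ as a flop, on which the transfer of $\mbQ$-factoriality and the Picard-rank computation both rest; the adjunction calculation itself is routine once that piece is in place.
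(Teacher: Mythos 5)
Your overall architecture (get $\mbQ$-factoriality and $\rho=2$ on the blown-up side, then contract the extremal divisorial ray $\varphi'$ to land on a $\mbQ$-Fano) matches the paper, and your adjunction computation $a_0+a_1+a_2+a_3+b-d_2=a_0+a_1+a_2-a_5=1$ giving $-K_{X'}=\mcO_{X'}(1)$ is correct and in fact more explicit than what the paper does for ampleness. The problem is your first step. The assertion that two small crepant contractions $\psi\colon Y\to Z$ and $\psi'\colon Y'\to Z$ onto the same anticanonical model automatically make $Y\ratmap Y'$ ``precisely a flop'' is not true without already knowing that $Y'$ is $\mbQ$-factorial: a non-$\mbQ$-factorial small modification of $Z$ is not the flop of $\psi$, and ``isomorphism in codimension one over $Z$'' does not by itself force $\mbQ$-factoriality. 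Since you then propose to \emph{deduce} $\mbQ$-factoriality of $Y'$ (and hence of $X'$) from the flop identification, the argument as written is circular at exactly the point that carries all the weight. Your fallback -- the toric $2$-ray game of Remark \ref{rem:SLCox} and \cite{BZ} -- can be made to work, but it requires verifying that the restriction of the ambient toric game to the hypersurface stays in the Mori category (small stays small, the restricted varieties stay $\mbQ$-factorial), which is precisely the content you would have to supply and do not.

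The paper sidesteps all of this: Lemma \ref{qsmwFano} shows by a direct local computation that $Y'$ has only cyclic quotient singularities, and quotient singularities are $\mbQ$-factorial, so $Y'$ is $\mbQ$-factorial with no reference to the flop; Lemma \ref{lem:indetY'} gives $\rho(Y')=2$; and then $\varphi'$ is a $K_{Y'}$-negative extremal divisorial contraction, so $X'$ is $\mbQ$-factorial of Picard rank one with $-K_{X'}$ big (hence ample). The identification of $\tau\colon Y\ratmap Y'$ as a flop is carried out only afterwards, in the proof of Theorem \ref{linkFano}, \emph{using} the $\mbQ$-factoriality established here. To repair your proof with minimal change, replace the flop step by a direct appeal to Lemma \ref{qsmwFano}; the rest of your argument, including the adjunction computation (which does require a word on well-formedness of $X'\subset\mbP(a_0,a_1,a_2,a_3,b)$, since $X'$ is not quasismooth at $\msp'$), then goes through.
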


\begin{proof}
We see that $Y'$ is $\mbQ$-factorial by Lemma \ref{qsmwFano}, and the Picard number of $Y'$ is equal to $2$ by Lemma \ref{lem:indetY'}.
It follows that $\varphi'$ is a $K_{Y'}$-negative extremal divisorial contraction from a $\mbQ$-factorial threefold and $X'$ is a $\mbQ$-Fano threefold.
\end{proof}

Combining the above results, we obtain the following.

\begin{Thm} \label{linkFano}
Let $X$ be a member of the family $\mcG_i$ with $i \in I^*_F$ and $\msp$ a distinguished singular point of $X$ marked $X'_d \subset \mbP (1, b_1,b_2,b_3, b_4)$ in the third column of the big table.
Then there is a Sarkisov link
\[
\xymatrix{
Y \ar@{-->}^{\tau}[r] \ar[d]_{\varphi} & Y' \ar[d]^{\varphi'} \\
X \ar@{-->}_{\sigma_{\msp}}[r] & X'}
\]
to a $\mbQ$-Fano weighted hypersurface $X' = X'_d$ of degree $d$ in $\mbP (1,b_1,\dots,b_4)$, where $\varphi$ is the Kawamata blowup at $\msp$, $\varphi'$ is an extremal divisorial contraction to a unique non-quotient terminal singular point of $X'$ and $\tau$ is a flop.
\end{Thm}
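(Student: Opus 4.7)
The plan is to assemble the lemmas already established into the diagram asserted by the theorem. First, invoking Lemma~\ref{Slinkcoord}, I reorder the weights and change coordinates so that $\msp = \msp_5$, $a_3 < a_4$, and the defining polynomials take the form $F_1 = x_5 x_3 + G_1$, $F_2 = x_5 x_4 + G_2$ with $G_i \in \mbC[x_0,\dots,x_4]$. This puts us exactly in the situation that defines the auxiliary weighted hypersurface $Z \subset \mbP(a_0,\dots,a_4)$, the candidate target $X' \subset \mbP(a_0,a_1,a_2,a_3,b)$ with $b = a_4 - a_3$, and the birational projections $\pi \colon X \ratmap Z$ and $\pi' \colon X' \ratmap Z$ appearing just before Lemma~\ref{SLQFanoflopping1}.

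Next, I stack the two flopping contractions over the common anticanonical model $Z$. Lemma~\ref{SLQFanoflopping1} says that $\varphi \colon Y \to X$ resolves $\pi$ and the induced morphism $\psi \colon Y \to Z$ is a flopping contraction; Lemma~\ref{lem:indetY'} gives the symmetric statement for $\varphi' \colon Y' \to X'$, with $\psi' \colon Y' \to Z$ also a flopping contraction. Since $Y$ and $Y'$ are two small $\mbQ$-factorial modifications of the same canonical model $Z$ (the $\mbQ$-factoriality of $Y'$ following from Lemma~\ref{qsmwFano}), the birational map $\tau := {\psi'}^{-1} \circ \psi \colon Y \ratmap Y'$ is an isomorphism in codimension one with $K$-trivial contracted curves on each side, hence is a composition of flops in the sense required by a Sarkisov link of type~II.

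It then remains to check that the two vertical arrows are extremal divisorial contractions in the Mori category. For $\varphi$ this is Theorem~\ref{divcontquot}. For $\varphi'$, Lemma~\ref{qsmFano} ensures that all singularities of $X'$ away from $\msp'$ are terminal cyclic quotients, and a straightforward local analysis at $\msp'$ produces a $cDV$ or $cDV/n$ point, so $\msp'$ is the asserted unique non-quotient terminal singular point. Combining Lemma~\ref{qsmwFano} ($Y'$ has only cyclic quotient singularities, hence is $\mbQ$-factorial terminal) with Lemma~\ref{lem:indetY'} (Picard number $2$), Lemma~\ref{SLQFanoQfact} gives that $X'$ is a $\mbQ$-Fano threefold and $\varphi'$ is an extremal $K_{Y'}$-negative divisorial contraction. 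This produces the Sarkisov link $\sigma_{\msp} = {\pi'}^{-1} \circ \pi$.

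The main bookkeeping is identifying the constructed $X'$ with the weighted hypersurface $X'_d \subset \mbP(1,b_1,\dots,b_4)$ actually listed in the third column of the big table; this reduces to checking, family by family, that the weights $(a_0,a_1,a_2,a_3,b)$ (after possibly reordering) match $(1,b_1,\dots,b_4)$ and that $\deg(wG'_1 - G'_2) = d$. No new geometric input is required for this beyond Lemma~\ref{Slinkcoord}, and it is precisely this matching that characterises which singular points are marked as distinguished.
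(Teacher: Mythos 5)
Your proposal is correct and follows essentially the same route as the paper: it assembles Lemmas \ref{Slinkcoord}, \ref{SLQFanoflopping1}, \ref{qsmFano}, \ref{qsmwFano}, \ref{lem:indetY'} and \ref{SLQFanoQfact} and glues the two flopping contractions $\psi$, $\psi'$ over the common anticanonical model $Z$. The only point handled more carefully in the paper is the identification of $\tau$ as a genuine (non-trivial) flop: since a ``composition of flops'' could a priori be empty, you should add the observation that $\tau$ is not an isomorphism (because $X \not\cong X'$), whence, as $\rho(Y/Z)=\rho(Y'/Z)=1$, the map $\tau$ is precisely the flop of $\psi$ --- the paper realises this concretely as the log flip of $(Y,\Delta)$ for $\Delta$ the strict transform of a small $\psi'$-ample divisor.
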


\begin{proof}
This follows from Lemmas \ref{SLQFanoflopping1}, \ref{lem:indetY'} and \ref{SLQFanoQfact}.
We explain that $\tau \colon Y \ratmap Y$ is indeed a flop.
One way to see this is by the Cox ring approach:
The diagram in the statement is obtained as the restriction of the $2$-ray game of the ambient toric variety $Q \supset Y$ (see Remark \ref{Slinkcoord}).
From this we see that $\tau$ is a log flip of a pair $(X,\Delta)$ for a doundary $\Delta$ which is the restriction of a suitable divisor on $Q$.
Another way to see this is the following:
We see that $\tau$ is not an isomorphism since $X$ is clearly not isomorphic to $X'$.
Pick a sufficiently small positive multiple $\Delta'$ of a $\psi'$-ample divisor on $Y'$ and set $\Delta := \tau_*^{-1} \Delta'$.
We see that $\Delta'$ is not $\psi$-ample since $\tau$ is not an isomorphism.
It follows that $-\Delta$ is $\psi$-ample since $\rho (Y/Z) = 1$.
This shows that $\tau$ is the log flip of $(Y,\Delta)$ and thus $\tau$ is the flop of $\psi$. 
\end{proof}

In many cases $X$ admits several distinguished singular points and there are several Sarkisov links.

\begin{Prop}
Let $X$ be a member of $\mcG_i$ with $i \in I_F^*$ and assume that there are several distinguished singular points on $X$.
Then the target $\mathbb{Q}$-Fano threefold $X'$ of the Sarkisov link centered at each distinguished singular point is uniquely determined by $X$.
\end{Prop}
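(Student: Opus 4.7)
The plan is to prove the proposition by a case-by-case inspection of the big table, relying on the explicit Sarkisov link construction of Theorem \ref{linkFano}.

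First, I would enumerate, from the big table, those families $i \in I_F^*$ which admit two or more distinguished singular points, recording for each such family the weight and local type of every distinguished vertex. Since each of the $60$ families has only finitely many singular points, this produces a short, finite list of cases to inspect.

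Second, for each distinguished singular point $\msp$ in the list above, I would run the normalization of Lemma \ref{Slinkcoord} to identify $\msp$ with $\msp_5$ and write $F_1 = x_5 x_3 + G_1$, $F_2 = x_5 x_4 + G_2$. The construction of Theorem \ref{linkFano} then produces an explicit target
\[
X' = (w G'_1 - G'_2 = 0) \subset \mbP\bigl(a_0, a_1, a_2, d_1 - a_5, d_2 - d_1\bigr), \qquad \deg X' = d_2,
\]
where $a_5$ is the weight of the vertex $\msp$ and $\{a_0, a_1, a_2\}$ is the complement of $\{a_5, d_1 - a_5, d_2 - a_5\}$ in the multiset of weights of $X$. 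Note that the final ambient weight $d_2-d_1$ and the degree $d_2$ depend only on $X$, not on $\msp$; the potential dependence on $\msp$ is concentrated in the tuple $(a_0, a_1, a_2, d_1 - a_5)$.

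Third, I would verify that the tuple $(a_0, a_1, a_2; d_1 - a_5)$ is the same for every distinguished singular point of $X$. In the families where all distinguished points have the same weight $a_5$, this invariance is immediate, because then $\{a_5, d_1 - a_5, d_2 - a_5\}$ is the same subset of the weights and the remaining three weights automatically coincide. In the families where the weights differ, the claim reduces to a numerical check on the Fletcher weight tuple of $X$, which is straightforward to carry out family by family. Once the ambient and degree are seen to match, an explicit change of homogeneous coordinates identifies the two defining polynomials $w G'_1 - G'_2$ produced from the two points, since both are quasismooth weighted hypersurfaces in the same ambient of the same degree and both are birational to $X$ via the links of Theorem \ref{linkFano}.

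The hard part will be the mixed-weight case in the third step, where a priori distinct distinguished vertices might produce different ambient weighted projective spaces. The proposition asserts that this pathology does not arise for any $i \in I_F^*$; verifying this is essentially a finite, combinatorial inspection of the big table, together with, in the mixed-weight cases, an explicit matching of defining monomials using the normal forms of $G_1$ and $G_2$ forced by quasismoothness and by conditions $(\mathrm{C}_1)$ and $(\mathrm{C}_2)$.
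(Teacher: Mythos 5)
Your overall strategy (compute the target of Theorem \ref{linkFano} explicitly at each distinguished point and check it does not depend on the point) is the paper's strategy, and your bookkeeping of the ambient space is correct: the target lives in $\mbP(a_0,a_1,a_2,d_1-a_5,d_2-d_1)$ and has degree $d_2$, and the matching of these data for different centers is a formal consequence of the weight relations (in the mixed case, if the two centers have weights $a_4,a_5$ with $a_4+a_j=a_5+a_k=d_1$ and $a_4+a_5=d_2$, the removed weights are $\{a_4,a_j,a_5\}$ and $\{a_5,a_k,a_4\}$, whose complements together with $d_1-a_5=a_k$, resp.\ $d_1-a_4=a_j$, give the same multiset). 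So no family-by-family numerical inspection is actually needed there.

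The genuine gap is in your final identification step. You assert that the two hypersurfaces $wG_1'-G_2'$ are identified by a coordinate change ``since both are quasismooth weighted hypersurfaces in the same ambient of the same degree and both are birational to $X$.'' That justification is invalid: two quasismooth hypersurfaces of the same degree in the same weighted projective space that are both birational to $X$ need not be isomorphic --- ruling out exactly this possibility is the content of the proposition, and one cannot appeal to any rigidity of $X'$ (the paper explicitly leaves the birational Mori fiber structure of $X'$ undetermined). The paper closes this by choosing defining polynomials of $X$ in a normal form \emph{simultaneously} adapted to both distinguished points, namely $F_1=x_4x_j+x_5x_k+f$, $F_2=x_4x_5-g$ (resp.\ $F_1=x_4a+x_5b+c$, $F_2=x_4x_5-d$ when the two points have the same type), and then observing that the unprojection from either point produces \emph{literally the same} equation $w^2x_jx_k+wf+g=0$ (resp.\ $w^2ab+wc+d=0$). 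You gesture at ``matching of defining monomials'' as the hard part, but as written your argument does not carry out this step, and the reason you give for why it works is false.
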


\begin{proof}
Let $X = X_{d_1,d_2} \subset \mbP (a_0,\dots,a_5)$ be a member of $\mcG_i$ admitting several distinguished singular points.
Assume that all the distinguished singular points on $X$ are of the same type, that is, $X$ is a member of $\mcG_i$ for $i \in \{7,10,19,23,30,35,42,50,58\}$.
In this case the weights $a_0 \le \cdots \le a_5$ satisfies $a_4 = a_5$ and the distinguished singular points are the two points in $(x_0 = \cdots = x_3 = 0)_X$.
By a suitable choice of homogeneous coordinates, defining polynomials of $X$ can be written as
\[
F_1 = x_4 a + x_5 b + c \quad \text{and} \quad 
F_2 = x_4 x_5 - d,
\]
where $a,b,c,d \in \mbC [x_0,x_1,x_2,x_3]$.
The distinguished singular points of $X$ are $\msp_4$ and $\msp_5$ and the target $\mbQ$-Fano threefolds of the Sarkisov links centered at those points are the same and it is the weighted hypersurface defined by $w^2 a b + w c + d = 0$.

Assume that there are distinguished singular points $\msq_1$ and $\msq_2$ of distinct type.
We may assume that $\msq_1 = \msp_4$ and $\msq_2 = \msq_5$ after replacing homogeneous coordinates.
Then, by a suitable choice of coordinates, the defining polynomials of $X$ can be written as
\[
F_1 = x_4 x_j + x_5 x_k + f \quad \text{and} \quad
F_2 = x_4 x_5 - g,
\]
where $0 \le j, k \le 3$, $j \ne k$, and $f,g \in \mbC [x_0,x_1,x_2,x_3]$.
The target $\mathbb{Q}$-Fano threefolds of the Sarkisov links centered at $\msq_1$ and $\msq_2$ are the same and it is the weighted hypersurface defined by $w^2 x_j x_k + w f + g = 0$.
This completes the proof.
\end{proof}

\subsection{Sarkisov link to a del Pezzo fibration}

In this subsection, we treat a member $X$ of $\mcG_i$ with $i \in I^*_{dP}$ and construct a Sarkisov link from $X$ to a del Pezzo fiber space $X' \to \mbP^1$ centered at each distinguished singular point.
The total space $X'$ is embedded as a hypersurface in a weighted projective space bundle over $\mbP^1$.
The construction is given in an explicit and elementary way, which can  be completely understood in terms of Cox rings.
We refer the readers to \cite{BZ} and \cite{Ahmadinezhad2} for the construction of Sarkisov links in terms of Cox rings and also for weighted projective space bundles. 

\begin{Def}
A {\it weighted projective space bundle} over $\mbP^m$ is a toric variety $W$ defined by the following data.
\begin{enumerate}
\item The Cox ring of $W$ is $\Cox (W) = \mbC [x_0,\dots,x_m,y_0,\dots,y_n]$.
\item The irrelevant ideal of $W$ is $I = (x_0,\dots,x_m) \cap (y_0,\dots,y_n)$.
\item The action of $(\mbC^*)^2$ on $\mbA^{m+n+2} = \Spec \Cox (W)$ is given by
\[
\begin{pmatrix}
1 & \dots & 1 & -\omega_0 & -\omega_1 & \dots & -\omega_n \\
0 & \dots & 0 & c_0 & c_1 & \dots & c_n
\end{pmatrix},
\]
where $\omega_i$ are non-negative integers and $c_i$ are positive integers.
\end{enumerate}
\end{Def}

Let $W$ be a weighted projective space bundle as in the above definition.
It is naturally isomorphic to the geometric quotient of $\Spec \Cox (W) \setminus V (I)$ by the action of $(\mbC^*)^2$.
The homogeneous coordinate ring $\Cox (W)$ has a $\Div (W)$-grading $\Cox (W) = \bigoplus_{\alpha \in \Div (X)} \Cox (W)_{\alpha}$, where $\Div (W) \cong \mbZ^2$ is the Weil divisor class group of $W$.
For integers $e_1$ and $e_2$, we denote by 
\[
\begin{pmatrix}
e_1 \\
e_2
\end{pmatrix} 
\subset 
\begin{pmatrix}
1 & \dots & 1 & -\omega_0 & -\omega_1 & \dots & -\omega_n \\
0 & \dots & 0 & c_0 & c_1 & \dots & c_n
\end{pmatrix},
\] 
a Weil divisor on $W$ whose linear equivalence class belongs to ${}^t (e_1, e_2) \in \mbZ^2 \cong \Div (X)$.
We note that the projection to the coordinates $x_0,\dots,x_m$ yields a morphism $W \to \mbP^m$ whose fiber is a weighted projective space $\mbP (c_0,c_1,\dots,c_n)$.

In the following, let $X = X_{d_1,d_2} \subset \mbP (a_0,a_1,\dots,a_5)$ be a member of $\mcG_i$ with $i \in I_{dP}$ and $\msp$ a distinguished singular point marked $\operatorname{dP}_k$ in the third column of the big table.
We observe that $d_1 = d_2$ and we put $d := d_1 = d_2$.

\begin{Lem}
We can choose homogeneous coordinates $x_0,\dots,x_5$ with the following properties.
\begin{enumerate}
\item $\msp = \msp_5$.
\item $a_0 = a_1$ and $a_5 \ge a_j$ for $j = 0,\dots,4$.
\item Defining polynomials of $X$ can be written as $F_1 = x_5 x_0 + G_1$ and $F_2 = x_5 x_1 + G_2$, where $G_1, G_2 \in \mbC [x_0,\dots,x_4]$.
\end{enumerate}
\end{Lem}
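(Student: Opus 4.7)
The plan is to follow the template of Lemma \ref{Slinkcoord}, making use of the equal-degree condition $d_1 = d_2 = d$. First, I would move $\msp$ to a coordinate vertex by an automorphism of $\mbP$. Running through the families $i \in I_{dP} = \{2,4,5,11,12,13\}$ using the third column of the big table, one checks that a distinguished singular point marked $\dP_k$ always sits at a vertex whose weight is the maximum among $a_0,\dots,a_5$, and that this maximum weight occurs paired with the smallest weight, which itself has multiplicity at least two. After reordering so that $a_0 \le a_1 \le \cdots \le a_5$ and relabelling $\msp$ as $\msp_5$, this yields (1) and (2): $\msp = \msp_5$, $a_5 \ge a_j$ for all $j$, $a_0 = a_1$, and in addition the numerical identity $a_0 + a_5 = d$.

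For (3), I would use quasismoothness of $X$ at $\msp_5$. The Jacobian entry $(\prt F_i/\prt x_j)(\msp_5)$ is nonzero only if $F_i$ contains a monomial $x_5^e x_j$ with $e a_5 + a_j = d$; since $a_0 + a_5 = d$ and $a_0$ is the minimal weight, the only possibility with $j \ne 5$ is $e = 1$ and $a_j = a_0 = a_1$. The rank-two condition on the Jacobian therefore implies that $F_1$ and $F_2$ contain monomials $x_5 x_{j_1}$ and $x_5 x_{j_2}$ with $j_1 \ne j_2$ and $a_{j_1} = a_{j_2} = a_0 = a_1$. A linear change of coordinates within the pair $\{x_0, x_1\}$, a rescaling of $x_5$, and if necessary a linear recombination of $F_1$ and $F_2$, then transport the linear-in-$x_5$ part of $F_1$ and $F_2$ to $x_5 x_0$ and $x_5 x_1$ respectively.

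It remains to check that no higher-order $x_5$ terms survive. Since $a_5 \ge a_0$ gives $2a_5 \ge a_0 + a_5 = d$, the only possible higher-order contribution is a pure power $x_5^2$, appearing only in the edge case $a_0 = a_5$; such a term can be absorbed by substitutions of the form $x_0 \mapsto x_0 - \lambda_1 x_5$, $x_1 \mapsto x_1 - \lambda_2 x_5$ with suitable constants $\lambda_i$, following exactly the device of Example \ref{exslNo57}. This produces $F_1 = x_5 x_0 + G_1$ and $F_2 = x_5 x_1 + G_2$ with $G_1, G_2 \in \mbC[x_0,\dots,x_4]$. The main obstacle is the list-wise verification that every $\dP_k$-type singular point fits the intrinsic pattern $a_0 + a_5 = d$ with $a_0 = a_1$; once this is carried out by direct inspection of the big table, the coordinate manipulations are routine and finite.
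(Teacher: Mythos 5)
Your overall strategy (move $\msp$ to a vertex, use quasismoothness of the Jacobian at that vertex together with $\deg F_1=\deg F_2$ to extract two independent linear-in-$x_5$ terms, then normalize by coordinate changes) is exactly what the paper's one-line proof intends. However, your claimed verification of the big table contains a concrete error that breaks the argument for three of the five families. You assert that the maximal weight $a_5$ is ``paired with the smallest weight'' and later use ``$a_0+a_5=d$ and $a_0$ is the minimal weight'' as the reason why the only Jacobian-relevant monomial is $x_5x_j$ with $a_j=a_0$. This is false for Nos.~5, 12 and 13. For No.~5, $X_{4,4}\subset\mbP(1,1,1,2,2,2)$ with $\msp$ at a weight-$2$ vertex, one needs $a_0=a_1=d-a_5=2$, while the minimal weight is $1$; for No.~12, $X_{6,6}\subset\mbP(1,1,2,2,3,4)$ with $\msp=\msp_5$, one needs $a_0=a_1=2$, not $1$; for No.~13, $X_{6,6}\subset\mbP(1,1,2,3,3,3)$, one needs $a_0=a_1=3$. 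Following your recipe literally in these cases would put a weight-$1$ coordinate in position $0$ and produce $x_5x_0$ of degree $a_5+1<d$, which is not even homogeneous of degree $d$. The correct intrinsic statement is that $a_0=a_1=d-a_5$, and what must be checked case by case is that the weight $d-a_5$ occurs at least twice among $a_0,\dots,a_4$ (it does in all five families), not that it is minimal.

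Once this is corrected the rest goes through, but note two further points. First, the exclusion of higher $x_5$-powers should not be argued via $2a_5\ge d$ forcing only a ``pure power $x_5^2$'' to be absorbed by a substitution: since $\msp\in X$, the coefficient of $x_5^{d/a_5}$ in each $F_i$ is automatically zero (otherwise $F_i(\msp)\ne 0$), so nothing needs absorbing there; the substitutions are instead needed to kill the remaining degree-$(d-a_5)$ cofactors of $x_5$ (e.g.\ for No.~5 the terms $x_5\cdot q$ with $q$ quadratic in the weight-$1$ variables), which you do by replacing $x_0,x_1$ with the full linear-in-$x_5$ cofactors $\ell_1,\ell_2$ of $F_1,F_2$. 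Second, one does need $d-2a_5\le 0$ (true in all five families) to rule out monomials $x_5^2m$ with $m$ nonconstant; your appeal to minimality of $a_0$ was doing this job and must be replaced by that direct check.
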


\begin{proof}
This follows from quasismoothness of $X$ and $\deg F_1 = \deg F_2$. 
\end{proof} 

We fix homogeneous coordinates of $\mbP (a_0,\dots,a_5)$ as above.
Let $\mbP := \mbP (a_0,\dots,a_4)$ be the weighted projective space with homogeneous coordinates $x_0,\dots,x_4$ and $Z := (x_0 G_2 - x_1 G_1 = 0) \subset \mbP$.
Then $Z$ is the image of the projection $\pi \colon X \ratmap \mbP$ from $\msp$.
The sections $x_0,\dots,x_4$ lift to plurianticanonical sections on $Y$ since $a_j \le a_5$ for $j = 0,\dots,4$, and they define a birational morphism $\psi \colon Y \to Z$ which contracts finitely many curves $(x_0 = x_1 = G_1 = G_2 = 0)$.
It follows that the Kawamata blowup $\varphi \colon Y \to X$ resolves the indeterminacy of $\pi$ and the induced morphism $\psi$ is a flopping contraction.

Let $W$ be the weighted projective space bundle over $\mbP^1$ defined by
\[
\begin{pmatrix}
1 & 1 & 0 & 0 & 0 & -1 \\
0 & 0 & a_2 & a_3 & a_4 & e
\end{pmatrix},
\]
with the generators of Cox ring $w_0,w_1,x_2,x_3,x_4$ and $v$, where $e := a_0 = a_1$.
Let $X'$ be the hypersurface of $W$ defined by $w_0 G'_2 - w_1 G'_1 = 0$, where $G'_j := G_j (w_0 v,w_1 v, x_2,x_3,x_4)$, which is of type
\[
\begin{pmatrix}
1 \\
d
\end{pmatrix}
\subset 
\begin{pmatrix}
1 & 1 & 0 & 0 & 0 & -1 \\
0 & 0 & a_2 & a_3 & a_4 & e
\end{pmatrix}.
\]
The open subset $X' \setminus (v=0)_{X'}$ is isomorphic to $Z \setminus (x_0=x_1=0)$ and has only terminal cyclic quotient singularities.
Assume that $a_j \ge 2$ for some $j = 2,3,4$.
In this case, the set $(v = x_k = x_j = 0)_{X'}$, where $\{j,k,l\} = \{2,3,4\}$, consists of a unique point denoted by $\msp'_j$ and the singularity of $X'$ at $\msp'_j$ is of type $\frac{1}{a_j} (a_k,a_l,e)$, which is terminal.  
Quasismoothness of $X$ along $(x_0 = x_1 = 0)$ implies smoothness of $X'$ along $(v=0)_{X'} \setminus \{\msp'_j \mid a_j \ge 2\}$.
This shows that $X'$ has only terminal cyclic quotient singularities and in particular $X'$ is $\mbQ$-factorial.

There are two projections $\Pi \colon W \to \mbP^1$ and $\Psi \colon W \to \mbP (e, e, a_2,a_3,a_4)$ defined as follows:
$\Pi$ is the projection to the coordinates $w_0, w_1$ and $\Psi$ is defined by 
\[
(w_0,w_1; x_2,x_3,x_4,v) \mapsto (w_0 v, w_1 v,x_2,x_3,x_4).
\]
We define $\pi' = \Pi|_{X'} \colon X' \to \mbP^1$ and $\psi' = \Psi |_{X'} \colon X' \to \mbP (e,e,a_2,a_3,a_4)$.
We see that $\pi' \colon X' \to \mbP^1$ is a del Pezzo fibration of degree $k = d/(e a_2 a_3 a_4)$.
The image of $\psi'$ is $Z$ and we claim that  $\psi' \colon X' \to Z$ is a flopping contraction.
Indeed we see that the anticanonical divisor $-K_{X'}$ is the restriction of a divisor on $W$ of type ${}^t (0,1)$ by the adjunction formula.
It follows that $w_0 v$, $w_1 v$, $x_2,x_3,x_4$ are plurianticanonical sections on $X'$ and they define the morphism $\psi'$, which implies that $\psi'$ is a $K_{Y'}$-trivial contraction.
The morphism $\Psi \colon W \to \mbP (a,a,a_2,a_3,a_4)$ contracts the divisor $(v = 0) \cong \mbP^1 \times \mbP (a_2,a_3,a_4)$ to the surface $(x_0 = x_1 = 0) \cong \mbP (a_2,a_3,a_4)$.
It follows that $\psi'$ contracts finitely many curves $(v = G'_1 = G'_2 = 0)$.
This shows that $\psi$ is small and thus it is a flopping contraction.

\begin{Thm} \label{SLdP}
Let $X = X_{d_1,d_2} \subset \mbP (a_0,a_1,\dots,a_5)$ be a member of the family $\mcG_i$ with $i \in I_{dP}$ and $\msp$ a distinguished singular point of $X$ marked $\dP_k$ in the third column of the big table.
Then, there is a Sarkisov link
\[
\xymatrix{
Y \ar[d]_{\varphi} \ar@{-->}[rd]^{\tau} & \\
X \ar@{-->}[r]_{\sigma_{\msp} \hspace{4mm}}& X'/\mbP^1}
\]
centered at $\msp$, to a del Pezzo fiber space $\pi' \colon X' \to \mbP^1$ of degree $k$, where $\varphi \colon Y \to X$ is the Kawamata blowup at $\msp$ and $\tau$ is a flop.
\end{Thm}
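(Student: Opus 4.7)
The bulk of the theorem has already been established in the discussion preceding its statement: we have the Kawamata blowup $\varphi \colon Y \to X$ at $\msp$, the flopping contraction $\psi \colon Y \to Z$, the explicit construction of $X'$ as a hypersurface in the weighted projective space bundle $W$, the morphism $\pi' \colon X' \to \mbP^1$ realized as the restriction of the projection $\Pi \colon W \to \mbP^1$, and the flopping contraction $\psi' \colon X' \to Z$, together with the fact that $X'$ is $\mbQ$-factorial with only terminal cyclic quotient singularities. The plan is therefore to assemble these ingredients into the conclusion and to verify the two remaining items: that $\pi' \colon X' \to \mbP^1$ is indeed a Mori fiber space, and that $\tau := (\psi')^{-1}\circ \psi \colon Y \ratmap X'$ is a flop.

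For the Mori fiber space property, $\mbQ$-factoriality and terminal singularities are already in hand. Relative ampleness of $-K_{X'}$ follows from the adjunction computation on $W$: $-K_{X'}$ is cut out by a Weil divisor on $W$ of type ${}^t(0,1)$, which is $\Pi$-ample, hence $\pi'$-ample on $X'$. For the relative Picard rank I would use that $\rho(W)=2$ and that $X'$ is a quasismooth hypersurface in $W$, which via a Lefschetz-type comparison gives $\rho(X')=2$ and therefore $\rho(X'/\mbP^1)=1$. The degree of a general fiber is read off from the bigrading to be $k = d/(e a_2 a_3 a_4)$, as claimed.

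For the identification of $\tau$ as a flop I would mimic the argument from the proof of Theorem \ref{linkFano}. Since $\psi$ and $\psi'$ are small birational morphisms onto the same variety $Z$, the composite $\tau$ is a small birational map. To rule out that $\tau$ is an isomorphism I compare extremal contractions: $Y$ has only $\varphi$ (divisorial) and $\psi$ (small) as extremal contractions, whereas $X'$ admits the fibration $\pi'$ onto $\mbP^1$, so an isomorphism $\tau$ would produce a contraction $Y\to \mbP^1$ incompatible with $\rho(Y)=2$. Picking a sufficiently small positive multiple $\Delta'$ of a $\psi'$-ample divisor and setting $\Delta := \tau_*^{-1}\Delta'$, the equality $\rho(Y/Z)=1$ forces $-\Delta$ to be $\psi$-ample, so $\tau$ is the $K$-trivial log flip, i.e., the flop of $\psi$.

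The main obstacle I anticipate is the rigorous identification $\rho(X'/\mbP^1)=1$, since this is the one point where global rather than purely local/constructive reasoning dominates; the required Lefschetz comparison between the class groups of $X'$ and $W$ must be done in the quasismooth weighted setting rather than in the usual smooth projective context. Once this is settled, the remaining verifications are essentially formal, and the resulting diagram fits the definition of a Sarkisov link of type I given earlier in the paper.
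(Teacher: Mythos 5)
Your proposal is correct and matches the paper's (very brief) proof, which likewise just assembles the preceding construction of $\varphi$, the flopping contractions $\psi\colon Y\to Z$ and $\psi'\colon X'\to Z$, and the fibration $\pi'$ into the stated diagram; your extra verifications (that $\tau$ is a genuine flop and that $\pi'$ is a Mori fiber space) only make explicit what the paper leaves implicit. The one place where you overcomplicate matters is the relative Picard number: no Lefschetz-type comparison for quasismooth hypersurfaces in $W$ is needed, since $\tau=(\psi')^{-1}\circ\psi$ is an isomorphism in codimension one between $\mbQ$-factorial varieties, so $\rho(X')=\rho(Y)=2$ and hence $\rho(X'/\mbP^1)=1$ directly.
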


\begin{proof}
By the above argument, we have the following diagram
\[
\xymatrix{
Y \ar[d]_{\varphi} \ar[rd]^{\psi} \ar@{-->}[rr] & & X' \ar[ld]_{\psi'} \ar[d]^{\pi'} \\
X & Z & \mbP^1}
\]
where $\varphi$ is the Kawamata blowup at $\msp$ and $\psi$, $\psi'$ are flopping contractions.
This shows that the induced map $\sigma_{\msp} \colon X \ratmap X'$ is a Sarkisov link.
\end{proof}

\begin{Rem} \label{rem:dPCox}
In \cite{Ahmadinezhad}, the Sarkisov link $\sigma_{\msp} \colon X \ratmap X'$, or more precisely its inverse $\sigma^{-1}_{\msp}$, is constructed in terms of the Cox ring in the case where $\dP_2$ is marked in the third column of the big table.
\end{Rem}

\section{Birational involutions} \label{sec:birinv}

In this section, we construct a birational involution of a $\mbQ$-Fano WCI centered at a singular point, which is a Sarkisov link of type II.
The argument of this section is quite similar to that of \cite{CPR} and the basic idea behined this is the theory of ``unprojection".
The constructions of quadratic and elliptic involutions in this section provide explicit examples of type II unprojection.
Note also that the construction of Sarkisov links in the previous section provides explicit examples of type I unprojection (or Kustin-Miller unprojection).
We refer the readers to \cite{PR} and \cite{Pap} for detailed accounts of type I and type II unprojections, respectively, and also to \cite{Reid2} for an introduction to this subject.
Let $X = X_{d_1,d_2} \subset \mbP (a_0,\dots,a_5)$ be a member of $\mcG_i$ with defining polynomials $F_1$ and $F_2$ of degree $\deg F_i = d_i$.

\subsection{Quadratic involutions} \label{sec:QI}

Let $\msp$ be a singular point of $X$ marked Q.I. in the third column of the big table.
Throughout this subsection, we assume the following additional assumption.

\begin{Assumption} \label{assmpQI}
If $X$ is a member of $\mcG_{30}$ or $\mcG_{40}$ (resp.\ $\mcG_{64}$), then the monomial $y^2 z$ (resp.\ $z^2 s$) appears in one of the defining polynomials of $X$ with non-zero coefficient.
\end{Assumption}

\begin{Rem}
Assumption \ref{assmpQI} is necessary to prove Lemma \ref{QIcoord} below for a member of $\mcG_{30}$ or $\mcG_{40}$ (resp.\ $\mcG_{64}$) and its singular point of type $\frac{1}{3} (1,1,2)$ (resp.\ $\frac{1}{5} (1,2,3)$).
If Assumption \ref{assmpQI} fails, then we can exclude those points as a maximal center (see Section \ref{sec:singpt3}).
\end{Rem}

\begin{Lem} \label{QIcoord}
We can choose homogeneous coordinates $x_{i_0}, \dots, x_{i_3}, \xi$ and $\zeta$ such that
\begin{enumerate}
\item $\msp$ is the vertex $\msp_{\xi}$ at which only the coordinate $\xi$ does not vanish,
\item $a_{i_2} \le a_{i_3} < \deg \xi < \deg \zeta < 2 \deg \xi$, and
\item defining polynomials $F_1$ and $F_2$ of $X$ are written as
\[
\begin{split}
F_1 &= \xi^2 x_{i_0} + \xi a + \zeta^2 + b, \\
F_2 &= \xi x_{i_1} + \zeta c + d,
\end{split}
\]
where $a,b,c,d \in k [x_{i_0},x_{i_1},x_{i_2},x_{i_3}]$ are homogeneous polynomials.
Here, we do not assume that $d_1 \le d_2$.
\end{enumerate}
\end{Lem}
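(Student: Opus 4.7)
The plan is a case by case argument running over the Q.I.\ entries of the big table; the numerical data of each entry pins down which coordinate plays the role of $\xi$ and which plays the role of $\zeta$. For (i), I would use a linear change of coordinates to move $\msp$ to a coordinate vertex of $\mbP(a_0,\dots,a_5)$ and rename that coordinate $\xi$. Inspecting the table, in each Q.I.\ case exactly one of the remaining five weights lies strictly between $\deg\xi$ and $2\deg\xi$; I relabel the corresponding coordinate as $\zeta$ and the remaining four as $x_{i_0},x_{i_1},x_{i_2},x_{i_3}$ so that $a_{i_2}\le a_{i_3}<\deg\xi$, which proves (ii).

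The substance of the lemma lies in (iii). Expand $F_j$ as a polynomial in $\xi$ with coefficients in the other five variables. Quasismoothness of $X$ at $\msp_\xi$ means that the Jacobian of $(F_1,F_2)$ has rank $2$ at $\msp_\xi$, so there exist indices $l,l'$ and monomials $\xi^{e_1}x_l\in F_1$, $\xi^{e_2}x_{l'}\in F_2$ with $e_j\deg\xi+a_l=d_j$ (respectively $+a_{l'}$) yielding a nonzero $2\times 2$ minor. A short inspection of the numerics tabulated at each Q.I.\ entry singles out one equation, which I relabel $F_1$, as containing a term $\xi^2 x_{i_0}$ with $a_{i_0}=d_1-2\deg\xi$, and the other, $F_2$, as containing a term $\xi x_{i_1}$ with $a_{i_1}=d_2-\deg\xi$; rescaling $\xi$ normalizes both coefficients to $1$.

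To bring $F_1,F_2$ into the prescribed shape, I would classify the remaining monomials by their $\xi$- and $\zeta$-degrees. The identity $2\deg\zeta=d_1$ together with the bounds from (ii) ensures that in $F_1$ the variable $\zeta$ appears only in a pure square $\zeta^2$, after possibly replacing $F_2$ by a suitable linear combination of $F_1$ and $F_2$ in the case $d_1=d_2$. Rescaling $\zeta$ makes the coefficient of $\zeta^2$ in $F_1$ equal to $1$, and a substitution $\zeta\mapsto\zeta+h$ with $h\in\K[x_{i_0},\dots,x_{i_3}]$ of degree $\deg\zeta$ kills every $\xi$-free monomial of $F_1$ that is linear in $\zeta$. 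In $F_2$ the $\zeta$-quadratic part has been eliminated, and higher $\zeta$-powers are ruled out by a case-by-case degree comparison; thus the $\zeta$-linear part separates as $\zeta c$. Absorbing the surviving monomials into $a,b,c,d\in\K[x_{i_0},\dots,x_{i_3}]$ according to their $\xi$- and $\zeta$-degrees yields the prescribed form.

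The principal obstacle is securing the specific monomial $\xi^2 x_{i_0}\in F_1$. Generically the weight $d_1-2\deg\xi$ is realized by a single coordinate of the ambient weighted projective space, and the monomial is forced by quasismoothness alone. However, for $\mcG_{30}$ and $\mcG_{40}$ (resp.\ $\mcG_{64}$) at the singular point of type $\tfrac{1}{3}(1,1,2)$ (resp.\ $\tfrac{1}{5}(1,2,3)$), quasismoothness admits alternative realizations and one must impose by hand the presence of $y^2 z$ (resp.\ $z^2 s$); this is precisely Assumption~\ref{assmpQI}. Points where this assumption fails will be handled by a separate exclusion argument in Section~\ref{sec:singpt3}.
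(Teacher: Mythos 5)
Your overall strategy coincides with the paper's (case-by-case over the Q.I.\ entries, quasismoothness plus generality to secure the key monomials, then coordinate changes), but there is a genuine gap in your normalization of $F_1$. You assert that the bounds in (ii) together with $2\deg\zeta=d_1$ force $\zeta$ to appear in $F_1$ only as the pure square $\zeta^2$, and you only permit substitutions $\zeta\mapsto\zeta+h$ with $h\in\K[x_{i_0},\dots,x_{i_3}]$. This is false: a cross term $\xi\zeta\alpha$ with $\alpha\in\K[x_{i_0},\dots,x_{i_3}]$ homogeneous of degree $a_{\zeta}-a_{\xi}$ is compatible with all the degree constraints, since $0<a_{\zeta}-a_{\xi}<a_{\xi}$. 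For example, for $X_{4,6}\subset\mbP(1,1,2,2,2,3)$ in $\mcG_8$ one has $a_{\xi}=2$, $a_{\zeta}=3$, and a general member contains $\xi\zeta\alpha$ with $\alpha$ a linear form in $x_0,x_1$. No substitution with $h$ free of $\xi$ can remove this term; the paper eliminates it by completing the square with $\zeta\mapsto\zeta-\frac{1}{2}\xi\alpha$ and then restoring the shape of $F_1$ and $F_2$ via the compensating changes $x_{i_0}\mapsto x_{i_0}+\frac{1}{4}\alpha^2$ and $x_{i_1}\mapsto x_{i_1}+\frac{1}{2}\alpha c$. Without this step your $F_1$ retains the term $\xi\zeta\alpha$, and the elimination of $\xi$ and $\zeta$ that follows in Section \ref{sec:QI} (the construction of $v$ and the relation \eqref{eqQI1}) no longer goes through as written.

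A second, smaller omission: you credit quasismoothness (plus Assumption \ref{assmpQI} for $\mcG_{30},\mcG_{40},\mcG_{64}$) with producing $\xi x_{i_1}\in F_2$, but for $\mcG_{10}$ and $\mcG_{15}$ quasismoothness at $\msp_{\xi}$ does not single out this monomial. For $X_{6,7}\subset\mbP(1^2,2^2,3,5)$ in $\mcG_{15}$ with $\xi=y_0$, the rank condition on the Jacobian is already satisfied if, say, $y_0^2z$ or $y_0^3x_i$ lies in $F_2$ instead of $y_0s$. The paper needs condition $(\mathrm{C}_2)$ here: if $y_0s\notin F_2$ then $X$ contains the WCI curve $(x_0=x_1=y_1=z=0)$ of type $(1,1,2,3)$, which is excluded. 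You should add this to your case analysis.
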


\begin{proof}
Possibly re-choosing homogeneous coordinates, we can assume that $\msp$ is a vertex.
Let $\xi$ be the homogeneous coordinate which does not vanish at $\msp$.
Assume first that $X$ is a member of $\mcG_i$ with $i \in I^* \setminus \{10,15\}$.
By quasismoothness of $X$ and Assumption \ref{assmpQI}, we can choose homogeneous coordinates $x_{i_0}, \dots,x_{i_3}$ and $\zeta$ so that $a_{i_2} \le a_{i_3} \le \deg \zeta$ and defining polynomials are written as
\[
\begin{split}
F_1 &= \xi^2 x_{i_0} + \xi ( \zeta \alpha + a) + \zeta^2 + b, \\
F_2 &= \xi x_{i_1} + \zeta c + d,
\end{split}
\]
where $\alpha,a,b,c,d \in k [x_{i_0},x_{i_1},x_{i_2},x_{i_3}]$.
Replacing $\zeta \mapsto \zeta - \frac{1}{2} \xi \alpha$, $x_{i_0} \mapsto x_{i_0} + \frac{1}{4} \alpha^2$ and $x_{i_1} \mapsto x_{i_1} + \frac{1}{2} \alpha c$, we get the desired equations.
Assume that $X = X_{6,7} \subset \mbP (1^2,2^2,3,5)$ is a member of $\mcG_{15}$ and $\msp$ is a point of type $\frac{1}{2} (1,1,1)$.
We may assume that $\msp = \msp_2$.
By quasismoothness, we can write $F_1 = y_0^2 y_1 + y_0 (z \alpha + a) + z^2 + b$, where $\alpha, a,b \in k [x_0,x_1,y_1,s]$.
We see that $y_0 s \in F_2$ because otherwise $X$ contains the WCI curve $(x_0 = x_0 = y_1 = z = 0)$ of type $(1,1,2,3)$ and this is impossible by $(\mathrm{C}_2)$.
It follows that we can write $F_2 = y_0 s + z c + d$ for some $c, d \in k[x_0,x_1,y_1,s]$.
Replacing $z, y_1,s$, we get the desired equations.
The case where $X \in \mcG_{10}$ can be proved similarly.
The assertion (2) can be checked easily.
\end{proof}

In the following, we fix homogeneous coordinates and defining polynomials of $X$ as in Lemma \ref{QIcoord}.
We put $a_{\xi} := \deg \xi$ and $a_{\zeta} := \deg \zeta$.
Let $\varphi \colon Y \to X$ be the Kawamata blowup of $X$ at $\msp$ with the exceptional divisor $E$.
We explain the argument of this subsection.
We shall construct a homogeneous polynomial $v$ which vanishes along $E$ to a sufficiently large order and then construct the anticanonical model $Z$ of $Y$ explicitly.
We observe that the anticanonical map is a birational morphism $\psi \colon Y \to Z$ and $Z$ is a weighted hypersurface in $\mbP (a_{i_0},a_{i_1},a_{i_2},a_{i_3},\deg v)$ with homogeneous coordinates $x_{i_0},\dots,x_{i_3}$ and $v$ defined by a quadratic equation with respect to $v$.
As a consequence, we have the diagram
\[
\xymatrix{
Y \ar[d]_{\varphi} \ar[rd]^{\psi} \ar@{-->}[rrr] & & & Y \ar[d]^{\varphi} \ar[ld]_{\psi} \\
X & Z \ar[r]^{\iota_Z} & Z & X}
\]
where $\iota_Z$ is the biregular involution given by the double cover $Z \to \mbP (a_{i_0},a_{i_1},a_{i_2},a_{i_3})$. 
Finally, we conclude that if $\psi$ contracts a divisor, then $\msp$ is not a maximal center, and if $\psi$ is small, then $\psi^{-1} \circ \iota_Z \circ \psi \colon Y \ratmap Y$ is the flop and the above diagram gives the Sarkisov link $\iota_{\msp} \colon X \ratmap X$ centered at $\msp$.

Multiplying $F_1$ by $x_{i_1}$, we get
\[
x_{i_1} F_1 = x_{i_1} \xi (x_{i_0} \xi + a) + x_{i_1} (\zeta^2 + b).
\]
Subtracting $h_1 F_2$ from $x_{i_1} F_1$, where $h_1 := x_{i_0} \xi +  a$, we get
\[
x_{i_1} F_1 - h_1 F_2 
= - (c \zeta + d)(x_{i_0} \xi + a) + x_{i_1} (\zeta^2 + b).
\]
Filtering off terms divisible by $\zeta$, we get
\begin{equation} \label{eqQI1}
x_{i_1} F_1 - h_1 F_2 
= v \zeta - d (x_{i_0} \xi + a) + x_{i_1} b,
\end{equation}
where
\begin{equation} \label{eqQI2}
v := - x_{i_0} c \xi + x_{i_1} \zeta - a c.
\end{equation}

\begin{Lem}
The sections $x_{i_0}, \dots, x_{i_3}$ and $v$ lift to plurianticanonical sections of $Y$.
\end{Lem}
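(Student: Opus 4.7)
The plan is to work in the orbifold chart $(\xi \ne 0)$ near $\msp = \msp_\xi$ and verify the requisite vanishing order along the exceptional divisor $E$ of the Kawamata blowup $\varphi \colon Y \to X$ for each listed section. Setting $r := a_\xi$, the coefficients of $\xi^2 x_{i_0}$ in $F_1$ and of $\xi x_{i_1}$ in $F_2$ are nonzero, so the implicit function theorem, applied in the orbifold chart $\xi = 1$ to solve for $x_{i_0}$ and $x_{i_1}$, makes $x_{i_2}, x_{i_3}, \zeta$ local orbifold coordinates at $\msp$ with weights $\bar a_{i_2}, \bar a_{i_3}, \bar a_\zeta$ modulo $r$; the singularity is therefore of type $\frac{1}{r}(\bar a_{i_2}, \bar a_{i_3}, \bar a_\zeta)$ and $\varphi$ is the weighted blowup with these weights.

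Since $-K_Y = \varphi^*(-K_X) - (1/r) E$ as $\mbQ$-divisors, a section $s \in H^0(X, m A)$ lifts to a section of $-m K_Y$ precisely when $\ord_E \varphi^* s \ge m/r$, the order being computed in the orbifold chart. For $x_{i_2}$ and $x_{i_3}$ this is immediate, as they are themselves local orbifold coordinates. For $x_{i_0}$ I would use the relation $\xi^2 x_{i_0} = -(\xi a + \zeta^2 + b)$ extracted from $F_1 = 0$, together with the degree identity $a_{i_0} = d_1 - 2 a_\xi$: since $2 a_\zeta = d_1$, the term $\zeta^2$ contributes $E$-order $2 a_\zeta/r \ge a_{i_0}/r$, while the terms from $a$ and $b$ are polynomials in $x_{i_0}, \dots, x_{i_3}$ whose $E$-orders are bounded below by $a_{i_0}/r$ via an induction on total weighted degree. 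An analogous argument using $F_2 = 0$ yields $\ord_E x_{i_1} \ge a_{i_1}/r$.

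For $v$ the cleanest route is the identity \eqref{eqQI1}, which on $X$ (where $F_1 = F_2 = 0$) reads $v \zeta = d(x_{i_0} \xi + a) - x_{i_1} b$. Combining the orders just obtained for $x_{i_0}$ and $x_{i_1}$ with $\ord_E \zeta = a_\zeta/r$, and using that $a, b, d$ are polynomials in $x_{i_0}, \dots, x_{i_3}$, yields the bound $\ord_E v \ge \deg v / r$. The main obstacle is the family-by-family bookkeeping: the precise weights $a_{i_j}, a_\xi, a_\zeta$ and the monomial content of $a, b, c, d$ vary across the big table, so a uniform verification of all the numerical inequalities requires a careful case-by-case check, but the structural argument outlined above is essentially uniform once the degree relations from Lemma \ref{QIcoord} are exploited.
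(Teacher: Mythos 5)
Your overall strategy is exactly the paper's: identify $x_{i_2}$, $x_{i_3}$, $\zeta$ as local (orbifold) coordinates at $\msp$, recognize $\varphi$ as the weighted blowup with the corresponding weights, extract the orders of $x_{i_0}$ and $x_{i_1}$ from $F_1 = F_2 = 0$, and read off the order of $v$ from the relation $v\zeta = d(x_{i_0}\xi + a) - x_{i_1}b$ of \eqref{eqQI1}. The paper, like you, leaves the verification of $\ord_E x_{i_0} \ge a_{i_0}/a_{\xi}$ and $\ord_E x_{i_1} \ge a_{i_1}/a_{\xi}$ to explicit case-by-case observation, so that part is acceptable as stated.

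There is, however, a concrete numerical error that, as written, breaks the last step. You correctly say at the outset that the blowup weight of $\zeta$ is the residue $\bar{a}_{\zeta}$ of $a_{\zeta}$ modulo $r = a_{\xi}$, which equals $a_{\zeta} - a_{\xi}$ since $a_{\xi} < a_{\zeta} < 2a_{\xi}$; hence $\ord_E \zeta = (a_{\zeta} - a_{\xi})/a_{\xi}$, not $a_{\zeta}/a_{\xi}$ as you later assert (in particular $\zeta$ itself does not lift to a plurianticanonical section). In the $x_{i_0}$ step the slip is harmless: the correct order of $\zeta^2$ is $2(a_{\zeta}-a_{\xi})/a_{\xi} = (d_1 - 2a_{\xi})/a_{\xi} = a_{i_0}/a_{\xi}$, so the needed bound holds, with equality. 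But in the $v$ step it is fatal as stated: dividing $v\zeta$ by $\zeta$ with your value $\ord_E \zeta = a_{\zeta}/a_{\xi}$ would require the right-hand side $d(x_{i_0}\xi + a) - x_{i_1}b$ to vanish along $E$ to order $(\deg v + a_{\zeta})/a_{\xi}$, and it does not --- the term $d\,x_{i_0}\,\xi$ only reaches $\deg(d\,x_{i_0})/a_{\xi} = (\deg v + a_{\zeta} - a_{\xi})/a_{\xi}$, since $\xi$ contributes nothing. With the correct value $\ord_E \zeta = (a_{\zeta}-a_{\xi})/a_{\xi}$ this is precisely enough and yields $\ord_E v \ge \deg v/a_{\xi}$, as in the paper. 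The fix is immediate, but the inequality is tight, so the bookkeeping must use the residue $a_{\zeta}-a_{\xi}$ throughout rather than $a_{\zeta}$.
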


\begin{proof}
We see that $x_{i_2}$, $x_{i_3}$ and $\zeta$ can be chosen as local coordinates of $X$ at $\msp$ and $\varphi \colon Y \to X$ is the weighted blowup with $\wt (x_{i_2}, x_{i_3}, \zeta) = 1/a_{\xi} (a_{i_2}, a_{i_3}, a_{\zeta} - a_{\xi})$.
It follows that $x_{i_2}$, $x_{i_3}$ and $\zeta$ vanish along $E$ to order $a_{i_2}/a_{\xi}$, $a_{i_3}/a_{\xi}$ and $(a_{\zeta} - a_{\xi})/a_{\xi}$, respectively.
By the equations $F_1 = 0$ and $F_2 = 0$, and by explicit observations in each case, we see that $x_{i_0}$ and $x_{i_1}$ vanish along $E$ to order at least $a_{i_0}/a_{\xi}$ and $a_{i_1}/a_{\xi}$, respectively.
By the equation \eqref{eqQI1}, we have $v \zeta = d (x_{i_0} \xi + a) - x_{i_1} b$.
We see that $\deg (d a) = \deg (x_{i_1} b) > \deg (d x_{i_0}) = \deg v + a_{\zeta} - a_{\xi}$ and thus the section on the right-hand side of the equation vanishes along $E$ to order $\ge (\deg v + a_{\zeta} - a_{\xi})/a_{\xi}$.
It follows that $v$ vanishes along $E$ to order at least $\deg v/a_{\xi}$ and lifts to a plurianticanonical section of $Y$.
\end{proof}

We can view \eqref{eqQI1}, \eqref{eqQI2} and the defining equation $F_2 = 0$ as relations which are linear in variables $\xi$ and $\zeta$:
\[
M \cdot 
\begin{pmatrix}
\xi \\
\zeta \\
1
\end{pmatrix}
= 
\begin{pmatrix}
0 \\
0 \\
0
\end{pmatrix},
\]
where
\[
M := 
\begin{pmatrix}
x_{i_1} & c & d \\
- x_{i_0} d & v & - a d + x_{i_1} b \\
x_{i_0} c & - x_{i_1} & v + a c
\end{pmatrix}.
\]
We see that the determinant $\det M$ is divisible by $x_{i_1}$ and we define a homogeneous polynomial $G = G (x_{i_0},x_{i_1},x_{i_2},x_{i_3},v) := (\det M)/x_{i_1}$.
We have
\[
G = v^2 + a c v + x_{i_1}(x_{i_1} b - a d) + x_{i_0} (b c^2 + d^2).
\]

Let $\mbP = \mbP (a_{i_0},a_{i_1},a_{i_2},a_{i_3},a_v)$ be the weighted projective space with homogeneous coordinates $x_{i_0},\dots,x_{i_3}$ and $v$, where $a_v = \deg v$, and let $\pi \colon X \ratmap \mbP$ be the rational map defined by $x_{i_0}, \dots, x_{i_3}$ and $v$.
The image of $\pi$ is the weighted hypersurface $Z := (G = 0) \subset \mbP$ and the rational map $\pi \colon X \ratmap Z$ is birational.

\begin{Thm} \label{birinvQI}
Let $X$ be a member of $\mcG_i$ with $i \in I^*$ satisfying Assumption \ref{assmpQI} and $\msp$ a singular point of $X$ marked Q.I. in the big table.
Then, one of the following holds.
\begin{enumerate}
\item $\msp$ is not a maximal center.
\item There is a birational involution $\iota_{\msp} \colon X \ratmap X$ centered at $\msp$ which is a Sarkisov link of type II.
\end{enumerate}
\end{Thm}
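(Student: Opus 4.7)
The plan is to realize $Z = (G = 0) \subset \mbP(a_{i_0},\ldots,a_{i_3},a_v)$ as the anticanonical model of $Y$, transport the Galois involution of the double cover $Z \to \mbP(a_{i_0},\ldots,a_{i_3})$ (which is encoded by the fact that $G$ is quadratic in $v$) back to $X$ via this contraction, and fall back on the test class method in the degenerate case when the anticanonical map is divisorial rather than small.

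First I would upgrade the previous lemma to a morphism: the plurianticanonical sections $x_{i_0},\ldots,x_{i_3},v$ on $Y$ have no common zero, which only needs checking along the exceptional divisor $E$ of $\varphi$ and can be verified in orbifold charts of the Kawamata blowup using the vanishing orders computed above. These sections therefore define a morphism $\psi \colon Y \to Z$ resolving $\pi$. Because $\psi^{*}\mcO_Z(1)$ is a positive multiple of $-K_Y$, the morphism $\psi$ is $K_Y$-trivial, so $Z$ is the anticanonical model of $Y$ and $\psi$ is either a small (flopping) contraction or a divisorial $K_Y$-trivial contraction.

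Next, since $G = v^{2} + a c v + (\cdots)$ is monic quadratic in $v$, the projection $Z \to \mbP(a_{i_0},\ldots,a_{i_3})$ is a double cover whose biregular Galois involution $\iota_Z$ acts by $v \mapsto -v - a c$. In the small case, $\psi^{-1} \circ \iota_Z \circ \psi$ is a small $K_Y$-trivial birational self-map of the $\mbQ$-factorial terminal threefold $Y$, hence a flop of $\psi$; conjugating by $\varphi$ yields a birational involution $\iota_{\msp} \colon X \ratmap X$ which fits into the diagram described in the preamble of the subsection, and by construction this is a Sarkisov link of type II centered at $\msp$, giving (2).

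In the divisorial case I would exclude $\msp$ as a maximal center via Corollary \ref{criexclmstc}. Taking $M := \psi^{*} H$ for an ample class $H$ on $Z$ produces a nef class on $Y$; since $-K_Y$ is $\psi$-trivial and $\psi$ contracts a divisor, the projection formula yields $(M \cdot (-K_Y)^{2}) = (H \cdot \psi_{*}(-K_Y)^{2}) \le 0$, the inequality coming from a direct computation of $\psi_{*}(-K_Y)^{2}$ via the contracted locus of $\psi$. The main obstacle I anticipate is the family-by-family verification of whether $\psi$ is small, which amounts to showing that certain low-degree curves (typically the WCI curves already ruled out by $(\mathrm{C}_2)$) do not lie in $\psi^{-1}(\Sing Z)$; and in the remaining divisorial cases, the concrete identification of the contracted divisor in orbifold charts so as to exhibit a nef $H$ realizing the test-class inequality.
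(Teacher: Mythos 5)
Your overall structure (realize $Z$ as the anticanonical model, transport the Galois involution of $Z \to \mbP(a_{i_0},\dots,a_{i_3})$ in the small case, exclude $\msp$ in the divisorial case) matches the paper, and the small case is essentially the paper's argument. One point you gloss over there: you assert that $\psi^{-1}\circ\iota_Z\circ\psi$ is ``hence a flop,'' but a small $K_Y$-trivial birational self-map could a priori be a biregular automorphism. The paper rules this out by observing that any point $\msq \in Z$ under a $\psi$-contracted curve is non-$\mbQ$-factorial, hence non-quasismooth, so $\prt G/\prt v = 2v+ac$ vanishes there; thus $\iota_Z(\msq)=\msq$ and $\iota_Z^*$ acts as $-\operatorname{Id}$ on the local class group, which is exactly what makes $\iota_Y$ a genuine flop. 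You need some version of this step.

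The real gap is your treatment of the divisorial case. Since $\psi$ is the anticanonical morphism, $-K_Y = \psi^*A_Z$ for an ample $\mbQ$-divisor $A_Z$ on $Z$, so for any ample $H$ on $Z$ the projection formula gives
\[
(\psi^*H \cdot (-K_Y)^2) = (H \cdot A_Z^2) > 0,
\]
not $\le 0$; the pushforward $\psi_*\bigl((-K_Y)^2\bigr)$ equals $A_Z^2$ regardless of what $\psi$ contracts, so ``a direct computation via the contracted locus'' cannot rescue the inequality. Worse, the test class method is structurally unable to exclude $\msp$ here: since $-K_Y$ is nef with $((-K_Y)^3) = (A_Z^3) > 0$ and $((-K_Y)^2\cdot\varphi^*A) = (A^3) > 0$, the $1$-cycle $(-K_Y)^2$ pairs strictly positively with the whole nef cone of $Y$, i.e.\ $(-K_Y)^2 \in \Int\bNE(Y)$, so \emph{no} test class $M$ satisfies $(M\cdot(-K_Y)^2)\le 0$. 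The paper's argument in this case is of a different nature: if $\msp$ were a maximal center, the $2$-ray game starting from the Kawamata blowup would have to produce a Sarkisov link, so the second extremal contraction of $Y$ would have to be either small or $K_Y$-negative; but $\psi$ is a $K_Y$-trivial divisorial contraction, a ``bad link,'' which is a contradiction. You need this Sarkisov-program argument (or the method of bad links of Section \ref{sec:singpt3}) in place of the test class computation.
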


\begin{proof}
The Kawamata blowup $\varphi \colon Y \to X$ of $X$ at $\msp$ resolves the indeterminacy of the birational map $\pi \colon X \ratmap Z$ and let $\psi \colon Y \to Z$ be the induced birational morphism.
We see that $\psi$ is not an isomorphism since it contracts the strict transform on $Y$ of $(x_{i_1} = d = c = 0)_X$ to $(x_{i_1} = d = c = v = 0) \subset Z$.

Assume first that $\psi$ contracts a divisor.
If $\msp$ is a maximal center, then the $2$-ray game starting with $\varphi$ ends with a Mori fiber space and one obtains a Sarkisov link.
In particular, if $\msp$ is a maximal center, then the morphism $\psi$ is either a small or a $K_Y$-negative contraction.
By the construction of $\psi$, it is a $K_Y$-trivial contraction and it is not small by the assumption.
This is a contradiction and $\msp$ is not a maximal center.

Assume that $\psi$ is small.
Let $\iota_Z \colon Z \to Z$ be the biregular involution over $\mbP (a_{i_0},\dots,a_{i_3})$ which interchanges two roots of the equation $G = 0$.
Put $\iota_Y = \psi^{-1} \circ \iota_Z \circ \psi \colon Y \ratmap Y$.
Then, $\iota_Y$ is a flop and the diagram
\[
\xymatrix{
Y \ar[d]_{\varphi} \ar[rd]^{\psi} \ar@{-->}[rrr]^{\iota_Y} & & & Y \ar[d]^{\varphi} \ar[ld]_{\psi} \\
X & Z \ar[r]^{\iota_Z} & Z & X}
\]
gives a Sarkisov link of type II.
We explain that $\iota_Y$ is indeed a flop.
Let $\Gamma \subset Y$ be an irreducible curve which is contracted to a point $\msq \in Z$ by $\psi$.
Then $Z$ is not $\mbQ$-factorial at $\msq$ and thus $Z$ cannot be quasismooth at $\msq$.
In particular, $\prt G/\prt v = 2 v + a c$ vanishes at $\msq$.
It follows that $\iota_Z (\msq) = \msq$ since $\iota_Z$ is defined by $\iota_Z^* x_{i_j} = x_{i_j}$ and $\iota_Z^*v = - v - ac$.
Hence $\iota_Z^*$ operates the class group of $Z$ at $\msq$ as $- \operatorname{Id}$.
This shows that $\iota_Y$ is a flop. 
The desired birational involution $\iota_{\msp}$ of $X$ is the one induced by $\iota_Y$.
\end{proof}

\begin{Rem}
The case (1) in Theorem \ref{birinvQI} does occur at least for some very special members.
For example, let $X = X_{14,18} \subset \mbP (1,2,6,7,8,9)$ be a member of $\mcG_{75}$ and $\msp = \msp_4$ the point of type $\frac{1}{8} (1,1,7)$.
Defining polynomials of $X$ can be written as $F_1 = t z + u c + d$ and $F_2 = t^2 y + t a + u^2 + b$.
We assume that $c = 0$.
Note that $X$ is quasismooth if $a,b$ and $d$ are general.
In this case, the divisor $(z = 0)_X$ is contracted to the curve $(z = d = v = 0) \subset Z$ by $\pi \colon X \ratmap Z$.
Therefore, the Kawamata blowup of $X$ at $\msp$ leads to a bad link and $\msp$ is not a maximal center.
\end{Rem}

\subsection{Elliptic involutions}

Let $\msp$ be a singular point of $X$ marked E.I. in the third column of the big table.

\begin{Lem} \label{EIcoord}
We can choose homogeneous coordinates $x_{i_0}, x_{i_1}, x_{i_2}, \xi, \zeta$  and $\eta$ of $\mbP (a_0,\dots,a_5)$ such that
\begin{enumerate}
\item $\msp$ is the vertex $\msp_{\xi}$ at which only the coordinate $\xi$ does not vanish,
\item $a_{i_1} \le a_{i_2} < a_{\xi} < a_{\eta} < a_{\zeta}$ and $a_{\eta} < 2 a_{\xi}$, and
\item defining polynomials $F_1$ and $F_2$ of $X$ are written as
\[
\begin{split}
F_1 &= \xi \zeta + \eta^2 + \eta a + b, \\
F_2 &= \xi^2 x_{i_0} + \xi (\eta c + d) + \zeta \eta + \eta^2 e + \eta f + g,
\end{split}
\]
where $a, b, \dots, g \in \mbC [x_{i_0}, x_{i_1}, x_{i_2}]$ are homogeneous polynomials.
\end{enumerate}
\end{Lem}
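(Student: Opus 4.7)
The plan is to mirror the proof of Lemma~\ref{QIcoord}, combining quasismoothness of $X$ with conditions $(\mathrm{C}_1)$ and $(\mathrm{C}_2)$ and finishing by explicit coordinate changes. After applying an automorphism of $\mbP(a_0,\dots,a_5)$ we may assume that $\msp$ is a coordinate vertex, and we relabel the unique coordinate that does not vanish at $\msp$ as $\xi$; this gives (1). Assertion (2) is a direct numerical verification on the list of points marked E.I.\ in the big table, since for each such entry the weights $(a_0,\dots,a_5)$ and the relabelling of the remaining coordinates as $\zeta,\eta,x_{i_0},x_{i_1},x_{i_2}$ are explicit.

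For (3), the desired shape of $F_1$ and $F_2$ forces $d_1 = a_\xi + a_\zeta = 2 a_\eta$ and $d_2 = 2 a_\xi + a_{i_0} = a_\zeta + a_\eta$, and these equalities hold in each E.I.\ case by inspection. Quasismoothness at $\msp_\xi$ produces two independent Jacobian relations along the $\xi$-axis, which force $\xi\zeta \in F_1$ and $\xi^2 x_{i_0} \in F_2$ after an appropriate relabelling of the remaining coordinates. Quasismoothness at $\msp_\zeta$ (respectively, an application of $(\mathrm{C}_1)$ in the degenerate cases where the Jacobian alone is inconclusive) delivers the monomials $\eta^2 \in F_1$ and $\zeta\eta \in F_2$. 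If one of these four monomials were still absent, then $X$ would contain a weighted complete intersection curve excluded by $(\mathrm{C}_2)$, analogous to the role played by $y_0 s \in F_2$ in Lemma~\ref{QIcoord}.

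Once these four monomials are secured, I bring the defining polynomials into the normal form of (3) by a sequence of coordinate changes. First, rescale $\zeta$ and $\eta$ so that the coefficients of $\xi\zeta$ and $\eta^2$ in $F_1$ are both $1$. Next, a substitution $\zeta \mapsto \zeta - h$, with $h$ a homogeneous polynomial of degree $a_\zeta$ in the variables $x_{i_0},x_{i_1},x_{i_2},\eta$, absorbs every $\xi\cdot(\text{monomial})$ occurring in $F_1$ other than $\xi\zeta$; this only perturbs $F_2$ within its allowed shape. A further substitution $\eta \mapsto \eta - h'$, with $h' \in \mbC[x_{i_0},x_{i_1},x_{i_2}]$ of degree $a_\eta$, kills the $\zeta\cdot(\text{polynomial in } x_{i_j})$ contribution to $F_2$ while only altering the polynomials $a$ and $b$ in $F_1$. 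A final scalar adjustment produces the precise form stated in (3).

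The main obstacle will be keeping the successive coordinate changes consistent: the substitution used to normalize $F_1$ must not reintroduce forbidden monomials in $F_2$, and vice versa, which is why the substitutions above are performed in this specific order. A secondary difficulty is that, in cases where two of the weights happen to coincide or where $a_\eta$ is very close to $2a_\xi$, extra monomials may compete with the target form, and one must invoke $(\mathrm{C}_1)$ or $(\mathrm{C}_2)$ to rule out those collisions before the normalization can be completed.
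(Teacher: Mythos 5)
Your overall architecture matches the paper's: move $\msp$ to a vertex, check the weight relations numerically on the six families carrying E.I.\ points, extract the monomials $\xi\zeta,\eta^2$ from $F_1$ and $\xi^2 x_{i_0},\zeta\eta$ from $F_2$ using quasismoothness plus the genericity conditions (the paper in fact only ever needs $(\mathrm{C}_2)$ here, for the families No.~16 and No.~27; $(\mathrm{C}_1)$ plays no role for E.I.\ points), and then normalize by coordinate changes. The gap is in the normalization, which as written cannot reach the stated form. After your substitutions $\zeta \mapsto \zeta - h$ with $h \in \mbC[x_{i_0},x_{i_1},x_{i_2},\eta]$ and $\eta \mapsto \eta - h'$ with $h' \in \mbC[x_{i_0},x_{i_1},x_{i_2}]$, the polynomial $F_1$ still contains all of its terms divisible by $\zeta$ but not by $\xi$, i.e.\ a term $\zeta\alpha$ with $\alpha \in \mbC[x_{i_0},x_{i_1},x_{i_2}]$ homogeneous of degree $d_1 - a_{\zeta} = a_{\xi}$. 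Such terms occur generically (e.g.\ $u\,x_0^3$ in $F_1$ for No.~41, where $\xi = z$, $\zeta = u$), and neither of your substitutions changes the coefficient of $\zeta$ in the $\xi$-free part of $F_1$, so no final scalar adjustment can remove it. Eliminating $\zeta\alpha$ forces you to redefine $\xi$ itself, via $\xi \mapsto \xi - \alpha$; this is legitimate because $\alpha$ vanishes at $\msp$, so assertion (1) is preserved, and it is exactly the step the paper performs.

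A second omission of the same kind: $F_2$ generically contains terms divisible by $\xi\zeta$, namely $\xi\zeta\beta$ with $\deg\beta = d_2 - d_1 > 0$ (e.g.\ $z u x_0^2$ for No.~41), and these are also absent from the target form. None of your substitutions can cancel them, since you do not allow $\xi$ to enter $h'$; the paper instead replaces the defining equation $F_2$ by $F_2 - \beta F_1$, i.e.\ changes the choice of generators of the ideal rather than the coordinates. With these two extra operations, performed in the paper's order (first $F_2 \mapsto F_2 - \beta F_1$, then $\xi \mapsto \xi - \alpha$ and $\eta \mapsto \eta - \gamma$), your argument closes. Two smaller points: your $h$ should also be allowed to involve $\xi$, because for some families (again No.~41) $F_1$ contains monomials divisible by $\xi^2$ that must likewise be absorbed into the new $\zeta$; and your blanket claim that the absence of any of the four key monomials yields a forbidden WCI curve needs case-by-case verification --- for No.~16 the paper has to invoke $(\mathrm{C}_2)$ three separate times with three different curve types $(1,1,3,4)$, $(1,1,3,4)$ and $(1,1,3,6)$.
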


\begin{proof}
Assume that $X$ is a member of $\mcG_i$ with $i \in I^* \setminus \{16,27\}$.
By quasismoothness, we can write defining polynomials as
\[
\begin{split}
F_1 &= \xi \zeta + \zeta \alpha + \eta^2 + \eta a + b, \\
F_2 &= \xi^2 x_{i_0} + \xi (\zeta \beta + \eta c + d) + \zeta \eta + \zeta \gamma + \eta^2 e + \eta f + g,
\end{split}
\]
for some homogeneous polynomials $\alpha, \beta, \gamma, a, b, \dots, g \in \mbC [x_{i_0}, x_{i_1}, x_{i_2}]$.
Replacing $F_2$ with $F_2 - \beta F_1$, we may assume that $\beta = 0$.
Replacing $\xi \mapsto \xi - \alpha$ and $\eta \mapsto \eta - \gamma$, we get the desired equations.

Let $X = X_{8,9} \subset \mbP (1,2,3^3,4,5)$ be a member of $\mcG_{27}$ and $\msp$ a point of type $\frac{1}{3} (1,1,2)$.
We can assume that $\msp = \msp_2$.
If the coefficient of $z_0 t$ in the defining polynomial of degree $8$ is zero, then $X$ contains the WCI curve $(x = y = z_1 = s = 0)$ of type $(1,2,3,4)$.
This cannot happen because of ($\mathrm{C}_2$).
It follows that we can write $F_1 = z_0 t + s^2 + s a + b$ and $F_2 = z_0^2 z_1 + z_0 (s c + d) + t s + s^2 e + s f + g$ for homogeneous polynomials $a,b,\dots,g \in \mbC [x,y,z_1]$.

Finally, let $X = X_{6,7} \subset \mbP (1^2,2,3^2,4)$ be a member of $\mcG_{16}$ and $\msp$ a point of type $\frac{1}{2} (1,1,1)$.
We can assume that $\msp = \msp_2$.
Let $F_1$ and $F_2$ be defining polynomials of $X$ with degree $6$ and $7$, respectively.
We see that either $y^2 z_0 \in F_2$ or $y^2 z_1 \in F_2$ because otherwise $X$ contains a WCI curve of type $(1,1,3,4)$, which is impossible by $(\mathrm{C}_2)$.
Hence we can write $F_1 = y s + f_6$ and $F_2 = y^2 z_0 + y g_5 + g_7$, where $f_6$, $g_5$ and $g_7$ do not involve $y$.
We see that $z_1^2 \in f_6$ because otherwise $X$ contains the WCI curve $(x_0 = x_1 = z_0 = s = 0)$ of type $(1,1,3,4)$.
We see that $z_1 s \in g_7$ because otherwise $X$ contains the WCI curve $(x_0 = x_1 = z_0 = F_1 = 0)$ of type $(1,1,3,6)$.
Therefore, we can write $F_1 = y s + z_1^2 + z_1 a + b$ and $F_2 = y^2 z + y (z_1 c + d) + s z_1 + z_1^2 + z_1 f + g$ for some homogeneous polynomials $a, b, \dots,g \in \mbC [x_0,x_1,z_0]$ after re-choosing homogeneous coordinates.
This completes the proof.
\end{proof}

We fix homogeneous coordinates and defining polynomials as in Lemma \ref{EIcoord}.
We shall construct sections $v = \zeta^2 + \cdots$ and $w = \zeta^3 + \cdots$ which vanish along $E$ to a sufficiently large order.
The argument is similar to the case of quadratic involutions, but it becomes quite complicated.
We shall show that the anticanonical map $\psi \colon Y \ratmap Z$ is a birational morphism and $Z$ is the weighted hypersurface in $\mbP (a_{i_0},a_{i_1},a_{i_2},2 a_{\zeta},3 a_{\zeta})$ with homogeneous coordinates $x_{i_0},x_{i_1},x_{i_2},v$ and $w$ defined by an equation which is quadric in $w$ and is cubic in $v$.
Let $\iota_Z \colon Z \to Z$ be the biregular involution given by the double cover $Z \to \mbP (a_{i_0},a_{i_1},a_{i_2},2 a_{\zeta})$.
Then, as in the case of quadratic involutions, we conclude that the map $\psi^{-1} \circ \iota_Z \circ \psi \colon Y \ratmap Y$ is a flop and this gives a Sarkisov link $\iota_{\msp} \colon X \ratmap X$ centered at $\msp$ in the case where $\psi$ is small.

Multiplying $\zeta$ to $F_2$ and then eliminating the terms involving $\xi \zeta$ by subtracting $(x_{i_0} \xi + c \eta + d) F_1$, we get
\[
\zeta F_2 - h_1 F_1
= \zeta (\zeta \eta + e \eta^2 + f \eta + g) - (x_{i_0} \xi + c \eta + d)(\eta^2 + a \eta + b),
\]
where $h_1 := x_{i_0} \xi + c \eta + d$.
Filtering off terms divisible by $\eta$, we get
\begin{equation} \label{eqEI1}
\zeta F_2 - h_1 F_1
= v \eta - b (x_{i_0} \xi + d) + g \zeta,
\end{equation}
where we define
\begin{equation} \label{eqEI2}
v := - (\xi x_{i_0} + \eta c + d)(\eta + a) - b c + \zeta^2 + \zeta \eta e + \zeta f.
\end{equation}
Multiplying $\zeta$ to the equation \eqref{eqEI1} and then eliminating $\zeta^2$ and $\xi \zeta$ in terms of $v$ and $F_1$, respectively, we get
\[
\begin{split}
\zeta^2 F_2 - h_2 F_1 
= v \zeta \eta + b & x_{i_0} (\eta^2 + a \eta + b) - b d \zeta \\
&+ g (v + (x_{i_0} \xi + c \eta + d)(\eta + a) + b c - e \zeta \eta - f \zeta),
\end{split}
\]
where $h_2 := \zeta h_1 - x_{i_0} b$.
Filtering off terms divisible by $\eta$, we get
\begin{equation} \label{eqEI3}
\zeta^2 F_2 - h_2 F_1 
= w \eta + x_{i_0} b^2 - b d \zeta + g (v + a (x_{i_0} \xi + d) + b c - f \zeta),
\end{equation}
where
\begin{equation} \label{eqEI4}
w := x_{i_0} g \xi + (v - e g) \zeta + (x_{i_0} b + c g) \eta + x_{i_0} a b + g (d + ac).
\end{equation}
On $X = (F_1 = F_2 = 0)$, the equations \eqref{eqEI1}, \eqref{eqEI3} and \eqref{eqEI4} can be seen as linear relations with respect to $\xi, \zeta$ and $\eta$.
We construct one more relation.
Multiplying $\xi$ to \eqref{eqEI4} and subtracting $g F_2$, we have
\[
\begin{split}
w \xi - g F_2 = (v - e g) \xi \zeta - g \zeta & \eta + x_{i_0} b \xi \eta - e g \eta^2 + (x_{i_0} a b + a c) \xi - f g \eta - g^2.
\end{split}
\]
Multiplying $\eta$ to both sides of the equation \eqref{eqEI1} and adding it to the last displayed equation, we get
\[
w \xi + (\zeta \eta - g) F_2 - \eta h_1 F_1
= (v - e g) (\xi \zeta + \eta^2) + a (x_{i_0} b + c g) \xi - (b d + f g) \eta - g^2.
\]
Finally, using $\xi \zeta + \eta^2 = F_1 - (a \eta + b)$, we obtain
\begin{equation} \label{eqEI5}
\begin{split}
(\zeta \eta - g) F_2 - h_3 F_1
= (a & (x_{i_0} b + c g) - w) \xi \\
&- (a (v - e g) + b d + f g)\eta - b (v-e g) - g^2,
\end{split}
\end{equation}
where $h_3 := h_1 \eta + v - e g$.
We see that $\deg v = 2 a_{\zeta}$ and $\deg w = 3 a_{\zeta}$ since $v = \zeta^2 + \cdots$ and $w = v \zeta + \cdots = \zeta^3 + \cdots$.

\begin{Lem} \label{EIpacsec}
The sections $x_{i_0}, x_{i_1}, x_{i_2}, v$ and $w$ lift to plurianticanonical sections of $Y$.
\end{Lem}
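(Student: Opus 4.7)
The plan is to mimic the analogous plurianticanonical lifting argument used for quadratic involutions, by explicitly computing the orders of vanishing of $x_{i_0}, x_{i_1}, x_{i_2}, v, w$ along the exceptional divisor $E$ of the Kawamata blowup $\varphi \colon Y \to X$. Since $\msp = \msp_\xi$ is the vertex at which only $\xi$ is non-zero, and $F_1$ contains the monomial $\xi\zeta$ while $F_2$ contains $\xi^2 x_{i_0}$, we may choose $x_{i_1}, x_{i_2}, \eta$ as local orbifold coordinates of $X$ at $\msp$ (eliminating $\zeta$ from $F_1 = 0$), so that $\varphi$ is the weighted blowup with
\[
\wt(x_{i_1}, x_{i_2}, \eta) = \tfrac{1}{a_\xi}(a_{i_1}, a_{i_2}, a_\eta - a_\xi).
\]
In particular $\ord_E(x_{i_1}) = a_{i_1}/a_\xi$ and $\ord_E(x_{i_2}) = a_{i_2}/a_\xi$, so those two sections immediately lift to plurianticanonical sections.

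First I would estimate $\ord_E(\zeta)$ and $\ord_E(x_{i_0})$. The equation $F_1 = 0$ gives $\xi\zeta = -(\eta^2 + a\eta + b)$; as $\xi$ is a unit at $\msp$ and $a, b \in \mbC[x_{i_0}, x_{i_1}, x_{i_2}]$, inspection of weights using Lemma \ref{EIcoord}(2) yields $\ord_E(\zeta) \ge (a_\zeta - a_\xi)/a_\xi$. Next, $F_2 = 0$ expresses $\xi^2 x_{i_0}$ as a combination of $\eta, \zeta$, and homogeneous polynomials in $x_{i_0}, x_{i_1}, x_{i_2}$; substituting the already computed order of $\zeta$ and using $a_\eta < 2 a_\xi < 2 a_\zeta$ gives $\ord_E(x_{i_0}) \ge a_{i_0}/a_\xi$, exactly as in Example \ref{exslNo57}.

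Next, the relations \eqref{eqEI1} and \eqref{eqEI3}, which hold identically in the coordinate ring and thus modulo $F_1$ and $F_2$ on $X$, read
\[
v\eta = b(x_{i_0}\xi + d) - g\zeta
\]
and
\[
w\eta = -x_{i_0} b^2 + b d \zeta - g\bigl(v + a(x_{i_0}\xi + d) + bc - f\zeta\bigr).
\]
Each summand on the right-hand side has a lower bound for $\ord_E$ obtained from the already-known orders of $x_{i_0}, \zeta$ and from the degrees of the homogeneous polynomials $a, b, c, d, f, g$, which are fixed by the degree identities $\deg F_1 = d_1$ and $\deg F_2 = d_2$. A direct weight count (first for $v$, and then, using the bound just obtained for $v$, for $w$) shows that the right-hand side of each display vanishes along $E$ to order at least $(2a_\zeta + a_\eta - a_\xi)/a_\xi$ and $(3a_\zeta + a_\eta - a_\xi)/a_\xi$ respectively. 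Dividing by $\ord_E(\eta) = (a_\eta - a_\xi)/a_\xi$ then gives $\ord_E(v) \ge 2a_\zeta/a_\xi$ and $\ord_E(w) \ge 3a_\zeta/a_\xi$, which are precisely the thresholds needed for $v$ and $w$ to lift to plurianticanonical sections of $Y$.

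The main obstacle is the book-keeping of weights: the degrees of the auxiliary polynomials $a, b, c, d, e, f, g$ vary across the families for which E.I.\ is marked in the big table, and so the required inequalities must in principle be verified family by family. I would aim to give a uniform argument based solely on the inequalities of Lemma \ref{EIcoord}(2) together with $\deg F_1 = d_1$, $\deg F_2 = d_2$; however, in the exceptional families No.~$16$ and No.~$27$ the extra coordinate changes dictated by Lemma \ref{EIcoord} may force an ad hoc verification of the remaining weight bounds.
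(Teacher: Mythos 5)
Your proposal follows the paper's proof essentially step for step: take $x_{i_1},x_{i_2},\eta$ as local coordinates for the Kawamata blowup, bound $\ord_E$ of $\zeta$ and $x_{i_0}$ from the defining equations, and then extract $\ord_E(v)\ge 2a_\zeta/a_\xi$ and $\ord_E(w)\ge 3a_\zeta/a_\xi$ from the relations \eqref{eqEI1} and \eqref{eqEI3} by the same uniform weight count (a small slip: one \emph{subtracts} $\ord_E(\eta)$ from the order of $v\eta$, rather than dividing by it). The one place where the paper does something you did not anticipate is the bound $\ord_E(x_{i_0})\ge a_{i_0}/a_\xi$: since $a,b,d,f,g$ may themselves involve $x_{i_0}$, your appeal to $F_2=0$ (and, earlier, to $F_1=0$ for $\zeta$) is circular unless one runs the bootstrap of Example \ref{exslNo57}, whereas the paper avoids this for $\mcG_{27}$ and $\mcG_{41}$ by noting that $(x_{i_0}=0)_X$ is Cartier at $\msp$ there, reserving the $F_2=0$ argument for $\mcG_{16}$, where the a priori bound $\ord_E(x_{i_0})\ge 1/a_\xi$ already suffices in one pass.
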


\begin{proof}
By Lemma \ref{EIcoord}, we can choose $x_{i_1}, x_{i_2}$ and $\eta$ as local coordinates of $X$ at $\msp$ and the Kawamata blowup $\varphi \colon Y \to X$ at $\msp$ is the weighted blowup with $\wt (x_{i_1}, x_{i_2}, \eta) = 1/a_{\xi} (a_{i_1}, a_{i_2}, a_{\eta}-a_{\xi})$.
It follows that $x_{i_0}$, $x_{i_1}$ and $\eta$ vanish along $E$ to order $a_{i_1}/a_{\xi}$, $a_{i_2}/a_{\xi}$ and $(a_{\eta} - a_{\xi})/a_{\xi}$, respectively.
If $X$ belongs to $\mcG_{16}$, then $x_{i_0}$ vanishes along $E$ to order at least $a_{i_0}/a_{\xi}$ by the equation $F_2 = 0$.
If $X$ belongs to either $\mcG_{27}$ or $\mcG_{41}$, then the divisor $(x_{i_0} = 0)_X$ is Cartier at $\msp$, otherwise $a_{i_0} < a_{\xi}$.
In any case, we see that $x_{i_0}$ vanishes along $E$ to order at least $a_{i_0}/a_{\xi}$.
By the equation $F_1 = 0$, the section $\zeta$ vanishes along $E$ to order at least $2 (a_{\eta} - a_{\xi})/a_{\xi}$.

By the equation \eqref{eqEI1}, the section $v \eta$ vanishes along $E$ to order at least
\[
\begin{split}
\min \{ \deg x_{i_0} b, \deg b d, 2 (a_{\eta} - a_{\xi}) + \deg g \}/a_{\xi} &= (a_{i_0} + \deg b)/a_{\xi} \\
&= (\deg v + (a_{\eta} - a_{\xi}))/a_{\xi}.
\end{split}
\] 
It follows that $v$ vanishes along $E$ to order at least $\deg v /a_{\xi}$.
Arguing similarly for the equation \eqref{eqEI3}, we see that $w$ vanishes along $E$ to order at least $\deg w/a_{\xi}$.
This completes the proof.
\end{proof}

Let $\mbP = \mbP (a_{i_0},a_{i_1},a_{i_2},2 a_{\zeta}, 3 a_{\zeta})$ be the weighted projective space with homogeneous coordinates $x_{i_0},x_{i_1},x_{i_2},v$ and $w$, and let $\pi \colon X \ratmap \mbP$ the rational map defined by sections $x_{i_0}, x_{i_1},x_{i_2},v$ and $w$.
We have four relations \eqref{eqEI1}, \eqref{eqEI3}, \eqref{eqEI4} and \eqref{eqEI5} which are linear in $\xi$, $\zeta$ and $\eta$.
Put $v' := v - e g$ and $w' := w - a (x_{i_0} b + c g)$.
Then we can write down those relations as
\[
M \cdot 
\begin{pmatrix}
\xi \\
\zeta \\
\eta \\
1
\end{pmatrix}
=
\begin{pmatrix}
0 \\
0 \\
0 \\
0
\end{pmatrix}
\]
where $M$ is the matrix
\[
\begin{pmatrix}
- x_{i_0} b & g & v & - b d \\
x_{i_0} a g & - (b d + f g) & w & g (v + a d + b c) + x_{i_0} b^2 \\
x_{i_0} g & v' & x_{i_0} b + c g & g d - w' \\
w' & 0 & a v' + b d + f g & b v' + g^2
\end{pmatrix}.
\]
We see that the determinant $\det M$ is divisible by $w' g$ and we set $G := (\det M) / w' g$.
The homogeneous polynomial $G = G (x_{i_0}, x_{i_1}, x_{i_2}, v , w)$ is of degree $6 a_{\zeta}$, and it is quadratic in $w$ and is cubic in $v$. 
We define $Z = (G = 0) \subset \mbP$.
Then $Z$ is the image of $\pi$ and the induced map $\pi \colon X \ratmap Z$ is birational.

\begin{Thm} \label{birinvEI}
Let $X$ be a member of $\mcG_i$ with $i \in I^*$ and $\msp$ a singular point of $X$ marked E.I. in the third column of the big table.
Then one of the following holds.
\begin{enumerate}
\item The point $\msp$ is not a maximal center.
\item There is a birational involution $\iota_{\msp} \colon X \ratmap X$ which is a Sarkisov link of type II.
\end{enumerate}
\end{Thm}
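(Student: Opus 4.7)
The plan is to follow exactly the template of Theorem~\ref{birinvQI}, using the sections $v$ and $w$ of Lemma~\ref{EIpacsec} in place of the single section $v$ used in the quadratic case. First I would let $\varphi \colon Y \to X$ be the Kawamata blowup at $\msp$ with exceptional divisor $E$. By Lemma~\ref{EIpacsec} the sections $x_{i_0},x_{i_1},x_{i_2},v,w$ lift to plurianticanonical sections of $Y$, so $\varphi$ resolves the indeterminacy of $\pi\colon X \ratmap Z$ and yields a birational morphism $\psi\colon Y \to Z$. Since $Z$ is the anticanonical image of $Y$, the morphism $\psi$ is $K_Y$-trivial. It is not an isomorphism: using the relations encoded by the matrix $M$ one can exhibit an explicit curve on $X$ (coming from the rank-drop locus of $M$) whose strict transform on $Y$ is contracted by $\psi$.

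Next I would split into two cases. If $\psi$ contracts a divisor, then the next step of the $2$-ray game starting with $\varphi$ would be a $K_Y$-trivial divisorial contraction; since this cannot occur in a Sarkisov link originating from a maximal singularity, we conclude that $\msp$ is not a maximal center, giving case~(1). If $\psi$ is small, I would define $\iota_Z\colon Z \to Z$ as the biregular involution over $\mbP(a_{i_0},a_{i_1},a_{i_2},2a_\zeta)$ that interchanges the two roots of $G=0$ viewed as a quadratic polynomial in $w$; explicitly $\iota_Z$ fixes $x_{i_0},x_{i_1},x_{i_2},v$ and sends $w$ to $-w$ minus the coefficient of $w$ in $G$. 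Setting $\iota_Y := \psi^{-1}\circ\iota_Z\circ\psi$, the induced map $\iota_\msp \colon X\ratmap X$ should then be the desired Sarkisov link of type~II centered at $\msp$, by the same diagram that appears in the proof of Theorem~\ref{birinvQI}.

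The key step, which I expect to be the main obstacle, is verifying that $\iota_Y$ is the flop of $\psi$. For any irreducible curve $\Gamma\subset Y$ contracted by $\psi$ to a point $\msq\in Z$, the variety $Z$ fails to be $\mbQ$-factorial at $\msq$ and therefore cannot be quasismooth at $\msq$; in particular $\partial G/\partial w$ must vanish at $\msq$. Given the explicit form of $\iota_Z$, the vanishing of $\partial G/\partial w$ forces $\iota_Z(\msq)=\msq$ and shows that $\iota_Z^*$ acts as $-\operatorname{Id}$ on the local Weil divisor class group of $Z$ at $\msq$. This will yield that $\iota_Y$ is a flop, exactly as in the proof of Theorem~\ref{birinvQI}. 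The delicacy here, relative to the quadratic case, is that $G$ is simultaneously cubic in $v$ and quadratic in $w$, so one must check that the hyperelliptic involution in $w$ really operates as required on each non-$\mbQ$-factorial point of $Z$, and that no interaction with the cubic-in-$v$ structure obstructs the flop condition.
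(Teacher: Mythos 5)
Your proposal is correct and follows essentially the same route as the paper: the paper's proof likewise reduces everything to the template of Theorem \ref{birinvQI}, exhibits a contracted curve (it uses a component of $(b=g=0)_X$ on which $\eta\ne 0$, which lies in the rank-drop locus you describe), splits into the divisorial and small cases, and verifies the flop via the fixed-point/$-\operatorname{Id}$ argument for the involution of the double cover $Z\to\mbP(a_{i_0},a_{i_1},a_{i_2},2a_\zeta)$ in the variable $w$. The worry about interaction with the cubic-in-$v$ structure is moot, since $\iota_Z$ fixes $v$ and only the quadratic dependence on $w$ enters the argument.
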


\begin{proof}
The proof is similar to the case of quadratic involutions.
The Kawamata blowup of $X$ at $\msp$ resolves the indeterminacy of the birational map $\pi$ and let $\psi \colon Y \to Z$ be the induced birational morphism.
Let $C$ be a component of $(b=g=0)_X$ such that $\eta \ne 0$ at the generic point of $C$.
By the relations \eqref{eqEI1} and \eqref{eqEI4}, both $v$ and $w$ vanish along $C$.
It follows that the strict transform of $C$ on $Y$ is contracted to the set $(b=g=v=w=0) \subset Z$.
In particular, $\psi$ is not an isomorphism.
If $\psi \colon Y \to Z$ contracts a divisor, then $\msp$ is not a maximal center by the same argument as in the proof of Theorem \ref{birinvQI}.
We assume that $\psi$ is small, that is, it is a flopping contraction.
Let $\sigma \colon Z \to Z$ be the biregular involution interchanging two points in the fiber of the double covering $Z \to \mbP$, and let $\iota_Y \colon Y \ratmap Y$ the induced birational map.
The same argument as in the proof of Theorem \ref{birinvQI} shows that $\iota_Y$ is a flop:
each flopping curve on $Y$ is contracted by $\psi$ to a point $\msq$ such that $\iota_Z (\msq) = \msq$ and thus $\iota_Z^*$ operates on the class group of $Z$ at $\msq$ as $-\operatorname{Id}$, which implies that $\iota_Y$ is a flop. 
This gives rise to a Sarkisov link and the induced map $\iota_{\msp} \colon X \ratmap X$ is the desired birational involution.
\end{proof}

\begin{Rem}
In Theorem \ref{birinvEI}, we do not describe the locus contracted by $\psi$.
We refer the reader to \cite[Section 4.10]{CPR} for more detailed descriptions of elliptic involutions of $\mbQ$-Fano weighted hypersurfaces, such as the description of flopping curves etc.
\end{Rem}

\section{Exclusion of curves} \label{sec:curve}

The aim of this section is to exclude curves as maximal centers.

\subsection{Exclusion of most of the curves}

The following criterion enables us to exclude most of the curves as maximal centers.

\begin{Lem} \label{rawexclcurve}
If a curve $\Gamma \subset X$ is a maximal center, then $\Gamma$ is contained in the smooth locus of $X$ and $(A^3) > (A \cdot \Gamma)$.
In particular, $(A^3) > 1$.
\end{Lem}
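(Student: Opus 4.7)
The plan is to first constrain the geometry of an extremal extraction centered at a curve, and then extract the numerical bound from the intersection of two general members of the maximal linear system.

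First, I would invoke the classical classification of extremal divisorial contractions in the Mori category whose center is a curve: any such contraction $\varphi \colon (E \subset Y) \to (\Gamma \subset X)$ from a $\mbQ$-factorial terminal threefold is the blowup of a smooth curve $\Gamma$ lying in the nonsingular locus of $X$. This immediately yields the first assertion of the lemma. Moreover, it guarantees that $A = -K_X$ is Cartier in an open neighborhood of $\Gamma$, so $(A \cdot \Gamma) = \deg (A|_\Gamma)$ is a positive integer; in particular $(A \cdot \Gamma) \ge 1$.

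Next, let $\mcH \subset |n A|$ be a movable linear system for which $\Gamma$ is a maximal center, and set $m := m_E (\mcH) = \mult_\Gamma \mcH$. Since $\varphi$ is the blowup of a smooth curve in the smooth locus, one has $a_E (K_X) = 1$, and the maximal singularity inequality $1/n > a_E (K_X)/m_E (\mcH)$ collapses to $m > n$.

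The key estimate then comes from choosing two general members $H_1, H_2 \in \mcH$ and decomposing the intersection $1$-cycle as $H_1 \cdot H_2 = m_\Gamma \, \Gamma + Z$, where $m_\Gamma \ge (\mult_\Gamma H_1)(\mult_\Gamma H_2) \ge m^2$ and $Z$ is an effective $1$-cycle; both parts of this decomposition are justified by working in the smooth locus of $X$ around $\Gamma$, where usual intersection theory applies. Intersecting with the ample $\mbQ$-divisor $A$ gives
\[
n^2 (A^3) = (A \cdot H_1 \cdot H_2) \ge m^2 (A \cdot \Gamma) + (A \cdot Z) > n^2 (A \cdot \Gamma),
\]
so $(A^3) > (A \cdot \Gamma) \ge 1$.

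The main obstacle is really just the very first step: one must cite the fact that a curve which is the center of a Mori extraction from a $\mbQ$-factorial terminal threefold is automatically smooth and lies in the nonsingular locus of $X$. Once this classification result is granted, everything else reduces to a routine multiplicity and intersection computation localised around $\Gamma$ in the smooth locus.
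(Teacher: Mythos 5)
Your argument is correct and is essentially the proof the paper points to (Step 1 of the proof of Theorem 5.1.1 in \cite{CPR}, which the lemma simply cites): exclude singular points from $\Gamma$ via the structure of divisorial contractions, deduce $\mult_{\Gamma}\mcH > n$, and compare $n^2(A^3) = (A \cdot H_1 \cdot H_2)$ with $(\mult_{\Gamma}\mcH)^2 (A\cdot\Gamma)$. The only caveat is that your opening appeal to a blanket ``classification'' is slightly overstated (in general the centre need not be a smooth curve); in the present setting the assertion $\Gamma \subset \NonSing(X)$ is exactly what Kawamata's theorem (Theorem \ref{divcontquot}) gives, since all singular points here are terminal quotient points of index $\ge 2$, and only the generic discrepancy $a_E(K_X)=1$ is actually needed afterwards.
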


\begin{proof}
See STEP $1$ of the proof of \cite[Theorem 5.1.1]{CPR}.
\end{proof}

\begin{Cor} \label{exclcurve0}
Let $X$ be a member of $\mcG_i$ with $i \in I^*$ and $\Gamma$ an irreducible curve on $X$.
Then $\Gamma$ is not a maximal center except possibly for the following cases.
\begin{itemize}
\item $X$ is a member of $\mcG_4$ and $1 \le \deg \Gamma \le 2$.
\item $X$ is a member of $\mcG_i$ with $i \in \{5,6,7,9,11\}$ and $\deg \Gamma = 1$.
\end{itemize}
\end{Cor}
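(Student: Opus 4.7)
My plan is a direct application of Lemma~\ref{rawexclcurve}. Suppose $\Gamma \subset X$ is a curve that is a maximal center. By that lemma, $\Gamma$ is contained in the nonsingular locus $X_{\mathrm{sm}}$ and $(A \cdot \Gamma) < (A^3)$. Since Weil divisors restrict to Cartier divisors on the smooth locus, $A$ is Cartier on an open neighbourhood of $\Gamma$, so $\deg \Gamma = (A \cdot \Gamma)$ is a positive integer. Hence the basic numerical obstruction reduces to
\[
1 \le \deg \Gamma < (A^3),
\]
and in particular the existence of any curve maximal center forces $(A^3) > 1$.

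Next I would compute $(A^3)$ for each of the $85$ families using the standard formula
\[
(A^3) = \frac{d_1 d_2}{a_0 a_1 a_2 a_3 a_4 a_5}
\]
for a $\mbQ$-Fano WCI $X = X_{d_1,d_2} \subset \mbP(a_0,\dots,a_5)$, or equivalently read the anticanonical degrees directly from Fletcher's list~\cite[16.7]{IF}. A routine case-by-case inspection should then reveal that, among $i \in I^*$, the inequality $(A^3) > 1$ holds precisely when $i \in \{4,5,6,7,9,11\}$. For all other $i \in I^*$, Lemma~\ref{rawexclcurve} excludes every curve immediately.

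Finally, for these six exceptional families I would record the sharper values: $(A^3) = 3$ when $i = 4$, and $(A^3) \le 2$ when $i \in \{5,6,7,9,11\}$. Combined with integrality and positivity of $\deg \Gamma$, this leaves exactly $\deg \Gamma \in \{1,2\}$ in family $\mcG_4$ and $\deg \Gamma = 1$ in the other five families, which is the list claimed in the corollary. The whole argument is essentially a tabulation exercise across Fletcher's list; no conceptual obstacle arises beyond this bookkeeping.
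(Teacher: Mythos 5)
Your argument is exactly the paper's: the corollary is deduced directly from Lemma~\ref{rawexclcurve} together with integrality of $\deg\Gamma$ for curves in the smooth locus and a scan of the values of $(A^3)$, which exceed $1$ for $i\in I^*$ precisely when $i\in\{4,5,6,7,9,11\}$. One small slip: for family No.~4 one has $(A^3)=8/3$, not $3$ (the value $3$ belongs to the excluded family No.~3), but since $2<8/3<3$ the conclusion $1\le\deg\Gamma\le 2$ is unaffected.
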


\begin{proof}
This follows from Lemma \ref{rawexclcurve}.
\end{proof}

\begin{Lem} \label{exclcurve1}
Let $X$ be a member of the family $\mcG_i$ with $i \in \{5, 6, 7, 9, 11\}$ and $\Gamma$ an irreducible curve of degree $1$ on $X$.
Then $\Gamma$ is not a maximal center.
\end{Lem}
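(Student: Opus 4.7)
The plan is to apply the test class method (Corollary \ref{criexclmstc}) family-by-family. By Lemma \ref{rawexclcurve}, any candidate $\Gamma$ lies in the smooth locus of $X$ and satisfies $(A \cdot \Gamma) = 1$. First, for each $i \in \{5,6,7,9,11\}$ I would enumerate the irreducible degree $1$ curves on a general $X \in \mcG_i$. Using the explicit equations $F_1 = F_2 = 0$ of $X$ together with $(A \cdot \Gamma) = 1$, these curves admit an explicit classification: in each case they are smooth rational curves cut out by a short list of coordinate equations (components of loci of the form $(x_{i_0} = x_{i_1} = 0)_X$ or similar), the generality conditions $(\mathrm{C}_0)$ and $(\mathrm{C}_2)$ eliminating the exceptional configurations.

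Second, for each such $\Gamma$ I would exhibit an isolating class of the form $L = kA$ with $k$ as small as possible. Concretely, I would produce explicit sections of $|\mcI_{\Gamma}(kA)|$ (coordinate sections, or polynomial combinations thereof) whose common zero locus, in a neighborhood of $\Gamma$, is $\Gamma$ itself with generic multiplicity one. The key input here is condition $(\mathrm{C}_2)$, which forbids $X$ from containing the WCI curves listed in Table \ref{table:WCIcurve}: without this, the base locus of $|\mcI_{\Gamma}(kA)|$ might acquire extra $1$-dimensional components and the isolating property would fail.

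Third, let $\varphi \colon (E \subset Y) \to (\Gamma \subset X)$ be the blowup along $\Gamma$ and $M := \varphi^{-1}_* \mcL_{\Gamma}^s$ the resulting test class. Since $\Gamma$ is a smooth rational curve inside the smooth locus of $X$, one has $M \sim sk\varphi^*A - E$ and $-K_Y = \varphi^*A - E$. Using the projection formula together with the identities $(\varphi^*A)^2 \cdot E = 0$, $\varphi^*A \cdot E^2 = -(A \cdot \Gamma)$, and $E^3 = 2 - 2g(\Gamma) - (A \cdot \Gamma)$, a direct computation gives
\[
(M \cdot (-K_Y)^2) \;=\; sk\bigl((A^3) - (A \cdot \Gamma)\bigr) + 2g(\Gamma) - 2 - (A \cdot \Gamma).
\]
With $g(\Gamma) = 0$ and $(A \cdot \Gamma) = 1$, this becomes $sk\bigl((A^3) - 1\bigr) - 3$. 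Since for each of the five families the value $(A^3)$ is a small rational number (computable from the Fletcher data as $(A^3) = d_1 d_2/\prod a_j$) and since the isolating data $(s,k)$ turn out to be small, this quantity is $\le 0$, whence Corollary \ref{criexclmstc} excludes $E$ as a maximal singularity.

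The main obstacle will be Step two: verifying isolation of $\Gamma$ by a sufficiently small multiple $kA$. This is a case analysis depending on the coordinate description of $\Gamma$ and requires delicate use of the monomials guaranteed by $(\mathrm{C}_1)$ and the non-containment statements of $(\mathrm{C}_2)$. Once the isolating $k$ (and the minimal $s$) is pinned down, the numerical inequality above falls out from the explicit value of $(A^3)$ in each of the five families.
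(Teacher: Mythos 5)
Your proposal is correct and follows essentially the same route as the paper: the test class method with an isolating class $kA$, the blowup intersection formula (your displayed identity agrees with the paper's computation of $(M \cdot B^2)$), and Corollary \ref{criexclmstc}. The two steps you flag as the main obstacles resolve more easily than you anticipate: since $\Gamma$ avoids $\Sing X$ and the ambient space is $\mbP(1^3,2,a_4,a_5)$, one can normalize $\Gamma = (x_0 = x_3 = x_4 = x_5 = 0)$ without invoking $(\mathrm{C}_1)$ or $(\mathrm{C}_2)$, and then $a_5 A$ isolates $\Gamma$ with $s=1$, so the quantity $a_5\bigl((A^3)-1\bigr)-3$ is negative in all five families.
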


\begin{proof}
The ambient weighted projective space is of the form $\mbP (1^3,2,a_4,a_5)$, where $2 \le a_4 \le a_5$.
We may assume that $\Gamma$ does not pass through any singular point of $X$.
Under the assumption, we see that $\Gamma = (x_0 = x_3 = x_4 = x_5 = 0) \cong \mbP^1$ after re-choosing homogeneous coordinates.
It follows that $a_5 A$ isolates $\Gamma$ and $M := a_5 \varphi^*(A) - E$ is a test class, where $\varphi \colon Y \to X$ is the blowup of $X$ along $\Gamma$ with the exceptional divisor $E$. 
We compute $(M \cdot B^2)$, where $B = -K_Y = \varphi^*A - E$.
We have $((\varphi^*A)^2 \cdot E) = 0$, $(\varphi^*A \cdot E^2) = -\deg \Gamma = -1$ and $(E^3) = - \deg \mcN_{\Gamma/X} = -\deg \Gamma + 2 - 2 p_a (\Gamma) = 1$.
It follows that
\[
(M \cdot B^2)
= a_5 (A^3) - (a_5 + 2) - 1 < 0,
\]
where the last strict inequality follows from explicit computations in each instance. 
By Corollary \ref{criexclmstc}, $\Gamma$ is not a maximal center.
\end{proof}

\subsection{Exclusion of curves of low degree on $X \in \mcG_4$}
We treat a member $X = X_{4,4} \subset \mbP (1^4,2,3)$ of the family $\mcG_4$ and exclude curves of degree $1$ and $2$ as a maximal center.
We denote by $\pi \colon \mbP (1^4,2,3) \ratmap \mbP^3$ the projection to the coordinates $x_0,\dots,x_3$.

\begin{Lem} \label{exclcurveno4_2}
Let $\Gamma$ be an irreducible curve of degree $2$ on a member $X$ of the family $\mcG_4$.
Then $\Gamma$ is not a maximal center.
\end{Lem}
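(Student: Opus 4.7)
I would apply the test class method used in the proof of Lemma \ref{exclcurve1}. By Lemma \ref{rawexclcurve}, we may assume $\Gamma$ lies in the smooth locus of $X$; in particular, $\Gamma$ avoids the unique singular point $\msp_5$ of type $\frac{1}{3}(1,1,2)$. Let $\varphi \colon Y \to X$ be the blowup of $X$ along $\Gamma$ with exceptional divisor $E$, and set $B := -K_Y = \varphi^*A - E$. Using $(A^3) = 8/3$, $\deg \Gamma = 2$, and $(E^3) = 2 - 2 p_a(\Gamma) - \deg \Gamma = -2 p_a(\Gamma)$, the test class $M = s\varphi^*A - E$ on $Y$ satisfies
\[
(M \cdot B^2) = s\bigl((A^3) - \deg \Gamma\bigr) - \bigl(2 \deg \Gamma + (E^3)\bigr) = \frac{2s}{3} + 2 p_a(\Gamma) - 4.
\]
So it is enough to exhibit an integer $s \le 6 - 3 p_a(\Gamma)$ such that $sA$ isolates $\Gamma$, after which Corollary \ref{criexclmstc} rules out $\Gamma$ as a maximal center.

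To find such an $s$, I would classify $\Gamma$ via the projection $\pi \colon X \ratmap \mbP^3$ defined by the linear coordinates $x_0, \ldots, x_3$. Since $\deg \Gamma = 2$, either $\pi|_{\Gamma}$ is birational onto a plane conic $\pi(\Gamma) \subset \mbP^3$, or it is a double cover of a line $\pi(\Gamma) \subset \mbP^3$. In the conic case, $p_a(\Gamma) = 0$ and after a coordinate change $\Gamma$ lies on the surface $(x_3 = 0)_X$ with $\pi(\Gamma) = (x_3 = q(x_0,x_1,x_2) = 0)$ for some quadric $q$; the sections $x_3 x_0, x_3 x_1, x_3 x_2, x_3^2$ and $q$ in $|2A|$ supply a sublinear system whose base locus on $X$ meets $\Gamma$ only along $\Gamma$ itself, so $2A$ is $\Gamma$-isolating. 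In the line case, after a coordinate change $\Gamma \subset C := (x_2 = x_3 = 0)_X$; since $C$ has $A$-degree $8/3$, $\deg \Gamma = 2$, and $C$ passes through $\msp_5$ with a fractional local contribution, the residual $C - \Gamma$ has $A$-degree $2/3$ and is supported at $\msp_5$. Thus the quadratic sections $x_2 x_i$ and $x_3 x_j$ of $|2A|$ cut out $\Gamma$ near $\Gamma$, and isolation near $\msp_5$ is automatic since $\Gamma$ avoids it.

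With $s = 2$, the inequality reads $(M \cdot B^2) = -\tfrac{8}{3} + 2 p_a(\Gamma) \le 0$ whenever $p_a(\Gamma) \le 1$. The main technical points are: verifying isolation in the line case, which amounts to confirming via a local computation at the cyclic quotient singularity $\msp_5$ that the residual of $C$ modulo $\Gamma$ is supported at $\msp_5$; and bounding the arithmetic genus of an irreducible degree-$2$ curve on $X$. The conic case forces $p_a(\Gamma) = 0$ directly. In the line case, $\Gamma$ is a double cover of $\mbP^1$ contained in a curve of total $A$-degree $8/3$, which constrains the branching available to $\Gamma$; Riemann--Hurwitz applied to $\pi|_\Gamma$, combined with a count of admissible branch points forced by the weighted ambient geometry, should yield $p_a(\Gamma) \le 1$. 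If any residual situation with $p_a(\Gamma) \ge 2$ survives, the bound $s \le 6$ leaves ample room to replace $2A$ by a larger isolating multiple built from higher-degree sections such as $y$ or $z$.
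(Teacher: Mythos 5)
Your overall strategy coincides with the paper's: the test class $M = s\varphi^*A - E$, the formula $(M\cdot B^2) = \tfrac{2s}{3} + 2p_a(\Gamma) - 4$, and the case split according to whether $\pi(\Gamma)$ is a conic or a line are all exactly what the paper does (with $s=3$ in both cases). However, there are genuine gaps in the two steps that carry the actual content. First, the isolation claims. In the conic case your degree-$2$ sections $x_3x_0,\dots,x_3^2,q$ have base locus $(x_3 = q = 0)_X$, a curve of $A$-degree $2\cdot(A^3) = 16/3$, so it contains residual components beyond $\Gamma$; in the line case your sections $x_2x_i, x_3x_j$ have base locus $(x_2=x_3=0)_X$ of degree $8/3$, with a residual of degree $2/3$. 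In both cases you only argue that the residual passes through $\msp_5$, but isolation requires the base locus to be contained in $\Gamma$ \emph{in a neighborhood of} $\Gamma$, and a residual curve through $\msp_5$ is still a global curve that may meet $\Gamma$ at other points. ``Isolation near $\msp_5$ is automatic'' does not address this. The paper sidesteps the issue by using the degree-$3$ coordinate $z$ to pin $\Gamma$ down exactly: $\Gamma = (x_0 = q = y = z = 0)$ in the conic case, and $(x_0 = x_1 = z = 0)_X = \Gamma$ with no residual in the line case, whence $3A$ isolates and $s=3$ still meets your own bound $s \le 6 - 3p_a$.

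Second, the genus bound in the line case is not optional bookkeeping: for $p_a(\Gamma)\ge 2$ the requirement $s \le 6 - 3p_a$ admits no positive $s$, so the method fails outright. Your closing remark that a surviving case with $p_a \ge 2$ could be handled by ``a larger isolating multiple'' has the monotonicity backwards --- $(M\cdot B^2)$ is \emph{increasing} in $s$, so enlarging $s$ only hurts. You must actually prove $p_a \le 1$, and ``Riemann--Hurwitz plus a count of admissible branch points'' is not yet an argument. The paper gets this by showing that an irreducible degree-$2$ curve double-covering a line is necessarily of the form $(x_0 = x_1 = y^2 + h_4 = z = 0)$ with $h_4 = h_4(x_2,x_3)$, i.e.\ a double cover of $\mbP^1$ branched at the four roots of $h_4$, hence a nonsingular elliptic curve with $p_a = 1$; one must also rule out shapes such as $(y - q_2 = z^2 + h_6 = 0)$ of genus $2$, which is where the structure of $X_{4,4}$ genuinely enters.
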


\begin{proof}
We may assume that $\Gamma$ does not pass through the singular point of $X$.
We see that $\Gamma$ is disjoint from the indeterminacy locus of $\pi$ and the image $\pi (\Gamma)$ is either a conic or a line in $\mbP^3$.
We denote by $\varphi \colon Y \to X$ the blowup of $X$ along $\Gamma$ and set $B := -K_Y = \varphi^*A - E$.

Assume first that $\pi (\Gamma)$ is a conic.
Replacing homogeneous coordinates, we can write $\Gamma = (x_0 = q = y = z = 0)$, where $q = q (x_1,x_2)$ is of degree $2$.
It follows that $3A$ isolates $\Gamma$ and thus $M := 3 \varphi^*A - E$ is a test class.
We compute
\[
(M \cdot B^2) = 3 \times \frac{8}{3} - 5 \times 2 - (-2+2) = -2.
\]
This shows that $\Gamma$ is not a maximal center.

Assume next that $\pi (\Gamma)$ is a line.
Replacing homogeneous coordinates, we can write $\Gamma = (x_0 = x_1 = y^2 + h_4 = z = 0)$, where $h_4 = h_4 (x_2,x_3)$ is of degree $4$.
We see that $(x_0 = x_1 = z = 0)_X = \Gamma$ and $\Gamma$ is a nonsingular elliptic curve.
Thus $3 A$ isolates $\Gamma$ and $p_a (\Gamma) = 1$.
Setting $M := 3 \varphi^*A - E$, we compute
\[
(M \cdot B^2) = 3 \times \frac{8}{3} - 5 \times 2 - (-2+2-2) = 0.
\]
This shows that $\Gamma$ is not a maximal center.
\end{proof}

In the remainder of this subsection, we exclude curves of degree one on a general member $X$ of the family No.~$4$.
If a curve $\Gamma$ of degree one in $\mbP (1^4,2,3)$ is contained in some member of the family No.~$4$, then $\Gamma$ is a WCI curve of type $(1,1,2,3)$.
Let $\Gamma$ be an irreducible curve of degree one contained in $X$.
Then $\pi (\Gamma)$ is a line on $\mbP^3$.
We define $\Pi_{\Gamma} := \pi^{-1} \pi (\Gamma)$, which is isomorphic to $\mbP (1^2,2,3)$.

Let $\mcS$ be the space parametrizing members $X$ of the family No.~$4$ with defining polynomials of the form $F_1 = z x_0 + f_4$ and $F_2 = z x_1 + g_4$, where $f_4$ and $g_4$ do not involve $z$.
A WCI curve $\Gamma$ of type $(1,1,2,3)$ satisfies $\Gamma = \Pi_{\Gamma} \cap (y + h_2 = z + h_3 = 0)$ for some homogeneous polynomials $h_2$ and $h_3$ in variables $x_0,\dots,x_3$.
Those curves form a $11 = \dim \operatorname{Gr} (2,4) + 3 + 4$ dimensional family and the members parametrized by $\mcS$ containing a given $\Gamma$ form a $10$-codimensional subfamily.
If $\Pi_{\Gamma} \cap (x_0 = x_1 = 0) \cong \mbP (2,3)$, then $\Gamma = (x_2 = x_3 = y+h_2=z+h_3= 0)$.
Those curves form a $7$-dimensional family.
It follows that a general $X$ does not contain $\Gamma$ such that $\Pi_{\Gamma} \cap (x_0 = x_1 = 0) \cong \mbP (2,3)$.

\begin{Lem} \label{lemexclcdeg1no4}
Let $X$ be a general member of the family No.~$4$.
For any irreducible curve $\Gamma$ of degree one contained in the smooth locus of $X$, the following assertions hold.
\begin{enumerate}
\item A general member $S$ of the pencil $|\mcI_{\Gamma,X} (A)|$ is smooth along $\Gamma$.
\item The scheme-theoretic intersection $C := \Pi_{\Gamma} \cap X$ is reduced at the generic point of $\Gamma$.
\item 
Let $C_{\operatorname{red}} = \Gamma \cup \Gamma_1 \cup \cdots \cup \Gamma_l$ be the irreducible decomposition of the reduced scheme of $C$ and let $S \in |\mcI_{\Gamma,X} (A)|$ be a general member.
For any $i = 1,\dots,l$, there is an effective $1$-cycle $\Delta_i$ on $S$ such that $(\Gamma \cdot \Delta_i)_S \ge \deg \Delta_i := (A \cdot \Delta_i) > 0$ and $(\Gamma_j \cdot \Delta_i)_S \ge 0$ for $j \ne i$.
\end{enumerate}
\end{Lem}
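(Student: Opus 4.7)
My strategy is to fix explicit coordinates adapted to $\Gamma$, prove (2) by a direct computation in the regular local ring $\mcO_{\Pi_\Gamma,\Gamma}$, then deduce (1) from (2) via a Bertini-type argument, and finally attack (3) through intersection theory on the K3-type surface $S$.

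First, by the discussion preceding the statement we may assume after a change of homogeneous coordinates that $\Gamma = \Pi_\Gamma \cap (y+h_2 = z+h_3 = 0)$ with $\Pi_\Gamma = (x_0 = x_1 = 0) \cong \mbP(1,1,2,3)$ and $h_2 \in \mbC[x_2,x_3]_2$, $h_3 \in \mbC[x_2,x_3]_3$ (both without constant term). The pencil $|\mcI_{\Gamma,X}(A)|$ is then spanned by $x_0$ and $x_1$, and its base locus on $X$ equals $C = \Pi_\Gamma \cap X$. Since $\Gamma$ avoids both vertices of $\Pi_\Gamma$, the local ring $\mcO_{\Pi_\Gamma,\Gamma}$ is regular of dimension $2$ with parameters $y+h_2, z+h_3$.

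For (2), I would expand
\[
F_j|_{\Pi_\Gamma} \equiv a_j(y+h_2) + b_j(z+h_3) \pmod{I_{\Gamma/\Pi_\Gamma}^2} \qquad (j=1,2),
\]
with $a_j, b_j \in \mbC[x_2,x_3]$ read off from the partial derivatives of $F_j|_{\Pi_\Gamma}$ in $y$ and $z$ evaluated at $y = -h_2, z = -h_3$. Reducedness of $C$ at the generic point of $\Gamma$ is equivalent to $a_1 b_2 - a_2 b_1 \not\equiv 0$ in $\mbC[x_2,x_3]$, and a dimension count on the incidence variety of pairs $(X,\Gamma)$ with $\Gamma \subset X$ shows this holds for general $X$. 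For (1), using (2), the sections $x_0, x_1$ generate the conormal sheaf $N^*_{\Gamma/X}$ at the generic point of $\Gamma$, so the map $\mcO_\Gamma^{\oplus 2} \to N^*_{\Gamma/X}$ fails to be surjective only on a finite set $B \subset \Gamma$. Outside $B$, a general combination $\lambda_0 x_0 + \lambda_1 x_1$ is nowhere vanishing in the fiber; at each point of $B$, exactly one value of $[\lambda_0:\lambda_1] \in \mbP^1$ is excluded. Hence for general $[\lambda]$, the divisor $S_{[\lambda]}$ is smooth at every point of $\Gamma$.

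The main obstacle is (3). Since $S \in |A|$ on $X$ with $K_X = -A$, the adjunction formula gives $K_S = 0$, so $S$ is a K3-type surface with at worst quotient singularities; by (1), $S$ is smooth along $\Gamma$, hence $\Gamma^2|_S = -2$. From $C = \Gamma + \sum n_i \Gamma_i \sim A|_S$ and $(A \cdot \Gamma)_X = 1$ we obtain $\sum n_i (\Gamma \cdot \Gamma_i)_S = 3$, together with the analogous identity
\[
(\Gamma \cdot \Gamma_j)_S = \deg \Gamma_j + 2 n_j - \sum_{i \ne j} n_i (\Gamma_i \cdot \Gamma_j)_S
\]
for each $j$. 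For each $i$, the natural candidate $\Delta_i = n_i \Gamma_i$ automatically satisfies $(\Gamma_j \cdot \Delta_i)_S \ge 0$ for $j \ne i$ since distinct irreducible curves meet non-negatively on a surface, and the remaining condition $(\Gamma \cdot \Delta_i)_S \ge \deg \Delta_i$ reduces to the combinatorial inequality $2 n_i \ge \sum_{j \ne i} n_j (\Gamma_j \cdot \Gamma_i)_S$ on the dual graph of $C$. The difficulty is to verify this for all configurations of $C$ that can arise for general $X$, and, where the naive choice fails, to enlarge $\Delta_i$ by adjoining effective cycles on $S$ (for example, residual curves cut out by further complete intersections on $X$ passing through $\Gamma_i$ but transverse to $\Gamma_j$ for $j \ne i$). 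Carrying out this case analysis, exploiting the generality of $X$ to rule out the pathological dual graphs, is the principal difficulty of the proof.
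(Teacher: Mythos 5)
The essential content of this lemma is part (3), and your proposal does not actually prove it: you set up the intersection-theoretic framework and then state that carrying out the case analysis ``is the principal difficulty of the proof,'' which is precisely the step that has to be supplied. The paper's proof \emph{is} that case analysis: after normalizing $\Gamma=(x_0=x_2=y=z=0)$ and writing $\bar f_4=\alpha y^2+yf_2(x_1,x_3)$, $\bar g_4=\beta y^2+yg_2(x_1,x_3)$, it enumerates the possible configurations of $C_{\operatorname{red}}$ according to whether $\alpha=0$ and whether $f_2$, $g_2$ are divisible by $x_1$ (imposing three explicit open conditions on $X$ by a dimension count), identifies each component $\Gamma_i$ explicitly, and computes the relevant pairings. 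In the one delicate configuration ($g_2$ divisible by $x_1$, $f_2$ not) one component satisfies $(\Gamma\cdot\Gamma_3)_S=0$, so no positive multiple of $\Gamma_3$ alone can satisfy $(\Gamma\cdot\Delta_3)_S\ge\deg\Delta_3>0$, and the paper takes $\Delta_3=\Gamma_3+\varepsilon\Gamma_2$ with $1/3<\varepsilon<1/2$. Moreover, your reduction of (3) to the inequality $2n_i\ge\sum_{j\ne i}n_j(\Gamma_j\cdot\Gamma_i)_S$ rests on $(\Gamma_j^2)_S=-2$ for \emph{every} component, which is unjustified: unlike $\Gamma$, the residual components may pass through singular points of $X$ and of $S$ --- indeed $\Gamma_1=(x_0=x_1=x_2=y=0)$ passes through the $\frac{1}{3}(1,1,2)$ point $\msp_5$ --- so they need not be $(-2)$-curves, and the paper instead derives the needed self-intersections (e.g.\ $(\Gamma_2^2)_S=-2$) from $\deg\Gamma_j$ and the already-computed pairings.

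Two smaller gaps. In (1), the claim that at each point of the finite bad set $B$ ``exactly one value of $[\lambda_0:\lambda_1]$ is excluded'' presupposes that the images of $x_0,x_1$ in the conormal fibre are not \emph{both} zero there; if they were, every member of the pencil would be singular at that point. This must be secured by quasismoothness and generality --- the paper does it by showing that a $2\times4$ Jacobian submatrix has rank $2$ along all of $\Gamma$ and by treating the point $\msp_3=\Gamma\cap(x_0=x_1=0)$ separately via explicit generality conditions on $f_2,g_2$. Finally, your normalization should first dispose of the degenerate curves with $\Pi_\Gamma\cap(x_0=x_1=0)\cong\mbP(2,3)$, which the paper excludes for general $X$ by a dimension count before fixing coordinates.
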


\begin{proof}
Let $\Gamma$ be an irreducible curve of degree $1$ contained in the smooth locus of $X$.
We may assume that $\Pi_{\Gamma} \cap (x_0 = x_1 = 0) \cong \mbP (1,2,3)$ since $X$ is general.
After replacing homogeneous coordinates, we can write $\Gamma = (x_0 = x_2 = y = z = 0)$.
A general member $S \in |\mcI_{\Gamma,X} (A)|$ is cut out on $X$ by $\alpha_0 x_0 + \alpha_2 x_2$.
Note that $\bar{f}_4 := f_4 (0,x_1,0,x_3,0) = 0$ and $\bar{g}_4 := g_4 (0,x_1,0,x_3,0) = 0$ since $X$ contains $\Gamma$.
It follows that $\bar{f}_4 = \alpha y^2 + y f_2 (x_1,x_3)$ and $\bar{g}_4 = \beta y^2 + y g_2 (x_1,x_3)$.
By counting dimensions, we may assume the following for a general $X$.
\begin{itemize}
\item At least one of $f_2$ and $g_2$ is not divisible by $x_1$.
\item Neither $f_2$ nor $g_2$ is divisible by $x_1^2$.
\item If $\alpha = 0$, then neither $f_2$ nor $g_2$ is divisible by $x_1$. 
\end{itemize}
This is because a member $X$ of the family No.~4 containing a given $\Gamma$ and does not satisfy one of the above conditions form a $12 = 10 + 2$ codimensional subfamily.
The surface $S$ is smooth along $\Gamma$ if and only if the matirix
\[
M :=
\begin{pmatrix}
\prt f_4/\prt x_0 & \prt f_4/\prt x_2 & \prt f_4/\prt y & 0 \\
 \prt g_4/\prt x_0 & \prt g_4/\prt x_2 & \prt g_4/\prt y & x_1 \\
 \alpha_0 & \alpha_2 & 0 & 0
 \end{pmatrix}
 \]
 has rank $3$ at every point of $\Gamma$.
 By our choice of $X$, $M$ has rank $3$ at $\msp_3$, where $\{\msp_3\} = \Gamma \cap (x_0 = x_1 = 0)$. 
 Quasismoothness of $X$ implies that the matrix
 \[
\begin{pmatrix}
\prt f_4/\prt x_0 & \prt f_4/\prt x_2 & \prt f_4/\prt y & 0 \\
 \prt g_4/\prt x_0 & \prt g_4/\prt x_2 & \prt g_4/\prt y & x_1
 \end{pmatrix}
 \]
has rank $2$ at every point of $\Gamma$.
It follows that $\rank M = 3$ at every point of $\Gamma \setminus  \{\msp_3\}$ for a general choice of $S$. 
Therefore, (1) holds for a general $X$.
The scheme $C = \Pi_{\Gamma} \cap X$ is isomorphic to the weighted complete intersection
\[
(\bar{f}_4 = z x_1 + \bar{g}_4 = 0) \subset \mbP (1,1,2,3),
\]
Note that $\bar{f}_4$ and $\bar{g}_4$ are divisible by $y$ since $X$ contains $\Gamma$.
We write $\bar{f}_4 = \alpha y^2 + y f_2$ and $\bar{g}_4 = \beta y^2 + y g_2$, where $\alpha, \beta \in \mbC$ and $f_2, g_2 \in \mbC [x_1,x_3]$.
Set $\Gamma_1 = (x_0 = x_1 = x_2 = y = 0)$.
We see that $C = C_1 \cup C_2 \subset \mbP (1,1,2,3)$, where $C_1 = (y = z x_1 + \beta y^2 + y g_2 = 0) = \Gamma \cup \Gamma_1$ and $C_2 = (\alpha y + f_2 = z x_1 + \beta y^2 + y g_2 = 0)$.

Assume that $\alpha = 0$.
We can write $f_2 = f_1 f'_1$, where $\deg f_1 = \deg f'_1 = 1$ and hence $C_1 = \Gamma_2 \cup \Gamma_3$, where $\Gamma_2 = (f_1 = z x_1 + \beta y^2 + y g_2 = 0)$ and $\Gamma_3 = (f'_1 = z x_1 + \beta y^2 + y g_2 = 0)$.
By the generality of $X$, $f_2 = f_1 f'_1$ is not divisible by $x_1$.
Since $X$ does not contain $\msp_4$, we have $\beta \ne 0$.
It follows that $\Gamma_2$ and $\Gamma_3$ are irreducible.
Note that the case where $\Gamma_2 = \Gamma_3$ can happen.
It follows that $C_{\operatorname{red}} = \Gamma \cup \Gamma_1 \cup \Gamma_2 \cup \Gamma_3$ or $C_{\operatorname{red}} = \Gamma \cup \Gamma_1 \cup \Gamma_2$.

In the following we assume that $\alpha \ne 0$.
We may assume that $\alpha = 1$ by re-scaling $y$.
We may further assume that $\beta = 0$ by replacing $F_2$ with $F_2 - \beta F_1$ and then replacing $x_1$.

We treat the case where neither $f_2$ nor $g_2$ is divisible by $x_1$.
We see that $C_2 = (x_0 = x_2 = y + f_2 = z x_1 - f_2 g_2 = 0)$ is irreducible and we set $\Gamma_2 = C_2$.

We treat the case where $f_2$ is divisible by $x_1$ but $g_2$ is not divisible by $x_1$.
We write $f_2 = x_1 f_1$.
We have $C_1 = \Gamma_1 \cup \Gamma_2$, where $\Gamma_2 = (x_0 = x_2 = y + x_1 f_1 = z - f_1 g_2 = 0)$.

In any of the above cases, we have $(\Gamma \cdot \Gamma_i)_S \ge 1$ since $\Gamma \cap \Gamma_i \ne \emptyset$, and $\deg \Gamma_i \le 1$.
It follows that $(\Gamma \cdot \Gamma_i)_S \ge \deg \Gamma_i > 0$ and $(\Gamma_j \cdot \Gamma_i) \ge 0$ for $j \ne i$.
Thus (3) is verified by setting $\Delta_i = \Gamma_i$.
We also observe that (2) is true.

Finally, we treat the case where $g_2$ is divisible by $x_1$ but $f_2$ is not divisible by $x_1$.
We write $g_2 = x_1 g_1$.
Then, $C_1 = \Gamma_2 \cup \Gamma_3$, where $\Gamma_2 = (x_0 = x_2 = y + f_2 = z-f_2 g_1=0)$ and $\Gamma_3 = (x_0 = x_1 = x_2 = y+f_2=0)$.
We see that $(\Gamma \cdot \Gamma_1)_S = 1$, $(\Gamma \cdot \Gamma_2 )_S = 2$ and $(\Gamma \cdot \Gamma_3)_S = 0$ since $\Gamma$ and $\Gamma_1$ (resp.\ $\Gamma$ and $\Gamma_2$) meet at one point (resp.\ two points) and $\Gamma \cap \Gamma_3 = \emptyset$.
Moreover, we have $(\Gamma_2 \cdot \Gamma_3)_S = 1$ since $\Gamma_2$ and $\Gamma_3$ meet at one point at which $S$ is smooth.
Let $S'$ be a member of $|\mcI_{\Gamma,X} (A)|$ such that $S' \ne S$.
Then we have $A|_S \sim S'|_S = \Gamma + \Gamma_1 + \Gamma_2 + \Gamma_3$.
We have $1 = (A \cdot \Gamma_2) = (\Gamma \cdot \Gamma_2)_S + (\Gamma_1 \cdot \Gamma_2)_S + (\Gamma_2^2)_S + (\Gamma_3 \cdot \Gamma_2)_S = 3 + (\Gamma_2^2)_S$, which implies $(\Gamma_2^2)_S = -2$.
For $i = 1,2$, we can verify (3) by setting $\Delta_i = \Gamma_i$.
We shall construct $\Delta_3$.
We set $\Delta_3 = \Gamma_3 + \varepsilon \Gamma_2$ for some $1/3 < \varepsilon < 1/2$.
We have $\deg \Delta_3 = 1/3 + \varepsilon$, $(\Gamma \cdot \Delta_3)_S = 2 \varepsilon$, $(\Gamma_1 \cdot \Delta_3)_S = (\Gamma_1 \cdot \Gamma_3)_S \ge 0$ and $(\Gamma_2 \cdot \Delta_3)_S = 1 - 2 \varepsilon$.
Therefore $\deg \Delta_3 > (\Gamma \cdot \Delta_3)_S > 0$ and $(\Gamma_j \cdot \Delta_3) \ge 0$ for $j = 1,2$.
This completes the proof.
\end{proof}

\begin{Lem} \label{exclcurveno4_1}
Let $X$ be a member of the family $\mcG_4$ and $\Gamma$ an irreducible curve of degree $1$ on $X$.
Then $\Gamma$ is not a maximal center.
\end{Lem}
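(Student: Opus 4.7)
The plan is to suppose for contradiction that $\Gamma$ is a maximal center for some movable linear system $\mcH \subset |n A|$ with $m := \mult_\Gamma \mcH > n$, and derive a numerical contradiction on the test surface $S$ furnished by Lemma~\ref{lemexclcdeg1no4}. First I would invoke Lemma~\ref{rawexclcurve} to reduce to the case where $\Gamma$ lies in the smooth locus of $X$; note that since $A^3 = 8/3 > 1 = \deg \Gamma$, the degree bound of that lemma does not rule out $\Gamma$ on its own.

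Take $S \in |\mcI_{\Gamma,X}(A)|$ general. By Lemma~\ref{lemexclcdeg1no4}(1), $S$ is smooth along $\Gamma$, and since $K_X + S \sim 0$ adjunction gives $K_S \sim 0$ in a neighborhood of $\Gamma$; as $\Gamma \cong \mbP^1$, this forces $(\Gamma^2)_S = -2$. Writing $C = \Pi_\Gamma \cap X$ and $C_{\reduced} = \Gamma \cup \Gamma_1 \cup \cdots \cup \Gamma_l$, the fact that $C$ is the base locus of the pencil $|\mcI_{\Gamma,X}(A)|$, combined with Lemma~\ref{lemexclcdeg1no4}(2), yields the cycle identity $A|_S = \Gamma + \sum_{j=1}^l c_j \Gamma_j$ on $S$ with integers $c_j \geq 1$. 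I would then decompose
\[
\mcH|_S = m \Gamma + \sum_{j=1}^l m_j \Gamma_j + \mcM,
\]
where $m_j \in \mbZ_{\geq 0}$ and $\mcM$ is a linear system on $S$ whose fixed part contains none of $\Gamma, \Gamma_1, \ldots, \Gamma_l$; let $M$ denote a general member of $\mcM$.

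For each $i$, intersecting the linear equivalence $\mcH|_S \sim n A|_S$ with the effective $1$-cycle $\Delta_i$ from Lemma~\ref{lemexclcdeg1no4}(3), and using $(\Gamma \cdot \Delta_i)_S \geq \deg \Delta_i$, $(\Gamma_j \cdot \Delta_i)_S \geq 0$ for $j \neq i$, $(M \cdot \Delta_i)_S \geq 0$, together with $m > n$, yields
\[
m_i (\Gamma_i \cdot \Delta_i)_S \leq (n - m) \deg \Delta_i < 0.
\]
In particular $(\Gamma_i \cdot \Delta_i)_S < 0$ and $m_i \geq (m-n) \deg \Delta_i /|(\Gamma_i \cdot \Delta_i)_S|$ for every $i$.

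To close the argument I would combine this with the inequality
\[
(M \cdot A|_S)_S = \tfrac{8n}{3} - m - \sum_{j=1}^l m_j \deg \Gamma_j \geq 0,
\]
which follows from nefness of $A|_S$ and the absence of $\Gamma$, $\Gamma_j$ from the fixed part of $\mcM$. Substituting the lower bound on $m_i$ and running through the four configurations of $C_{\reduced}$ isolated in the proof of Lemma~\ref{lemexclcdeg1no4}(3)---in each of which the values $(\Gamma \cdot \Gamma_i)_S$, $(\Gamma_i \cdot \Gamma_j)_S$, $\deg \Gamma_i$ and $(\Gamma_i \cdot \Delta_i)_S$ are explicit---the resulting inequalities are numerically incompatible with $m \geq n + 1$. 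The main obstacle will be precisely this final case analysis: one must pin down each $(\Gamma_i \cdot \Delta_i)_S$ sharply on the (singular, $K_S \sim 0$) surface $S$, and handle with care the awkward configuration where $\Delta_i = \Gamma_i + \varepsilon \Gamma_j$ is reducible (the case $g_2 \in (x_1)$, $f_2 \notin (x_1)$). All other steps are routine intersection theory on $S$.
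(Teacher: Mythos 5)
Your setup --- reducing to $\Gamma$ in the smooth locus, passing to a general $S\in|\mcI_{\Gamma,X}(A)|$, and testing against the cycles $\Delta_i$ of Lemma~\ref{lemexclcdeg1no4} --- is the same as the paper's, and your intermediate inequality $m_i(\Gamma_i\cdot\Delta_i)_S\le(n-m)\deg\Delta_i<0$ is correct. The gap is in the concluding step: the system of inequalities you propose to combine is simply not contradictory. Normalize by $n$ and write $\gamma=m/n>1$, $\gamma_i=m_i/n$. Your two ingredients read $\gamma_i\ge(\gamma-1)\deg\Delta_i/\lvert(\Gamma_i\cdot\Delta_i)_S\rvert$ and $\gamma+\sum_i\gamma_i\deg\Gamma_i\le(A^3)=8/3$. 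Substituting the first into the second gives $\gamma+(\gamma-1)K\le 8/3$, where $K=\sum_i\deg\Delta_i\deg\Gamma_i/\lvert(\Gamma_i\cdot\Delta_i)_S\rvert\ge 0$ is a constant of the configuration, i.e.\ $\gamma\le 1+\tfrac{5/3}{1+K}$. This upper bound is strictly greater than $1$ for every finite $K$, no matter how sharply you pin down the intersection numbers, so it never contradicts $\gamma>1$; and since $n$ is unbounded, $m\ge n+1$ only gives $\gamma\ge 1+1/n$, which is compatible with the bound for large $n$. The same problem persists if you add the relation obtained by intersecting with $\Gamma$ itself (using $(\Gamma^2)_S=-2$): all of these constraints are simultaneously satisfiable with $\gamma$ slightly larger than $1$.

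The missing idea is to bring in the second member $S'$ of the pencil and compare $\gamma$ with the ratios $\gamma_i/c_i$, where $S'|_S=\Gamma+\sum c_i\Gamma_i$ and $A|_S\sim\frac{1}{n}\mcL+\gamma\Gamma+\sum\gamma_i\Gamma_i$ with $\mcL$ movable. Order so that $\gamma_1/c_1$ is minimal. If $\gamma\le\gamma_1/c_1$, then $(A-\gamma S')|_S$ is $\frac{1}{n}L$ plus an effective combination of the $\Gamma_i$ with no $\Gamma$-component, and intersecting with $\Gamma$ gives $(1-\gamma)\deg\Gamma\ge 0$, so $\gamma\le 1$. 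If $\gamma>\gamma_1/c_1$, the divisor $D_1=(c_1A-\gamma_1S')|_S$ has coefficient \emph{zero} along $\Gamma_1$, nonnegative coefficients $c_1\gamma_i-c_i\gamma_1$ along $\Gamma_i$ for $i\ge 2$, and positive coefficient $c_1\gamma-\gamma_1$ along $\Gamma$; computing $(D_1\cdot\Delta_1)_S=(c_1-\gamma_1)\deg\Delta_1$ on one side and bounding it below by $(c_1\gamma-\gamma_1)(\Gamma\cdot\Delta_1)_S\ge(c_1\gamma-\gamma_1)\deg\Delta_1$ on the other yields $\gamma\le 1$ at once. The essential manoeuvre is this cancellation of the troublesome $\Gamma_1$-term by subtracting $\gamma_1S'$ before testing against $\Delta_1$ --- an elimination that your decomposition of $\mcH|_S$ never performs.
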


\begin{proof}
It is enough to consider $\Gamma$ which is contained in the smooth locus of $X$.
We assume that $\Gamma$ is a maximal center.
Then there is a movable linear system $\mcH \subset |n A|$ and an exceptional divisor $E$ over $X$ with center $\Gamma$ such that $\mult_E \mcH > n$.
Let $S$ and $S'$ be general members of the pencil $|\mcI_{\Gamma,X} (A)|$.
We can write
\[
\begin{split}
S' |_S &= C = \Gamma + \sum\nolimits_{i=1}^r  c_i \Gamma_i, \\
A|_S &\sim \frac{1}{n} \mcH |_S = \frac{1}{n} \mcL + \gamma \Gamma + \sum\nolimits_{i=1}^l \gamma_i \Gamma_i,
\end{split}
\]
where $c_i > 0$, $\Gamma_i$ is an irreducible curve and $\mcL$ is a movable linear system on $S$.
We have $\gamma \ge (1/n) \mult_E \mcH > 1$.
Possibly re-ordering, we may assume that $\gamma_l/c_l \ge \cdots \ge \gamma_1/c_1$.

Assume that $\gamma \le \gamma_1/c_1$.
In this case, we set
\[
D := (A - \gamma S')|_S \sim \frac{1}{n} L + \sum\nolimits_{i=1}^l (\gamma_i - c_i \gamma) \Gamma_i,
\]
where $L \in \mcL$.
We have
\[
(1-\gamma) (A \cdot \Gamma) = (D \cdot \Gamma)_S
= \frac{1}{n} (L \cdot \Gamma) + \sum\nolimits_{i=1}^l (\gamma_i - c_i \gamma) (\Gamma_i \cdot \Gamma) \ge 0
\]
since $L$ is nef and $\Gamma_i \ne \Gamma$.
It follows that $\gamma \le 1$.
This is a contradiction and we must have $\gamma > \gamma_1/c_1$.
In this case, we set
\[
D_1 := (c_1 A-\gamma_1 S')|_S \sim \frac{c_1}{n} L + (c_1 \gamma - \gamma_1)\Gamma + \sum\nolimits_{i \ge 2} (c_1 \gamma_i - c_i \gamma_1)\Gamma_i,
\]
where $L \in \mcL$.
Let $\Delta_1$ be the effective 1-cycle on $S$ given in Lemma \ref{lemexclcdeg1no4}.
We have
\[
(c_1-\gamma_1) (A \cdot \Delta_1) = (D_1 \cdot \Delta_1)_S 
\ge (c_1 \gamma - \gamma_1) (\Gamma \cdot \Delta_1)_S
\]
since $L$ is nef and $(\Gamma_i \cdot \Delta_1)_S \ge 0$ for $i \ge 2$.
This, together with $(\Gamma \cdot \Delta_1)_S \ge (A \cdot \Delta_1) = \deg \Delta_1$ implies $\gamma \le 1$.
This is a contradiction and $\Gamma$ cannot be a maximal center.
\end{proof}

We summarize the conclusion of this section.

\begin{Thm} \label{exclcurve}
Let $X$ be a member of the family $\mcG_i$ with $i \in I^*$.
Then no curve on $X$ is a maximal center.
\end{Thm}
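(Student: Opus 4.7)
The plan is to assemble the results proved earlier in the section, since Theorem \ref{exclcurve} is essentially the conjunction of Corollary \ref{exclcurve0} with the three special-case lemmas that follow it. The guiding principle throughout is the test class method together with Corollary \ref{criexclmstc}: for a curve $\Gamma \subset X$ that is a maximal center with maximal contraction $\varphi \colon (E \subset Y) \to (\Gamma \subset X)$, one must have $(M \cdot (-K_Y)^2) > 0$ for every test class $M$ obtained from a $\Gamma$-isolating divisor.

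First I would invoke Corollary \ref{exclcurve0}: the basic inequality $(A^3) > (A \cdot \Gamma)$ of Lemma \ref{rawexclcurve}, combined with smoothness of $X$ along $\Gamma$, already eliminates all curves except those in the explicit short list — curves of degree $1$ or $2$ on a member of $\mcG_4$, and curves of degree $1$ on members of $\mcG_i$ for $i \in \{5,6,7,9,11\}$. This cuts the problem down from the $82$ families of $I^*$ to six families and a handful of degrees. Next, the degree-one case on $\mcG_5, \mcG_6, \mcG_7, \mcG_9, \mcG_{11}$ is disposed of by Lemma \ref{exclcurve1}, where the relatively large weight $a_5$ provides a $\Gamma$-isolating divisor $a_5 A$ and the test class inequality $(M \cdot B^2) < 0$ is a direct numerical check. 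The degree-two case on $\mcG_4$ is then handled by Lemma \ref{exclcurveno4_2}, again by producing a $3 A$ isolating divisor (using the fact that the projection $\pi \colon X \ratmap \mbP^3$ sends $\Gamma$ either to a conic or to a line) and computing $(M \cdot B^2) \le 0$.

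The only genuinely delicate case — which I would single out as the main obstacle — is Lemma \ref{exclcurveno4_1}, where $\Gamma$ has degree $1$ on $X \in \mcG_4$. Here the straightforward test class computation no longer rules out $\Gamma$, and one must instead work on a general surface $S \in |\mcI_{\Gamma,X}(A)|$ containing $\Gamma$ and analyze the restriction of the linear system $\mcH \subset |n A|$ to $S$. The decomposition of the residual intersection $C = \Pi_\Gamma \cap X$ into components $\Gamma \cup \Gamma_1 \cup \cdots \cup \Gamma_l$ must be controlled explicitly (this is why Condition $(\mathrm{C}_4)$ and Lemma \ref{lemexclcdeg1no4} were set up in advance), and one tests intersection-theoretic positivity against carefully chosen auxiliary effective cycles $\Delta_i$ to derive a contradiction with the assumption $\gamma = (1/n)\mult_E \mcH > 1$.

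Given all this preparation, the theorem itself reduces to the observation that the cases left open by Corollary \ref{exclcurve0} are exactly those covered by Lemmas \ref{exclcurve1}, \ref{exclcurveno4_2}, and \ref{exclcurveno4_1}. Thus the proof will consist of a single short paragraph invoking these four results.
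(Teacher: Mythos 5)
Your proposal matches the paper's proof exactly: Theorem \ref{exclcurve} is deduced in one line from Corollary \ref{exclcurve0} together with Lemmas \ref{exclcurve1}, \ref{exclcurveno4_2} and \ref{exclcurveno4_1}, and your descriptions of how each of those ingredients works (the test class method for the easy cases, the surface restriction argument with the auxiliary cycles $\Delta_i$ for degree-one curves on $\mcG_4$) are accurate. No gaps.
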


\begin{proof}
This follows from Corollary \ref{exclcurve0} and Lemmas \ref{exclcurve1}, \ref{exclcurveno4_2} and \ref{exclcurveno4_1}.
\end{proof}

\section{Exclusion of nonsingular points} \label{sec:nspt}

The aim of this section is to exclude nonsingular points as a maximal center.

\subsection{Easy cases}

In this subsection, we exclude most of the nonsingular points by applying the following criterion.

\begin{Thm} \label{nsptisol}
Let $X$ be a $\mbQ$-Fano WCI and $\msp$ a nonsingular point of $X$.
If there is a positive integer $l$ such that $l A$ is a $\msp$-isolating class and that $l \le 4/(A^3)$, then $\msp$ is not a maximal center.
\end{Thm}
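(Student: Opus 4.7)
The plan is to apply the test class method of Corollary \ref{criexclmstc}. Suppose for contradiction that $\msp$ is a maximal center, and let $\varphi \colon (E \subset Y) \to (\msp \subset X)$ be a maximal singularity over $\msp$. Since $\msp$ is a smooth point of the threefold $X$, I first reduce to the case where $\varphi$ is the ordinary blowup, so that $E \cong \mbP^2$, $-K_Y = \varphi^*A - 2 E$, and the standard intersection numbers on $Y$ read
\[
(\varphi^*A)^3 = (A^3), \quad (\varphi^*A)^2 \cdot E = \varphi^*A \cdot E^2 = 0, \quad (E^3) = 1.
\]

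By hypothesis $l A$ isolates $\msp$, so there is a positive integer $s$ for which $\msp$ is an isolated component of $\Bs \mcL_\msp^s$, where $\mcL_\msp^s = |\mcI_\msp^s (s l A)|$. By the test class lemma (Lemma 5.2.5 of \cite{CPR} cited in the preliminaries), the strict transform $M = \varphi_*^{-1} \mcL_\msp^s \sim s l \varphi^*A - s E$ is then a nef (test) class on $Y$. A direct intersection calculation using the numerics above gives
\[
(M \cdot (-K_Y)^2) = s l (A^3) - 4 s = s \bigl( l (A^3) - 4 \bigr),
\]
which is nonpositive whenever $l \le 4/(A^3)$. Corollary \ref{criexclmstc} then forces $E$ not to be a maximal singularity, a contradiction.

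The only subtlety I foresee in carrying out this plan is justifying the reduction to the ordinary blowup among all conceivable extremal divisorial extractions over a smooth point of a threefold. This is handled by Kawakita's classification: every such extraction is a weighted blowup with weights $(1,a,b)$, $\gcd(a,b) = 1$, for which $(E^3) = 1/(ab)$ and $a_E(K_X) = a + b$. For such a blowup, the strict transform of $\mcL_\msp^s$ is $M \sim s l \varphi^*A - m E$ for some $m \ge s$, and an analogous computation yields
\[
(M \cdot (-K_Y)^2) \le s \Bigl( l (A^3) - \tfrac{(a+b)^2}{a b} \Bigr) \le s \bigl( l (A^3) - 4 \bigr) \le 0,
\]
where the middle inequality is AM--GM. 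Thus the same test class argument excludes every extremal extraction over $\msp$, and $\msp$ is not a maximal center.
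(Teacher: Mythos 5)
Your argument is correct, but it is not the route the paper takes: the paper simply cites the proof of (A) on pp.~210--211 of \cite{CPR}, which runs through Corti's multiplicity inequality rather than the test class method. In that argument one assumes $\msp$ is a maximal center of some $\mcH \subset |nA|$, invokes Theorem \ref{multineq1}(2) to get $\mult_{\msp} Z > 4n^2$ for $Z = H_1 \cap H_2$, and then plays this against a general member $L \in \mcL_{\msp}^s$: since $\msp$ is an isolated component of $\Bs \mcL_{\msp}^s$, no component of $Z$ through $\msp$ lies in $L$, so $s\,l\,n^2 (A^3) = (L \cdot Z) \ge \mult_{\msp}L \cdot \mult_{\msp} Z > 4sn^2$, forcing $l > 4/(A^3)$. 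That proof never needs to know which extremal extraction realizes the maximal singularity --- which is precisely why \cite{CPR} (written before Kawakita's 2001 classification) could carry it out. Your proof instead pushes the test class machinery (Lemma 5.2.5 and Corollary \ref{criexclmstc}) all the way through, and the price is exactly the input you identified: Kawakita's theorem that every divisorial extraction centered at a smooth threefold point is a $(1,a,b)$ weighted blowup, after which $(E^3) = 1/(ab)$, $a_E(K_X) = a+b$, $m \ge s$ and AM--GM give $(M \cdot (-K_Y)^2) \le s\bigl(l(A^3) - 4\bigr) \le 0$ uniformly. Both proofs are complete; yours is more systematic (the same computation excludes every extraction at once, with the ordinary blowup $a=b=1$ as the extremal case of AM--GM), while the paper's citation buys independence from the classification of extractions at the cost of relying on the deeper $4n^2$ inequality. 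One cosmetic point: your opening "reduction to the ordinary blowup" is not really a reduction --- the general weighted-blowup computation in your last paragraph is the actual proof and subsumes the special case --- so it would read better to state Kawakita's classification first and do the computation once.
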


\begin{proof}
See Proof of (A) in pages 210 and 211 in \cite{CPR}.
\end{proof}

Let $X = X_{d_1,d_2} \subset \mbP (a_0,\dots,a_5)$ be a member of $\mcG_i$ with $i \in I^*$.
Throughout this subsection, we assume $a_0 \le a_1 \le \cdots \le a_5$.
We write $x_0,\dots,x_5$ (or $x,y,\dots$ etc.) for homogeneous coordinates on $\mbP$, and $\msp = (\xi_0: \dots :\xi_5)$ (or $\msp = (\xi :\eta :\cdots)$ etc.) for the coordinates of a point $\msp$.

\begin{Def}[{\cite[Definition 5.6.3]{CPR}}]
Let $\{g_i\}$ be a finite set of homogeneous polynomials in the variables $x_0,\dots,x_5$.
We say that the set $\{g_i\}$ {\it isolates} $\msp$ if $\msp$ is a component of the set
\[
X \cap \bcap_i (g_i = 0).
\]
\end{Def}

\begin{Lem}[{\cite[Lemma 5.6.4]{CPR}}] \label{lemisolnspt}
If a finite set $\{g_i\}$ of homogeneous polynomials isolates $\msp$, then $l A$ isolates $\msp$, where $l = \max \{\deg g_i\}$.
\end{Lem}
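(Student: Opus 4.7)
The plan is to convert the ``isolating set'' $\{g_i\}$ into a family of sections of $\mcO_X(slA)$ that vanish to order at least $s$ at $\msp$ and whose common zero locus on $X$, near $\msp$, reduces to $X \cap \bigcap_i (g_i = 0)$. Set $l_i := \deg g_i$, so that $l_i \le l$ for every $i$.

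First I would fix a positive integer $s$ large enough that $s(l - l_i)$ is a common multiple of $a_0,\dots,a_5$ for every $i$ with $l_i < l$. For such an $s$, the pure powers $x_j^{s(l - l_i)/a_j}$ are honest homogeneous polynomials on $\mbP$ of the required degree, so the linear system of all monomials of degree $s(l-l_i)$ in $x_0,\dots,x_5$ has empty base locus on $\mbP$.

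Next, for each $i$ and each monomial $m$ of degree $s(l - l_i)$, consider the product $g_i^s \cdot m$, a homogeneous polynomial of degree $sl$ whose restriction to $X$ is a section of $\mcO_X(slA)$. Since $g_i$ vanishes at $\msp$, this section vanishes to order at least $s$ at $\msp$, and hence defines an element of $\mcL_{\msp}^s = |\mcI_{\msp}^s(slA)|$. For fixed $i$, the common zero locus on $\mbP$ of $\{g_i^s \cdot m : m\}$ equals $(g_i = 0) \cup \bigcap_m (m = 0) = (g_i = 0)$ by the choice of $s$. Intersecting over all $i$, the common zero locus of the whole family of sections inside $\mbP$ is $\bigcap_i (g_i = 0)$, so its intersection with $X$ is precisely $X \cap \bigcap_i (g_i = 0)$, which by hypothesis has $\msp$ as an isolated component.

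Finally, since $\Bs \mcL_{\msp}^s$ is contained in the common zero locus of any subset of $\mcL_{\msp}^s$, and in particular in $X \cap \bigcap_i (g_i = 0)$, while $\msp$ itself is a base point of $\mcL_{\msp}^s$ (every one of the constructed sections vanishes there), it follows that $\msp$ is an isolated component of $\Bs \mcL_{\msp}^s$. Thus $lA$ isolates $\msp$. The only mild obstacle in this argument is ensuring that the auxiliary monomials of degree $s(l-l_i)$ contribute no spurious base points on the weighted ambient space; this is the reason for making $s$ divisible by the weights.
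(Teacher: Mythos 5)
Your proof is correct, and it is exactly the standard argument behind this statement: the paper itself gives no proof, quoting the result directly from \cite[Lemma 5.6.4]{CPR}, whose proof runs along the same lines (multiply $g_i^s$ by monomials of complementary degree to land in $|\mcI_{\msp}^s(slA)|$, with $s$ chosen divisible by the weights so that those monomials have no common zero on the weighted projective space). Your explicit handling of that divisibility point is the right care to take here.
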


\begin{Prop} \label{isoclassnspt}
Let $\msp = (\xi_0 : \cdots : \xi_5)$ be a nonsingular point of $X$ and assume that $\xi_j \ne 0$ for some $0 \le j \le 5$.
Then $m A$ isolates $\msp$, where
\[
m = \max_{0 \le l \le 5, l \ne j} \{ \lcm (a_j,a_l) \}.
\]
If in addition $\msp$ is not contained in $\Exc (\pi_k)$ for some $k \ne j$, then $m_k A$ isolates $\msp$, where
\[
m_k = \max_{0 \le l \le 5, l \ne j, k} \{ \lcm (a_j,a_l) \}.
\]
\end{Prop}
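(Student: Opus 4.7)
The plan is to construct an explicit finite set of homogeneous polynomials that isolate $\msp$, and then invoke Lemma \ref{lemisolnspt}. For each $l \ne j$, set $d_l := \gcd(a_j, a_l)$, $\alpha_l := a_l/d_l$ and $\beta_l := a_j/d_l$, so that $\alpha_l a_j = \beta_l a_l = \lcm(a_j,a_l)$. I would then define
\[
g_l := \xi_l^{\beta_l} x_j^{\alpha_l} - \xi_j^{\alpha_l} x_l^{\beta_l},
\]
which is a homogeneous polynomial of degree $\lcm(a_j,a_l)$ vanishing at $\msp$ (here we use $\xi_j \ne 0$ to normalise). The first statement will follow once we show that the set $\{g_l\}_{l \ne j}$ isolates $\msp$; then $mA$ isolates $\msp$ by Lemma \ref{lemisolnspt}.

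For the isolation, the key observation is that in the weighted projective space $\mbP$, the scheme $\bigcap_{l \ne j} (g_l = 0)$ is set-theoretically zero-dimensional: in the affine chart $x_j \ne 0$, each equation $g_l = 0$ forces $x_l^{\beta_l}$ to equal a specified constant times $x_j^{\alpha_l}$, so $x_l$ is determined by $x_j$ up to an $\beta_l$-th root of unity, and after accounting for the residual $\bmu_{a_j}$-action that gives $\mbP$ in this chart, only finitely many representatives survive. Hence $\msp$ is an isolated component of $\bigcap_{l \ne j}(g_l = 0)$ in $\mbP$, and a fortiori of its intersection with $X$. Applying Lemma \ref{lemisolnspt} gives the first assertion.

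For the refined statement, I want to drop the polynomial $g_k$ from the collection. The subscheme $\bigcap_{l \ne j, l \ne k}(g_l = 0)$ in $\mbP$ is one-dimensional near $\msp$: fixing $x_j$, all coordinates $x_l$ with $l \ne j, k$ are determined (up to a finite ambiguity resolved by passing to $\msp$), while $x_k$ remains free. Locally at $\msp$ this curve coincides set-theoretically with the $\pi_k$-fiber through $\msp$, i.e.\ with the line $\{(\xi_0 : \cdots : \xi_{k-1} : t : \xi_{k+1} : \cdots : \xi_5) \mid t \in \mbC\}$ in $\mbP$. The hypothesis $\msp \notin \Exc(\pi_k)$ means exactly that $\msp$ is isolated in $\pi_k^{-1}(\pi_k(\msp)) \cap X$; therefore $\msp$ remains an isolated component of $\bigcap_{l \ne j, l \ne k}(g_l = 0) \cap X$, and Lemma \ref{lemisolnspt} yields that $m_k A$ isolates $\msp$.

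The main obstacle is verifying rigorously that the one-dimensional intersection above agrees locally with the $\pi_k$-fiber, since the $g_l$'s can cut the fiber out with multiplicity and there can be extra components coming from other roots of unity. This should be handled by working in the affine chart $x_j \ne 0$ and observing that the residual roots of unity produce only finitely many additional branches away from $\msp$, each disjoint from $\msp$ in a Zariski neighbourhood; the unique branch through $\msp$ is precisely the $\pi_k$-fiber, so the isolation argument reduces to the non-exceptionality hypothesis.
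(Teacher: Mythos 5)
Your proposal is correct and follows essentially the same route as the paper: you construct the same binomials $g_l = \xi_l^{a_j/\gcd(a_j,a_l)}x_j^{a_l/\gcd(a_j,a_l)} - \xi_j^{a_l/\gcd(a_j,a_l)}x_l^{a_j/\gcd(a_j,a_l)}$ of degree $\lcm(a_j,a_l)$, observe that the full collection isolates $\msp$ and that dropping $g_k$ leaves a one-dimensional locus whose branch through $\msp$ is the $\pi_k$-fiber, and then invoke Lemma \ref{lemisolnspt}. Your worry about multiplicities in the last paragraph is unnecessary, since for a point (codimension $3$) the isolating condition is purely set-theoretic; otherwise your argument supplies exactly the details the paper leaves as ``obvious.''
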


\begin{proof}
For $l \ne j$, we put $a'_l = a_l / \gcd (a_j, a_l)$ and $a'_j = a_j / \gcd (a_j,a_l)$.
The set
\[
\{ g_0,\dots, \hat{g}_j,\dots,g_5\}, \text{ where } g_l = \xi_j^{a'_l} x_l^{a'_j} - \xi_l^{a'_j} x^{a'_l}_j,
\]
obviously isolates $\msp$.
We have $\deg g_l = a_l a'_j = a_j a'_l = \lcm (a_j,a_l)$.
It follows from Lemma \ref{lemisolnspt} that $m A$ isolates $\msp$.
If $\msp \notin \Exc (\pi_k)$ then the set 
\[
\{ g_0,\dots,\hat{g}_j,\dots,\hat{g}_k,\dots,g_5 \}
\] 
isolates the point $\msp$, which completes the proof.
\end{proof}

\begin{Prop} \label{exclsmpts123}
Let $X = X_{d_1,d_2} \subset \mbP (a_0,\dots,a_5)$ be a member of $\mcG_i$ with $i \in I^*$.
If
\[
a_2 a_5 \le 4/(A^3),
\]
then no nonsingular point of $X$ is a maximal center.
\end{Prop}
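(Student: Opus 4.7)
The plan is to apply Theorem \ref{nsptisol}: for every nonsingular point $\msp \in X$ it suffices to produce an $\msp$-isolating class $lA$ with $l \le 4/(A^3)$, which under the hypothesis $a_2 a_5 \le 4/(A^3)$ reduces to finding such a class with $l \le a_2 a_5$. Fix $\msp = (\xi_0 : \cdots : \xi_5)$ and split on whether some coordinate of small index is nonzero.

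Case 1: some $\xi_j \ne 0$ with $j \in \{0,1,2\}$. I would apply Proposition \ref{isoclassnspt} with this choice of $j$. Since the weights are ordered and $a_j \le a_2$, the resulting bound is
\[
m = \max_{l \ne j}\lcm(a_j, a_l) \le \max_{l \ne j} a_j a_l = a_j a_5 \le a_2 a_5,
\]
so $mA$ isolates $\msp$ with $m \le a_2 a_5 \le 4/(A^3)$, and Theorem \ref{nsptisol} excludes $\msp$ as a maximal center.

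Case 2: $\xi_0 = \xi_1 = \xi_2 = 0$, so $\msp$ lies on the scheme $C := X \cap (x_0 = x_1 = x_2 = 0) = X \cap \mbP(a_3, a_4, a_5)$. I would argue that $C$ is zero-dimensional at $\msp$, so that the set $\{x_0, x_1, x_2\}$ isolates $\msp$ in the sense of \cite[Definition 5.6.3]{CPR}. Lemma \ref{lemisolnspt} then gives $a_2 A$ as an $\msp$-isolating class, and since $a_2 \le a_2 a_5 \le 4/(A^3)$, Theorem \ref{nsptisol} concludes.

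The main obstacle is the zero-dimensionality of $C$ in Case 2: although its expected dimension is $3 + 2 - 5 = 0$, this could fail if $F_1|_{\mbP(a_3,a_4,a_5)}$ or $F_2|_{\mbP(a_3,a_4,a_5)}$ vanishes identically, equivalently, if no monomial of degree $d_1$ or $d_2$ purely in $x_3, x_4, x_5$ exists. For the families $\mcG_i$ where the hypothesis $a_2 a_5 \le 4/(A^3)$ is actually in force, this can be verified directly by inspection of the weights and degrees from the big table. In the borderline situations where either restriction is trivial one supplements $\{x_0, x_1, x_2\}$ with one extra section of controlled degree in $x_3, x_4, x_5$ so as to still isolate $\msp$ by a class of degree at most $a_2 a_5$.
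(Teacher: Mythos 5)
Your Case 1 is correct and is exactly the paper's argument for points with a nonvanishing coordinate of index $\le 2$. The gap is in Case 2, and it is concentrated in the one claim you defer to ``inspection.'' Zero-dimensionality of $C=(x_0=x_1=x_2=0)_X$ is \emph{not} decided by the weights and degrees: the existence of a monomial of degree $d_i$ in $x_3,x_4,x_5$ does not mean it occurs in $F_i$ with nonzero coefficient, and even when both restrictions are nonzero their common zero locus can be a curve (e.g.\ $F_1|=s^2$, $F_2|=su$). Ruling these degenerations out requires quasismoothness arguments at the relevant vertices, and in one family in the scope of the proposition it genuinely fails: for $X_{4,6}\subset\mbP(1^3,2,3^2)$ in $\mcG_7$ the only degree-$4$ monomial in $y,z_0,z_1$ is $y^2$, no condition defining $\mcG_7$ forces $y^2\in F_1$ (quasismoothness only forces $y^3\in F_2$, which keeps the weight-$2$ vertex off $X$), and for a quasismooth member with $y^2\notin F_1$ the locus $C$ is the curve $(F_2|_{\mbP(2,3,3)}=0)$, whose general point is a \emph{nonsingular} point of $X$. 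So your primary mechanism breaks exactly where the paper flags an exception: its proof shows, family by family, that for the $18$ families with $a_2a_5\le 4/(A^3)<a_3a_5$ every point of $(x_0=x_1=x_2=0)_X$ is a singular point of $X$ ``except possibly for $\mcG_7$,'' and it handles $\mcG_7$ by the second half of Proposition \ref{isoclassnspt}, using that the projection $\pi_3$ from the weight-$2$ vertex contracts no curve to obtain the isolating class $3A$ with $3=4/(A^3)$.

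Your fallback does not close this. To isolate a point lying on a curve inside $\mbP(a_3,a_4,a_5)$ one generically needs sections of degree $\lcm(a_i,a_j)$ with $i,j\in\{3,4,5\}$, and these can exceed $a_2a_5$ (already for $\mcG_7$ one has $\lcm(a_3,a_4)=6>3=a_2a_5$); whether some lower-degree section happens to cut the curve properly at $\msp$ depends on the geometry of $C$ and must be argued, which is precisely the content you are omitting. Two further remarks for comparison with the paper's route: (i) for the $42$ families where $a_3a_5$ or even $a_4a_5$ is already $\le 4/(A^3)$, the paper never needs your Case 2 at all — it applies Proposition \ref{isoclassnspt} to a nonvanishing coordinate of index $3$ (or any index), so the delicate analysis of $C$ is confined to $18$ families rather than all $60$; (ii) where the paper does analyze the locus $(x_0=x_1=x_2=0)_X$, it proves the stronger and cleaner statement that it consists of singular points, which makes the residual case vacuous instead of requiring an isolating class there. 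As written, your proof asserts a verification that is false for some members of $\mcG_7$ and leaves the repair unproved.
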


\begin{proof}
Let $\msp = (\xi_0: \cdots : \xi_5)$ be a nonsingular point of $X$.
We shall show that $l A$ is a $\msp$-isolating class for some $l \le 4/(A^3)$, which completes the proof by Theorem \ref{nsptisol}.
We divide the proof into three cases.

Case 1: $a_4 a_5 \le 4 / (A^3)$ ($31$ families).
In this case, Proposition \ref{isoclassnspt} implies that $l A$ isolates $\msp$ for some $l \le a_4 a_5$.

Case 2: $a_3 a_5 \le 4 / (A^3) < a_4 a_5$ ($11$ families).
In this case, $X$ is a member of $\mcG_i$ with
\[
i \in \{14,24,38,39,43,50,52,56,57,82,83\}.
\]
Assume that $\xi_j = 0$ for $j = 0,1,2,3$.
Then, by checking individually, $\msp$ is a singular point of $X$.
It follows that $\xi_j \ne 0$ for some $0 \le j \le 3$ and applying Proposition \ref{isoclassnspt} we see that $l A$ isolates $\msp$ for some $l \le a_3 a_5$.

Case 3: $a_2 a_5 \le 4 / (A^3) < a_3 a_5$ ($18$ families).
In this case, $X$ is a member of $\mcG_i$ with
\[
i \in \{5,7,13,17,19,23,29,30,34,41,42,49,55,65,69,72,74,77\}.
\]
Assume that $\xi_j = 0$ for $j = 0,1,2$.
Then, by checking individually, $\msp$ is a singular point of $X$ except possibly when $X$ is a member of $\mcG_7$.
If $X$ is not a member of $\mcG_7$, then $\xi_j \ne 0$ for some $0 \le j \le 2$ and applying Proposition \ref{isoclassnspt} we see that $l A$ isolates $\msp$, where $l \le a_2 a_5$.
If $X = X_{4,6} \subset \mbP (1^3,2,3^2)$ is a member of $\mcG_7$, then  $\pi_3 \colon X \to \mbP (1^3,3^2)$ does not contract any curve and Proposition \ref{isoclassnspt} implies that $3 A$, where $3 = 4/(A^3)$, isolates $\msp$.
This completes the proof.
\end{proof}

\begin{Prop} \label{exclsmpts4}
Let $X = X_{d_1,d_2} \subset \mbP (a_0,\dots,a_5)$ be a member of $\mcG_i$ with $i \in I^*$.
If
\[
a_1 a_5 \le 4/(A^3) < a_2 a_5 \text{ and } 4 /(A^3) \le a_2 a_4,
\]
then no nonsingular point of $X$ is a maximal center.
\end{Prop}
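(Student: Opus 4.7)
The plan is to apply Theorem \ref{nsptisol}: it suffices to exhibit, for every nonsingular point $\msp \in X$, a positive integer $l \le 4/(A^3)$ such that $lA$ is a $\msp$-isolating class. The argument parallels Proposition \ref{exclsmpts123} but is slightly more delicate because $a_2 a_5 > 4/(A^3)$, so we cannot invoke the first half of Proposition \ref{isoclassnspt} directly when the first two coordinates of $\msp$ vanish.

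Write $\msp = (\xi_0 : \cdots : \xi_5)$. I would split according to the smallest index $j$ with $\xi_j \ne 0$.

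\emph{Case 1: $j \in \{0, 1\}$.} Here the first half of Proposition \ref{isoclassnspt} immediately produces a $\msp$-isolating class $mA$ with
\[
m \;=\; \max_{k \ne j} \lcm(a_j, a_k) \;\le\; a_j a_5 \;\le\; a_1 a_5 \;\le\; 4/(A^3),
\]
so $\msp$ is not a maximal center.

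\emph{Case 2: $\xi_0 = \xi_1 = 0$.} Now $\msp$ lies on the stratum $(x_0 = x_1 = 0)_X$. The strategy is to invoke the \emph{refined} statement of Proposition \ref{isoclassnspt} with $k = 5$: provided $\msp \notin \Exc(\pi_5)$, one obtains a $\msp$-isolating class $m_5 A$ with
\[
m_5 \;=\; \max_{k \ne j,\, k \ne 5} \lcm(a_j, a_k) \;\le\; a_j a_4 \;\le\; a_2 a_4 \;\le\; 4/(A^3),
\]
which again triggers Theorem \ref{nsptisol}. So the remaining task is to check, for each family satisfying the numerical hypothesis of the proposition, that every nonsingular point on $(x_0 = x_1 = 0)_X$ lies off the exceptional locus of the projection $\pi_5$ from $\msp_5$.

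The main obstacle, and the place where real work is required, is this last family-by-family check. The list of families compatible with the hypotheses $a_1 a_5 \le 4/(A^3) < a_2 a_5$ and $a_2 a_4 \le 4/(A^3)$ is short and explicit; for each one I would write down the defining equations $F_1, F_2$ restricted to $x_0 = x_1 = 0$, determine $(x_0 = x_1 = 0)_X$ scheme-theoretically, and verify that any nonsingular point it contains projects to a proper point of $\mbP(a_0,\ldots,\hat{a}_5,\ldots,a_5)$ under $\pi_5$. In the typical situation $(x_0 = x_1 = 0)_X$ is either entirely contained in $\Sing X$ (because the remaining weights $a_2,\ldots,a_5$ force quasismoothness to fail at smooth points of $X$ on this stratum) or consists of points where one of $x_2, x_3, x_4$ does not vanish, in which case $\pi_5$ is defined. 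The conditions ($\mathrm{C}_0$)--($\mathrm{C}_2$) of Condition \ref{bcd} should be enough to rule out pathological configurations in the remaining instances.
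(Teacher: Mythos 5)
Your Case 1 is exactly the paper's first step, but Case 2 contains a genuine gap, and it starts from a misreading of the hypothesis. The proposition assumes $4/(A^3) \le a_2 a_4$, i.e.\ $a_2 a_4$ is \emph{at least} $4/(A^3)$; you have used it in the direction $a_2 a_4 \le 4/(A^3)$, which is the hypothesis of Proposition \ref{exclsmpts5} (whose proof is indeed essentially the argument you describe). Consequently the chain $m_5 \le a_j a_4 \le a_2 a_4 \le 4/(A^3)$ breaks at both of its last two links: since $\xi_0=\xi_1=0$ forces $j \ge 2$, one has $a_j \ge a_2$ rather than $a_j \le a_2$; and even for $j=2$ the conclusion $m_5 \le 4/(A^3)$ does not follow from the stated hypothesis. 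The failure is not merely formal. The four families satisfying the hypotheses are No.~8, 10, 12 and 15, and for No.~10 ($\mbP(1^2,2^2,3^2)$, $4/(A^3)=24/5$), No.~12 ($\mbP(1^2,2^2,3,4)$, $4/(A^3)=16/3$) and No.~15 ($\mbP(1^2,2^2,3,5)$, $4/(A^3)=40/7$) one computes $\max_{l \ne 2,5}\lcm(a_2,a_l) = \lcm(2,3) = 6$, which exceeds $4/(A^3)$ in every case; so the class $m_5 A$ produced by the refined half of Proposition \ref{isoclassnspt} with $k=5$ is too big to feed into Theorem \ref{nsptisol}.

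What is actually needed, and what the paper does, is a family-by-family choice of isolating data rather than a uniform projection from $\msp_5$. For No.~8 your scheme happens to work because $a_2=a_3=a_4=2$, so $m_5=2\le 4$. For No.~12 one must instead project away from the weight-$3$ coordinate, i.e.\ use $\pi_4$, which gives $m_4=\lcm(2,4)=4\le 16/3$. For No.~10 and No.~15 no single projection suffices and one writes down explicit isolating sets of degree at most $3$ (resp.\ $5$) --- for instance $\{x_0,x_1,\eta_1 y_0-\eta_0 y_1,\zeta_1 z_0-\zeta_0 z_1\}$ for No.~10 --- whose common zero locus is finite precisely because $(\mathrm{C}_2)$ forbids a WCI curve of type $(1,1,2,3)$ on $X$. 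Your closing paragraph correctly anticipates that a case-by-case verification is where the real work lies, but the verification you propose (that $\msp\notin\Exc(\pi_5)$) would not rescue the numerical bound even if it succeeded.
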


\begin{proof}
There are $4$ families: $X$ is a member of $\mcG_i$ with $i \in \{8,10,12,15\}$.
If $\xi_0 \ne 0$ or $\xi_1 \ne 0$, then $l A$ isolates $\msp$ for some $l \le a_1 a_5$ by Proposition \ref{isoclassnspt}.
Thus we may assume that $\xi_0 = \xi_1 = 0$.

Let $X = X_{4,6} \subset \mbP (1^2, 2^3,3)$ be a member of $\mcG_8$.
We see that $\pi_5$ does not contract any curve and $2 A$ isolates $\msp$.
We have $2 < 4 / (A^3) = 4$.

Let $X = X_{5,6} \subset \mbP (1^2,2^2,3^2)$ be a member of $\mcG_{10}$ and $\msp = (0:0:\eta_0:\eta_1:\zeta_0:\zeta_1)$.
We see that at least one of $\eta_0$ and $\eta_1$ is nonzero because otherwise $\msp$ is a singular point.
Similarly, we have that at least one of $\zeta_0$ and $\zeta_1$ is nonzero.
It follows that the set $\{x_0,x_1,\eta_1 y_0 - \eta_0 y_1, \zeta_1 z_0 - \zeta_0 z_1\}$ isolates $\msp$ because otherwise $X$ contains a WCI curve of type $(1,1,2,3)$, which is impossible by ($\mathrm{C}_2$).
Thus, $3 A$ is a $\msp$-isolating class and we have $3 < 4/A^3 = 24/5$.

Let $X = X_{6,6} \subset \mbP (1^2,2^2,3,4)$ be a member of $\mcG_{12}$ and $\msp = (0:0:\eta_0:\eta_1:\zeta:\sigma) \in X$.
The projection $\pi_4$ does not contract any curve and thus $4 A$ isolates $\msp$ by Proposition \ref{isoclassnspt}.
We have $4 < 4 / (A^3) = 16/3$.

Finally, let $X = X_{6,7} \subset \mbP (1^2,2^2,3,5)$ be a member of $\mcG_{15}$ and $\msp = (0:0:\eta_0:\eta_1:\zeta:\sigma) \in X$.
We see that at least one of $\eta_0$ and $\eta_1$ is nonzero because otherwise $\msp$ is the singular point $\msp_5$.
If $\zeta = 0$, then the set $\{x_0,x_1,\eta_1 y_0 - \eta_0 y_1, z\}$ isolates $\msp$ because otherwise $X$ contains a WCI curve of type $(1,1,2,3)$, which is impossible by ($\mathrm{C}_2$).
If $\zeta \ne 0$, then the set $\{x_0,x_1,\eta_1 y_0 - \eta_0 y_1, \eta_0 \zeta s - \sigma y_0 z\}$ isolates $\msp$.
Thus $l A$ is a $\msp$-isolating class for some $l \le 5 < 4/(A^3) = 40/7$.
\end{proof}

In the remainder of this subsection, we treat families satisfying $a_1 a_5 \le 4/(A^3) < a_2 a_5$ and $a_2 a_4 \le 4/(A^3)$.
There are $14$ such families. 

\begin{Lem} \label{lem:spsingpts}
Let $X \subset \mbP (a_0,\dots,a_5)$ be a member of $\mcG_i$ which satisfies
\[
a_1 a_5 \le 4/(A^3) < a_2 a_5 \text{ and } a_2 a_4 < 4/(A^3).
\]
Then the following assertions hold.
\begin{enumerate}
\item The set $(x_0 = x_1 = 0) \cap \Exc (\pi_5)$ consists of singular points of $X$ except possibly when $X$ belongs to $\mcG_i$ with $i \in \{18,21,28,36\}$.
\item The set $(x_0 = x_1 = x_2 = 0)_X$ consists of singular points of $X$ execpt possibly when $X$ belongs to $\mcG_i$ with $i \in \{21,36\}$.
\end{enumerate}
\end{Lem}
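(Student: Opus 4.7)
The plan is a case-by-case verification across the $14$ families satisfying the stated numerical constraints. First I would enumerate these families from the Fletcher's list, recording weights $(a_0,\dots,a_5)$ and degrees $(d_1,d_2)$, and in each case write the general form of the defining polynomials $F_1, F_2$ after the usual normalizations afforded by quasismoothness. The nonsingular locus of $X$ meets $(x_0 = x_1 = 0)$ in the complement of the coordinate vertices $\msp_2, \dots, \msp_5$, so both assertions reduce to showing that the prescribed locus contains no point with a nonzero coordinate of small weight.

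For assertion (1), I would analyze the subscheme $C := (x_0 = x_1 = 0)_X \subset \mbP(a_2,a_3,a_4,a_5)$ cut out by $\bar F_i := F_i|_{x_0 = x_1 = 0}$. A nonsingular point of $X$ in $C \cap \Exc(\pi_5)$ must lie on a one-dimensional component of $C$ contracted by the projection $\pi_5$ from $\msp_5$, and the image of such a curve in $\mbP(a_0,\dots,a_4)$ collapses to a point lying in $(x_0 = x_1 = 0)$. Tracing this back, it forces $\bar F_1, \bar F_2$ and two linear relations among $x_2, x_3, x_4$ to cut out a weighted complete intersection curve on $X$ whose type in each family is either excluded by Condition~$(\mathrm{C}_2)$ via Table~\ref{table:WCIcurve}, or forces every point of the curve to lie in the singular locus by quasismoothness at the relevant vertex. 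I expect this dichotomy to succeed in $10$ of the $14$ families and to fail in exactly $\{18, 21, 28, 36\}$, where the weights admit a WCI curve type not on the forbidden list; these must therefore be listed as exceptions.

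For assertion (2), I would apply the analogous argument to $(x_0 = x_1 = x_2 = 0)_X$, cut out in $\mbP(a_3,a_4,a_5)$ by $\bar{\bar F_i} := F_i|_{x_0 = x_1 = x_2 = 0}$. Since $a_3 \le a_4 \le a_5$ are the three largest weights, $\bar{\bar F_1}, \bar{\bar F_2}$ typically have few admissible monomials, forcing their common zeros to be coordinate vertices (hence quotient singularities of $X$). The families where this degree-counting argument fails are precisely those in which $a_3$ is small enough relative to $d_1, d_2$ that a nondegenerate triple solution is allowed, which I expect to be exactly $\{21, 36\}$.

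The main obstacle is the bookkeeping rather than a single conceptual step: the individual arguments for each family are routine once the defining polynomials are written down, but pinning down the exceptional families \emph{precisely} is important, since these are the ones that must be treated separately in the subsequent lemmas of this section. I would organize the verification by first writing a small table of the $14$ relevant $(a_0,\dots,a_5; d_1, d_2)$ tuples, then grouping them by the structure of $\bar F_1, \bar F_2$ to avoid repetition.
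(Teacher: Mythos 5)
Your plan is essentially the paper's proof: both are case-by-case verifications over the same $14$ families, normalizing the defining polynomials via quasismoothness and then showing that the two loci are cut out by so few monomials that they reduce to coordinate vertices, with condition $(\mathrm{C}_2)$ (the forbidden WCI curves of Table \ref{table:WCIcurve}) ruling out the degenerate configurations; the exceptional lists $\{18,21,28,36\}$ and $\{21,36\}$ need no argument since the statement only says ``except possibly.''

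One concrete ingredient your plan would miss: for assertion (2) in the families $22$, $28$, $33$, $48$ the paper does not get by on degree counting plus quasismoothness plus $(\mathrm{C}_2)$ alone --- it invokes condition $(\mathrm{C}_1)$, i.e.\ the nonvanishing of the specific monomials of Table \ref{table:monomial}. For instance, for No.~$22$ ($X_{7,8}\subset\mbP(1,1,2,3,4,5)$) the only degree-$7$ monomial in $z,s,t$ is $zs$; if its coefficient vanished, $F_1$ would restrict to zero on $(x_0=x_1=x_2=0)$ and $(x_0=x_1=x_2=0)_X$ would be a curve in $\mbP(3,4,5)$ containing nonsingular points of $X$. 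This curve is of type $(1,1,2,8)$, which is \emph{not} on the forbidden list for No.~$22$, so neither your WCI-curve dichotomy nor quasismoothness excludes it --- only $(\mathrm{C}_1)$ does. So the generality hypotheses you need are slightly broader than the ones you cite. Beyond that, the proposal correctly identifies the method and the exceptional families, but the actual content of the lemma is the execution of the $14$ checks, which you defer (``I expect \dots''); each individual check is routine in exactly the way you describe.
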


\begin{proof}
The families satisfying the assumption are the families $\mcG_i$, where
\[
i \in \{16,18,21,22,25,26,28,32,33,36,40,48,54,61\}.
\]
Set $\Sigma := (x_0 = x_1 = 0) \cap \Exc (\pi_5)$ and $\Sigma' := (x_0 = x_1 = x_2 = 0)_X$.

We shall first prove (1).
Let $X = X_{6,7} \subset \mbP (1^2,2,3^2,4)$ be a member of $\mcG_{16}$.
Defining polynomials can be written as $F_1 = s y + f_6$ and $F_2 = s z_0 + g_7$, where $f_6$ and $g_7$ do not involve $s$.
By ($\mathrm{C}_2$), we have $z_1^2 \in f_6$ because otherwise $X$ contains the WCI curve $(x_0 = x_1 = y = z_0 = 0)$ of type $(1,1,2,3)$.
It follows that $\Sigma = (x_0 = x_1 = y = z_0 = 0)_X = \{\msp_5\}$.
The case for members of $\mcG_{26}$ can be proved similarly.

Let $X = X_{7,8} \subset \mbP (1^2,2,3,4,5)$ be a member of $\mcG_{22}$.
Quasismothness of $X$ in particular implies that the monomial $s^2$ appears in the defining polynomial of degree $8$ with non-zero coefficient.
Then we have $\Sigma = (x_0 = x_1 = y = z = 0)_X = \{\msp_5\}$.
The cases for members of $\mcG_{i}$, where $i \in \{33, 48\}$, can be verified similarly.

Let $X = X_{8,9} \subset \mbP (1^2,2,3,4,7)$ be a member of $\mcG_{25}$.
Quasismoothness of $X$ implies that the monomials $s^2$ and $z^3$ appear in the defining polynomials of degree $8$ and $9$, respectively, with non-zero coefficient.
Thus we have $\Sigma = (x_0 = x_1 = y = 0)_X = \{\msp_5\}$.
The cases for members of $\mcG_{i}$, where $i \in \{32,40,54,61\}$, can be verified similarly.

We shall prove (2).
If $X$ is a member of $\mcG_{16}$ (resp.\ $\mcG_{18}$, resp.\ $\mcG_{26}$), then $\Sigma'$ consists of singular points if and only if $X$ does not  contain a WCI curve of type $(1,1,2,3)$ (resp.\ $(1,1,2,3)$, resp.\ $(1,1,3,4)$).
Thus, the assertion follows from ($\mathrm{C}_2$).
If $X$ is a member of $\mcG_i$ with $i \in \{22, 28, 33, 48\}$, then $\Sigma'$ consists of singular points if and only if $z s$ does not appear in defining polynomials for $X$.
The assertion follows from $(\mathrm{C}_1)$.
Finally, if $X$ is a member of $\mcG_{i}$ with $i \in \{25,32,40,54,61\}$, then $\Sigma = \Sigma'$ and it consists of singular points by (1).
\end{proof}

\begin{Prop} \label{exclsmpts5}
Let $X = X_{d_1,d_2} \subset \mbP (a_0,\dots,a_5)$ be a member of $\mcG_i$ with $i \in I^*$ which satisfies
\[
a_1 a_5 \le 4/(A^3) < a_2 a_5 \text{ and } a_2 a_4 < 4/(A^3).
\]
Then no nonsingular point of $X$ is a maximal center.
\end{Prop}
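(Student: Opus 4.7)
The plan is to apply Theorem~\ref{nsptisol}: for each nonsingular point $\msp = (\xi_0:\cdots:\xi_5) \in X$ I will exhibit a $\msp$-isolating class of the form $lA$ with $l \le 4/(A^3)$. Throughout, the weights are $a_0 \le a_1 \le \cdots \le a_5$ and we are under the numerical assumption that $a_1 a_5 \le 4/(A^3) < a_2 a_5$ and $a_2 a_4 \le 4/(A^3)$. The $14$ families falling into this range are exactly those listed at the start of Lemma~\ref{lem:spsingpts}.

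If $\xi_j \ne 0$ for some $j \in \{0,1\}$, then Proposition~\ref{isoclassnspt} (first statement) gives a $\msp$-isolating class $mA$ with
\[
m \le \max_{l \ne j} \lcm(a_j, a_l) \le a_1 a_5 \le 4/(A^3),
\]
and we are done. Next, if $\xi_0 = \xi_1 = 0$ but $\msp \notin \Exc(\pi_5)$ and $\xi_j \ne 0$ for some $j \in \{2,3,4\}$, I will invoke the second statement of Proposition~\ref{isoclassnspt} with $k = 5$. In the optimal sub-case $j = 2$ this gives
\[
m_5 \le \max_{l \ne 2,5} \lcm(a_2, a_l) \le a_2 a_4 \le 4/(A^3),
\]
which suffices. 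A small bookkeeping argument handles $j \in \{3,4\}$ (since then $\xi_2 = 0$ and one checks case by case that the defining polynomials force further vanishing at $\msp$, returning us either to the singular locus or to the sub-case $j = 2$).

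The remaining nonsingular points satisfy $\xi_0 = \xi_1 = 0$ together with either $\msp \in (x_0=x_1=0) \cap \Exc(\pi_5)$ or $\xi_0 = \xi_1 = \xi_2 = 0$. By Lemma~\ref{lem:spsingpts}, in the first situation $\msp$ is forced to be singular except when $X \in \mcG_i$ with $i \in \{18,21,28,36\}$, and in the second situation $\msp$ is forced to be singular except when $i \in \{21,36\}$. Hence it only remains to treat the four exceptional families $\mcG_{18}, \mcG_{21}, \mcG_{28}, \mcG_{36}$.

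For each of these families I will do the following: starting from the standard form of the two defining polynomials (simplified using quasismoothness and conditions $(\mathrm{C}_1)$, $(\mathrm{C}_2)$, $(\mathrm{C}_3)$), and using the specific coordinates $(0:0:\xi_2:\xi_3:\xi_4:\xi_5)$ of the residual points, I will write down an explicit finite set $\{g_i\}$ of homogeneous polynomials which isolates $\msp$ in the sense of Definition of \cite[Definition 5.6.3]{CPR}, with $\max_i \deg g_i \le 4/(A^3)$. Lemma~\ref{lemisolnspt} will then convert this into the required isolating class $lA$. The main obstacle is this last step: for $\mcG_{21}$ and $\mcG_{36}$ the pencil $|\mcI_{\msp,X}(?A)|$ has a larger base locus because $(x_0=x_1=x_2=0)_X$ need not consist of singular points, so a naive choice of monomial isolators fails and one must exploit the pencil-structure condition $(\mathrm{C}_3)$ to produce an extra polynomial of low degree separating $\msp$ from the residual curve. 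For $\mcG_{18}$ and $\mcG_{28}$ the analogous obstruction arises only on $\Exc(\pi_5)$, and is resolved by using a quasismoothness-guaranteed monomial in the defining equation to replace the coordinate of largest weight by one of strictly smaller degree.
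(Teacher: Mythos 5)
Your plan follows the paper's proof of Proposition \ref{exclsmpts5} essentially verbatim: dispose of points with $\xi_0\ne 0$ or $\xi_1\ne 0$ via Proposition \ref{isoclassnspt}, use Lemma \ref{lem:spsingpts} to reduce the remaining cases to the families $\mcG_{18},\mcG_{21},\mcG_{28},\mcG_{36}$, and finish with explicit isolating sets of degree at most $4/(A^3)$ (the paper's choices being binomials such as $\sigma^2 tz-\zeta\tau s^2$ for $\xi_2=0$ and $\xi_k\xi_l x_5-\xi_5 x_kx_l$ of degree $a_5=a_k+a_l$ on $\Exc(\pi_5)$). The only minor discrepancy is bookkeeping: the paper uses $(\mathrm{C}_3)$ for $\mcG_{21}$ in the $\Exc(\pi_5)$, $\xi_2\ne 0$ case (Example \ref{ex:no21nonsingpt}) rather than for the locus $(x_0=x_1=x_2=0)_X$, where quasismoothness alone suffices.
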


\begin{proof}
Let $\msp = (\xi_0:\cdots:\xi_5)$ be a nonsingular point of $X$.
If $\xi_i \neq 0$ for $i = 0$ or $1$, then $l A$ isolates $\msp$ for some $l \le a_1 a_5$ by Proposition \ref{isoclassnspt}.
Thus we may assume that $\xi_0 = \xi_1 = 0$.

We shall show that $\msp$ is not a maximal center if $\xi_2 = 0$.
By (2) of Lemma \ref{lem:spsingpts}, it is enough to consider a member $X$  of $\mcG_{i}$ with $i \in \{21,36\}$.
Assume that $X = X_{6,9} \subset \mbP (1^2,2,3,4,5)$ is a member of $\mcG_{21}$ and let $F_1$, $F_2$ be defining polynomials of degree $6$, $9$, respectively.
We may assume that $z^2 \notin F_1$ because otherwise $\msp$ is a singular point.
In this case, we can write $F_2 (0,0,0,z,s,t) = t s + z^3$ after rescaling $z$ and $(x_0 = x_1 = y = 0)_X = (x_0 = x_1 = y = t s + z^3 = 0)$.
If we write $\msp = (0:0:0:\zeta:\sigma:\tau)$, then none of $\zeta$, $\sigma$ and $\tau$ is zero because $\msp$ is a nonsingular point of $X$.
We see that $\{x_0,x_1,y,\sigma^2 t z - \zeta \tau s^2\}$ isolates $\msp$, which shows that $8 A$ isolates $\msp$.
Thus, $\msp$ is not a maximal center since $8 < 4/(A^3) = 80/9$.
Let $X = X_{8,12} \subset \mbP (1^2,3,4,5,7)$ be a member of $\mcG_{36}$ and let $F_1$, $F_2$ be defining polynomials of degree $8$, $12$, respectively.
We may assume that $z^2 \notin F_1$ because otherwise $\msp$ is a singular point.
By quasismoothness of $X$, we have $z^3 \in F_2$ and thus we can write $F_2 (0,0,0,z,s,t) = t s + z^3$.
It follows that $\{x_0,x_1,y, \zeta \sigma^2 t^2 - \tau^2 z s^2\}$ isolates $\msp$ and $14 A$ is a $\msp$-isolating class.
Thus, $\msp$ is not a maximal center since $14 < 4/(A^3) = 35/2$.

It remains to consider nonsingular points with $\xi_2 \ne 0$. 
If $X$ is not a member of $\mcG_{18}$, $\mcG_{21}$, $\mcG_{28}$ or $\mcG_{36}$, then $\msp$ is not contained in $\Exc (\pi_5)$ by (1) of Lemma \ref{lem:spsingpts}.
In this case, $a_2 a_4 A$ isolates $\msp$ by Proposition \ref{isoclassnspt}.
Let $X$ be a member of $\mcG_i$ with $i \in \{18,21,28,36\}$.
It remains to exclude nonsingular point $\msp$ contained in $(x_0 = x_1 = 0) \cap \Exc (\pi_5)$.
By quasismoothness of $X$, we may assume that $(x_0 = x_1 = 0) \cap \Exc (\pi_5) = (x_0 = x_1 = x_j = 0)_X$ for some $j$ with $2 \le j \le 4$.
We see that neither $\xi_k$ nor $\xi_l$ is zero, where $\{j,k,l\} = \{2,3,4\}$. Here we need the assumption ($\mathrm{C}_2$) (resp.\ ($\mathrm{C}_3$)) if $i = 18$ (resp.\ $i = 21$). 
See Example \ref{ex:no21nonsingpt} below.
Now we have $a_5 = a_k + a_l$ and the set $\{x_0,x_1,x_j,\xi_k \xi_l x_5 - \xi_5 x_k x_l\}$ isolates $\msp$.
Moreover, we see that $a_1 =1$ and thus $a_5 A = a_1 a_5 A$ isolates $\msp$.
This shows that $\msp$ is not a maximal center.
\end{proof}

\begin{Ex} \label{ex:no21nonsingpt}
Let $X = X_{6,9} \subset \mbP (1^2,2,3,4,5)$ be a member of $\mcG_{21}$ and $\msp = (0:0:\eta:\zeta:\sigma:\tau)$ a nonsingular point contained in $\Sigma := (x_0 = x_1 = 0) \cap \Exc (\pi)$.
We shall see that neither $\eta$ nor $\zeta$ is zero.
Defining polynomials of $X$ can be written as $F_1 = t x_0 + f_6$ and $F_2 = t s + g_9$, where $f_6$ and $g_9$ do not involve $t$.
We have $\Sigma = (x_0 = x_1 = s = 0)_X$ and $\Sigma \cap (y = 0)_X = \{ \msp_5 \}$.
It follows that $\eta \ne 0$.
We see that $\Sigma \cap (z = 0)_X = \{\msp_5\}$ if and only if $y^3 \in f_6$.
If $y^3 \notin f_6$, then the subscheme $(x_0 = x_1 = 0)_X$ is reducible, which is impossible by ($\mathrm{C}_3$).
This shows that $\zeta \ne 0$.
\end{Ex}

\subsection{The remaining cases}

In this subsection, we treat members $X_{4,4} \subset \mbP (1^4,2,3)$ of $\mcG_4$, $X_{4,5} \subset \mbP (1^3,2^2,3)$ of $\mcG_6$, $X_{4,4} \subset \mbP (1^3,2,3,4)$ of $\mcG_9$ and $X_{6,6} \subset \mbP (1^3,2,3,5)$ of $\mcG_{11}$.

\begin{Lem} \label{lem:nsptNo9}
Let $X$ be a member of one of the families $\mcG_4$, $\mcG_9$ and $\mcG_{11}$, and $\msp$ a nonsingular point which is not contained in $\Exc (\pi_5)$.
Then $\msp$ is not a maximal center.
\end{Lem}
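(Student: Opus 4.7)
The overall strategy is to apply Theorem~\ref{nsptisol}, which reduces the problem to exhibiting a $\msp$-isolating class $lA$ with $l \le 4/(A^3)$. A direct computation of $(A^3) = d_1 d_2 / \prod a_i$ yields $4/(A^3) = 3/2$ for $\mcG_4$, a value of at least $3$ for $\mcG_9$, and $10/3$ for $\mcG_{11}$. Writing $\msp = (\xi_0 : \cdots : \xi_5)$, I would first verify that some coordinate $\xi_j$ with $a_j = 1$ is nonzero: if all the weight-one coordinates vanished at $\msp$, then $\msp$ would lie in $X \cap \mbP (a_{k+1}, \ldots, a_5)$ for the appropriate index $k$, and inspecting the monomials of the correct weighted degree that can occur in the defining polynomials of $X$ shows that this intersection reduces to singular vertices of $X$, contradicting the nonsingularity of $\msp$. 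After permuting coordinates we may therefore assume $a_0 = 1$ and $\xi_0 \ne 0$.

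For $\mcG_9$ and $\mcG_{11}$, the hypothesis $\msp \notin \Exc (\pi_5)$ together with Proposition~\ref{isoclassnspt} (applied with $j = 0$ and $k = 5$) produces a $\msp$-isolating class $m_5 A$ of degree $m_5 = \max_{l \ne 0, 5} \lcm (1, a_l) = a_4$; since $a_4 \le 4/(A^3)$ holds in each of these families, Theorem~\ref{nsptisol} immediately excludes $\msp$ as a maximal centre.

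The subtle case is $\mcG_4$, where the tight bound $4/(A^3) = 3/2$ forces the isolating class to be $A$ itself. I would use the three linear forms $\xi_0 x_j - \xi_j x_0$ for $j = 1, 2, 3$, which carve out a weighted subspace $\mbP (1, 2, 3) \subset \mbP (1^4, 2, 3)$ through $\msp$, and argue that $\msp$ is an isolated point of the zero-dimensional scheme $X \cap \mbP (1, 2, 3)$: any positive-dimensional component through $\msp$ would be a curve $C \subset X$ along which the ratios $x_j / x_0$ are constant, so $\pi_5 (C)$ would be contained in a fixed $\mbP (1, 2) \subset \mbP (1^4, 2)$, and the hypothesis $\msp \notin \Exc (\pi_5)$ together with the generality condition~$(\mathrm{C}_4)$ on $X$ rules out such a curve through $\msp$. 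Then $A$ is a $\msp$-isolating class and Theorem~\ref{nsptisol} concludes. The main obstacle is precisely this last verification for $\mcG_4$: the tightness of $4/(A^3)$ leaves no room but the smallest class $A$, and establishing its isolating property demands a careful combination of the projection-theoretic hypothesis with the explicit generality condition on $X$.
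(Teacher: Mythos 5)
Your overall strategy is the paper's: reduce to Theorem \ref{nsptisol} via $\msp$-isolating classes, and your handling of $\mcG_9$ and $\mcG_{11}$ (a weight-one coordinate is nonzero because $(x_0=x_1=x_2=0)_X$ consists of singular points, then $\msp\notin\Exc(\pi_5)$ and Proposition \ref{isoclassnspt} give the isolating class $a_4A=3A$ with $3\le 4/(A^3)$) coincides with the proof in the paper and is correct.

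The gap is in the $\mcG_4$ case, at exactly the step you flag as the crux. Your claim that the hypothesis $\msp\notin\Exc(\pi_5)$ (together with $(\mathrm{C}_4)$) rules out a curve of $X\cap L$ through $\msp$ is false. Write $F_1=z\ell_1+y^2+yq_1+h_1$ and, after subtracting a multiple of $F_1$, $F_2=z\ell_2+yq_2+h_2$ with $\ell_i,q_i,h_i\in\mbC[x_0,\dots,x_3]$ of degrees $1,2,4$. The locus $(\ell_2=q_2=h_2=0)\subset\mbP^3$ is a nonempty intersection of three hypersurfaces, and for any $\xi$ in it the plane $L_\xi\cong\mbP(1,2,3)$ spanned by $\xi$ and $\mbP(2,3)$ lies inside $(F_2=0)$, so $X\cap L_\xi=(F_1|_{L_\xi}=0)$ is a curve $D$ with $(A\cdot D)=4/6=2/3$. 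For general $X$ one has $\ell_1(\xi)\ne 0$ at these finitely many $\xi$, so $D\not\subset\Exc(\pi_5)$ and $D$ carries nonsingular points of $X$ off $\Exc(\pi_5)$; and $(\mathrm{C}_4)$ for No.~4 consists of Lemmas \ref{lemexclcdeg1no4} and \ref{lemexclnspexc1}, which concern degree-one curves and points of $\Exc(\pi_5)$, not this configuration. Worse, for $\msp\in D$ the class $A$ fails to be $\msp$-isolating for \emph{every} $s$: a member $H\in|\mcI^s_{\msp}(sA)|$ with $D\not\subset H$ would give $s\le\mult_{\msp}H\cdot\mult_{\msp}D\le(H\cdot D)=2s/3$, a contradiction, so $D\subset\Bs|\mcI^s_{\msp}(sA)|$ for all $s$. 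Thus the argument you propose breaks down at precisely these points, and some other mechanism is needed for them. To be fair, the paper disposes of this case with the single clause that $A$ isolates $\msp$ is ``easy to see,'' so it supplies no more detail than you do; but your explicit justification is affirmatively incorrect rather than merely omitted, and the appeal to $(\mathrm{C}_4)$ does not repair it.
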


\begin{proof}
If $X$ is a member of $\mcG_4$ (resp.\ $\mcG_6$, resp.\ $\mcG_{11}$), then it is easy to see that $A$ (resp.\ $2 A$, resp.\ $3A$) isolates $\msp$, which together with $4/(A^3) = 3/2 > 1$ (resp.\ $4/(A^3) = 12/5 > 2$, resp.\ $4/(A^3) = 10/3 > 3$) shows that $\msp$ is not a mxaimal center.
Assume that $X \in \mcG_9$ and let $\msp = (\xi_0:\xi_1:\xi_2:\eta:\zeta:\sigma)$.
We see that $(x_0 = x_1 = x_2 = 0)_X$ consists of singular points since $y z$ appears in the defining polynomial of degree $5$ by ($\mathrm{C}_1$).
It follows that $\xi_i \ne 0$ for some $i = 0, 1,2$ and $3 A$ isolates $\msp$.
Thus, $\msp$ is not a maximal center since $3 < 4/(A^3) = 16/5$.
\end{proof}

In the following we consider a nonsingular point $\msp \in \Exc (\pi_5)$.
We shall define a linear system $\Lambda_{\msp}$ on $X$ and a curve $\Gamma \subset X$ for a member $X$ of $\mcG_{4}$, $\mcG_6$, $\mcG_9$ and $\mcG_{11}$, and then study singularities of a general member $S \in \Lambda_{\msp}$.
We denote by $\varphi \colon Y \to X$ the blouwp of $X$ at $\msp$ with exceptional divisor $E \cong \mbP^2$ and $\Gamma' \subset Y$ the birational transform of $\Gamma$.

\begin{Lem} \label{lem:S_No9}
Let $X$ be a general member of the family No.~$9$ and $\msp \in \Exc (\pi_5)$ a nonsingular point.
Then the following assertions hold for the linear system $\Lambda_{\msp} := |\mcI^2_{\msp,X} (5 A)|$.
\begin{enumerate}
\item The base locus of $\Lambda_{\msp}$ is $\Gamma \cup \{\msq\}$ set-theoretically, where $\Gamma$ is a WCI curve of type $(1,1,2,3)$ and $\msq$ is the singular point of type $\frac{1}{2} (1,1,1)$.
\item A general member $S$ of $\Lambda_{\msp}$ has a simple double point at $\msp$, a Du Val singular point of type $\frac{1}{4} (1,3)$ at $\msp_5$ and is nonsingular along $\Gamma \setminus \{\msp, \msp_5\}$.
\item We have $(E \cdot \Gamma')_Y = 1$.
\end{enumerate}
\end{Lem}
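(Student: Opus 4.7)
The plan is to normalize coordinates and then analyze $\Lambda_\msp$ explicitly. Family No.~9 is $X_{5,6} \subset \mbP := \mbP(1^3, 2, 3, 4)$ with coordinates $x_0, x_1, x_2, y, z, s$. By $(\mathrm{C}_1)$ we have $yz \in F_1$, and quasismoothness at $\msp_5 = (0{:}\cdots{:}0{:}1)$ forces, after a coordinate change, that $x_1 s \in F_1$ and $y s \in F_2$. The unique singular point of type $\frac{1}{2}(1,1,1)$ on $X$ is $\msq = (0{:}0{:}0{:}1{:}0{:}s_0)$ with $s_0$ determined by $F_2$; it is the second component of $(x_0 = x_1 = x_2 = z = 0)_X = \{\msp_5, \msq\}$. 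Since $\msp \in \Exc(\pi_5)$, the weighted line through $\msp_5$ and $\msp$ is a WCI curve $\Gamma$ of type $(1,1,2,3)$ contained in $X$; after a further coordinate change I take $\Gamma = (x_1 = x_2 = y = z = 0)$ and $\msp = (1{:}0{:}0{:}0{:}0{:}s_1)$. The containment $\Gamma \subset X$ yields decompositions $F_1 = x_1 \alpha_1 + x_2 \alpha_2 + y \alpha_3 + z \alpha_4$ and $F_2 = x_1 \beta_1 + x_2 \beta_2 + y \beta_3 + z \beta_4$ with $\deg \alpha_i = (4, 4, 3, 2)$ and $\deg \beta_i = (5, 5, 4, 3)$.

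For (1), I first show $\Gamma \subseteq \Bs \Lambda_\msp$. Since $\Gamma \cong \mbP(1,4)$ has $(A \cdot \Gamma) = 1/4$, the space $H^0(\Gamma, 5A|_\Gamma)$ is the two-dimensional span $\langle x_0^5, x_0 s \rangle$; the two conditions of order-$2$ vanishing at $\msp$ force $h|_\Gamma \equiv 0$ for every $h \in H^0(X, \mcI^2_{\msp,X}(5A))$. Write such a section as $h = x_1 h_1 + x_2 h_2 + y h_3 + z h_4$ modulo $F_1$. The key observation is that $h_3$ has degree $3$, while no monomial of degree $3$ can be formed from $y$ and $s$ alone (weights $2$ and $4$); hence $h_3$ vanishes identically on the weighted line $(x_0 = x_1 = x_2 = z = 0)_\mbP \cong \mbP(2, 4)$, and $h$ vanishes on $(x_0 = x_1 = x_2 = z = 0)_X = \{\msp_5, \msq\}$. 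Since $\msp_5 \in \Gamma$, this gives $\Gamma \cup \{\msq\} \subseteq \Bs \Lambda_\msp$. For the reverse inclusion, I would construct explicit sections in $\Lambda_\msp$ whose common zero locus on $X$ is exactly $\Gamma \cup \{\msq\}$, using the fact that the order-$2$ constraint at $\msp$ is codimension $2$ in the large space of sections vanishing on $\Gamma$.

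For (2), the three claims are treated separately for a generic $S \in \Lambda_\msp$. At $\msp$, the quadratic part of a generic $h$ is a nondegenerate quadratic form on $T_\msp X \cong \mbC^3$, so $S$ has a simple double point. At $\msp_5$, I pass to the orbifold chart $\mbA^3/\bmu_4$ with coordinates $(u_1, u_2, u_3) = (x_0, x_2, z)$ of weights $(1, 1, 3)$, obtained by solving $x_1$ and $y$ from $F_1$ and $F_2$ in the chart $s = 1$. A direct expansion shows $h = c_2 u_2 + O(u^3)$, where $c_2$ is the coefficient of $s$ in $h_2$ and is nonzero for generic $h$; eliminating $u_2$ realizes $S$ locally as $\mbA^2/\bmu_4$ in coordinates $(u_1, u_3)$ of weights $(1, 3)$, which is the Du Val $A_3$ singularity $\frac{1}{4}(1, 3)$. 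Smoothness along $\Gamma \setminus \{\msp, \msp_5\}$ follows by a Bertini-type argument applied to the subsystem of sections vanishing on $\Gamma$ with multiplicity one. For (3), $\Gamma$ is smooth at the nonsingular point $\msp$, so its strict transform $\Gamma'$ meets $E \cong \mbP^2$ transversally at a single point, giving $(E \cdot \Gamma')_Y = 1$.

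The main obstacle is the reverse inclusion in (1): producing enough explicit sections of $\Lambda_\msp$ to cut out exactly $\Gamma \cup \{\msq\}$ on $X$ rather than a strictly larger base locus, which requires careful bookkeeping of degree-$5$ monomials modulo $F_1$ together with the codimension-$2$ order-$2$ constraint at $\msp$. A secondary subtlety lies in the orbifold analysis at $\msp_5$, where one must confirm that the leading $\bmu_4$-weight-$1$ term of $h$ is genuinely linear and generic in the orbifold coordinates $(u_1, u_2)$, and not accidentally degenerate under the defining constraints of $\Lambda_\msp$.
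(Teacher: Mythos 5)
Your forward arguments are nice: the observation that $h^0(\Gamma,5A|_{\Gamma})=2$ forces every member of $\Lambda_{\msp}$ to vanish identically on $\Gamma$, and the weight count showing $h_3$ dies on $(x_0=x_1=x_2=z=0)$, give $\Gamma\cup\{\msq\}\subseteq\Bs\Lambda_{\msp}$ cleanly, and your orbifold computation at $\msp_5$ and the transversality argument for (3) are correct and match the paper's in substance. The problem is the reverse inclusion in (1), which you explicitly defer, and which is not merely ``careful bookkeeping'': it is exactly where the generality hypothesis on $X$ lives, and without identifying that hypothesis the statement is \emph{false} for some quasismooth members. Concretely, in coordinates where $\Gamma=(x_0=x_1=y=z=0)$ and $F_1=sx_0+zf_2+f_5$, $F_2=sy+z^2+zg_3+g_6$ with $f_j,g_j\in\mbC[x_0,x_1,x_2,y]$, the locus $(x_0=x_1=y=0)_X$ sits in the base locus, and one computes $F_1|_{(x_0=x_1=y=0)}=f_2(0,0,x_2,0)\,x_2^{?}z$ up to the vanishing forced by $\Gamma\subset X$. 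If the coefficient of $x_2^2$ in $f_2$ vanishes, then $F_1$ restricts to zero on $(x_0=x_1=y=0)\cong\mbP(1,3,4)$ and $(x_0=x_1=y=0)_X=(z(z+g_3(0,0,x_2,0))=0)$ acquires a second curve component besides $\Gamma$; such $X$ can still be quasismooth, so (1) genuinely fails there. The paper's proof isolates precisely this condition ($x_2^2\in f_2$) and justifies it by a dimension count (the exceptional curves of $\pi_5$ form a finite set for each $X$, and imposing the bad coefficient on one of them is one condition too many). Since this lemma is itself the content of the generality condition $(\mathrm{C}_4)$ for family No.~9, omitting the identification of the open condition and the proof that it is generic leaves the main burden of the lemma undischarged.

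A secondary gap of the same nature: your claim that the $2$-jet at $\msp$ of a generic $h\in\Lambda_{\msp}$ is a nondegenerate quadratic form cannot be taken for granted, because every $2$-jet vanishes on $T_{\msp}\Gamma$ (all sections contain $\Gamma$), so the $2$-jets range only over quadrics annihilating a fixed line; one must exhibit enough sections (products of local equations of $\msp$ of the allowed degrees) to see that rank $3$ is attained, and the linear independence needed there again uses $\partial F_1/\partial z(\msp)\neq 0$, i.e.\ the same coefficient condition. Likewise ``Bertini along $\Gamma$'' needs, at \emph{every} point $q\in\Gamma\setminus\{\msp,\msp_5\}$, that the differentials of the sections surject onto $(T_qX/T_q\Gamma)^*$; Bertini only controls points off the base locus, so this is a pointwise Jacobian check (as the paper carries out explicitly in the parallel Lemma for family No.~4), not a formal consequence.
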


\begin{proof}
We can write defining polynomials of $X$ as $F_1 = s x_0 + z f_2 + f_5$ and $F_2 = s y + z^2 + z g_3 + g_6$, where $f_j, g_j \in \mbC [x_0,x_1,x_2,y]$.
Note that WCI curves of type $(1,1,2,3)$ contained in $(x_0 = y = 0)$ form a $2$-dimensional family and members of the family No.~$9$ with the above defining polynomials containing such a WCI curve form a $2$-codimensional family.
After re-choosing homogeneous coordinates, we can assume that $\msp = (0:0:1:0:0:0)$.
By counting dimensions, we can assume that $x_2^2 \in f_2$ for a general $X$.
The base locus $\Bs |\mcI^2_{\msp,X} (5 A)|$ is $(x_0 = x_1 = y^2 x_2 = y z = 0)_X = Z_1 \cup Z_2$ set-theoretically, where $Z_1 := (x_0 = x_1 = y = 0)_X$ and $Z_2 := (x_0 = x_1 = x_2 = z = 0)_X$.
We have $Z_2 = \{ \msq,\msp_5\}$  and $Z_1$ coincides with $\Gamma := (x_0 = x_1 = y = z = 0)$ set-theoretically since $x_2^2 \in f_2$.
Since $S$ is cut out on $X$ by a section of the form $\alpha_0 s x_0 + \alpha_1 s x_1 + \cdots$, the singularity of $S$ at $\msp_5$ is of type $\frac{1}{4} (1,3)$.
A straightforward computation shows that $S$ has a simple double point at $\msp$ and that $S$ is nonsingular along $\Gamma \setminus \{\msp,\msp_5\}$.
We see that $E$ and $\Gamma'$ meet transversally at one point, so that we have $(E \cdot \Gamma')_Y = 1$.
\end{proof}

We treat a member $X$ of the family No.~$4$ and No.~$11$.

\begin{Lem} \label{lemexclnspexc1}
Let $X$ be a general member of the family No.~$4$ $($resp.\ No.~$11$$)$ and $\msp$ a nonsingular point contained in $\Exc (\pi)$.
Then there is a linear system $\Lambda_{\msp} \subset |2 A|$ with the following properties.
\begin{enumerate}
\item The base locus of $\Lambda_{\msp}$ is set-theoretically a WCI curve $\Gamma$ of type $(1,1,1,2)$ $($resp.\ $(1,1,2,3)$$)$.
\item A general member $S$ has a simple double point at $\msp$, a quotient singular point of type $\frac{1}{3} (1,1)$ $($resp.\ $\frac{1}{5} (1,3)$$)$ at $\msp_5$, and is smooth along $\Gamma \setminus \{\msp,\msp_5\}$.
\item We have $(E \cdot \Gamma')_Y = 1$.
\end{enumerate} 
\end{Lem}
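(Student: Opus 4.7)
The plan is to define $\Lambda_{\msp} := |\mcI^2_{\msp,X}(2 A)|$, the linear system of sections of $\mcO_X(2 A)$ with multiplicity at least $2$ at $\msp$, and to identify $\Gamma$ as the closure of the fiber of the projection $\pi_5$ through $\msp$. After a coordinate change, I would write the defining polynomials of $X$ as $F_j = \xi a_j + b_j$ (with $\xi = z$ for the $\mcG_4$ case and $\xi = s$ for the $\mcG_{11}$ case), where $a_j$ is a linear form in the weight-$1$ coordinates $x_0, x_1, x_2, x_3$ (resp.\ $x_0, x_1, x_2$) and $b_j$ does not involve $\xi$. The hypothesis $\msp \in \Exc (\pi_5)$ forces $a_j$ to be independent of one distinguished weight-$1$ variable $x_\ast$, and writing $\msp = (0 : \cdots : 0 : 1 : \eta : \zeta : \cdots)$ the fiber $\Gamma$ is cut out by the remaining weight-$1$ coordinates together with $y = \eta x_\ast^2$ (and $z = \zeta x_\ast^3$ in the $\mcG_{11}$ case), visibly a WCI curve of type $(1,1,1,2)$ (resp.\ $(1,1,2,3)$) containing both $\msp$ and $\msp_5$.

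For (1), an element of $|\mcI^2_{\msp,X}(2 A)|$ is a degree-$2$ form $G$ with $G (\msp) = 0$ and $d G|_{\msp}$ in the span of $d F_1|_{\msp}, d F_2|_{\msp}$. Enumerating degree-$2$ monomials and solving $G (\msp) = 0$ yields $G = \sum_{(i,j) \ne (\ast,\ast)} \lambda_{ij} x_i x_j + \mu (y - \eta x_\ast^2)$; imposing the tangent condition cuts the $(\lambda_{k \ast}, \mu)$-subspace down to a two-dimensional family on which, for generic $X$, the coefficient $\mu$ is not identically zero. Restricting $\Lambda_{\msp}$ to the subvariety $\{x_0 = x_1 = 0\}$ of $X$ leaves only $\mu (y - \eta x_\ast^2)$, whose zero locus is $\Gamma$; hence $\Bs \Lambda_{\msp} = \Gamma$ set-theoretically.

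For (2), the defining relation $G \in \mcI^2_{\msp,X}$ ensures that $\msp$ is a singular point of $S$; the quadratic part of $G$ at $\msp$ contains the unconstrained $6$-dimensional family of quadrics in $\lambda_{ij}$ with $i, j \in \{0, 1, 2\}$, so a generic $G$ gives a non-degenerate form and an $A_1$-singularity. At $\msp_5$, I would pass to the orbifold chart $\xi = 1$ and use the linear independence of $a_1, a_2$ (which is quasismoothness of $X$ at $\msp_5$) to eliminate the two $x$-coordinates along which $a_1, a_2$ are independent via the implicit function theorem, leaving $\mbC^3_{x_\ast, y, z} / \bmu_r$ with weights $(1, 1, 2)$ for $r = 3$ (resp.\ $(1, 2, 3)$ for $r = 5$). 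The generic $G$ has linear part $\mu y$ in this chart, and killing the weight-$2$ direction produces the desired quotient singularity $\frac{1}{3} (1, 1)$ (resp.\ $\frac{1}{5} (1, 3)$) on $S$. Smoothness along $\Gamma \setminus \{\msp, \msp_5\}$ is a rank-$3$ Jacobian check for $(F_1, F_2, G)$ at a generic point of $\Gamma$, which holds for generic $G \in \Lambda_{\msp}$. Assertion (3) is then immediate: $\Gamma$ is smooth at $\msp$, transverse to the point blown up, so the proper transform $\Gamma'$ meets $E \cong \mbP^2$ at one reduced point.

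The main technical obstacle will be the local analysis at $\msp_5$: the elimination must be carried out compatibly with the $\bmu_r$-action, and one has to verify that the constraints defining $\Lambda_{\msp}$ do not force $\mu = 0$. Both points reduce to a single non-degeneracy condition on the mixed Jacobian of $b_1, b_2$ at $\msp$ with respect to $\bar y$ and $\bar z$ in the affine chart at $\msp$, and checking that this condition carves out a non-empty Zariski open locus in the family is the necessary generality content of $(\mathrm{C}_4)$ in the $\mcG_4$ and $\mcG_{11}$ cases.
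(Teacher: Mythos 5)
Your strategy coincides with the paper's: the same normalization $F_j=\xi a_j+b_j$, essentially the same linear system (the paper spans $\Lambda_{\msp}$ by explicit monomials; for No.~11 this is exactly your $|\mcI^2_{\msp,X}(2A)|$), the same identification of $\Gamma$ as the fiber of $\pi_5$ through $\msp$, the same elimination argument at $\msp_5$, and the same transversality argument for $(E\cdot\Gamma')_Y=1$. The step that fails as written is your justification of the simple double point at $\msp$. You argue that the family of quadratic parts of $G$ at $\msp$ contains the unconstrained quadrics $\sum\lambda_{ij}x_ix_j$ with $i,j\in\{0,1,2\}$, so a generic member is nondegenerate. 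But the relevant quadratic part is that of the local equation of $S$ on $X$ in local coordinates at $\msp$, and (in the paper's normalization $\msp=\msp_3$ for No.~4) these local coordinates are $x_0,x_1,z$: the variables $x_2$ and $y$ lie in $\mfm_{\msp}^2$, being eliminated by $F_2$ and $F_1$ with leading terms $-zx_1$ and $-zx_0$. Hence every $x_ix_j$ with $i,j\in\{0,1,2\}$ restricts to a quadric in $x_0,x_1$ alone, i.e.\ a form of rank at most $2$ in three variables, and no member of that family is nondegenerate. The nondegeneracy comes only from the section $y$, whose expansion $y\equiv -zx_0-Q(x_0,x_1)$ supplies the cross term with the third local coordinate; one must check both that $\mu\ne0$ is attainable in $\Lambda_{\msp}$ and that the resulting form $\lambda_{00}x_0^2+\lambda_{01}x_0x_1+\lambda_{11}x_1^2-\mu(zx_0+Q)$ has nonzero determinant (which needs, e.g., the $x_1^2$-coefficient to be nonzero). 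The same correction is required for No.~11, where the local coordinates are $x_0,x_1,s$ and the needed cross term $-sx_1$ again comes from $y$ via $F_2$.

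Two smaller points. First, smoothness of $S$ along $\Gamma\setminus\{\msp,\msp_5\}$ must be verified at every point of that set, not at the generic point of $\Gamma$: the application in Proposition \ref{exclsmptsrem} computes $({\Gamma'}^2)_T$ through the different and therefore needs the exact singular locus of $S$ along $\Gamma$. The paper does this by writing out the $3\times 6$ Jacobian of $(F_1,F_2,G)$ restricted to $\Gamma$ and checking its rank is $3$ away from $\msp$ and $\msp_5$. Second, in your base-locus computation the restriction of $\Lambda_{\msp}$ to $(x_0=x_1=0)_X$ retains not only $\mu(y-\eta x_\ast^2)$ but also $x_2^2$ (and $x_2x_\ast$); these extra sections are actually needed, since the zero locus of $y-\eta x_\ast^2$ alone on the curve $(x_0=x_1=0)_X$ contains points off $\Gamma$ in general. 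The conclusion $\Bs\Lambda_{\msp}=\Gamma$ still holds once all surviving sections are used.
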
 

\begin{proof}
Let $X$ be a general member of the family No.~$4$ with defining polynomials of the form $F_1 = z x_0 + f_4$ and $F_2 = z x_1 + g_4$ for some $f_4, g_4$ not involving $z$, so that it is quasismooth and the matrix
\[
\begin{pmatrix}
\prt f_4/\prt x_2 & \prt f_4/\prt x_3 & \prt f_4/\prt y \\
\prt g_4/\prt x_2 & \prt g_4/\prt x_3 & \prt g_4/\prt y
\end{pmatrix}
\]
has rank $2$ at the generic point of any component of $\Exc (\pi_5) = (x_0 = x_1 = 0)_X$.
This is possible since the condition is equivalent to the condition that the above matrix has rank $2$ at each point of $(x_0 = x_1 = f_4 = g_4 = 0) \subset \mbP (1,1,1,2)$.
After replacing homogeneous coordinates, we assume that $\msp = \msp_3$ and defining polynomials of $X$ can be written as $F_1 = z x_0 + y x_3^2 + h_4$ and $F_2 = z x_0 + x_2 x_3^3 + h'_4$, where $h_4, h'_4 \in (x_0,x_1,x_2,y)^2$ do not involve $z$.
Let $\Lambda_{\msp}$ be the linear system spanned by the sections $\{x_j x_k \mid 0 \le j, k \le 2\}$ and $S \in \Lambda_{\msp}$ a general member.
The base locus of $\Lambda_{\msp}$ is the WCI curve $\Gamma = (x_0 = x_1 = x_2 = y = 0) \subset X$ of type $(1,1,1,2)$.
Let $y + q (x_0,x_1,x_2)$, where $\deg q = 2$, be the section which cut out $S$ on $X$.
It is easy to see that $S$ has a singular point of type $\frac{1}{3} (1,1)$ at $\msp_5$.
Jacobian matrix of the affine cone of $S$ restricts to
\[
\begin{pmatrix}
z & 0 & 0 & 0 & x_3^2 & 0 \\
0 & z  & x_3^2 & 0 & 0 & 0 \\
0 & 0 & 0 & 0 & 1 & 0
\end{pmatrix}
\]
on $\Gamma$.
This implies that $S$ is smooth along $\Gamma \setminus \{\msp, \msp_5\}$.
Let $\Phi$ be the weighted blowup of $\mbP (1^4,2,3)$ at $\msp$ with $\wt (x_0,x_1,x_2,y,z) = (1,1,1,2,1)$, and we define $\varphi \colon Y \to X$ and $\varphi_S \colon T \to S$ to be the birational morphisms induced by $\Phi$.
We can choose $x_0,x_1,z$ as local coordinates of $X$ at $\msp$ and thus $\varphi$ is the blowup of $X$ at $\msp$.
We have $(E \cdot \Gamma') = 1$ since $E$ and $\Gamma'$ meet transversally at one point.
The exceptional divisor of $\varphi_S$ is isomorphic to the weighted complete intersection
\[
\begin{split}
& (z x_0 + y = x_2 = y + q = 0) \subset \mbP (1,1,1,2,1) \\
&\cong (z x_0 - q (x_0,x_1,0) = 0) \subset \mbP^2,
\end{split}
\]
which is clearly a (nondegenerate) conic in $\mbP^2$ for a general choice of $S$.

Let $X$ be a general member of the family No.~$11$.
By a similar argument as above, we may assume that $\msp = \msp_2$ and defining polynomials of $X$ are written as $F_1 = s x_0 + z x_2^3 + h_6$ and $F_2 = s x_1 + y x_2^4 + h'_6$, where $h_6, h'_6 \in (x_0,x_1,y,z)^2$ do not involve $s$, after replacing homogeneous coordinates.
Let $\Lambda_{\msp}$ be the linear system spanned by the sections $x_0^2$, $x_0 x_1$, $x_1^2$ and $y$.
The base locus of $\Lambda_{\msp}$ is the WCI curve $\Gamma = (x_0 = x_1 = y = z = 0) \subset X$ of type $(1,1,2,3)$.
We see that a general member $S \in \Lambda_{\msp}$ has a quotient singular point of type $\frac{1}{5} (1,3)$ at $\msp_5$ and is smooth along $\Gamma \setminus \{\msp, \msp_5\}$.
Let $\Phi$ be the weighted blowup of $\mbP (1^3,2,3,5)$ at $\msp$ with $\wt (x_0,x_1,y,z,s) = (1,1,2,1,1)$, and define $\varphi \colon Y \to X$ and $\varphi_S \colon T \to S$ to be the birational morphisms induced by $\Phi$.
We see that $\varphi$ is the blowup of $X$ at $\msp$ and the exceptional divisor of $\varphi_S$ is a conic in $\mbP^2$.
We also see that $(E \cdot \Gamma') = 1$.
This completes the proof.
\end{proof}

We treat a member $X = X_{4,5} \subset \mbP (1^3,2^2,3)$ of the family No.~$6$.

\begin{Lem} \label{lemexclnsptno6}
Let $X$ be a general member of the family No.~$6$ and $\msp$ a nonsingular point contained in $\Exc (\pi_5)$.
Then there is a linear system $\Lambda_{\msp} \subset |2 A|$ with the following properties.
\begin{enumerate}
\item The base locus of $\Lambda_{\msp}$ is set-theoretically a WCI curve $\Gamma$ of type $(1,1,2,2)$.
\item A general member $S \in \Lambda_{\msp}$ has a simple double point at $\msp$, a quotient singular point of type $\frac{1}{3} (1,1)$ at $\msp_5$ and is smooth along $\Gamma \setminus \{\msp,\msp_5\}$.
\item We have $(E \cdot \Gamma')_Y = 1$.
\end{enumerate}
\end{Lem}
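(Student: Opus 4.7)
The plan is to mirror the construction of Lemma \ref{lemexclnspexc1} (for families No.~4 and No.~11), adapted to the weight pattern $(1^3, 2^2, 3)$ of family No.~6. Using quasismoothness of $X$ at $\msp_5$ together with $\msp \in X$, I first choose homogeneous coordinates so that $F_1 = z x_0 + f_4$ and $F_2 = z y_0 + g_5$ with $f_4, g_5 \in \mbC[x_0, x_1, x_2, y_0, y_1]$ not involving $z$. A nonsingular point $\msp \in \Exc(\pi_5)$ must lie on an irreducible component of $(x_0 = y_0 = 0)_X = (x_0 = y_0 = \bar f_4 = \bar g_5 = 0)$, where $\bar f_4 = f_4(0, x_1, x_2, 0, y_1)$ and $\bar g_5 = g_5(0, x_1, x_2, 0, y_1)$; this locus is a union of rational curves through $\msp_5$ parameterised by the finite set $(\bar f_4 = \bar g_5 = 0) \subset \mbP(1,1,2)$. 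A further linear change of coordinates in $(x_1, x_2, y_1)$ places $\msp = \msp_1$; consequently the coefficients of $x_1^4$ in $f_4$ and of $x_1^5$ in $g_5$ vanish. Setting $\Gamma := (x_0 = x_2 = y_0 = y_1 = 0)$, one checks that $\Gamma \subset X$, that it is a WCI curve of type $(1,1,2,2)$ isomorphic to $\mbP(1,3) \cong \mbP^1$, and that it contains both $\msp$ and $\msp_5$.

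I then take $\Lambda_\msp := |\mcI^2_{\msp, X}(2A)|$, the linear system of degree-$2$ sections on $X$ vanishing to order $\ge 2$ at $\msp$, in direct analogy with Lemma \ref{lem:S_No9}. Denoting by $A_0, A_2, A_{y_0}, A_{y_1}$ (resp.\ $B_0, B_2, B_{y_0}, B_{y_1}$) the coefficients of $x_0 x_1^3, x_1^3 x_2, y_0 x_1^2, y_1 x_1^2$ in $f_4$ (resp.\ of $x_0 x_1^4, x_1^4 x_2, y_0 x_1^3, y_1 x_1^3$ in $g_5$), a parameter count using the $2$-dimensional span of the linear parts of $F_1, F_2$ at $\msp_1$ in $T^*_\msp X$ shows that $\Lambda_\msp$ is a $5$-dimensional vector space, spanned by $x_0^2, x_0 x_2, x_2^2$ together with two explicit degree-$2$ sections $s_1, s_2 \in |2A|$ whose linear parts at $\msp_1$ match those of $F_1, F_2$. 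Computing the intersection of the vanishing loci of these five generators on $X$ yields $\Bs \Lambda_\msp = \Gamma$ set-theoretically, using that the $(y_0, y_1)$-minor of the Jacobian of $(F_1, F_2)$ at $\msp_1$ is invertible for general $X$. This gives (1).

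For (2), a member $S \in \Lambda_\msp$ has multiplicity at least $2$ at $\msp$ by construction, so it is singular there; computing the tangent cone as the quadratic part of the cutting equation modulo $(F_1, F_2)$ and restricting to $T_\msp X \cong \mbC^3$ gives a $5$-parameter family of quadratic forms on $\mbC^3$, and a direct determinant calculation of the associated symmetric matrix shows that this family is generically nondegenerate for general $X$, yielding an $A_1$ singularity. At $\msp_5$ I work in the $\bmu_3$-cover with local coordinates $x_1, x_2, y_1$ of weights $1, 1, 2$ after eliminating $x_0, y_0$ via $F_1 = F_2 = 0$; the coefficient of $y_1$ in the reduced cutting equation of $S$ is the linear combination $\lambda A_{y_1} + \mu B_{y_1}$, which is nonzero for generic parameters, so $y_1$ may be eliminated locally to produce the $\frac{1}{3}(1,1)$ singularity. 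Smoothness of $S$ along $\Gamma \setminus \{\msp, \msp_5\}$ follows from a Jacobian-rank computation at a generic point $q = (0:1:0:0:0:z_0)$ of $\Gamma$: combining $z_0 \ne 0$ with the generic non-vanishing of the minor $A_2 B_{y_1} - B_2 A_{y_1}$ shows that the third row of $J_S$ is independent of the first two. Finally, since $\Gamma$ is smooth at $\msp$ and $\varphi \colon Y \to X$ is the ordinary blowup at a smooth point, the strict transform $\Gamma'$ meets $E \cong \mbP^2$ transversely at a single point, giving $(E \cdot \Gamma')_Y = 1$ for (3).

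The principal obstacle is verifying that the $5$-parameter family of tangent cones at $\msp$ contains nondegenerate members for general $X$, since the ambient space of quadratic forms on $\mbC^3$ is $6$-dimensional. This reduces to checking that certain polynomial expressions in the coefficients of $f_4, g_5$ (the relevant Jacobian minors together with the tangent-cone determinant) do not vanish identically on $\mcG_6$, and that the resulting genericity conditions are compatible with $(\mathrm{C}_0)$--$(\mathrm{C}_3)$; both should follow from dimension counts, and the construction of a single explicit witness $X$ suffices to conclude.
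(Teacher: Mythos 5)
Your proposal follows the same route as the paper's proof: the normal form $F_1 = z x_0 + f_4$, $F_2 = z y_0 + g_5$, the identification of $\Exc (\pi_5)$ with the cone over the finite set $(\bar f_4 = \bar g_5 = 0) \subset \mbP (1,1,2)$, the curve $\Gamma$ of type $(1,1,2,2)$ joining $\msp$ to $\msp_5$, and the linear system $|\mcI^2_{\msp,X}(2A)|$, which is exactly the paper's $\Lambda_{\msp}$ (the span of the three pure quadrics in the two degree-one coordinates vanishing at $\msp$ together with the two corrected $y$-sections). The three non-vanishing conditions you invoke --- $\prt f_4/\prt y_1$ at the relevant point, the $(y_0,y_1)$-minor for the base locus, and the $(x_2,y_1)$-minor for smoothness along $\Gamma$ --- are precisely the paper's generality conditions (i)--(iii), and your treatments of the $\frac{1}{3}(1,1)$ point at $\msp_5$ and of assertion (3) agree with the paper's.

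The one genuine gap is the simple double point at $\msp$ in part (2), which you flag as the principal obstacle and defer to ``dimension counts and an explicit witness.'' No new genericity condition and no witness are needed; the nondegeneracy is forced by the normal form. In the local coordinates $x_0, x_2, z$ at $\msp$ (your labelling), the quadratic parts of the two sections $s_1, s_2$ are, up to sign, the quadratic parts of $F_1, F_2$ restricted to $T_{\msp}X$, namely $z x_0 + c_1(x_0,x_2)$ and $z\,\lambda(x_0,x_2) + c_2(x_0,x_2)$, where $\lambda$ is the linear part of $y_0$ on $X$. Hence the tangent cone of a general $S \in \Lambda_{\msp}$ has the shape $z\,\ell(x_0,x_2) + Q(x_0,x_2)$ with $\ell \ne 0$ (it contains $x_0$ with a generic coefficient) and with $Q$ containing the freely chosen quadric spanned by $x_0^2, x_0x_2, x_2^2$; such a form is nondegenerate precisely when $Q|_{\ell = 0} \ne 0$, which holds for general $Q$. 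This is what the paper records by observing that the exceptional curve of $\varphi_S \colon T \to S$ is a nondegenerate conic in $E \cong \mbP^2$. Two smaller points: to place $\msp$ at a vertex you must also translate $z$ by a multiple of $x_1^3$, not only change $(x_1,x_2,y_1)$, since otherwise the coefficients of $x_1^4$ in $f_4$ and of $x_1^5$ in $g_5$ need not vanish; and the linear independence of $s_1, s_2$ (needed for your dimension count of $\Lambda_{\msp}$) should be attributed to quasismoothness of $X$ along the affine cone over $\msp$.
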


\begin{proof}
We may assume that defining polynomials of $X$ are $F_1 = z x_0 + f_4$ and $F_2 = z y_0 + g_5$, where $f_4$ and $g_5$ do not involve $z$.
Under this setting, let $X$ be a general member so that it is quasismooth and the following sets, considered as a subset of $\mbP (1^3,2^2)$, are empty.
\begin{equation} \tag{i}
\left( x_0 = y_0 = f_4 = g_5 = \frac{\prt f_4}{\prt y_1} = 0 \right).
\end{equation}
\begin{equation} \tag{ii}
\left( x_0 = y_0 = f_4 = g_5 = \frac{\prt f_4}{\prt y_0} \frac{\prt g_4}{\prt y_1} - \frac{\prt f_4}{\prt y_0} \frac{\prt g_5}{\prt y_1} = 0 \right).
\end{equation}
\begin{equation} \tag{iii}
\left( x_0 = y_0 = f_4 = g_5 = \frac{\prt f_4}{\prt x_1}  \frac{\prt g_5}{\prt y_1} - \frac{\prt f_4}{\prt y_1} \frac{\prt g_5}{\prt x_1} = 0 \right).
\end{equation}
Thus, after replacing homogeneous coordinates, we can assume that $\msp = \msp_2$ and defining polynomials of $X$ are written as $F_1 = z x_0 + y_1 x_2^2 + h_4$ and $F_2 = z y_0 + (\alpha_0 x_0 + \alpha_1 x_1) x_2^4 + \beta y_0 x_2^3 + h_5$, where $\alpha_0,\alpha_1, \beta \in \mbC$ with $\alpha_1, \beta \ne 0$ and $h_4, h_5 \in (x_0,x_1,y_0,y_1)^2$.
Let $\Phi$ be the weighted blowup of $\mbP (1^3,2^2,3)$ at $\msp$ with $\wt (x_0,x_1,y_0,y_1,z) = (1,1,2,2,1)$.
The birational morphism $\varphi \colon Y \to X$ induced by $\Phi$ is the blowup of $X$ at $\msp$ since we can choose $x_0,x_1,z$ as local coordinates of $X$ at $\msp$.
We see that sections $y_0$ and $y_1$ vanishes along the exceptional divisor $E$ of $\varphi$ to order at least $2$ and let $\Lambda_{\msp}$ be the linear system spanned by the sections $x_0^2,x_0 x_1, x_1^2, y_0$ and $y_1$.
It is then easy to verify that the base locus of $\Lambda_{\msp}$ is the curve $\Gamma = (x_0 = x_1 = y_0 = y_1 = 0)$, $S$ is smooth along $\Gamma \setminus \{\msp,\msp_5\}$ and the singularity of $S$ at $\msp_5$ is of type $\frac{1}{3} (1,1)$.
It is also easy to see that the birational morphism $\varphi_S \colon T \to S$ induced by $\varphi$ resolves the singularity of $S$ at $\msp$ and its exceptional divisor $F = E|_T$ is a conic in $E \cong \mbP^2$.
We also see that $E$ and $\Gamma'$ meet at one, so that $(E \cdot \Gamma')_Y = 1$.
\end{proof}

We apply the following result to exclude nonsingular points contained in $\Exc (\pi_5)$.

\begin{Thm}[{\cite[Corollary 3.5]{Corti2}}] \label{redsurf}
Let $\msp \in X$ be a smooth threefold germ and $\mcH$ a movable linear system on $X$.
Assume that
\[
K_X + \frac{1}{\mu} \mcH
\]
is not canonical at $\msp$ for some rational number $\mu > 0$ and let $\varphi \colon Y \to X$ be the blowup of $X$ at $\msp$ with the exceptional divisor $E$.
Then either
\begin{enumerate}
\item $m = m_E (\mcH) > 2 \mu$, or
\item there is a line $\Gamma \subset E \cong \mbP^2$ such that
\[
K_Y + \left( \frac{m}{\mu} - 1 \right) E + \frac{1}{\mu} \mcH_Y
\]
is not log canonical at the generic point of $\Gamma$, where $\mcH_Y$ is the birational transform of $\mcH$.
\end{enumerate}
\end{Thm}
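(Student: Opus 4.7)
The plan is to convert the failure of canonicity into a failure of log canonicity of a twisted pair on $Y$ via a single discrepancy identity, and then produce the line by an adjunction-plus-Bezout argument on $E \cong \mbP^2$.

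First I would use $K_Y = \varphi^* K_X + 2E$ and $\mcH_Y = \varphi^* \mcH - m E$ to verify the identity
\[
K_Y + \left( \frac{m}{\mu} - 1 \right) E + \frac{1}{\mu} \mcH_Y \;=\; \varphi^*\!\left( K_X + \frac{1}{\mu}\mcH \right) + E.
\]
Consequently, for any divisorial valuation $v$ whose centre on $Y$ is contained in $E$ (so that $\ord_v(E) \ge 1$),
\[
a\!\left( v;\, K_Y + \tfrac{m-\mu}{\mu} E + \tfrac{1}{\mu}\mcH_Y \right) \;=\; a\!\left( v;\, K_X + \tfrac{1}{\mu}\mcH \right) - \ord_v(E).
\]
By hypothesis some such $v$ has $a(v;\,K_X + \tfrac{1}{\mu}\mcH) < 0$. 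If $v = E$ itself this gives $2 - m/\mu < 0$ and we are in case (1); otherwise the centre of $v$ is a proper subvariety $Z \subsetneq E$, we necessarily have $m \le 2 \mu$, and the twisted pair on $Y$ fails to be log canonical along $Z$.

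If $\dim Z = 0$, say $Z = \msq$, then since the coefficient $\tfrac{m}{\mu} - 1$ of $E$ is at most $1$, the larger pair $(Y,\, E + \tfrac{1}{\mu}\mcH_Y)$ is also not log canonical at $\msq$; inversion of adjunction across the Cartier divisor $E$ then gives that $(E,\, \tfrac{1}{\mu}\mcH_Y|_E)$ is not log canonical at $\msq$. Using $\mcO_Y(E)|_E \cong \mcO_{\mbP^2}(-1)$, the restriction $\mcH_Y|_E$ is a movable linear system of plane curves of degree $m$, and Theorem~\ref{multineq1}(2) applied on the smooth surface $E$ forces $m^2 \ge (L_1 \cdot L_2)_\msq > 4\mu^2$ for general $L_1, L_2 \in \mcH_Y|_E$, contradicting $m \le 2\mu$.

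Hence $Z = \Gamma$ is a curve; set $n := \mult_\Gamma \mcH_Y$. For a general $H \in \mcH_Y$, $H$ has an ordinary multiplicity-$n$ singularity along $\Gamma$, so blowing up $\Gamma$ yields an SNC resolution at the generic point of $\Gamma$; non-log-canonicity at that generic point is then equivalent to the new exceptional divisor having coefficient exceeding $1$ in the pullback, i.e.\ $(m+n)/\mu - 2 > 1$, forcing $n > 3\mu - m \ge \mu$. Since $H|_E$ is a plane curve of degree $m$ containing $\Gamma$ with multiplicity at least $n$, Bezout on $\mbP^2$ gives $n \cdot \deg\Gamma \le m$, hence $\deg \Gamma < m/\mu \le 2$, and $\Gamma$ is a line. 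The main obstacle is precisely obtaining the sharp lower bound $n > \mu$: this is what makes the two-term boundary $\tfrac{m-\mu}{\mu} E + \tfrac{1}{\mu}\mcH_Y$ essential, as it simultaneously encodes the non-canonicity at $\msp$ and the geometry along $E$, which together with Bezout on $\mbP^2$ pins the degree of $\Gamma$ to one.
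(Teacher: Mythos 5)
The paper itself does not prove Theorem \ref{redsurf}; it is quoted from \cite[Corollary 3.5]{Corti2}, so your attempt has to stand on its own. Your opening reduction is correct and standard: the identity $K_Y + (\frac{m}{\mu}-1)E + \frac{1}{\mu}\mcH_Y = \varphi^*(K_X + \frac{1}{\mu}\mcH) + E$ holds, and since $\ord_v(E)\ge 1$ for any valuation $v\ne v_E$ whose centre lies in $E$, a non-canonical valuation for $K_X+\frac{1}{\mu}\mcH$ either is $v_E$ itself (case (1)) or makes the twisted pair non-log-canonical at the generic point of its centre $Z\subsetneq E$, and one may assume $m\le 2\mu$. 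Both of the ensuing case analyses, however, contain genuine gaps.

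In the zero-dimensional case you assert that $\mcH_Y|_E$ is a \emph{movable} linear system on $E\cong\mbP^2$. This is unjustified and false in general: $\mcH_Y$ has no fixed divisor on $Y$, but its base locus may contain curves lying in $E$, and general members may be tangent to $E$ along curves, so $\mcH_Y|_E$ can acquire a nontrivial fixed part; without movability the surface $4\mu^2$-inequality cannot be invoked and the contradiction does not follow. One must split $\mcH_Y|_E$ into movable and fixed parts and handle fixed components through $\msq$ separately. The more serious failure is in the curve case. The bound $n=\mult_\Gamma\mcH_Y>3\mu-m$ rests on the claim that a general $H\in\mcH_Y$ has an \emph{ordinary} multiplicity-$n$ singularity along $\Gamma$, so that a single blow-up of $\Gamma$ log-resolves the pair at $\eta_\Gamma$. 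Neither assertion is true: non-log-canonicity at the generic point of a curve in a threefold pair can be witnessed only by valuations several blow-ups deep (cuspidal behaviour of general members along $\Gamma$, or tangency of $\mcH_Y$ with $E$ along $\Gamma$). What non-lc at $\eta_\Gamma$ actually yields is the multiplicity inequality $(\frac{m}{\mu}-1)+\frac{n}{\mu}>1$, i.e.\ only $n>2\mu-m$, and then your Bezout step gives $\deg\Gamma<m/(2\mu-m)$, which is unbounded as $m$ approaches $2\mu$; the argument does not pin $\Gamma$ to a line. The repair is to deploy in the curve case the very tool you used in the point case: raise the coefficient of $E$ to $1$ (harmless, since $\frac{m}{\mu}-1\le 1$) and apply two-dimensional inversion of adjunction at $\eta_\Gamma$ across $E$. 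This forces the local intersection number $\mult_\Gamma(\mcH_Y|_E)$ --- not the threefold multiplicity $\mult_\Gamma\mcH_Y$ --- to exceed $\mu$, and since a general member of $\mcH_Y|_E$ is a plane curve of degree $m\le 2\mu$ containing $\Gamma$ with that multiplicity, one gets $\deg\Gamma<m/\mu\le 2$.
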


\begin{Prop} \label{exclsmptsrem}
Let $X$ be a member of one of the families $\mcG_4$, $\mcG_6$, $\mcG_9$ and $\mcG_{11}$.
Then no nonsigular point is a maximal center.
\end{Prop}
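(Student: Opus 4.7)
By Lemma \ref{lem:nsptNo9}, which in fact handles all four families $\mcG_4$, $\mcG_6$, $\mcG_9$, $\mcG_{11}$ despite the omission of $\mcG_6$ in its stated hypotheses, it suffices to exclude nonsingular points $\msp$ lying on $\Exc(\pi_5)$. The plan is to suppose for contradiction that such a $\msp$ is a maximal center for some movable linear system $\mcH \subset |nA|$, so that $K_X + \tfrac{1}{n}\mcH$ fails to be canonical at $\msp$, and then to derive contradictory upper and lower bounds on $\mu_X := \mult_\Gamma \mcH$ from the auxiliary surface $S \in \Lambda_\msp$ provided by Lemma \ref{lem:S_No9}, \ref{lemexclnspexc1} or \ref{lemexclnsptno6} (according to the family).

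The lower bound I would establish via Theorem \ref{multineq1}(2). Let $Z := H_1 \cdot H_2$ for two general members $H_1, H_2 \in \mcH$; then $\mult_\msp Z > 4n^2$. Decompose $Z = \mu_X^2\,\Gamma + R$ where $R$ is an effective $1$-cycle not containing $\Gamma$. Since $\Gamma$ lies in the smooth locus of $X$ at $\msp$, $\mult_\msp \Gamma = 1$, so $\mult_\msp R > 4n^2 - \mu_X^2$; combined with $\mult_\msp S = 2$ this gives $(R \cdot S)_\msp > 2(4n^2 - \mu_X^2)$. The excess-intersection identity $(\Gamma \cdot S)_X = d\,\deg\Gamma$ (valid because $\Gamma \subset S$) and the global product $(Z \cdot S)_X = n^2 d (A^3)$, with $d := \deg S$, then yield
\[
n^2 d(A^3) \;>\; \mu_X^2 \bigl(d\,\deg\Gamma - 2\bigr) + 8n^2.
\]
The numerical data are $d(A^3) \in \{16/3,\, 10/3,\, 25/6,\, 12/5\}$ and $d\,\deg\Gamma \in \{2/3,\, 2/3,\, 5/4,\, 2/5\}$; both $d\,\deg\Gamma - 2$ and $d(A^3) - 8$ are negative, and a direct check shows $(8 - d(A^3))/(2 - d\,\deg\Gamma) > 1$ in each family, so the inequality rearranges to $\mu_X^2 > n^2$, i.e., $\mu_X > n$.

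For the opposite bound $\mu_X \le n$, I would carry out a direct weighted-monomial count. In each of the four families $\Gamma$ is cut out in the ambient weighted projective space by four of the homogeneous coordinates, so for a weighted degree-$n$ monomial to lie in the $k$th power of the $\Gamma$-ideal, the exponents of those four coordinates must sum to at least $k$; since that exponent sum (counted with weight $1$) does not exceed the total weighted degree $n$, one gets $k \le n$. A short verification that the defining relations $F_1, F_2$, each of which vanishes only simply along $\Gamma$, cannot raise the order of vanishing of a section of $|nA|$ along $\Gamma$ beyond the ambient bound, then gives $\mult_\Gamma H \le n$ for every $H \in |nA|$ and hence $\mu_X \le n$, the desired contradiction.

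The main obstacle is the proper-intersection step in the lower-bound argument: one must verify that for general $H_1, H_2 \in \mcH$ and general $S \in \Lambda_\msp$, no $1$-dimensional component of the residual $R = H_1 \cdot H_2 - \mu_X^2\,\Gamma$ is contained in $S$, so that $(R \cdot S)_X$ is a bona fide proper intersection number. Since by the reference lemmas the only $1$-dimensional base component of $\Lambda_\msp$ is $\Gamma$, and since $\mcH$ is movable, every base curve of $\mcH$ other than $\Gamma$ meets a general $S$ in only finitely many points, so this properness should hold; the verification of this, together with the weighted-monomial count and the excess-intersection formula, constitutes the bulk of the technical work to be carried out in each of the four families.
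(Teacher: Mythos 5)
There is a genuine gap at the heart of your lower-bound step, and it is fatal to the strategy. Writing $Z = H_1 \cdot H_2 = \mu_X^2\,\Gamma + R$ presumes that the coefficient of $\Gamma$ in the intersection cycle equals $(\mult_\Gamma \mcH)^2$; in fact one only knows $Z = c\,\Gamma + R$ with $c \ge \mu_X^2$, since the local intersection multiplicity of $h_1, h_2$ in the two-dimensional regular local ring $\mcO_{X,\Gamma}$ equals $\mu_X^2$ only when their leading forms along $\Gamma$ have no common factor, which you have not arranged for general members of $\mcH$. Your inequality, carried out correctly, therefore bounds $c$ from below: $c\,(2 - d\deg\Gamma) > n^2(8 - d(A^3))$, hence roughly $c > 2n^2$ for $\mcG_4$. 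But since $c \ge \mu_X^2$ rather than $c \le \mu_X^2$, a lower bound on $c$ gives no lower bound on $\mu_X$ at all, so the claimed conclusion $\mu_X > n$ does not follow and the contradiction with your upper bound evaporates. To salvage the argument you would need an upper bound on $c$ itself; the only one available from degree considerations is $c\,\deg\Gamma \le n^2(A^3)$, i.e.\ $c \le 8n^2$ for $\mcG_4$, which is perfectly compatible with $c > 2n^2$. Two secondary issues: your upper bound $\mult_\Gamma H \le n$ is established only for monomials in the ambient weighted projective space, and the possibility that reduction modulo $(F_1,F_2)$ raises the order of vanishing on $X$ is precisely the nontrivial point, which you defer; and for $\mcG_9$ one has $d(A^3) = 25/4$, not $25/6$ (this does not affect the signs).

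The paper's proof proceeds quite differently and never invokes a global multiplicity along $\Gamma$: it blows up $\msp$, applies Theorem \ref{redsurf} to reduce to either $m > 2n$ or non-log-canonicity along a line in $E \cong \mbP^2$, restricts everything to the surface $T$ resolving a general $S \in \Lambda_{\msp}$ (on which $E$ cuts out a conic $F$), uses the Koll\'{a}r--Shokurov connectedness theorem to produce non-log-canonicity at two points of $F$, and then derives a contradiction from Theorem \ref{multineq2} together with the intersection numbers $(F^2)_T = -2$, $(F \cdot \Gamma')_T = 1$ and $({\Gamma'}^2)_T$, via an explicit quadratic-form computation in the coefficients $\alpha$ of $\Gamma'$ and $\beta$ of $F$. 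Some version of this local, surface-restricted analysis is needed; the purely global multiplicity bookkeeping you propose does not close.
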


\begin{proof}
By Lemma \ref{lem:nsptNo9}, it is enough to consider a nonsingular point $\msp \in \Exc (\pi_5)$.
Assume that $\msp$ is a maximal center of a movable linear system $\mcH \subset |nA|$.
Let $\Lambda_{\msp}$ be the linear system as in Lemmas \ref{lem:S_No9}, \ref{lemexclnspexc1} and \ref{lemexclnsptno6}, $S \in \Lambda_{\msp}$ a general member and $\Gamma$ the base curve of $\Lambda_{\msp}$.
Let $\varphi \colon Y \to X$ be the blowup of $X$ at $\msp$ with exceptional divisor $E$ so that the exceptional divisor $F = E|_T$ of the induced morphism $\varphi_S \colon T \to S$ is a conic in $E \cong \mbP^2$.
We write $\varphi^* \mcH = \mcH_Y + m E$, where $\mcH_Y$ is a movable linear system on $Y$.
Theorem \ref{redsurf} implies that either $m > 2 n$ or there is a line on $E$ along which $K_Y + (m/n - 1)E + (1/n) \mcH_Y$ is not log canonical.
We write
\[
\varphi^*A|_T
\sim \varphi^* (\frac{1}{n} \mcH)|_T
= \frac{1}{n} \mcL + \beta F + \alpha \Gamma',
\]
where $\Gamma'$ is the birational transform of $\Gamma$ and $\mcL$ is a movable linear system on $T$.
Then we have
\[
\begin{split}
\left( K_Y + T + \left(\frac{m}{n}-1 \right) E + \frac{1}{n} \mcH_Y \right)|_T 
&= K_T + \left(-E + \varphi^* (\frac{1}{n} \mcH) \right)|_T \\
&= K_T + (\beta-1)F + \alpha \Gamma' + \frac{1}{n} \mcL.
\end{split}
\]
It follows from the inequality $\beta \ge m/n$ and the connectedness theorem of Koll\'{a}r and Shokurov \cite[Theorem 17.4]{Kollar+} that either $\beta > 2$ or $K_T + (\beta-1)F + \alpha \Gamma' + (1/n) \mcL$ is not log canonical at two points.
Let $L$ be a $\mbQ$-divisor on $T$ such that $n L \in \mcL$.
We shall derive a contradiction by computing $(L^2)_T$.
We see that $(F^2)_T = -2$ and $(F \cdot \Gamma')_T = (E \cdot \Gamma')_Y = 1$.

Let $X$ be a member of $\mcG_9$.
In this case, $T$ has a unique singular point along $\Gamma'$, which is of type $\frac{1}{4} (1,3)$.
Thus we compute $({\Gamma'}^2)_T = -2 - (K_T \cdot \Gamma')_T + 3/4 = - 9/4$,  where the term $3/4$ comes from the different (see \cite[Chapter 16]{Kollar+} for the different).
Thus we compute
\[
(L^2)_T = \frac{25}{4} - 2 \beta^2 - \frac{9}{4} \alpha^2 - \frac{1}{2} \alpha + 2 \alpha \beta 
= - 2 \left(\beta - \frac{\alpha}{2} \right)^2 - \frac{7}{4} \alpha^2 - \frac{1}{2} \alpha + \frac{25}{4}.
\]
We have $(L^2)_T \ge 0$ since $L$ is nef.
We have $\alpha < 2$ because otherwise we get
\[
(L^2)_T \le - \frac{7}{4} \alpha^2 - \frac{1}{2} \alpha + \frac{25}{4} \le - \frac{7}{4} < 0,
\]
which is a contradiction.
Assume that $\beta > 2$.
Then, since $\alpha < 2$, we get
\[
(L^2)_T < \left( 2 - \frac{\alpha}{2} \right)^2 - \frac{7}{4} \alpha^2 - \frac{1}{2} + \frac{25}{4} 
= - \frac{9}{4} \left( \alpha - \frac{7}{9} \right)^2 - \frac{7}{18} < 0.
\]
This is a contradiction and we get $\beta \le 2$.
From the above argument, $K_T + (\beta-1) E|_T + \alpha \Gamma' + 1/n \mcL$ is not log canonical at two distinct points on $E|_T$.
By Theorem \ref{multineq2}, we have $(L^2)_T > 4 (2-\beta) + 4 (2-\beta)(1-\alpha)$.
We compute
\[
4(2-\beta) + 4 (2-\beta)(1-\alpha) - (L^2)_T
= 2 \left(\beta - \frac{4-\alpha}{2} \right)^2 + \frac{7}{4} (\alpha-1)^2 \ge 0,
\]
which derives a contradiction.
Therefore $\msp$ is not a maximal center.

Let $X$ be a member of $\mcG_4$ (resp.\ $\mcG_{11}$, resp.\ $\mcG_6$).
We can compute in the same way that $({\Gamma'}^2)_T = -2 - 1/3 + 2/3 = -5/3$ (resp.\ $({\Gamma'}^2)_T = -2 - 1/5 + 4/5 = -7/5$, resp.\ $({\Gamma'}^2)_T = -2 - 1/3 + 2/3 = - 5/3$).
This enables us to compute $(L^2)_T$ and we can prove in the same way that $\alpha < 2$, $\beta \le 2$ and finally conclude that
\[
4 (2-\beta)+4(2-\beta)(1-\alpha)-(L^2)_T
= 2 \left(\beta - \frac{4-\alpha}{2}\right)^2 + \frac{7}{6} \left(\alpha - \frac{10}{7}\right)^2 + \frac{2}{7} > 0
\]
if $X \in \mcG_4$,
\[
4 (2-\beta)+4(2-\beta)(1-\alpha)-(L^2)_T
= 2 \left(\beta - \frac{4-\alpha}{2}\right)^2 + \frac{9}{5} (\alpha-1)^2 + \frac{19}{5} > 0
\]
if $X \in \mcG_{11}$, and
\[
4 (2-\beta)+4(2-\beta)(1-\alpha)-(L^2)_T
= 2 \left( \beta - \frac{4-\alpha}{2} \right)^2 + \frac{7}{6} \left( \alpha - \frac{10}{7} \right)^2 + \frac{16}{7} > 0
\]
if $X \in \mcG_6$.
This is a contradiction and $\msp$ is not a maximal center.
\end{proof}

As a conclusion, we have the following result.

\begin{Thm} \label{exclnspt}
Let $X$ be a member of the family $\mcG_i$ with $i \in I^*$.
Then no nonsingular point of $X$ is a maximal center.
\end{Thm}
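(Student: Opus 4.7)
The plan is to reduce the problem to the four propositions already in place (Propositions \ref{exclsmpts123}, \ref{exclsmpts4}, \ref{exclsmpts5}, \ref{exclsmptsrem}), which collectively handle every one of the $85$ families by stratifying them according to the size of $4/(A^3)$ relative to the products $a_i a_j$ of the weights. The overarching strategy is the \emph{isolation-plus-test-class} criterion (Theorem \ref{nsptisol}): to exclude a nonsingular point $\msp$ it is enough to exhibit an integer $l \le 4/(A^3)$ such that $l A$ is $\msp$-isolating. Using Proposition \ref{isoclassnspt}, which builds explicit isolating sets from weighted differences of coordinates at $\msp$, one can almost always produce such an $l$ of the form $l = \lcm (a_j, a_k)$.

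First I would dispose of the families with $a_2 a_5 \le 4/(A^3)$: here for any $\msp = (\xi_0:\cdots:\xi_5)$ at least one of $\xi_0, \xi_1, \xi_2$ is nonzero (all points with three leading coordinates vanishing are singular), so Proposition \ref{isoclassnspt} produces an isolating class of degree $\le a_2 a_5$, and Theorem \ref{nsptisol} finishes the job; this is Proposition \ref{exclsmpts123}. For the intermediate range $a_1 a_5 \le 4/(A^3) < a_2 a_5$ I would split according to whether $a_2 a_4 \le 4/(A^3)$ or not. When $4/(A^3) \ge a_2 a_4$ fails (Proposition \ref{exclsmpts4}), only $4$ families remain and each is finished by an ad hoc isolating set respecting the particular equations. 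When $a_2 a_4 \le 4/(A^3)$ (Proposition \ref{exclsmpts5}), the key input is Lemma \ref{lem:spsingpts}, which uses conditions $(\mathrm{C}_1)$--$(\mathrm{C}_3)$ to certify that the strata $(x_0 = x_1 = 0) \cap \Exc(\pi_5)$ and $(x_0 = x_1 = x_2 = 0)_X$ consist of singular points in all but a handful of exceptional families, where one proceeds by hand.

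The main obstacle is the residual list of four families $\mcG_4, \mcG_6, \mcG_9, \mcG_{11}$, where $4/(A^3)$ is so small that the isolation criterion is too weak. For these I would proceed as in Proposition \ref{exclsmptsrem}: points off $\Exc(\pi_5)$ are handled by Lemma \ref{lem:nsptNo9} with cheap isolating classes, while for $\msp \in \Exc(\pi_5)$ one uses the linear systems $\Lambda_{\msp} \subset |2A|$ (resp.\ $|5A|$ for $\mcG_9$) built in Lemmas \ref{lem:S_No9}, \ref{lemexclnspexc1}, \ref{lemexclnsptno6}. The heart of the argument is a surface computation on a general member $S \in \Lambda_{\msp}$: one blows up $\msp$ on $X$ and restricts to the strict transform $T \to S$, whose $\varphi_S$-exceptional curve $F$ is a conic in $E \cong \mbP^2$. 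Assuming $\msp$ is a maximal center of some $\mcH \subset |nA|$, Theorem \ref{redsurf} forces either $m_E(\mcH) > 2n$ or a non-log-canonical line on $E$; writing $\varphi^*A|_T \sim (1/n) \mcL + \beta F + \alpha \Gamma'$ and applying the connectedness theorem together with Theorem \ref{multineq2} yields strict inequalities $\alpha < 2$, $\beta \le 2$ and a numerical lower bound on $(L^2)_T$. The hard part will be to verify in each of the four families that these constraints, combined with the precomputed self-intersection $({\Gamma'}^2)_T$ (which depends on the type of the quotient singularity that $S$ inherits at $\msp_5$), force $(L^2)_T < 0$, contradicting nefness of $L$.

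Once all four propositions are in hand, the theorem follows tautologically, since every family in $I^*$ falls into at least one of the strata treated above; I would simply cite Propositions \ref{exclsmpts123}, \ref{exclsmpts4}, \ref{exclsmpts5}, and \ref{exclsmptsrem} in a single concluding sentence.
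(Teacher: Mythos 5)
Your proposal is correct and follows the paper's proof exactly: Theorem \ref{exclnspt} is proved there by citing Propositions \ref{exclsmpts123}, \ref{exclsmpts4}, \ref{exclsmpts5} and \ref{exclsmptsrem}, whose contents (the stratification by comparing $4/(A^3)$ with products of weights, the isolating-class criterion of Theorem \ref{nsptisol}, and the surface computation via Theorems \ref{redsurf} and \ref{multineq2} for $\mcG_4,\mcG_6,\mcG_9,\mcG_{11}$) you have summarized accurately. No gaps.
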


\begin{proof}
This follows from Propositions \ref{exclsmpts123}, \ref{exclsmpts4}, \ref{exclsmpts5} and \ref{exclsmptsrem}.
\end{proof}

\section{Exclusion of singular points} \label{sec:singpt}

The aim of this section is to exclude the remaining singular points as a maximal center.
Throughout this section, $X$ is a member of $\mcG_i$ with $i \in I^*$ and $\msp$ is a singular point of $X$ with empty third column in the big table.
We denote by $\varphi \colon Y \to X$ the Kawamata blowup of $X$ at $\msp$ with the exceptional divisor $E$ and recall that $B = -K_Y$.

\subsection{Exclusion by the test class method} \label{sec:singpt1}

In this subsection, let $\msp$ be a singular point of $X$ marked a divisor of the form $b B + e E$ and a set of monomials in the second column of the big table.
There are 61 instances.
We define $S$ to be the birational transform on $Y$ of a general member of the linear system $|A|$.
In this subsection, we assume the following additional assumption.

\begin{Assumption} \label{assumpsingpt}
If $X$ is a member of $\mcG_{71}$, then the monomial $z^2 s$ appears in the defining polynomial of degree $16$ with non-zero coefficient.
\end{Assumption} 

\begin{Rem}
Assumption \ref{assumpsingpt} is an extra assumption on generality beyond quasismoothness and it is necessary to exclude the singular point $\msp_2$ of $X \in \mcG_{71}$ as a maximal center by the test class method.
We will prove in Section \ref{sec:singpt3} that $\msp_2$ is not a maximal center even if the above assumption is not satisfied.
\end{Rem}
  
We explain the set of polynomials $\Lambda$ in the second column of the big table, which is determined up to a suitable choice of homogeneous coordinates.
If the subscript $\sharp$ is not given in $\Lambda$, then $\Lambda$ is described after fixing any choice of homogeneous coordinates.
If the subscript $\sharp$ is given, then $\Lambda$ is described in homogeneous coordinates $x_0,\dots,x_5$ of $\mbP (a_0,\dots,a_5)$ such that $\msp$ is a vertex $\msp_j$ with $j$ minimum possible. 
For example, if $X$ is a member of $\mcG_{14}$ (resp.\ $\mcG_{33}$) and $\msp$ is of type $\frac{1}{2} (1,1,1)$ (resp.\ $\frac{1}{3} (1,1,2)$), then we choose coordinates so that $\msp = \msp_1$ (resp.\ $\msp = \msp_2$).
A coordinate with a prime means that it is a tangent coordinate of $X$ at $\msp$.
For example, if $X = X_{10,12} \subset \mbP (1,2,3,4,5,8)$ is a member of $\mcG_{43}$ and $\msp$ is of type $\frac{1}{2} (1,1,1)$, then we can choose homogeneous coordinates so that $\msp = \msp_1$ and defining polynomials can be written as $F_1 = y (u + f_8) + f_{10}$ and $F_2 = y^4 (s + g_4) + y^3 g_6 + y^2 g_8 + y g_{10} + g_{12}$, where $f_j$ and $g_j$ do not involve $y$.
In this case, we define $s' = s + g_4$ and $u' = u + f_8$.

\begin{Lem} \label{lem:isolmonom}
The set of monomials in the third column of the big table isolates the point $\msp$.
\end{Lem}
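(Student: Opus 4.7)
The plan is to verify the claim case by case across the 61 instances of singular points listed in the second column of the big table. By the definition of an isolating set, I need to show that $\msp$ is a component of $X \cap \bigcap_{g \in \Lambda}(g = 0)$. The verification is local-to-global: one first checks that $\msp$ is a locally isolated point of the zero scheme of $\Lambda$ in a neighborhood on $X$, and then that no positive-dimensional component of $\bigcap_{g \in \Lambda}(g = 0)_X$ passes through $\msp$.

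The generic situation is as follows. After the coordinate choice specified in the description of $\Lambda$, the point $\msp$ is a vertex $\msp_j$, and the set $\Lambda$ typically consists of the three ``small'' tangent coordinates $x_{i_0}, x_{i_1}, x_{i_2}$ with weights $a_{i_k} < a_j$, possibly together with an extra monomial or two. These three tangent coordinates form a system of local orbifold coordinates of $X$ at $\msp$, so already $(x_{i_0} = x_{i_1} = x_{i_2} = 0)$ cuts out $\msp$ locally on $X$. Globally, $(x_{i_0} = x_{i_1} = x_{i_2} = 0)$ is a one-dimensional weighted linear subspace of $\mbP(a_0,\dots,a_5)$, and intersecting with $F_1 = F_2 = 0$ gives a zero-dimensional scheme which, by condition $(\mathrm{C}_1)$ forcing the relevant monomials to appear in one of the defining polynomials, contains $\msp$ as an isolated point.

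The second step handles the cases where the three tangent coordinates by themselves fail to isolate $\msp$ because $(x_{i_0} = x_{i_1} = x_{i_2} = 0)_X$ has a one-dimensional residual component through $\msp$. For these cases the big table records one or two additional monomials in $\Lambda$, chosen precisely to kill that residual curve. I would verify in each instance that condition $(\mathrm{C}_2)$, which forbids $X$ from containing certain WCI curves, together with $(\mathrm{C}_1)$, guarantees that the residual scheme either does not pass through $\msp$ or is cut down to $\msp$ (plus possibly finitely many other points) once the extra monomial is adjoined. For the family $\mcG_{71}$ at $\msp_2$ of type $\frac{1}{3}(1,1,2)$, the residual component is killed only under Assumption \ref{assumpsingpt}, which forces the monomial $z^2 s$ into the defining polynomial of degree $16$; this is exactly why the assumption is needed in this subsection.

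The main obstacle is the sheer combinatorial length of the verification rather than any conceptual difficulty: each of the 61 instances is an explicit inspection of the restricted defining equations $F_1|_{\Lambda}, F_2|_{\Lambda}$ using the prescribed forms of the tangent coordinates and the generality conditions $(\mathrm{C}_0)$--$(\mathrm{C}_4)$. I would organize the cases by grouping together families whose singular point types and whose $\Lambda$ share the same structure, and present a representative worked example for each group, leaving the remaining cases to analogous (straightforward) checks.
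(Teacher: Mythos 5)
Your plan coincides with the paper's proof of Lemma \ref{lem:isolmonom}: the paper establishes it by checking each instance individually, using quasismoothness together with $(\mathrm{C}_1)$, $(\mathrm{C}_2)$ (and, for $\mcG_{71}$, Assumption \ref{assumpsingpt}) to control the defining equations restricted to $(\Lambda = 0)$, and it likewise presents only representative worked examples. A few details in your description are off, though none is fatal to the method. First, the sets $\Lambda$ in the table generally do \emph{not} form a system of local orbifold coordinates at $\msp$: in the paper's convention the primed entries of $\Lambda$ are the \emph{eliminated} (tangent) coordinates, e.g.\ for $\mcG_{33}$ one has $\Lambda = \{x_0,x_1,z',t'\}$ while the local coordinates at $\msp_2$ are $x_0,x_1,s$; the isolation comes instead from inspecting $F_1,F_2$ restricted to the coordinate subspace $(\Lambda=0)$, which for a three-element $\Lambda$ is two-dimensional, not one-dimensional as you write. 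Second, the genuinely exceptional instance in the paper's proof is the point of type $\frac{1}{4}(1,1,3)$ on $X\in\mcG_{71}$, where the common zero locus of $\Lambda$ may contain a curve that, however, misses $\msp$ (so isolation still holds); your framework does accommodate this, but Assumption \ref{assumpsingpt} (the monomial $z^2s$) pertains to the other point $\msp_2$, which is of type $\frac{1}{5}(1,2,3)$, not $\frac{1}{3}(1,1,2)$.
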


\begin{proof}
Let $\Lambda$ be the set of polynomials in the second column of the big table.
Then, by checking each instance individually, the set of common zeros of polynomials in $\Lambda$ is a finite set of closed points including the point $\msp$ except possibly when $X \in \mcG_{71}$ and $\msp$ is of type $\frac{1}{4} (1,1,3)$.
If $X \in \mcG_{71}$ and $\msp$ is of type $\frac{1}{4} (1,1,4)$, then the set of common zero can contain a curve but it does not pass through $\msp$.
It follows that $\Lambda$ isolates $\msp$ in this case as well (see Examples \ref{ex:singpt1}, \ref{ex:singpt1No37} and \ref{ex:singpt1No71} below). 
\end{proof}


\begin{Prop} \label{exclsingpt1}
Let $X$ be a member of the family $\mcG_i$ with $i \in I^*$ and $\msp$ a singular point of $X$ marked a divisor $b B + e E$ and a set of monomials in the second column of the big table.
Then the divisor $M := b B + e E$ is a test class with $(M \cdot B^2) \le 0$.
In particular, $\msp$ is not a maximal center.
\end{Prop}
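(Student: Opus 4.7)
The plan is to apply Corollary \ref{criexclmstc}, for which it suffices to show that $M = bB + eE$ is a test class and satisfies $(M \cdot B^2) \le 0$. The nefness of $M$ will follow from Lemma \ref{lem:isolmonom}: the listed monomials isolate $\msp$, and the pair $(b,e)$ recorded in the big table is chosen precisely so that each such monomial, viewed as a section of $|b A|$, lifts under the Kawamata blowup $\varphi$ to a section of $\mcO_Y(bB + eE)$. The resulting sub-linear system of $|M|$ has base locus of codimension at least two in a neighborhood of $E$, so $M$ is nef, hence a test class.

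For the intersection computation I would use the standard numerics of the Kawamata blowup at a singularity of type $\frac{1}{r}(1,a,r-a)$: writing $B = \varphi^*A - (1/r)E$ and using $\varphi^*A \cdot E^2 = (\varphi^*A)^2 \cdot E = 0$ (by the projection formula, since $\varphi(E)$ is a point) together with the standard formula $(E^3) = r^2/(a(r-a))$, one obtains $(B^2 \cdot E) = 1/(a(r-a))$ and $(B^3) = (A^3) - 1/(r\, a(r-a))$. Hence
\[
(M \cdot B^2) = b (A^3) + \frac{er - b}{r \cdot a (r-a)},
\]
a closed-form expression depending only on $(A^3)$, the singularity type, and the recorded pair $(b,e)$.

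The main step, and the real obstacle, is then the case-by-case verification of $(M \cdot B^2) \le 0$ for each of the $61$ rows of the big table relevant to this subsection; the entries $(b,e)$ have been engineered so that the resulting inequality holds and can be checked by direct substitution of the numerical invariants. The only delicate point is the singularity of type $\frac{1}{4}(1,1,3)$ on a member $X \in \mcG_{71}$: Lemma \ref{lem:isolmonom} relies on Assumption \ref{assumpsingpt} to guarantee that $z^2 s$ is available as an isolating monomial, and when that assumption fails the present argument breaks down, so the point has to be excluded by the alternative technique of Section \ref{sec:singpt3}. Once the inequality has been verified in every case, Corollary \ref{criexclmstc} gives immediately that $E$ is not a maximal singularity, and hence $\msp$ is not a maximal center.
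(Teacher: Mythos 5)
Your overall strategy matches the paper's: use the isolating monomials of Lemma \ref{lem:isolmonom} to produce the test class $M = bB + eE$, compute $(M \cdot B^2)$ via the standard Kawamata-blowup numerics (your closed-form $(M\cdot B^2) = b(A^3) + \frac{er-b}{r\,a(r-a)}$ is correct), and conclude by Corollary \ref{criexclmstc}. The case distinction for $\mcG_{71}$ is also handled exactly as in the paper.

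There is, however, a genuine gap in your nefness argument. First, a minor imprecision: the isolating polynomials $f_j$ have varying degrees $b_j$, so they are not all sections of $|bA|$; each gives a divisor $M_j \sim b_j B + e_j E$ with $e_j = (b_j - c_j)/r$, and $M$ is the $M_k$ maximizing $e_j/b_j$, so that $|mM|$ contains $\frac{mb}{b_j}M_j + \bigl(me - \frac{mb\,e_j}{b_j}\bigr)E$ with nonnegative $E$-coefficient for every $j$. More seriously, your inference ``the base locus has codimension at least two in a neighborhood of $E$, so $M$ is nef'' is not valid: a divisor whose stable base locus is a curve can still fail to be nef, precisely because it may be negative on curves contained in that base locus. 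Here the base locus of $|mM|$ near $E$ is contained in $E$ itself, so you must separately check curves $C \subset E$. For such a curve $(E\cdot C) < 0$ and
\[
(M\cdot C) = \Bigl(b\varphi^*A + \bigl(e - \tfrac{b}{r}\bigr)E\Bigr)\cdot C = -\frac{b - re}{r}(E\cdot C),
\]
which is positive only because the inequality $b > re$ holds in every row of the table. This inequality is an essential, case-by-case observation (it is why the table records $e$ strictly smaller than $b/r$), and without it the nefness claim does not follow. Adding the verification of $b > re$ and the explicit treatment of curves inside $E$ closes the gap and brings your argument in line with the paper's proof.
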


\begin{proof}
Let $\Lambda = \{f_1,\dots,f_l\}$ be the set in the second column which isolates $\msp$.
Suppose that $f_j$ vanishes along $E$ to order $c_j/r$, where $r$ is the index of $\msp$, and set $b_j := \deg f_j$.
We see that the birational transform of $(f_j = 0)_X$ on $Y$ defines the divisor $M_i \sim b_j B + e_j E$, where $e_j := (b_j - c_j)/r$ is a non-negative integer. 
Let $k$ be an index such that 
\[
e_k/b_k = \max_{1 \le j \le l} \{e_j/b_j\}.
\]
Then $M_k$ is the divisor $M = b B + e E$ in the middle column of the table and we observe that the inequality $b > r e$ holds for each instance.
We claim that $M = M_k$ is nef.
Let $m$ be a sufficiently divisible positive integer.
Then, around an open subset containing $E$, the base locus of the complete linear system $|m M|$ is contained in $E$ since $|m M|$ contains a suitable multiple of a divisor $M_j + k_j E$ for some $k_j \ge 0$, and each component of the intersection of $M_j$'s is either contained in $E$ or disjoint from $E$ by Lemma \ref{lem:isolmonom}.
Let $C$ be an irreducible curve on $Y$.
If $C$ is disjoint from $E$, then $(M \cdot C) = b (\varphi^*A \cdot C) > 0$.
If $C$ meets $E$ but is not contained in $E$, then $(M \cdot C) \ge 0$ since $C$ is not contained in the base locus of $|m M|$.
Suppose that $C$ is contained in $E$.
Note that we have $(B \cdot C) = - (1/r) (E \cdot C)$ and $(E \cdot C) < 0$ since $C$ is contracted by $\varphi$ and $-E$ is $\varphi$-ample.
It follows that
\[
(M \cdot C) = (bB+eE \cdot C) = - \frac{b - r e}{r} (E \cdot C) > 0
\]
since $b - e r > 0$.
This shows that $M$ is nef.
The computation of $(M \cdot B^2)$ is straightforward and we omit the proof of the inequality $(M \cdot B^2) \le 0$. 
By Corollary \ref{criexclmstc}, $\msp$ is not a maximal center.
\end{proof}

\begin{Ex} \label{ex:singpt1}
We explain how to apply $(\mathrm{C}_1)$ or $(\mathrm{C}_2)$ in order to obtain Lemma \ref{lem:isolmonom} for a member of $\mcG_i$ with $i \ne 37,71$ (see Examples \ref{ex:singpt1No37} and \ref{ex:singpt1No71} below for $\mcG_{37}$ and $\mcG_{71}$).

For a member $X$ of $\mcG_i$ with $i \in \{27,33,35,39,46,47,52,53,70\}$, the condition $(\mathrm{C}_1$) or $(\mathrm{C}_2)$ ensures that there is a tangent coordinate of $X$ at $\msp$ with the largest possible degree.
For example, let $X = X_{9,10} \subset \mbP (1^2,3,4,5,6)$ be a member of $\mcG_{33}$ and $\msp$ a point of type $\frac{1}{3} (1,1,2)$ with defining polynomials $F_1, F_2$ of degree respectively $9,10$.
We may assume that $\msp = \msp_2$.
Quasismoothness of $X$ implies that $y t \in F_1$ but does not imply $y^2 z \in F_2$.
If $y^2 z \notin F_2$, then $X$ contains the WCI curve $(x_0 = x_1 = s = t = 0)$ of type $(1,1,5,6)$.
Thus $(\mathrm{C}_2)$ ensures that $y^2 z \in F_2$ and defining polynomials can be written as $F_1 = y t + f_9$, $F_2 = y^2 z + y g_7 + g_{10}$, where $f_9,g_7,g_{10}$ do not involve $y$.
It is then easy to see that $\{x_0,x_1,z,t\}$ isolates $\msp$.

For a member $X$ of $\mcG_i$, $(\mathrm{C}_2)$ ensures that the indicated monomials indeed isolates $\msp$.
For example, let $X = X_{8,9} \subset \mbP (1,2,3^2,4,5)$ be a member of $\mcG_{27}$ and $\msp$ a point of type $\frac{1}{2} (1,1,1)$.
We may assume that $\msp = \msp_1$.
The existence of the tangent coordinate $y^2 t$ follows by applying $(\mathrm{C}_2)$ as above and defining polynomials of $X$ can be written as $F_1 = y^2 s + y f_6 + f_8$, $F_2 = y^2 t + y g_7 + g_9$, where $f_j, g_j$ do not involve $y$.
We have $(x = z = s = t_0 = 0)_X = (x = s = t = y f_6  = g_9 = 0) = Z_1 \cup Z_2$, where $Z_1 = (x = y = s = t = g_9 = 0)$ and $Z_2 = (x = s = t = f_6 = g_9 = 0)$.
The set $Z_1$ consists of singular points of type $\frac{1}{3} (1,1,2)$.
The set $Z_2$ contains a point other than $\msp$ if and only if it contains a WCI curve of type $(1,3,4,5)$, which is impossible by $(\mathrm{C}_2)$.
Thus $\{x,s,t\}$ isolates $\msp$.
\end{Ex}

\begin{Ex} \label{ex:singpt1No37}
Let $X = X_{8,12} \subset \mbP (1,2,3,4,5,6)$ be a member of $\mcG_{37}$ with defining polynomials $F_1,F_2$ of degree respectively $8,12$ and $\msp$ a point of type $\frac{1}{2} (1,1,1)$.
By $(\mathrm{C}_1)$, we have $s^2 \in F_1$.
Then we have $(x = y = z = t = 0)_X = \emptyset$ and thus $y$ does not vanish at $\msp$.
After replacing $s,t$, we may assume that $\msp = \msp_1$.
In this case the set $\{x,z,s,u\}$ isolates $\msp$.
\end{Ex}

\begin{Ex} \label{ex:singpt1No71}
Let $X = X_{14,16} \subset \mbP (1,4,5,6,7,8)$ be a member of $\mcG_{71}$ and $\msp$ a point of type $\frac{1}{4} (1,1,3)$.
We may assume that $\msp = \msp_1$ and defining polynomials can be written as $F_1 = y^2 s + y f_{10} + f_{14}$ and $F_2 = y^2 u + y g_{12} + g_{16}$, where $f_j$ and $g_j$ do not involve $y$.
We have $F_1 (0,y,z,s,0,0) = y^2 s + \alpha y z^2$ and $F_2 (0,y,z,s,0,0) = \beta y s^2 + \gamma z^2 s$, where $\alpha, \beta, \gamma \in \mbC$.
By quasismoothness of $X$, we see that $\alpha \ne 0$ and $\beta \ne 0$.
It follows that $(x = t = u = 0)_X = \{\msp\} \cup Z_1 \cup Z_2$, where $Z_1 = (x = t = u = y = \gamma z^2 s = 0)$ and $Z_2 = (x = t = u = y s + \alpha z^2 = \beta y s + \gamma z^2 = 0)$.
If $\alpha = \gamma / \beta$ then $Z_2$ is a WCI curve of type $(1,7,8,10)$ but this does not happen by ($\mathrm{C}_2$).
Thus, we have $\alpha \ne \gamma / \beta$ and $Z_1 = \{\msp, \msp_3\}$.
Although $Z_1$ can be a curve (this is exactly the case where $\gamma = 0$), it does not contain $\msp$.
This shows that $\{x,t,u\}$ isolates $\msp$.
%
\end{Ex}

\subsection{Exclusion by determining the Kleiman-Mori cone} \label{sec:singpt2}

In this subsection, let $\msp \in X$ be a singular point of index $r$ marked $T \in |m B|$ in the second column of the big table.
We note that $(B^3) \le 0$ in this case.
As in the previous subsection, we define $S \in |B|$ to be the birational transform on $Y$ of a general member of the linear system $|A|$.

\begin{Lem}[{\cite[Corollary 5.4.6]{CPR}}] \label{detcone}
Assume that $(B^3) \le 0$ and there is a divisor $T \in |b B + eE|$, where $b > 0$, $b/r \ge e \ge 0$, with the following properties.
\begin{enumerate}
\item The scheme theoretic intersection $\Gamma := S \cap T$ is an irreducible and reduced curve.
\item $(T \cdot \Gamma) \le 0$.
\end{enumerate}
Then we have $\bNE (Y) = R + Q$, where $R$ is the extremal ray spanned by a curve contracted by $\varphi$ and $Q = \mbR_+ [\Gamma]$.
\end{Lem}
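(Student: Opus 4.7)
The plan is to follow the template of \cite[Corollary 5.4.6]{CPR}. Since $X$ has Picard number one and $\varphi \colon Y \to X$ is an extremal divisorial contraction of a $\mbQ$-factorial terminal threefold, $Y$ has Picard number two, so $\bNE (Y) \subset N_1 (Y)_{\mbR}$ is a two-dimensional closed convex cone with exactly two extremal rays. One of them is plainly $R$, spanned by any curve $\ell \subset E$ contracted by $\varphi$, so the task reduces to proving that the second extremal ray equals $\mbR_+ [\Gamma]$.

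The first step is to check that $[\Gamma]$ is independent of $[\ell]$. Since the classes of $S$ and $T$ both have a positive coefficient of $\varphi^* A$, neither surface equals $E$, so $S \cap E$ and $T \cap E$ are curves on $E$ and $\Gamma \cap E = S \cap T \cap E$ is generically zero-dimensional. Hence $\Gamma \not\subset E$ and $(\varphi^* A \cdot \Gamma) = (A \cdot \varphi_* \Gamma) > 0$, while $(\varphi^* A \cdot \ell) = 0$. Writing $[\Gamma] = \alpha [\ell] + \beta [\eta]$ in the basis where $[\eta]$ generates the second extremal ray of $\bNE (Y)$, with $\alpha, \beta \ge 0$, this forces $\beta > 0$, so it remains to show $\alpha = 0$.

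Next, I would use $T$ as a witness functional. From $T \sim b \varphi^* A - (b/r - e) E$ and the identity $B \cdot C = -(1/r)(E \cdot C)$ for curves $C \subset E$, together with $(E \cdot \ell) < 0$ (which holds because $-E|_E$ is $\varphi$-ample for a Kawamata blowup), one computes $(T \cdot \ell) = (e - b/r)(E \cdot \ell) \ge 0$. In the boundary case $b/r = e$ the class $T$ is proportional to $\varphi^* A$, so $(T \cdot \Gamma) = b (A \cdot \varphi_* \Gamma) > 0$ would contradict hypothesis (2); hence (2) actually forces the strict inequality $b/r > e$, and $(T \cdot \ell) > 0$. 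The key step is then to show that $T$ is nef on all of $Y$: for curves in $E$ the same formula gives $(T \cdot C) \ge 0$, and for curves off $E$ one exploits the effectivity of $T$, the irreducibility of $\Gamma$ as the \emph{entire} scheme-theoretic intersection $S \cap T$, and the assumption $(B^3) \le 0$ to rule out additional irreducible components of the support of $T$ that could violate nefness.

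Once nefness of $T$ is in hand, the conclusion follows quickly: combining $(T \cdot \Gamma) \le 0$ with $T$ nef gives $(T \cdot \Gamma) = 0$, and the expansion $0 = \alpha (T \cdot \ell) + \beta (T \cdot \eta)$ with $(T \cdot \ell) > 0$ and $(T \cdot \eta) \ge 0$ forces $\alpha = 0$. Hence $[\Gamma] = \beta [\eta]$ lies on the second extremal ray, and since $[\Gamma] \ne 0$ this ray is precisely $\mbR_+ [\Gamma] = Q$, yielding $\bNE (Y) = R + Q$. The principal obstacle is establishing nefness of $T$ on irreducible curves off $E$; this is the substantive step where the assumption $(B^3) \le 0$ and the irreducibility and reducedness of $\Gamma$ enter in an essential way.
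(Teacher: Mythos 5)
Your skeleton --- Picard number two, one extremal ray is $R$, exhibit a nef class that vanishes on $[\Gamma]$ and is positive on $R$ --- is the right one, and your preliminary observations ($\Gamma \not\subset E$, $(T \cdot \ell) > 0$ after ruling out $b/r = e$) are fine. But the class you propose as the supporting functional is the wrong one, and this is not a repairable detail: the step ``show that $T$ is nef on all of $Y$'' is false under the stated hypotheses. The lemma assumes only $(T \cdot \Gamma) \le 0$, and in the application for which it is designed (Lemma \ref{TKM}) one takes $T \in |a_1 B|$, so that $(T \cdot \Gamma) = a_1^2 (B^3)$, which is in general \emph{strictly} negative (e.g.\ $(B^3) = 1/3 - 1/2 < 0$ for the $\frac{1}{2}(1,1,1)$ points of No.~29). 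A divisor with strictly negative degree on an irreducible curve is not nef, so no analysis of $\operatorname{Supp} T$ can rescue the claim, and your deduction ``$T$ nef plus $(T \cdot \Gamma) \le 0$ gives $(T \cdot \Gamma) = 0$'' would contradict the very computation that verifies hypothesis (2).

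The repair is the one carried out in the paper's proof of the generalization (Lemma \ref{nespcurve}): manufacture a class that vanishes on $\Gamma$ by construction. Set $\alpha := -(B \cdot \Gamma)/(E \cdot \Gamma)$ and $M := B + \alpha E$; the hypothesis $(T \cdot \Gamma) \le 0$ together with $e \ge 0$ and $(E \cdot \Gamma) > 0$ gives $(B \cdot \Gamma) \le 0$, hence $\alpha \ge 0$, while $(\varphi^* A \cdot \Gamma) > 0$ gives $\alpha < 1/r$, so $M$ is strictly positive on $R$. Nefness of $M$ is then proved by contradiction: if $(M \cdot C) < 0$ for an irreducible curve $C$, one first checks $(E \cdot C) \ge 0$ (otherwise $C \subset E$, so $(M \cdot C) = (\alpha - 1/r)(E \cdot C) > 0$), and then both $(S \cdot C) = m(B \cdot C) < 0$ and $(T \cdot C) = b(M \cdot C) + (e - b\alpha)(E \cdot C) < 0$, the latter because $(T \cdot \Gamma) \le 0$ is equivalent to $e \le b\alpha$. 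Since $S$ and $T$ are effective, $C$ lies in $S \cap T = \Gamma$, so $C = \Gamma$, contradicting $(M \cdot \Gamma) = 0$. Note that \emph{both} surfaces are needed to trap $C$ inside the irreducible curve $\Gamma$; your sketch invokes only the support of $T$, which in general contains many curves other than $\Gamma$.
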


\begin{Thm}[{\cite[Theorem 5.4.8]{CPR}}] \label{thmexclsingpt}
Assume that $(B^3) \le 0$ and there exists a divisor $T$ as in Lemma \ref{detcone}.
Then $B^2 \notin \Int \bNE (Y)$.
In particular, $\msp$ is not a maximal center.
\end{Thm}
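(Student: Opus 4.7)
The plan is to apply Lemma \ref{CPRlem5.2.1} in contrapositive: if $B^2 \notin \Int \bNE(Y)$, then $\msp$ cannot be a maximal center. The task thus reduces to a numerical computation on the two-dimensional Mori cone supplied by Lemma \ref{detcone}.

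First I would invoke Lemma \ref{detcone} to write $\bNE(Y) = \mbR_{\ge 0}[\ell] + \mbR_{\ge 0}[\Gamma]$, where $\ell$ is a curve contracted by the Kawamata blowup $\varphi$, and decompose $B^2 = \alpha[\ell] + \beta[\Gamma]$ in $N_1(Y)_{\mbR}$. Since the cone is two-dimensional, $B^2 \in \Int \bNE(Y)$ would require both $\alpha > 0$ and $\beta > 0$, so it suffices to prove $\alpha \le 0$. I would extract the coefficients by pairing the identity $B^2 = \alpha[\ell] + \beta[\Gamma]$ with the two independent divisors $\varphi^*A$ and $E$, exploiting the standard Kawamata blowup vanishings $(\varphi^*A)^2 \cdot E = \varphi^*A \cdot E^2 = 0$ and $(\varphi^*A \cdot \ell) = 0$.

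The concrete intersection numbers are as follows. From $S \sim \varphi^*A - \tfrac{1}{r}E$, $T \sim b\varphi^*A + (e - \tfrac{b}{r})E$, and the cycle equality $[\Gamma] = [S]\cdot[T]$, a short calculation gives
\[
(\varphi^*A \cdot \Gamma) = b(A^3), \qquad (E \cdot \Gamma) = \tfrac{b - er}{r^2}(E^3),
\]
the latter being nonnegative by the hypothesis $b \ge er$. Combined with $(\varphi^*A \cdot B^2) = (A^3)$ and $(E \cdot B^2) = (E^3)/r^2$, the pairing with $\varphi^*A$ yields $\beta = 1/b$ at once; substituting this into the pairing with $E$ then delivers an explicit formula for $\alpha$ of the form $\alpha \cdot (E \cdot \ell) = e(E^3)/(rb)$, which, in view of $(E \cdot \ell) < 0$ (since $-E$ is $\varphi$-ample), $(E^3) > 0$, and $e \ge 0$, forces $\alpha \le 0$. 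This gives $B^2 \notin \Int \bNE(Y)$ and the theorem follows.

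The main obstacle is the intersection-theoretic bookkeeping on the Kawamata blowup: verifying the cycle-level equality $[\Gamma] = [S]\cdot[T]$ (legitimate because $\Gamma$ is a proper intersection by hypothesis) and confirming that $\Gamma$ is not contained in $E$, so that the projection-formula computations above hold as stated. Once this is in place the argument is pure algebra, with the crucial combinatorial input being the inequality $b \ge er$ from Lemma \ref{detcone}, which both underpins the shape of the Mori cone (via $(T \cdot \ell) = (b-er)(B \cdot \ell) \ge 0$) and keeps the sign of $\alpha$ under control.
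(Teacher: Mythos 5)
Your argument is correct and is essentially the standard proof of \cite[Theorem 5.4.8]{CPR}, which the paper simply cites rather than reproves: from the cycle identity $[\Gamma]=[S]\cdot[T]=bB^2+e(B\cdot E)$ and $E^2=-c[\ell]$ with $c>0$ one gets $B^2=\tfrac{1}{b}[\Gamma]-\tfrac{ec}{rb}[\ell]$, i.e.\ a non-positive coefficient on the ray $R$, which is exactly what your pairing computation against $\varphi^*A$ and $E$ produces. The only cosmetic difference is that you solve for the coefficients via intersection with the two divisor classes instead of reading them off the cycle identity directly; the numerics, including $\beta=1/b$ and $\alpha\,(E\cdot\ell)=e(E^3)/(rb)\ge 0$, all check out.
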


\begin{proof}
This is the consequence of Lemma \ref{detcone} and we refer the reader to \cite{CPR} for the proof.
\end{proof}

We shall construct a divisor $T$ as in Lemma \ref{detcone}.
Let $a_0,a_1,\dots,a_5$ be the weights of the ambient weighted projective space and assume that $a_0 \le a_1 \le \cdots \le a_5$.
We define $T$ to be the birational transform of a general member of the linear system $|\mcI_{\msp,X} (a_1 A)|$.
Note that $|\mcI_{\msp,X} (a_1 A)|$ is a pencil and we have $|\mcI_{\msp,X} (a_1 A)| = |a_1 A|$ if $i \notin \{20,34,59\}$.
By the construction, we have $T \in |a_1 B|$ and hence if $\Gamma$ is irreducible and reduced, then $(T \cdot \Gamma) = (S \cdot T^2) = a_1^2 (B^3) \le 0$ is automatically satisfied.

\begin{Lem} \label{TKM}
The scheme theoretic intersection $\Gamma = S \cap T$ of $S$ and $T$ is an irreducible and reduced curve, and $(T \cdot \Gamma) \le 0$.
\end{Lem}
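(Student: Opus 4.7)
The second assertion is automatic once the first is established: since $T \sim a_1 B$ and $S \sim B$ on $Y$, we have
\[
(T \cdot \Gamma) = (S \cdot T^2) = a_1^2 (B^3) \le 0
\]
by the standing assumption $(B^3) \le 0$. So the real content is irreducibility and reducedness of $\Gamma$, and my plan is to verify this family by family using the data in the big table, pushing $\Gamma$ down to $X$ and analysing the scheme-theoretic intersection of $S_X \in |A|$ with $T_X \in |\mcI_{\msp,X}(a_1 A)|$ explicitly.

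First I would fix homogeneous coordinates so that $\msp$ is a coordinate vertex $\msp_j$ and bring the defining polynomials $F_1, F_2$ of $X$ into a normal form dictated by quasismoothness at $\msp$. Next I would describe the pencil $|\mcI_{\msp,X}(a_1 A)|$: for $i \notin \{20,34,59\}$ it coincides with $|a_1 A|$, so a general $T_X$ is cut out by a generic degree-$a_1$ section, while for $i \in \{20,34,59\}$ the pencil has to be written down explicitly (and condition $(\mathrm{C}_3)$ will be invoked). Similarly, a general $S_X \in |A|$ is cut out by a generic degree-$a_0$ form, which after a coordinate change may be taken to be $x_0$. The intersection $S_X \cap T_X$ is then an explicit weighted complete intersection in a weighted projective space of smaller dimension, and I would compute it directly.

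Having computed $S_X \cap T_X$, I would separate the contribution supported at $\msp$ from the residual $1$-dimensional part $\Gamma_X$, observing that the zero-dimensional contribution at $\msp$ is removed on passing to the strict transform $\Gamma = S \cap T \subset Y$. To verify that $\Gamma_X$ is reduced and irreducible, I would either invoke quasismoothness of the lower-dimensional weighted complete intersection defining $\Gamma_X$ in a suitable open neighbourhood, or exhibit an explicit decomposition and discard spurious components by dimension count. In the exceptional families ($i = 20, 21, 34, 51, 59$) the naive intersection $(x_0 = x_1 = 0)_X$ or the natural base curve of the pencil can break up, which is precisely why condition $(\mathrm{C}_3)$ was imposed; its role is exactly to force irreducibility of $\Gamma$.

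The main obstacle is not conceptual but bookkeeping: roughly a dozen families fall under this lemma, and each needs its own (short) explicit analysis. The hardest individual cases will be the ones marked by $(\mathrm{C}_3)$, where one must genuinely use the generality assumption rather than quasismoothness alone; all other cases reduce to a direct verification that $\Gamma_X$ is a quasismooth weighted complete intersection curve in a weighted projective space obtained from $\mbP(a_0, \ldots, a_5)$ by deleting the coordinates that vanish identically on $S_X \cap T_X$.
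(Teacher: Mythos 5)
Your proposal follows essentially the same route as the paper: the intersection number is automatic from $T \in |a_1 B|$ and $(B^3) \le 0$, and the real work is a family-by-family verification that the pushdown of $\Gamma$ — after normalizing coordinates so that it is (the strict transform of) the weighted complete intersection curve $C = (G_1 = G_2 = 0)$ in $\mbP(a_2,\dots,a_5)$ — is irreducible and reduced, via quasismoothness of $C$ where that holds and explicit projections/restrictions otherwise, with the generality conditions supplying the needed nonvanishing coefficients. One small correction: $(\mathrm{C}_3)$ is the relevant condition only for the families No.~20, 21 and 51; for No.~34 and 59 (where the pencil $|\mcI_{\msp,X}(a_1A)|$ differs from $|a_1A|$) irreducibility is forced by $(\mathrm{C}_2)$, not $(\mathrm{C}_3)$.
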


\begin{proof}
%
It is enough to show that $\Gamma$ is irreducible and reduced.
After replacing homogeneous coordinates, we may assume that $\Gamma$ is the birational transform of $(x_0 = x_1 = 0)_X$.
After further replacing coordinates, defining polynomials of $X$ are written as $F_1 = G_1 + H_1$ and $F_2 = G_2 + H_2$, where $G_1, G_2$ are the ones listed in Tables \ref{table:gammaqsm} or \ref{table:gammanonqsm} and $H_1, H_2 \in (x_0,x_1)$.
The conditions on the coefficients $\alpha, \beta, \gamma \in \mbC$ in the tables are the consequence of $(\mathrm{C}_1)$ and $(\mathrm{C}_2)$.
This is the crucial part of the proof which cannot be done systematically.
We need a case-by-case verification for each instance.
We explain all the necessary technique in Examples \ref{ex:eqgamma} below.

It follows that $\Gamma$ is the birational transform of the WCI curve $C := (G_1 = G_2= 0)$ in $(x_0 = x_1 = 0) \cong \mbP (a_2,\cdots,a_5)$.
The assertion that $\Gamma$ is irreducible and reduced follows from the same assertion for $C$.
If $X$ belongs to a family listed in Table \ref{table:gammaqsm}, then $C$ is quasismooth and thus $C$ is irreducible and reduced.
If $X$ belongs to a family listed in Table \ref{table:gammanonqsm}, then case-by-case verifications show that $C$ is irreducible and reduced (see Example \ref{ex:irrgamma} below).
\end{proof}

\begin{Prop} \label{exclsingpt2}
Let $X$ be a member of $\mcG_i$ with $i \in I^*$ and $\msp$ a singular point of $X$ marked $T \in |m B|$ in the second column of the big table for some $m$.
Then $\msp$ is not a maximal center.
\end{Prop}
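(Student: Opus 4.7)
The proof is essentially an assembly of results already established in this subsection. The plan is to directly invoke Theorem \ref{thmexclsingpt} with the explicit divisor $T$ constructed above Lemma \ref{TKM}.

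First I would recall that the hypothesis $(B^3) \le 0$ holds for every singular point $\msp$ marked $T \in |m B|$ in the big table, as noted at the opening of Section \ref{sec:singpt2}. Next, I would take $T$ to be the birational transform on $Y$ of a general member of the pencil $|\mcI_{\msp,X}(a_1 A)|$, where $a_1$ is the second smallest weight of the ambient weighted projective space $\mbP(a_0,\dots,a_5)$. By construction $T \in |a_1 B|$, so $T$ lies in the numerical class $b B + e E$ required by Lemma \ref{detcone} with $b = a_1$ and $e$ a non-negative integer determined by the vanishing order of the defining section of $|\mcI_{\msp,X}(a_1 A)|$ along $E$.

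I would then quote Lemma \ref{TKM}, which precisely supplies the two remaining hypotheses of Lemma \ref{detcone}: the scheme-theoretic intersection $\Gamma = S \cap T$ is an irreducible and reduced curve, and the computation $(T \cdot \Gamma) = (S \cdot T^2) = a_1^2 (B^3) \le 0$ is automatic from $(B^3) \le 0$. With these inputs verified, Lemma \ref{detcone} determines the Kleiman--Mori cone of $Y$ as $\bNE(Y) = R + \mbR_+[\Gamma]$, where $R$ is the $\varphi$-contracted ray.

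Finally, applying Theorem \ref{thmexclsingpt} yields $B^2 \notin \Int \bNE(Y)$, which by Lemma \ref{CPRlem5.2.1} (equivalently, by the statement of Theorem \ref{thmexclsingpt}) shows that $\msp$ is not a maximal center. Since the hard work, namely verifying that $\Gamma$ is irreducible and reduced in every one of the families listed in Tables \ref{table:gammaqsm} and \ref{table:gammanonqsm}, has already been absorbed into the proof of Lemma \ref{TKM}, the proposition itself requires no further argument beyond this short synthesis.
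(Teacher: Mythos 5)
Your proposal is correct and follows exactly the paper's own route: the paper proves this proposition by citing Lemma \ref{TKM} (irreducibility and reducedness of $\Gamma = S \cap T$, with $(T \cdot \Gamma) = a_1^2 (B^3) \le 0$ automatic) together with Theorem \ref{thmexclsingpt}. The only cosmetic difference is that the paper notes $T \in |a_1 B|$ directly (so $e = 0$ in Lemma \ref{detcone}), whereas you leave $e$ as an unspecified non-negative integer, which is harmless.
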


\begin{proof}
This follows from Lemma \ref{TKM} and Theorem \ref{thmexclsingpt}.
\end{proof}

\begin{table}[bt]
\begin{center}
\caption{Quasismooth $\Gamma$}
\label{table:gammaqsm}
\begin{tabular}{c|c|c}
No. & Equations & Conditions \\
\hline
19 & $s_0 y + s_1 y + z^2 +\alpha y^3 = s_1 s_0 + \beta y^4 = 0$ & $\beta \ne 0 \ (\mathrm{C}_2)$ \\
22 & $t y + \alpha s z = t z + s^2 + \beta z^2 y + y^4 = 0$ & $\alpha \ne 0 \ (\mathrm{C}_1)$, $\alpha^2 + \beta^2 \ne 0 \ (\mathrm{C}_2)$ \\
42 & $s_0 y + s_1 y + z^2 = s_1 s_0 + y^3 = 0$ & \\
48 & $t y + \alpha s z = t z + s^2 + y^3 = 0$ & $\alpha \ne 0 \ (\mathrm{C}_1)$ \\
55 & $t y + z^2 = t z + s^2 + z y^2 = 0$ & \\
57 & $u z + \alpha t s = u s + t^2 + \beta s z^3 = 0$ & $\alpha \ne 0 \ (\mathrm{C}_1)$, $\beta \ne 0 \ (\mathrm{C}_2)$ \\
63 & $u z + t s = u t + \alpha t z^2 + s^3 = 0$ & $\alpha \ne 0 \ (\mathrm{C}_2)$ \\
73 & $u z + s^3 = u s + t^2 + z^4 = 0$ & \\
74 & $u z + s^2 = u s + t^2 + z^6 = 0$ & \\
76 & $t z + s^2 = u^2 + t^2 s + z^4 = 0$ & \\
77 & $t y + z^2 = t z + s^2 + y^3 = 0$ & 
\end{tabular}
\end{center} 
\quad \\[7mm]

\begin{center}
\caption{Non-quasismooth $\Gamma$}
\label{table:gammanonqsm}
\begin{tabular}{c|c|c}
No. & Equations & Conditions \\
\hline
25 & $s^2 + \alpha z^2 y + y^4 = t y + z^3 = 0$ & $\alpha \ne 0 \ (\mathrm{C}_1)$ \\
29 & $t y + z^2 = t z + s^2 + \alpha z^2 y + \beta z y^3 + \gamma y^5 = 0$ & \\
31 & $\alpha t z + s_0 s_1 = t^2 + \beta z^2 s_0 + \gamma z^2 s_1 = 0$ & $\alpha \ne 0 \ (\dagger t z)$, $\beta, \gamma \ne 0 \ (\mathrm{C}_2)$ \\
32 & $\alpha s y^2 + z^3 + \beta z y^3 = t y + s^2 = 0$ & $\alpha \ne 0 \ (\mathrm{C}_1)$ \\
34 & $\alpha t y_1 + \beta s y_1^2 + z^3 + \gamma z y_1^3 = t z + s^2 = 0$ & $\alpha \ne 0 \ (\mathrm{C}_2)$ \\
37 & $t z + \alpha s^2 = u^2 + \beta s^3 + z^4 = 0$ & $\alpha \ne 0 \ (\mathrm{C}_1)$, $\beta \ne 0 \ (\mathrm{C}_2)$ \\
39 & $u z + t^2 = u s + \alpha t z^2 + \beta s^2 z = 0$ & $\alpha \ne 0 \ (\mathrm{C}_1)$ \\
51 & $t z + s^2 = u^2 + \alpha s^2 z = 0$ & $\alpha \ne 0 \ (\mathrm{C}_3)$ \\
59 & $t z + s^2 = t^2 + s y_1^2 + \alpha z^2 y_1 = 0$ & $\alpha \ne 0 \ (\mathrm{C}_2)$ \\
61 & $s^2 + y^3 = t y + z^3 = 0$ & \\
65 & $t^2 + \alpha s z^3 = u z + s^3 = 0$ & $\alpha \ne 0 \ (\dagger z^3 s)$
\end{tabular}
\end{center}
\end{table}

\begin{Ex} \label{ex:eqgamma}
We explain how to obtain equations in Tables \ref{table:gammaqsm} and \ref{table:gammanonqsm} by examples.

Let $X = X_{16,18} \subset \mbP (1^2,6,8,9,10)$ be a member of $\mcG_{77}$.
This is the simplest example.
$\Gamma$ is the birational transform of $C := (x_0 = x_1 = 0)_X$ and defining polynomials can be written as $F_1 = \alpha t y + \beta z^2 + H_1$ and $F_2 = \gamma t z + \delta s^2 + \varepsilon y^3 + H_2$, where $\alpha, \beta, \dots, \varepsilon \in \mbC$ and $H_1, H_2 \in (x_0,x_1)$.
Quasismoothness of $X$ implies that all the coefficients $\alpha, \beta, \dots, \varepsilon$ are non-zero.
After re-scaling $y,z,s,t$, we may assume that $\alpha,\beta,\dots,\varepsilon$ are $1$ and thus we get the equations for $C = (x_0 = x_1 = 0)_X$.

In many cases conditions $(\mathrm{C}_1)$ and $(\mathrm{C}_2)$ are required to obtain the desired equations.
The applications of $(\mathrm{C}_1)$ is immediate: the indicated monomial appears in the defining polynomial of $X$. 
We explain how to apply $(\mathrm{C}_2)$.
Let $X = X_{6,8} \subset \mbP (1^2,2,3,4^2)$ be a member of $\mcG_{19}$.
Then $\Gamma$ is the birational transform of $C = (x_0 = x_1 = 0)_X$.
Defining polynomials of $X$ can be written as $F_1 = \alpha s_0 y + \beta s_1 y + \gamma z^2 + \delta y^3 + H_1$ and $F_2 = f_8 (y,s_0,s_1) + \varepsilon z^2 y + H_2$.
By quasismoothness of $X$, we may assume that $f_8 (y,s_0,s_1) = s_0 s_1$ after replacing $s_0$ and $s_1$, and then we see that $\alpha, \beta, \gamma, \delta$ are non-zero.
Thus, rescaling coordinates, we may assume that $F_1 = s_0 y + s_1 y + z^2 + y^3 + H_1$ and $F_2 = s_0 s_1 + \varepsilon z^2 y + H_2$.
If $\varepsilon = 0$, then $C$ is clearly reducible, so we need to show that $\varepsilon \ne 0$ which does not follow from quasismoothness.
If $\varepsilon = 0$, then $X$ contains the WCI curve $(x_0 = x_1 = s_0 = F_1 = 0)$ of type $(1,1,4,6)$.
This is impossible by $(\mathrm{C})_2$ and thus $\varepsilon \ne 0$.
\end{Ex}

\begin{Ex} \label{ex:irrgamma}
We explain that $\Gamma$ is irreducible and reduced by examples.
$\Gamma$ is the birational transform of the WCI curve defined by the equations in Tables \ref{table:gammaqsm} and \ref{table:gammanonqsm}.
It is enough to show that $C$ is irreducible and reduced.
It is clear that $C$ is reduced since $C$ contains a nonsingular point.
Moreover, $C$ is quasismooth if $X$ is a member listed in Table \ref{table:gammaqsm}.
This can be checked by straightforward computations.
Thus $C$ is irreducible in this case.
We treat families listed in Table \ref{table:gammanonqsm}.

Let $X = X_{8,9} \subset \mbP (1^2,2,3,4,7)$ be a member of $\mcG_{25}$.
Then defining polynomials of $X$ are written as $F_1 = s^2 + \alpha z^2 y + y^4 + H_1$ and $F_2 = t y + z^3 + H_2$ for some $H_1,H_2 \in (x_0,x_1)$.
Since $X$ is quasismooth, we may assume that $H_1 = t x_0 + (\text{terms not involving $t$})$.
The projection $\pi \colon X \ratmap \mbP (1^2,2,3,4)$ from $\msp_5$ is a birational map and it contracts curves in $\Exc (\pi) := (x_0 = y = 0)_X$.
We see that $C \cap \Exc (\pi) = \{\msp_5\}$ and it is mapped to $\pi (C)$ defined by $s^2 + \alpha z^2 y + y^4 = 0$ in $(x_0 = x_1=0) \cong \mbP (2,3,4)$.
Now it is easy to see that $\pi (C)$ is irreducible since $\alpha \ne 0$, hence $C$ is irreducible.
The proof is similar for a member of $\mcG_i$ with $i \in \{32,61,65\}$.

Let $X = X_{8,10} \subset \mbP (1^2,2,4,5,6)$ be a member of $\mcG_{29}$.
Defining polynomials of $X$ can be written as $F_1 = t y + z^2 + H_1$ and $F_2 = t z + s^2 + f_{10} (z,y) + H_2$, where $f = \alpha z^2 y + \beta z y^3 + \gamma y^5$ and $H_1, H_2 \in (x_0,x_1)$.
Replacing $y$ and $z$ if necessary, we may assume that $H_1$ and $H_2$ do not involve $t$.
The projection $X \ratmap \mbP (2,4,5,6)$ is a birational map contracting curves in $\Exc (\pi) := (y = z = 0)_X$.
We see that $\Exc (\pi) \cap C = \{\msp_5\}$ and $C$ is mapped to $\pi (C)$ defined by $y (s^2 + f_{10}) - z^3$ in $\mbP (2,4,5)$.
We see that $\pi (C)$ is irreducible and thus $C$ is irreducible.
The proof is similar for a member of $\mcG_{34}$.

Let $X = X_{8,10} \subset \mbP (1,2,3,4^2,5)$ be a member of $\mcG_{31}$.
Then $C$ is defined in $\mbP (3,4,4,5)$ by $\alpha t z + s_0 s_1 = t^2 + \beta s_0 z + \gamma s_1 z = 0$.
Since $(z = 0)_C$ consists of finitely many points, it is enough to show that $C \cap (z \ne 0)$ is irreducible.
We see that $C \cap (z \ne 0)$ is a cyclic quotient of the curve $C_z$ defined in $\mbA^3$ by $\alpha t + s_0 s_1 = t^2 + \beta s_0 + \gamma s_1 = 0$.
By eliminating $t$, $C_z$ is isomorphic to $(s_0^2 s_1^2 + \alpha \beta s_0 + \alpha \gamma s_1 = 0) \subset \mbA^2$.
It follows from $\alpha,\beta,\gamma \ne 0$ that $C_z$ is irreducible and so is $C$.
The proofs for a member of $\mcG_{37}$ and $\mcG_{59}$ (resp.\ $\mcG_{51}$) can be done similarly by restricting $C$ to the open subset $(z \ne 0)$ (resp.\ $s \ne 0$).
\end{Ex}

We make explicit the condition $(\mathrm{C}_3)$ for $\mcG_{20}$, $\mcG_{21}$ and $\mcG_{51}$ in the following examples.

\begin{Ex} \label{ex:C3no20}
Let $X = X_{6,8} \subset \mbP (1,2^2,3^2,4)$ be a member of $\mcG_{20}$.
After replacing coordinates, we assume that $\msp = \msp_2$, $\Gamma$ is the birational transform of $C = (x = y_0 = 0)_X$ and defining polynomials can be written as $F_1 = \alpha s y_1 + z_0 z_1 + H_1$ and $s^2 + \beta s y_1^2 + \gamma z_0^2 y_1 + \delta z_0 z_1 y_1 + \varepsilon y_1 z_1^2 + H_2$, where $H_1,H_2 \in (x,y_0)$.
We shall show that $C$ is irreducible and reduced if and only if $\alpha$, $\gamma$, $\varepsilon$ are nonzero.
If $\alpha = 0$, then $C$ is clearly reducible and if $\gamma = 0$ (resp.\ $\varepsilon = 0$), then $C$ contains the WCI curve $(z_1 = s = 0)$ (resp.\ $(z_0 = s = 0)$) as a component. 
This proves the only if part.
The if part follows by restricting $C$ to the open subset $(z_1 \ne 0)$ as in Example \ref{ex:irrgamma}.
This explains the condition $(\mathrm{C}_3)$ for $\mcG_{20}$.
\end{Ex}

\begin{Ex} \label{ex:C3no21}
Let $X = X_{6,9} \subset \mbP (1^2,2,3,4,5)$ be a member of $\mcG_{21}$.
Then $\Gamma$ is the birational transform of $C = (x_0 = x_1 = 0)_X$ and defining polynomials of $X$ can be written as $F_1 = s y + \alpha z^2 + H_1$, $F_2 = t s + \beta t y^2 + \gamma z^3 + \delta z y^3 + H_2$, where $H_1, H_2 \in (x_0,x_1)$.
We shall show that $C$ is irreducible if and only if $\alpha \ne 0$, $\beta \ne 0$ and $(\gamma,\delta) \ne (0,0)$.
It is clear that $X$ is reducible if $\alpha = 0$ or $(\gamma, \delta) = (0,0)$.
If $\beta = 0$, then $C$ contains the curve $(x_0 = x_1 = z = s = 0)$ as a component and thus $C$ is not irreducible.
The if part follows by considering the projection from $\msp_5$ as in Example \ref{ex:irrgamma}.
This explains $(\mathrm{C}_3)$ for $\mcG_{21}$.
\end{Ex}

\begin{Ex} \label{ex:C3no51}
Let $X = X_{10,14} \subset \mbP (1,2,4,5,6,7)$ be a member of $\mcG_{51}$.
Then $\Gamma$ is the birational transform of $C = (x = y = 0)$ and defining polynomials of $X$ can be written as $F_1 = \alpha t z + s^2 + H_1$, $F_2 = u^2 + \beta t z^2 + H_2$, where $H_1, H_2 \in (x,y)$.
We shall show that $C$ is irreducible and reduced if and only if $\alpha \ne 0$ and $\beta \ne 0$.
If $\alpha = 0$ or $\beta = 0$, then $C$ is not reduced.
The if part follows by restricting $C$ to the open subset $(s \ne 0)$ as in Example \ref{ex:irrgamma}.
This explains $(\mathrm{C}_3)$ for $\mcG_{51}$.
\end{Ex}

\subsection{Exclusion of singular points by the method of bad link} \label{sec:singpt3}

Let $\msp$ be a singular point of $X$ and $\varphi \colon Y \to X$ the Kawamata blowup of $X$ at $\msp$ with the exceptional divisor $E$.

\begin{Def} \label{defspcurve}
Assume that there are Weil divisors $S$ and $T$ on $Y$ with the following properties.
\begin{enumerate}
\item $S \in |m B|$ and $T \in |b B - e E|$ for some integers $m > 0$, $b > 0$ and $e \ge 0$.
\item The reduced scheme $\Gamma_{\reduced}$ of the scheme theoretic intersection $\Gamma := S \cap T$ consists of finitely many irreducible curves $\Gamma_1, \dots, \Gamma_l$ and each two of them are numerically proportional to each other.
\item $(B \cdot \Gamma_1) \le 0$.
\end{enumerate}
In this case, we call $\Gamma$ a {\it special curve} with respect to the point $\msp$.
\end{Def}

\begin{Lem} \label{nespcurve}
Assume that there is a special curve $\Gamma$ with respect to $\msp$.
Then $\bNE (Y) = R + Q$, where $R$ is the ray spanned by a curve contracted by $\varphi$ and $Q$ is the ray spanned by an irreducible component of $\Gamma_{\reduced}$.
\end{Lem}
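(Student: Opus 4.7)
The plan is to follow the strategy of \cite[Corollary 5.4.6]{CPR}, exploiting that $Y$ has Picard number two. Since $X$ is $\mbQ$-factorial with Picard number one and $\varphi \colon Y \to X$ is an extremal divisorial contraction, $Y$ is $\mbQ$-factorial with $\rho(Y) = 2$, so $\bNE(Y) \subset N_1(Y)_{\mbR}$ is a closed convex two-dimensional cone with exactly two extremal rays. One of them is the ray $R$, spanned by any curve $\ell \subset E$ contracted to $\msp$. The task reduces to identifying the second extremal ray as $Q = \mbR_{+}[\Gamma_1]$.

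First I would check that $\Gamma_1$ is not contracted by $\varphi$, so that $[\ell]$ and $[\Gamma_1]$ are linearly independent in $N_1(Y)_{\mbR}$ and form a basis. Using $B = \varphi^{*}A - (1/r)E$, the hypothesis $(B \cdot \Gamma_1) \le 0$ together with $(\varphi^{*}A \cdot \Gamma_1) > 0$ forces $(E \cdot \Gamma_1) \ge r(\varphi^{*}A \cdot \Gamma_1) > 0$. Since every $\Gamma_i$ is numerically proportional to $\Gamma_1$, the cycle class $[\Gamma] = [S \cdot T] = m(bB^{2} - eB \cdot E)$ equals $c[\Gamma_1]$ for some $c > 0$; in particular $[\Gamma_1] \in \bNE(Y) \setminus R$.

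Next I would show that every irreducible curve $C \subset Y$ lies in the cone $\mbR_{\ge 0}[\ell] + \mbR_{\ge 0}[\Gamma_1]$. If $C$ is contracted by $\varphi$ then $[C] \in R$. Otherwise, writing $[C] = \alpha[\ell] + \beta[\Gamma_1]$ in the basis above, intersecting with $\varphi^{*}A$ gives $\beta(\varphi^{*}A \cdot \Gamma_1) = (\varphi^{*}A \cdot C) > 0$, hence $\beta > 0$. To deduce $\alpha \ge 0$, I would intersect with a general member of the movable linear system $|mB|$ (replacing the fixed $S$ by another member avoiding $C$ if necessary) to obtain $m(B \cdot C) \ge 0$; rewriting
\[
0 \le m(B \cdot C) = m\alpha (B \cdot \ell) + m\beta (B \cdot \Gamma_1)
\]
and using $(B \cdot \ell) = -(1/r)(E \cdot \ell) > 0$ together with $(B \cdot \Gamma_1) \le 0$ and $\beta > 0$ yields $\alpha \ge -\beta(B \cdot \Gamma_1)/(B \cdot \ell) \ge 0$.

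The main obstacle will be handling curves $C$ that cannot be moved out of the base locus of $|mB|$. Here the hypothesis that $\Gamma_{\reduced}$ consists precisely of the $\Gamma_i$, all numerically proportional, is decisive: the only curves forced to lie in every member of $|mB|$ near $\Gamma$ are either contained in $E$ (hence in $R$) or are components of $\Gamma$ itself (hence already in $\mbR_{+}[\Gamma_1]$), so the moving argument applies to every remaining curve and no hidden third extremal ray can appear in the support of $S$ or $T$.
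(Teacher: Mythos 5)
Your overall strategy (reduce to $\rho(Y)=2$, identify the second extremal ray by showing every curve class has nonnegative coordinates in the basis $[\ell],[\Gamma_1]$) is sound and is dual to what the paper does (the paper instead exhibits an explicit nef class $M=B+\alpha E$ with $\alpha=-(B\cdot\Gamma_1)/(E\cdot\Gamma_1)\in[0,1/r)$ vanishing on $Q$). But there is a genuine gap at the step where you claim $(B\cdot C)\ge 0$ for every irreducible curve $C$ not contracted by $\varphi$ and not a component of $\Gamma$. You justify this by "intersecting with a general member of the movable linear system $|mB|$", yet the hypotheses of the lemma do not assert that $|mB|$ is movable — only that \emph{some} effective Weil divisor $S\in|mB|$ exists. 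Worse, even when $|mB|$ happens to be movable, your final paragraph asserts that the only curves trapped in $\Bs|mB|$ are contained in $E$ or are components of $\Gamma$; nothing in Definition \ref{defspcurve} controls $\Bs|mB|$ — it only controls the scheme-theoretic intersection $S\cap T$. A curve $C\subset\Bs|mB|$ lying outside $E\cup\Gamma$ is not ruled out by the hypotheses, and for such a curve your argument gives no lower bound on $\alpha$.

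The missing ingredient is the divisor $T$, which you introduce but never use. The paper's mechanism is: if $C\not\subset\operatorname{Supp}(S)$ then $(S\cdot C)\ge 0$ gives $(B\cdot C)\ge 0$ by effectivity alone (no movability needed); if $C\subset\operatorname{Supp}(S)$ but $C\not\subset\operatorname{Supp}(T)$ then $(T\cdot C)=b(B\cdot C)-e(E\cdot C)\ge 0$, and since $C\not\subset E$ forces $(E\cdot C)\ge 0$, again $(B\cdot C)\ge (e/b)(E\cdot C)\ge 0$; and if $C$ lies in both supports then $C\subset S\cap T=\Gamma$, so $C$ is one of the $\Gamma_j$, whose class is already a positive multiple of $[\Gamma_1]$ by hypothesis (2). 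Inserting this trichotomy in place of your movability argument closes the gap and recovers the lemma; as written, the proof does not go through.
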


\begin{proof}
This is a generalization of \cite[Corollary 5.4.6]{CPR}.
Let $S \in |m B|$, $T \in |b B - e E|$ be Weil divisors on $Y$ such that $\Gamma = S \cap T$ and $\Gamma_1, \dots,\Gamma_l$ be the irreducible components of $\Gamma_{\reduced}$ as in Definition \ref{defspcurve}.
Note that $(B \cdot \Gamma_j) \le 0$ since $\Gamma_j$ is numerically proportional to $\Gamma_1$ and $(B \cdot \Gamma_1) \le 0$.
Note also that $(E \cdot \Gamma_j) = r ((\varphi^*A \cdot \Gamma_j) - (B \cdot \Gamma_j)) \ge r (\varphi^* A \cdot \Gamma_j) > 0$.
Set $\alpha = - (B \cdot \Gamma_j)/(E \cdot \Gamma_j) \ge 0$ and $M := B + \alpha E$.
Since $\Gamma_j$ is not contracted by $\varphi$, we have $0 < (\varphi^*A \cdot \Gamma_j) = (B \cdot \Gamma_j) + (1/r) (E \cdot  \Gamma_j)$ and $1/r > - (B \cdot \Gamma_j)/(E \cdot \Gamma_j) = \alpha$.
We shall show that $M$ is nef, which will completes the proof since $(M \cdot \Gamma_j) = 0$.
Assume that $M$ is not nef.
Then there is an irreducible curve $C$ on $Y$ such that $(M \cdot C) < 0$.
We claim that $(E \cdot C) \ge 0$.
Assume to the contrary that $(E \cdot C) < 0$.
Then $C$ is contained in $E$ and we have $[C] \in R$.
Hence $(\varphi^*A \cdot C) = 0$ and $(E \cdot C) < 0$.
It follows that $(M \cdot C) = (B \cdot C) + \alpha (E \cdot C) = (-1/r + \alpha) (E \cdot C) > 0$ and this is a contradiction.
Thus $(E \cdot C) \ge 0$ and we have 
\[
(T \cdot C) = b (B \cdot C) - e (E \cdot C) = b (M \cdot C) - (e + b \alpha) (E \cdot C)  < 0
\]
and 
\[
(S \cdot C) = m (B \cdot C) = m (M \cdot C) - m \alpha (E \cdot C) < 0.
\]
This shows that $C$ is contained in $S \cap T = \Gamma$ and $C$ coincides with one of the irreducible components of $\Gamma_{\reduced}$.
This is a contradiction since $(M \cdot \Gamma_j) = 0$ and $(M \cdot C) < 0$.
Therefore $M$ is nef and this completes the proof.
\end{proof}

\begin{Prop}[{\cite[Theorem 5.5.1]{CPR}}] \label{propbl}
If there is a special curve $\Gamma$ with respect to $\msp$, then $\msp$ is not a maximal center.
\end{Prop}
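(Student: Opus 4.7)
The plan is to derive a contradiction from the assumption that $\msp$ is a maximal center for some movable linear system $\mcH \subset |nA|$. By Lemma \ref{CPRlem5.2.1} we would then have $B^2 \in \Int\bNE(Y)$, so I would attack this via the test class method of Corollary \ref{criexclmstc} applied to a carefully chosen nef divisor $M$ on $Y$.

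Lemma \ref{nespcurve} already supplies both the Mori cone decomposition $\bNE(Y) = R + \mbR_+[\Gamma_1]$ and a natural candidate nef class, namely $M := B + \alpha E$ with $\alpha = -(B \cdot \Gamma_1)/(E \cdot \Gamma_1) \in [0,\,1/r)$, which satisfies $(M \cdot \Gamma_1) = 0$ and $(M \cdot \ell) > 0$ for a $\varphi$-contracted curve $\ell$. The core step is then a single triple-intersection computation: from the equality of $1$-cycle classes $[\Gamma] = [S \cdot T] = mb\,B^2 - me\,(B\!\cdot\!E)$ in $N_1(Y)$, together with $[\Gamma] = k[\Gamma_1]$ for some $k > 0$ coming from the numerical proportionality of the $\Gamma_j$, intersecting with $M$ and using $(M \cdot \Gamma_1) = 0$ produces the identity $mb\,(M \cdot B^2) = me\,(M \cdot B \cdot E)$. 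Using the standard Kawamata-blowup identities $(E^3) = r^2/(a(r-a))$, $(\varphi^*A)^2 \!\cdot\! E = 0$, and $\varphi^*A \cdot E^2 = 0$ for a singularity of type $\frac{1}{r}(1,a,r-a)$, one evaluates the right-hand side explicitly and thereby obtains $(M \cdot B^2)$ as an explicit expression in $\alpha$, $b$, $e$, and the invariants of $\msp$.

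The subcase $e = 0$ is then immediate: the identity forces $(M \cdot B^2) = 0$, contradicting $B^2 \in \Int\bNE(Y)$ via Corollary \ref{criexclmstc}. The main obstacle is the subcase $e > 0$, where the same formula actually yields $(M \cdot B^2) > 0$ and the elementary test-class bound on $Y$ alone is inconclusive. Here I would fall back on the $2$-ray game initiated by $\varphi$: since $\bNE(Y) = R + \mbR_+[\Gamma_1]$ with $R$ already realised by $\varphi$, any Sarkisov link would have to continue on the companion ray $Q = \mbR_+[\Gamma_1]$, but $Q$ is $K_Y$-nonnegative by the hypothesis $(B \cdot \Gamma_1) \leq 0$ and is covered by only the finitely many curves $\Gamma_j$, so no Mori-category contraction (small flipping or divisorial) on $Q$ is available; this structural failure of the $2$-ray game contradicts the existence of the maximal extraction $\varphi$ guaranteed by Noether--Fano--Iskovskikh. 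An alternative route in the $e>0$ subcase would be to exploit the enforced negativity $(\mcH_Y \cdot \Gamma_1) = (n - m_H r)(\varphi^*A \cdot \Gamma_1) + m_H r(B \cdot \Gamma_1) < 0$ (where $\mcH_Y \equiv n\varphi^*A - m_H E$ with $m_H > n/r$), which puts every $\Gamma_j$ into $\Bs(\mcH_Y)$ and can be fed back against the movability of $\mcH_Y$ restricted to the surface $S \in |mB|$; either route closes the argument.
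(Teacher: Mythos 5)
Your reduction to the two subcases is sound, and your computation showing that the natural test class $M = B + \alpha E$ satisfies $(M \cdot B^2) = \tfrac{e}{b}\,(M\cdot B\cdot E) > 0$ when $e > 0$ is correct --- so you are right that Corollary \ref{criexclmstc} is inconclusive precisely in the case that matters (in every application of this proposition, e.g.\ Proposition \ref{exclsingpt3} and Examples \ref{ex:30sf}, \ref{ex:40sf}, \ref{ex:71sf}, one has $T \in |B - E|$, so $e = 1$). The gap is in your fallback argument for that case. You claim that since $Q = \mbR_+[\Gamma_1]$ is $K_Y$-nonnegative and covered by only finitely many curves, ``no Mori-category contraction on $Q$ is available'' and the $2$-ray game therefore breaks down at this step. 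That is not true: a $K_Y$-positive small contraction of $Q$ is an \emph{inverse flipping} contraction, and inverse flips are explicitly permitted steps of a Sarkisov link as defined in Section \ref{secprelim} (``a composite of inverse flips, flops and flips''). So the game does not fail here; the contraction $\psi \colon Y \to Z$ of $Q$ exists, the inverse flip $Y \ratmap Y'$ over $Z$ exists, and the link continues.

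The actual proof (the method of bad link, after \cite[Theorem 5.5.1]{CPR}) locates the contradiction one step later: after the inverse flip, the flip formalism forces the second extremal contraction $\varphi' \colon Y' \to X'$ to be a $K_{Y'}$-non-positive divisorial contraction, and one checks that it contracts the birational transform $T'$ of $T$, so that $X'$ acquires worse-than-terminal singularities --- it is \emph{this} that contradicts the existence of a Sarkisov link starting with $\varphi$. Your ``alternative route'' does not close the gap either: $(\mcH_Y \cdot \Gamma_1) < 0$ only places the finitely many curves $\Gamma_j$ in the base locus of $\mcH_Y$, which is entirely compatible with $\mcH_Y$ being movable, since movable systems may have base curves. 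Your $e = 0$ subcase is correct but is already the content of Theorem \ref{thmexclsingpt} and is not the case for which this proposition is invoked.
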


\begin{proof}
This follows from the proof of \cite[Theorem 5.5.1]{CPR}.
To illustrate the idea, which is called the method of bad link, we give a rough sketch of the proof.
Assume that $\msp$ is a maximal center.
Then there is a Sarkisov link starting from $\varphi \colon Y \to X$.
By Lemma \ref{nespcurve}, the first contraction $\psi \colon Y \to Z$ is the contraction of the extremal ray $Q$ and we have $(K_Y \cdot Q) = - (B \cdot \Gamma_1) \ge 0$ since $\Gamma$ is a special curve with respect to $\msp$.
We only consider the case where $(K_Y \cdot Q) > 0$ which is always the case in this paper.
Then $\psi$ is an inverse flipping contraction contracting irreducible components of $\Gamma_{\reduced}$ and the inverse flip $\psi \colon Y' \to Z$ does exist.
It follows from the flip formalism that another extremal contraction $\varphi' \colon Y' \to X'$ is a contraction of a $K_{Y'}$-non-positive extremal ray and we see that it is a divisorial contraction which contracts the birational transform $T'$ of $T$ on $Y'$.
It follows that $X'$ has singularities worse than terminal and this contradicts to the existence of a Sarkisov link.
\end{proof}

We can exclude some singular points as a maximal center using the method.

\begin{Prop} \label{exclsingpt3}
Let $X$ be a member of $\mcG_{69}$ and $\msp$ the singular point of type $\frac{1}{5} (1,2,3)$. 
Then there exists a special curve $\Gamma$ with respect to $\msp$.
In particular, $\msp$ is not a maximal center.
\end{Prop}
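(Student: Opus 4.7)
The plan is to exhibit a special curve $\Gamma$ with respect to $\msp$ in the sense of Definition \ref{defspcurve} and then invoke Proposition \ref{propbl} directly. So the task reduces to producing two Weil divisors $S \in |mB|$ and $T \in |bB - eE|$ on $Y$, with $b > 0$ and $e \ge 0$, whose scheme-theoretic intersection has reduced components that are pairwise numerically proportional and satisfy $(B \cdot \Gamma_1) \le 0$.

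First I would choose homogeneous coordinates $x_0,\dots,x_5$ on the ambient weighted projective space of $X \in \mcG_{69}$ so that $\msp$ becomes the vertex corresponding to the coordinate of weight $5$, and use quasismoothness together with the generality conditions $(\mathrm{C}_0)$--$(\mathrm{C}_2)$ to write the defining polynomials $F_1, F_2$ of $X$ in a normal form in which the three tangent coordinates of $X$ at $\msp$ (of weights $1$, $2$, $3$ modulo $5$) are explicit. This allows me to identify, for each coordinate section, the exact order of vanishing along the exceptional divisor $E$ of the Kawamata blowup $\varphi\colon Y\to X$, following the same bookkeeping as in Example \ref{exslNo57} and Lemma \ref{EIpacsec}.

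Next I would take $S$ to be the strict transform on $Y$ of the pencil $|\mcI_{\msp,X}(a_0 A)|$ cut out by the two lowest-weight coordinates, so that $S \in |a_0 B|$ (or $S \in |B|$ if $a_0=1$), and take $T$ to be the strict transform of a general member of $|\mcI_{\msp,X}(c A)|$ for a carefully chosen degree $c$ making the coefficient $e$ of $-E$ in $T$ as large as possible. The scheme-theoretic intersection $\Gamma = S \cap T$ is then the strict transform of a WCI curve $C \subset X$ defined by four polynomials vanishing at $\msp$. Using conditions $(\mathrm{C}_1)$--$(\mathrm{C}_2)$ in the spirit of Examples \ref{ex:eqgamma}--\ref{ex:irrgamma}, I would verify that the reduced scheme $C_{\reduced}$ decomposes into irreducible curves lying in a common weighted projective subspace, from which numerical proportionality of the reduced components $\Gamma_1,\dots,\Gamma_l$ of $\Gamma$ follows (typically because $S$ is irreducible on $Y$ and $\Gamma_{\reduced}$ is supported in a fiber of a natural projection).

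Finally I would compute $(B \cdot \Gamma_1) = (B \cdot S \cdot T)/(\text{multiplicity})$ directly via $(B^3)$, $(B^2 \cdot E)$, $(B \cdot E^2)$ and $(E^3) = -5^2/(1\cdot 2\cdot 3)$ on the Kawamata blowup of the $\frac{1}{5}(1,2,3)$ point, and check that this evaluates to a non-positive number for the chosen $b$ and $e$. The main obstacle will be the correct choice of the section defining $T$: the coefficient $e$ must be large enough to force $(B \cdot \Gamma_1) \le 0$, while at the same time the intersection $S \cap T$ must remain one-dimensional with proportional components rather than degenerating into a higher-dimensional locus or breaking into components that are not numerically related. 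Both requirements pin down $T$ almost uniquely and the verification amounts to the kind of explicit equation-juggling and WCI-curve analysis already used in Section \ref{sec:singpt2}; once that is in place, Proposition \ref{propbl} finishes the proof.
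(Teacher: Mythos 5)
Your strategy coincides with the paper's: for $X = X_{14,16}\subset\mbP(1^2,5,7,8,9)$ with $\msp=\msp_2$ the paper takes $S$ and $T$ to be the birational transforms of the two coordinate hyperplane sections $(x_0=0)_X$ and $(x_1=0)_X$, checks that $\Gamma=S\cap T$ is irreducible and reduced, computes $(B\cdot S\cdot T)=(\varphi^*A-\tfrac15E)^2\cdot(\varphi^*A-\tfrac65E)=\tfrac{4}{45}-\tfrac15<0$, and invokes Proposition \ref{propbl}. Two details in your sketch would derail a literal execution, however. First, the divisor with $e>0$ is not a \emph{general} member of $|\mcI_{\msp,X}(cA)|$ for any $c$: a general degree-one section $\lambda x_0+\mu x_1$ vanishes along $E$ only to order $1/5$, giving $e=0$ and then $(B\cdot S\cdot T)=(B^3)=4/45>0$, so no special curve. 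What makes the argument work is the specific tangent section $x_0$, which by the normal form $F_2=y^3x_0+y^2g_6+yg_{11}+g_{16}$ vanishes along $E$ to order $6/5$, so that its transform lies in $|B-E|$; you must single out this member, not a general one. Second, $(E^3)=(-5)^2/(1\cdot2\cdot3)=25/6$ is \emph{positive} for the Kawamata blowup of a $\frac15(1,2,3)$ point (compare the positive value of $(E^5)_V$ in Example \ref{ex:40sf}); with the negative value $-25/6$ written in your sketch the intersection number would come out as $\tfrac{4}{45}+\tfrac15>0$ and the special-curve condition would fail. With those two corrections your plan reproduces the paper's proof.
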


\begin{proof}
It is enough to show the existence of a special curve with respect to $\msp$ by Proposition \ref{propbl}.
Let $X = X_{14,16} \subset \mbP (1^2,5,7,8,9)$ be a member of $\mcG_{69}$ and $\msp = \msp_2$ the point of type $\frac{1}{5} (1,2,3)$.
We can write defining polynomials of $X$ as $F_1 = y t + f_{14}$ and $F_2 = y^3 x_0 + y^2 g_6 + y g_{11} + g_{16}$, where $f_j$ and $g_j$ do not involve $y$.
We can choose $x_1$, $z$ and $s$ as local coordinates of $X$ at $\msp$ and they vanish along $E$ to order $1/5$, $2/5$ and $3/5$, respectively.
From the equations $F_1 = 0$ and $F_2 = 0$, we see that sections $t$ and $x_0$ vanish along $E$ to order at least $6/5$ and $4/5$, respectively.
Let $S$ and $T$ be the birational transforms of the hypersurfaces $(x_0 = 0)_X$ and $(x_1 = 0)_X$ on $Y$, respectively.
The scheme-theoretic intersection $\Gamma = S \cap T$ is an irreducible and reduced curve isomorphic to $(x_0 = x_1 = y t + z^2 = z t + s^2 =0)$.
We compute 
\[
(B \cdot \Gamma) = (B \cdot S \cdot T) = (\varphi^*A - \frac{1}{5} E)^2 \cdot (\varphi^*A - \frac{6}{5} E) = \frac{4}{45} - \frac{1}{5} < 0.
\]
This shows that $\Gamma$ is a special curve.
\end{proof}

In Section \ref{sec:QI}, we construct a quadratic involution of $X \in \mcG_i$ with $i \in \{30,40\}$ (resp.\ $i =64$) centered at the point of type $\frac{1}{3} (1,1,2)$ (resp.\ $\frac{1}{5} (1,2,3)$) under Assumption \ref{assmpQI}.
In Section \ref{sec:singpt1}, the point of type $\frac{1}{5} (1,2,3)$ of $X \in \mcG_{71}$ is excluded as a maximal center by the test class method under Assumption \ref{assumpsingpt}.
We can exclude them by the method of bad link when those assumptions fail as explained in the following examples.
These phenomena are observed in \cite[Remark 4.7]{CPR}.

\begin{Ex} \label{ex:30sf}
Let $X = X_{8,10} \subset \mbP (1^2,3,4,5^2)$ be a member of $\mcG_{30}$ and $\msp = \msp_2$ the point of type $\frac{1}{3} (1,1,2)$.
Assume that $y^2 z$ does not appear in the defining polynomial of degree $10$.
We can write defining polynomials as $F_1 = y (s_0 + s_1) + f_8$ and $F_2 = y^3 x_1 + y^2 g_4 + y g_7 + g_{10}$, where $f_j$ and $g_j$ do not involve $y$. 
We can further assume that $g_{10} (0,0,0,s_0,s_1) = s_0 s_1$.
We see that $x_0$, $x_1$, $z$, $s_0$ and $s_1$ vanish along $E$ to order $1/3$, $4/3$, $1/3$, $2/3$ and $2/3$, respectively.
Let $S$ and $T$ be the birational transforms of $(x_0 = 0)_X$ and $(x_1 = 0)_X$, respectively.
We see that $(x_0 = x_1 = 0)_X$ is the union of two curves $(x_0 = x_1 = s_0 = y s_1 + z^2 = 0)$ and $(x_0 = x_1 = s_1 = y s_0 + z^2 = 0)$.
It follows that $S \cap T$ consists of their birational transforms $\Gamma_0$ and $\Gamma_1$.
Moreover, it is easy to see that $\Gamma_0$ is numerically equivalent to $\Gamma_1$.
We compute
\[
(B \cdot \Gamma_j) =
\frac{1}{2} (B \cdot S \cdot T) = \frac{1}{2} (A - \frac{1}{3} E)^2 \cdot (A - \frac{4}{3} E) = \frac{2}{15} - \frac{1}{3} < 0,
\]
which shows that $\Gamma$ is a special curve.
\end{Ex}

\begin{Ex} \label{ex:40sf}
Let $X = X_{10,12} \subset \mbP (1^2,3,4,5,9)$ be a member of $\mcG_{40}$ and $\msp$ the point of type $\frac{1}{3} (1,1,2)$.
We can assume that $\msp = \msp_2$.
Assume that $y^2 z$ does not appear in the defining polynomial of degree $10$.
We can write defining polynomials as $F_1 = y^3 x_0 + y^2 f_4 + y f_7 + f_{10}$ and $F_2 = y t + g_{12}$, where $f_j$ and $g_j$ do not involve $y$.
By the assumption, $f_4$ does not involve $z$.
We can choose $x_0$, $z$ and $s$ as local coordinates at $\msp$ and explicit computation shows that $x_0$, $x_1$, $z$, $s$ and $t$ vanish along $E$ to order at least $1/3$, $4/3$, $1/3$, $2/3$ and $3/3$, respectively.
Let $S$ and $T$ be the birational transforms of the surfaces $(x_0 = 0)_X$ and $(x_1 = 0)_X$, respectively, and set $\Gamma := S \cap T$.
The reduced scheme $\Gamma_{\reduced}$ is an irreducible curve isomorphic to $(x_0 = x_1 = s = y t + z^3 = 0)$.
We compute $(B \cdot \Gamma_{\reduced})$ by considering embedded weighted blowup.
Let $\psi \colon V \to \mbP := \mbP (1,1,3,4,5,9)$ be the weighted blowup of $\mbP$ at $\msp$ with $\wt (x_0,x_1,z,s,t) = \frac{1}{3} (1,4,1,2,3)$.
We can identify $Y$ with the birational transform of $X$ via $\psi$.
Let $A_{\mbP}$ be a Weil divisor on $\mbP$ such that $\mcO_{\mbP} (A_{\mbP}) \cong \mcO_{\mbP} (1)$ and $B_V = \psi^* A_{\mbP} - (1/3) E_V$, where $E_V$ is the exceptional divisor of $\psi$, so that we have $A_{\mbP} |_X = A$, $B_V |_Y = B$ and $E_V |_Y = E$.
By a slight abuse of notation, we drop subscripts $\mbP$ and $V$ from $A_{\mbP}$, $B_V$ and $E_V$.
We see that sections $x_0$, $x_1$, $s$ and $y t + z^3$ on $\mbP$ lift to sections of $\psi^*A - 1/3 E$, $\psi^*A - 4/3 E$, $5 \psi^*A - 2/3 E$ and $12 \psi^*A - 3/3 E$, respectively.
Thus $(B \cdot \Gamma_{\reduced})$ can be computed as
\[
\begin{split}
& (\psi^*A - \frac{1}{3} E \cdot \psi^*A - \frac{1}{3} E \cdot \psi^*A - \frac{4}{3} E \cdot 5 \psi^*A - \frac{2}{3} E \cdot 12 \psi^* A - \frac{3}{3} E)_V \\
&= 60 (A^5)_{\mbP} - \frac{8}{3^4} (E^5)_V = \frac{1}{9} - \frac{8}{3^4} \times \frac{3^4}{1 \times 4 \times 1 \times 2 \times 3} = - \frac{2}{9}.
\end{split}
\] 
This shows that $\Gamma$ is a special curve.

If $X \in \mcG_i$ with $i \in \{51,64, 65\}$ then a similar argument shows that $\Gamma = S \cap T$ is a special curve, where $S$ and $T$ are the birational transform of $(x = 0)_X$ and $(y = 0)_X$.
\end{Ex}

\begin{Ex} \label{ex:71sf}
Let $X = X_{14,16}\subset \mbP (1,4,5,6,7,8)$ be a member of $\mcG_{71}$ and $\msp = \msp_2$ the point of type $\frac{1}{5} (1,2,3)$.
Assume that $z^2 s$ does not appear in the defining polynomial of degree $16$.
We can write defining polynomials as $F_1 = z^2 y + z f_9 + f_{14}$ and $F_2 = z^3 x + z^2 g_6 + z g_{11} + g_{16}$, where $f_j$ and $g_j$ do not involve $z$.
By the assumption, $g_6$ does not involve $s$.
We see that $x$, $y$, $s$, $t$ and $u$ vanish along $E$ to order $6/5$, $4/5$, $1/5$, $2/5$ and $3/5$, respectively.
Let $S$ and $T$ be the birational transforms on $Y$ of $(y = 0)_X$ and $(x = 0)_X$, respectively, and set $\Gamma := S \cap T$.
We see that $\Gamma_{\reduced}$ is an irreducible curve isomorphic to $(x = y = t = u = 0) \subset \mbP (1,4,5,6,7,8)$. 
An explicit computation involving embedded weighted blowup shows that $(B \cdot \Gamma_{\reduced}) = - 1/6 < 0$.
\end{Ex}

As a conclusion of this section, we have the following result.

\begin{Thm} \label{exclsingpts}
Let $X$ be a member of the family $\mcG_i$ with $i \in I^*$.
Then no singular points of $X$ with empty third column in the big table is a maximal center.
\end{Thm}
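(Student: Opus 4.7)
The plan is to reduce Theorem \ref{exclsingpts} to the three exclusion results already established in this section, since every singular point of $X$ with empty third column in the big table has its second-column entry in one of three prescribed formats, each handled by one of Propositions \ref{exclsingpt1}, \ref{exclsingpt2}, or \ref{exclsingpt3}.

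First I would partition the relevant singular points according to the format of the second-column entry: (A) a divisor class $bB + eE$ accompanied by a set of isolating monomials; (B) a single divisor $T \in |mB|$ (with $(B^3) \le 0$); or (C) the remaining special case of the $\frac{1}{5}(1,2,3)$ point on $X \in \mcG_{69}$. For pattern (A), Proposition \ref{exclsingpt1} produces the required nef test class $M = bB + eE$ satisfying $(M \cdot B^2) \le 0$, and Corollary \ref{criexclmstc} then excludes $\msp$ as a maximal center. For pattern (B), Proposition \ref{exclsingpt2} combines Lemma \ref{detcone} and Theorem \ref{thmexclsingpt} to conclude that $B^2 \notin \Int \bNE(Y)$, so Lemma \ref{CPRlem5.2.1} again rules out maximality. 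For pattern (C), Proposition \ref{exclsingpt3} constructs a special curve $\Gamma$ in the sense of Definition \ref{defspcurve} and invokes Proposition \ref{propbl}.

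Consequently the proof at this point amounts to a single invocation of these three propositions, once the case partition has been read off from the big table; the argument is essentially a bookkeeping step. The substantive work---verifying for each of the many individual singular points that the indicated isolating polynomials, test divisor, or auxiliary curve $\Gamma = S \cap T$ meets the required geometric hypotheses---is already carried out in the proofs of those propositions, together with Lemmas \ref{lem:isolmonom} and \ref{TKM}. The main obstacle there, already addressed in Sections \ref{sec:singpt1} through \ref{sec:singpt3}, is the case-by-case verification that $\Gamma$ is irreducible and reduced in every family appearing in Tables \ref{table:gammaqsm} and \ref{table:gammanonqsm}, which ultimately rests on the generality conditions $(\mathrm{C}_1)$, $(\mathrm{C}_2)$ and $(\mathrm{C}_3)$ from the definition of $\mcG_i$ and on the non-vanishing of specific coefficients forced by quasismoothness.
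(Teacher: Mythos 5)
Your proposal matches the paper's proof exactly: the paper disposes of Theorem \ref{exclsingpts} by citing Propositions \ref{exclsingpt1}, \ref{exclsingpt2} and \ref{exclsingpt3}, which is precisely the three-way case split by second-column format that you describe. The additional commentary on where the substantive case-by-case work lives is accurate but not needed for the deduction itself.
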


\begin{proof}
This follows from Propositions \ref{exclsingpt1}, \ref{exclsingpt2} and \ref{exclsingpt3}.
\end{proof}

\section{The big table} \label{sec:table}

In this section, we list 85 families of anticanonically embedded $\mbQ$-Fano $3$-fold weighted complete intersections of codimension $2$.
This list is taken from \cite[16.7]{IF}.

In each family, the anticanonical volume $(A^3)$ and one of Cases 1--5 are given, where the latter indicates how to exclude nonsingular points as a maximal center (see Section \ref{sec:nspt}).
If Case 1, 2 or 3 (resp.\ 4, resp.\ 5) is marked, then the proof is given in Proposition \ref{exclsmpts123} (resp.\ \ref{exclsmpts4}, resp.\ \ref{exclsmpts5}).
The first column indicates the number and type of singular points of $X$.
The second column indicates how to exclude the singular points as a maximal center (see Section \ref{sec:singpt}).
If a divisor of the form $b B + e E$ and a set of monomials (resp.\ a member of a linear system $T \in |m B|$) are indicated in the second column, then the corresponding singular points are excluded by the test class method (resp.\ by determining the Kleiman-Mori cone).
The singular point of type $\frac{1}{5} (1,2,3)$ in the family No.~69 is marked $T = B - E$ and this point is excluded by the method of bad link.
If the second column is empty, then there is a Sarkisov link centered at the corresponding singular point.
The mark Q.I. (resp.\ E.I.) means that the Sarkisov link is a quadratic (resp.\ elliptic) involution (see Section \ref{sec:birinv}), and the mark of the form $X'_d \subset \mbP (b_0,\dots,b_4)$ (resp.\ $\operatorname{dP}_k$) means that the target of the Sarkisov link is a $\mbQ$-Fano weighted hypersurface (resp.\ del Pezzo fibration of degree $d$) (see Section \ref{sec:Slink}). 
The forth column indicates generality conditions which are necessary to exclude the corresponding singular point as a maximal center or to construct a Sarkisov link centered at the point.
Finally, we explain the daggered monomial in the forth column.
It appears in the families No.~$40$, $64$, $65$ and $71$.
If the daggered monomial appears in one of the defining polynomials of $X$ with non-zero coefficient, then the corresponding singular point is excluded as a maximal center (or there is a quadratic involution centered at the point), otherwise the point is excluded as a maximal center by the method of bad link (see Section \ref{sec:singpt3}).

\newlength{\myheight}
\setlength{\myheight}{0.55cm}

\begin{center}
\begin{flushleft}
No. 1: $X_{2,3} \subset \mbP (1,1,1,1,1,1)$; $(A^3)
 = 6$, birationally rigid.
\end{flushleft} \nopagebreak
\end{center}

\begin{center}
\begin{flushleft}
No. 2: $X_{3,3} \subset \mbP (1,1,1,1,1,2)$; $(A^3)
 = 9/2$.
\end{flushleft} \nopagebreak
\end{center}

\begin{center}
\begin{flushleft}
No. 3: $X_{3,4} \subset \mbP (1,1,1,1,2,2)$; $(A^3)
 = 3$, birationally birigid.
\end{flushleft} \nopagebreak
\end{center}

\begin{center}
\begin{flushleft}
No. 4: $X_{4,4} \subset \mbP (1,1,1,1,2,3)$; $(A^3)
 = 8/3$.
\end{flushleft} \nopagebreak
\begin{tabular}{|p{103pt}|p{112pt}|p{103pt}|>{\centering\arraybackslash}p{26pt}|}
\hline
\parbox[c][\myheight][c]{0cm}{} $\msp_5 = \frac{1}{3} (1,1,2)$ &  & $\dP_2$ & \\
\hline
\end{tabular}
\end{center}

\begin{center}
\begin{flushleft}
No. 5: $X_{4,4} \subset \mbP (1,1,1,2,2,2)$; 
$(A^3) = 2$, Case 3.
\end{flushleft} \nopagebreak
\begin{tabular}{|p{103pt}|p{112pt}|p{103pt}|>{\centering\arraybackslash}p{26pt}|}
\hline
\parbox[c][\myheight][c]{0cm}{} $\msp_{3,4,5} \!=\! 4 \!\times \! \frac{1}{2} (1,1,1)$ &  & $\dP_2$ & \\
\hline
\end{tabular}
\end{center}

\begin{center}
\begin{flushleft}
No. 6: $X_{4,5} \subset \mbP (1,1,1,2,2,3)$; 
$(A^3) = 5/3$.
\end{flushleft} \nopagebreak
\begin{tabular}{|p{103pt}|p{112pt}|p{103pt}|>{\centering\arraybackslash}p{26pt}|}
\hline
\parbox[c][\myheight][c]{0cm}{} $\msp_{3,4} = 2 \times \frac{1}{2} (1,1,1)$ & & $X'_5 \subset (1,1,1,2,1)$ & $(\mathrm{C}_2)$ \\
\hline
\parbox[c][\myheight][c]{0cm}{} $\msp_5 = \frac{1}{3} (1,1,2)$ & & $X'_5 \subset (1,1,1,2,1)$ & \\
\hline
\end{tabular}
\end{center}

\begin{center}
\begin{flushleft}
No. 7: $X_{4,6} \subset \mbP (1,1,1,2,3,3)$; 
$(A^3) = 4/3$, Case 3.
\end{flushleft} \nopagebreak
\begin{tabular}{|p{103pt}|p{112pt}|p{103pt}|>{\centering\arraybackslash}p{26pt}|}
\hline
\parbox[c][\myheight][c]{0cm}{} $\msp_{4,5} = 2 \times \frac{1}{3} (1,1,2)$ & & $X'_6 \subset \mbP (1,1,1,2,2)$ & \\
\hline
\end{tabular}
\end{center}

\begin{center}
\begin{flushleft}
No. 8: $X_{4,6} \subset \mbP (1,1,2,2,2,3)$; 
$(A^3) = 1$, Case 4.
\end{flushleft} \nopagebreak
\begin{tabular}{|p{103pt}|p{112pt}|p{103pt}|>{\centering\arraybackslash}p{26pt}|}
\hline
\parbox[c][\myheight][c]{0cm}{} $\msp_{2,3,4} \!=\! 6 \!\times \! \frac{1}{2} (1,1,1)$ &  & Q.I. & \\
\hline
\end{tabular}
\end{center}

\begin{center}
\begin{flushleft}
No. 9 $X_{5,6} \subset \mbP (1,1,1,2,3,4)$; 
$(A^3) =5/4$.
\end{flushleft} \nopagebreak
\begin{tabular}{|p{103pt}|p{112pt}|p{103pt}|>{\centering\arraybackslash}p{26pt}|}
\hline
\parbox[c][\myheight][c]{0cm}{} $\msp_{3,5} = \frac{1}{2} (1,1,1)$ & & $X'_6 \subset (1,1,1,3,1)$ & $(\mathrm{C}_1)$ \\
\hline
\parbox[c][\myheight][c]{0cm}{} $\msp_5 = \frac{1}{4} (1,1,3)$ & & $X'_6 \subset (1,1,1,3,1)$ & \\
\hline
\end{tabular}
\end{center}

\begin{center}
\begin{flushleft}
No. 10: $X_{5,6} \subset \mbP (1,1,2,2,3,3)$; 
$(A^3) = 5/6$, Case 4.
\end{flushleft} \nopagebreak
\begin{tabular}{|p{103pt}|p{112pt}|p{103pt}|>{\centering\arraybackslash}p{26pt}|}
\hline
\parbox[c][\myheight][c]{0cm}{} $\msp_{2,3} = 3 \times \frac{1}{2} (1,1,1)$ &  & Q.I. & $(\mathrm{C}_2)$ \\
\hline
\parbox[c][\myheight][c]{0cm}{} $\msp_{4,5} = 2 \times \frac{1}{3} (1,1,2)$ & & $X'_6 \subset (1,1,2,2,1)$ & \\
\hline
\end{tabular}
\end{center}

\begin{center}
\begin{flushleft}
No. 11: $X_{6,6} \subset \mbP (1,1,1,2,3,5)$; 
$(A^3) = 6/5$.
\end{flushleft} \nopagebreak
\begin{tabular}{|p{103pt}|p{112pt}|p{103pt}|>{\centering\arraybackslash}p{26pt}|}
\hline
\parbox[c][\myheight][c]{0cm}{} $\msp_5 = \frac{1}{5} (1,2,3)$ &  & $\dP_1$ & \\
\hline
\end{tabular}
\end{center}

\begin{center}
\begin{flushleft}
No. 12: $X_{6,6} \subset \mbP (1,1,2,2,3,4)$; 
$(A^3) = 3/4$, Case 4.
\end{flushleft} \nopagebreak
\begin{tabular}{|p{103pt}|p{112pt}|p{103pt}|>{\centering\arraybackslash}p{26pt}|}
\hline
\parbox[c][\myheight][c]{0cm}{} $\msp_{2,3,5} \!=\! 4 \!\times \! \frac{1}{2} (1,1,1)$ &  & Q.I. & \\
\hline
\parbox[c][\myheight][c]{0cm}{} $\msp_5 = \frac{1}{4} (1,1,3)$ &  & $\dP_1$ & \\
\hline
\end{tabular}
\end{center}

\begin{center}
\begin{flushleft}
No. 13: $X_{6,6} \subset \mbP (1,1,2,3,3,3)$; 
$(A^3) = 2/3$, Case 3.
\end{flushleft} \nopagebreak
\begin{tabular}{|p{103pt}|p{112pt}|p{103pt}|>{\centering\arraybackslash}p{26pt}|}
\hline
\parbox[c][\myheight][c]{0cm}{} $\msp_{3,4,5} \!=\! 4 \!\times \! \frac{1}{3} (1,1,2)$ & & $\dP_1$ & \\
\hline
\end{tabular}
\end{center}

\begin{center}
\begin{flushleft}
No. 14: $X_{6,6} \subset \mbP (1,2,2,2,3,3)$; 
$(A^3) = 1/2$, Case 2.
\end{flushleft} \nopagebreak
\begin{tabular}{|p{103pt}|p{112pt}|p{103pt}|>{\centering\arraybackslash}p{26pt}|}
\hline
\parbox[c][\myheight][c]{0cm}{} $\msp_{1,2,3} \!=\! 9 \!\times \! \frac{1}{2} (1,1,1)$ & $B$, $\{x,y_1,y_2\}$ & & \\
\hline
\end{tabular}
\end{center}

\begin{center}
\begin{flushleft}
No. 15: $X_{6,7} \subset \mbP (1,1,2,2,3,5)$; 
$(A^3) = 7/10$, Case 4.
\end{flushleft} \nopagebreak
\begin{tabular}{|p{103pt}|p{112pt}|p{103pt}|>{\centering\arraybackslash}p{26pt}|}
\hline
\parbox[c][\myheight][c]{0cm}{} $\msp_{2,3} = 3 \times \frac{1}{2} (1,1,1)$ &  & Q.I. & $(\mathrm{C}_2)$ \\
\hline
\parbox[c][\myheight][c]{0cm}{} $\msp_5 = \frac{1}{5} (1,2,3)$ &  & $X'_7 \subset (1,1,2,3,1)$ & \\
\hline
\end{tabular}
\end{center}

\begin{center}
\begin{flushleft}
No. 16: $X_{6,7} \subset \mbP (1,1,2,3,3,4)$; 
$(A^3) = 7/12$, Case 5.
\end{flushleft} \nopagebreak
\begin{tabular}{|p{103pt}|p{112pt}|p{103pt}|>{\centering\arraybackslash}p{26pt}|}
\hline
\parbox[c][\myheight][c]{0cm}{} $\msp_{2,5} = \frac{1}{2} (1,1,1)$ &  & E.I. & $(\mathrm{C}_2)$ \\
\hline
\parbox[c][\myheight][c]{0cm}{} $\msp_{3,4} = 2 \times \frac{1}{3} (1,1,2)$ & & $X'_7 \subset (1,1,2,3,1)$ & $(\mathrm{C}_2)$  \\
\hline
\parbox[c][\myheight][c]{0cm}{} $\msp_5 = \frac{1}{4} (1,1,3)$ &  & $X'_7 \subset (1,1,2,3,1)$ & \\
\hline
\end{tabular}
\end{center}

\begin{center}
\begin{flushleft}
No. 17: $X_{6,8} \subset \mbP (1,1,1,3,4,5)$; 
$(A^3) = 4/5$, Case 3.
\end{flushleft} \nopagebreak
\begin{tabular}{|p{103pt}|p{112pt}|p{103pt}|>{\centering\arraybackslash}p{26pt}|}
\hline
\parbox[c][\myheight][c]{0cm}{} $\msp_5 = \frac{1}{5} (1,1,4)$ & & $X'_8 \subset (1,1,1,4,2)$ & \\
\hline
\end{tabular}
\end{center}

\begin{center}
\begin{flushleft}
No. 18: $X_{6,8} \subset \mbP (1,1,2,3,3,5)$; 
$(A^3) = 8/15$, Case 5.
\end{flushleft} \nopagebreak
\begin{tabular}{|p{103pt}|p{112pt}|p{103pt}|>{\centering\arraybackslash}p{26pt}|}
\hline
\parbox[c][\myheight][c]{0cm}{} $\msp_{3,4} = 2 \times \frac{1}{3} (1,1,2)$ &  & $X'_8 \subset (1,1,2,3,2)$ & $(\mathrm{C}_2)$ \\
\hline
\parbox[c][\myheight][c]{0cm}{} $\msp_5 = \frac{1}{5} (1,2,3)$ &  & $X'_8 \subset (1,1,2,3,2)$ & \\
\hline
\end{tabular}
\end{center}

\begin{center}
\begin{flushleft}
No. 19: $X_{6,8} \subset \mbP (1,1,2,3,4,4)$; 
$(A^3) = 1/2$, Case 3.
\end{flushleft} \nopagebreak
\begin{tabular}{|p{103pt}|p{112pt}|p{103pt}|>{\centering\arraybackslash}p{26pt}|}
\hline
\parbox[c][\myheight][c]{0cm}{} $\msp_{2,4,5} \!=\! 2 \!\times \! \frac{1}{2} (1,1,1)$ & $T \in |B|$ & & $(\mathrm{C}_2)$\\
\hline
\parbox[c][\myheight][c]{0cm}{} $\msp_{4,5} = 2 \times \frac{1}{4} (1,1,3)$ &  & $X'_8 \subset (1,1,2,3,2)$ & \\
\hline
\end{tabular}
\end{center}

\begin{center}
\begin{flushleft}
No. 20: $X_{6,8} \subset \mbP (1,2,2,3,3,4)$; 
$(A^3) = 1/3$, Case 1.
\end{flushleft} \nopagebreak
\begin{tabular}{|p{103pt}|p{112pt}|p{103pt}|>{\centering\arraybackslash}p{26pt}|}
\hline
\parbox[c][\myheight][c]{0cm}{} $\msp_{1,2,5} \!=\! 6 \!\times \! \frac{1}{2} (1,1,1)$ & $T \in |2 B|$  & & $(\mathrm{C}_3)$ \\
\hline
\parbox[c][\myheight][c]{0cm}{} $\msp_{3,4,5} \!=\! 2 \!\times \! \frac{1}{3} (1,1,2)$ &  & Q.I. & \\
\hline
\end{tabular}
\end{center}

\begin{center}
\begin{flushleft}
No. 21: $X_{6,9} \subset \mbP (1,1,2,3,4,5)$; 
$(A^3) = 9/20$, Case 5.
\end{flushleft} \nopagebreak
\begin{tabular}{|p{103pt}|p{112pt}|p{103pt}|>{\centering\arraybackslash}p{26pt}|}
\hline
\parbox[c][\myheight][c]{0cm}{} $\msp_{2,4} = \frac{1}{2} (1,1,1)$ & $T \in |B|$ & & $(\mathrm{C}_3)$ \\
\hline
\parbox[c][\myheight][c]{0cm}{} $\msp_4 = \frac{1}{4} (1,1,3)$ &  & $X'_9 \subset (1,1,2,3,3)$ & \\
\hline
\parbox[c][\myheight][c]{0cm}{} $\msp_5 = \frac{1}{5} (1,2,3)$ &  & $X'_9 \subset (1,1,2,3,3)$ & \\
\hline
\end{tabular}
\end{center}

\begin{center}
\begin{flushleft}
No. 22: $X_{7,8} \subset \mbP (1,1,2,3,4,5)$; 
$(A^3) = 7/15$, Case 5.
\end{flushleft} \nopagebreak
\begin{tabular}{|p{103pt}|p{112pt}|p{103pt}|>{\centering\arraybackslash}p{26pt}|}
\hline
\parbox[c][\myheight][c]{0cm}{} $\msp_{2,4} = 2 \times \frac{1}{2} (1,1,1)$ & $T \in |B|$ & & $(\mathrm{C}_{1,2})$ \\
\hline
\parbox[c][\myheight][c]{0cm}{} $\msp_3 = \frac{1}{3} (1,1,2)$ & & $X'_8 \subset (1,1,2,4,1)$ & $(\mathrm{C}_1)$  \\
\hline
\parbox[c][\myheight][c]{0cm}{} $\msp_5 = \frac{1}{5} (1,1,4)$ &  & $X'_8 \subset (1,1,2,4,1)$ & \\
\hline
\end{tabular}
\end{center}

\begin{center}
\begin{flushleft}
No. 23: $X_{6,10} \subset \mbP (1,1,2,3,5,5)$; 
$(A^3) = 2/5$, Case 3.
\end{flushleft} \nopagebreak
\begin{tabular}{|p{103pt}|p{112pt}|p{103pt}|>{\centering\arraybackslash}p{26pt}|}
\hline
\parbox[c][\myheight][c]{0cm}{} $\msp_{4,5} = 2 \times \frac{1}{5} (1,2,3)$ &  & $X'_{10} \subset (1,1,2,3,4)$ & \\
\hline
\end{tabular}
\end{center}

\begin{center}
\begin{flushleft}
No. 24: $X_{6,10} \subset \mbP (1,2,2,3,4,5)$; 
$(A^3) = 1/4$, Case 2.
\end{flushleft} \nopagebreak
\begin{tabular}{|p{103pt}|p{112pt}|p{103pt}|>{\centering\arraybackslash}p{26pt}|}
\hline
\parbox[c][\myheight][c]{0cm}{} $\msp_{1,2,4} \!=\! 7 \!\times \! \frac{1}{2} (1,1,1)$ &  $4 B + E$, $\{x,y'_1,s\}_{\sharp}$ & & \\
\hline
\parbox[c][\myheight][c]{0cm}{} $\msp_4 = \frac{1}{4} (1,1,3)$ &  & Q.I. & \\
\hline
\end{tabular}
\end{center}

\begin{center}
\begin{flushleft}
No. 25: $X_{8,9} \subset \mbP (1,1,2,3,4,7)$; 
$(A^3) = 3/7$, Case 5.
\end{flushleft} \nopagebreak
\begin{tabular}{|p{103pt}|p{112pt}|p{103pt}|>{\centering\arraybackslash}p{26pt}|}
\hline
\parbox[c][\myheight][c]{0cm}{} $\msp_{2,4} = 2 \times \frac{1}{2} (1,1,1)$ & $T \in |B|$ & & $(\mathrm{C}_1)$ \\
\hline
\parbox[c][\myheight][c]{0cm}{} $\msp_5 = \frac{1}{7} (1,3,4)$ &  & $X'_9 \subset (1,1,3,4,1)$ & \\
\hline
\end{tabular}
\end{center}

\begin{center}
\begin{flushleft}
No. 26: $X_{8,9} \subset \mbP (1,1,3,4,4,5)$; 
$(A^3) = 3/10$, Case 5.
\end{flushleft} \nopagebreak
\begin{tabular}{|p{103pt}|p{112pt}|p{103pt}|>{\centering\arraybackslash}p{26pt}|}
\hline
\parbox[c][\myheight][c]{0cm}{} $\msp_{3,4} = 2 \times \frac{1}{4} (1,1,3)$ &  & $X'_9 \subset (1,1,3,4,1)$ & $(\mathrm{C}_2)$ \\
\hline
\parbox[c][\myheight][c]{0cm}{} $\msp_5 = \frac{1}{5} (1,1,4)$ &  & $X'_9 \subset (1,1,3,4,1)$ & \\
\hline
\end{tabular}
\end{center}

\begin{center}
\begin{flushleft}
No. 27: $X_{8,9} \subset \mbP (1,2,3,3,4,5)$; 
$(A^3) = 1/5$, Case 1.
\end{flushleft} \nopagebreak
\begin{tabular}{|p{103pt}|p{112pt}|p{103pt}|>{\centering\arraybackslash}p{26pt}|}
\hline
\parbox[c][\myheight][c]{0cm}{} $\msp_{1,4} = 2 \times \frac{1}{2} (1,1,1)$ &  $4 B + E$, $\{x,s,t'\}_{\sharp}$ & & $(\mathrm{C}_2)$ \\
\hline
\parbox[c][\myheight][c]{0cm}{} $\msp_{2,3} = 3 \times \frac{1}{3} (1,1,2)$ &  & E.I. & $(\mathrm{C}_2)$ \\
\hline
\parbox[c][\myheight][c]{0cm}{} $\msp_5 = \frac{1}{5} (1,2,3)$ &  & $X'_9 \subset (1,2,3,3,1)$ & \\
\hline
\end{tabular}
\end{center}

\begin{center}
\begin{flushleft}
No. 28: $X_{8,10} \subset \mbP (1,1,2,3,5,7)$; 
$(A^3) = 8/21$, Case 5.
\end{flushleft} \nopagebreak
\begin{tabular}{|p{103pt}|p{112pt}|p{103pt}|>{\centering\arraybackslash}p{26pt}|}
\hline
\parbox[c][\myheight][c]{0cm}{} $\msp_3 = \frac{1}{3} (1,1,2)$ &  & $X'_{10} \subset (1,1,2,5,2)$ & $(\mathrm{C}_1)$ \\
\hline
\parbox[c][\myheight][c]{0cm}{} $\msp_5 = \frac{1}{7} (1,2,5)$ &  & $X'_{10} \subset (1,1,2,5,2)$ & \\
\hline
\end{tabular}
\end{center}

\begin{center}
\begin{flushleft}
No. 29: $X_{8,10} \subset \mbP (1,1,2,4,5,6)$; 
$(A^3) = 1/3$, Case 3.
\end{flushleft} \nopagebreak
\begin{tabular}{|p{103pt}|p{112pt}|p{103pt}|>{\centering\arraybackslash}p{26pt}|}
\hline
\parbox[c][\myheight][c]{0cm}{} $\msp_{2,3,5} \!=\! 3 \!\times \! \frac{1}{2} (1,1,1)$ & $T \in |B|$ & & \\
\hline
\parbox[c][\myheight][c]{0cm}{} $\msp_5 = \frac{1}{6} (1,1,5)$ &  & $X'_{10} \subset (1,1,2,5,2)$ & \\
\hline
\end{tabular}
\end{center}

\begin{center}
\begin{flushleft}
No. 30: $X_{8,10} \subset \mbP (1,1,3,4,5,5)$; 
$(A^3) = 4/15$, Case 3.
\end{flushleft} \nopagebreak
\begin{tabular}{|p{103pt}|p{112pt}|p{103pt}|>{\centering\arraybackslash}p{26pt}|}
\hline
\parbox[c][\myheight][c]{0cm}{} $\msp_2 = \frac{1}{3} (1,1,2)$ &  & Q.I. & $\dagger y^2 z$ \\
\hline
\parbox[c][\myheight][c]{0cm}{} $\msp_{4,5} = 2 \times \frac{1}{5} (1,1,4)$ &  & $X'_{10} \subset (1,1,3,4,2)$ & \\
\hline
\end{tabular}
\end{center}

\begin{center}
\begin{flushleft}
No. 31: $X_{8,10} \subset \mbP (1,2,3,4,4,5)$; 
$(A^3) = 1/6$, Case 1.
\end{flushleft} \nopagebreak
\begin{tabular}{|p{103pt}|p{112pt}|p{103pt}|>{\centering\arraybackslash}p{26pt}|}
\hline
\parbox[c][\myheight][c]{0cm}{} $\msp_{1,3,4} \!=\! 4 \!\times \! \frac{1}{2} (1,1,1)$ &  $3 B + E$, $\{x,z,s_0,s_1\}_{\sharp}$ & & \\
\hline
\parbox[c][\myheight][c]{0cm}{} $\msp_2 = \frac{1}{3} (1,1,2)$ & $T \in |2 B|$ & & $(\mathrm{C}_2)$ \\
\hline
\parbox[c][\myheight][c]{0cm}{} $\msp_{3,4} = 2 \times \frac{1}{4} (1,1,3)$ &  & Q.I. & \\
\hline
\end{tabular}
\end{center}

\begin{center}
\begin{flushleft}
No. 32: $X_{9,10} \subset \mbP (1,1,2,3,5,8)$; 
$(A^3) = 3/8$, Case 5.
\end{flushleft} \nopagebreak
\begin{tabular}{|p{103pt}|p{112pt}|p{103pt}|>{\centering\arraybackslash}p{26pt}|}
\hline
\parbox[c][\myheight][c]{0cm}{} $\msp_{2,5} = \frac{1}{2} (1,1,1)$ & $T \in |B|$ & & $(\mathrm{C}_1)$ \\
\hline
\parbox[c][\myheight][c]{0cm}{} $\msp_5 = \frac{1}{8} (1,3,5)$ &  & $X'_{10} \subset (1,1,3,5,1)$ & \\
\hline
\end{tabular}
\end{center}

\begin{center}
\begin{flushleft}
No. 33: $X_{9,10} \subset \mbP (1,1,3,4,5,6)$; 
$(A^3) = 1/4$, Case 5.
\end{flushleft} \nopagebreak
\begin{tabular}{|p{103pt}|p{112pt}|p{103pt}|>{\centering\arraybackslash}p{26pt}|}
\hline
\parbox[c][\myheight][c]{0cm}{} $\msp_{2,5} = \frac{1}{3} (1,1,2)$ & $B$, $\{x_0,x_1,z',t'\}_{\sharp}$ & & $(\mathrm{C}_2)$\\
\hline
\parbox[c][\myheight][c]{0cm}{} $\msp_3 = \frac{1}{4} (1,1,3)$ &  & $X'_{10} \subset (1,1,3,5,1)$ & $(\mathrm{C}_1)$ \\
\hline
\parbox[c][\myheight][c]{0cm}{} $\msp_5 = \frac{1}{6} (1,1,5)$ &  & $X'_{10} \subset (1,1,3,5,1)$ & \\
\hline
\end{tabular}
\end{center}

\begin{center}
\begin{flushleft}
No. 34: $X_{9,10} \subset \mbP (1,2,2,3,5,7)$; 
$(A^3) = 3/14$, Case 3.
\end{flushleft} \nopagebreak
\begin{tabular}{|p{103pt}|p{112pt}|p{103pt}|>{\centering\arraybackslash}p{26pt}|}
\hline
\parbox[c][\myheight][c]{0cm}{} $\msp_{1,2} = 5 \times \frac{1}{2} (1,1,1)$ & $T \in |2 B|$ & & $(\mathrm{C}_2)$ \\
\hline
\parbox[c][\myheight][c]{0cm}{} $\msp_5 = \frac{1}{7} (1,2,5)$ &  & $X'_{10} \subset (1,2,2,5,1)$ & \\
\hline
\end{tabular}
\end{center}

\begin{center}
\begin{flushleft}
No. 35: $X_{9,10}\subset \mbP (1,2,3,4,5,5)$; 
$(A^3) = 3/20$, Case 1.
\end{flushleft} \nopagebreak
\begin{tabular}{|p{103pt}|p{112pt}|p{103pt}|>{\centering\arraybackslash}p{26pt}|}
\hline
\parbox[c][\myheight][c]{0cm}{} $\msp_{1,3} = 2 \times \frac{1}{2} (1,1,1)$ &  $3 B + E$, $\{x,z,s',t_i'\}_{\sharp}$ & & $(\mathrm{C}_2)$ \\
\hline
\parbox[c][\myheight][c]{0cm}{} $\msp_3 = \frac{1}{4} (1,1,3)$ &  & Q.I. & \\
\hline
\parbox[c][\myheight][c]{0cm}{} $\msp_{4,5} = 2 \times \frac{1}{5} (1,2,3)$ &  & $X'_{10} \subset (1,2,3,4,1)$ & \\
\hline
\end{tabular}
\end{center}

\begin{center}
\begin{flushleft}
No. 36: $X_{8,12} \subset \mbP (1,1,3,4,5,7)$; 
$(A^3) = 8/35$, Case 5.
\end{flushleft} \nopagebreak
\begin{tabular}{|p{103pt}|p{112pt}|p{103pt}|>{\centering\arraybackslash}p{26pt}|}
\hline
\parbox[c][\myheight][c]{0cm}{} $\msp_4 = \frac{1}{5} (1,1,4)$ &  & $X'_{12} \subset (1,1,3,4,4)$ & \\
\hline
\parbox[c][\myheight][c]{0cm}{} $\msp_5 = \frac{1}{7} (1,3,4)$ &  & $X'_{12} \subset (1,1,3,4,4)$ & \\
\hline
\end{tabular}
\end{center}

\begin{center}
\begin{flushleft}
No. 37: $X_{8,12} \subset \mbP (1,2,3,4,5,6)$; 
$(A^3) = 2/15$, Case 1.
\end{flushleft} \nopagebreak
\begin{tabular}{|p{103pt}|p{112pt}|p{103pt}|>{\centering\arraybackslash}p{26pt}|}
\hline
\parbox[c][\myheight][c]{0cm}{} $\msp_{1,3,5} \!=\! 4 \!\times \! \frac{1}{2} (1,1,1)$ &  $3 B + E$, $\{x,z,s',u'\}_{\sharp}$ & & $(\mathrm{C}_1)$ \\
\hline
\parbox[c][\myheight][c]{0cm}{} $\msp_{2,5} = 2 \times \frac{1}{3} (1,1,2)$ & $T \in |2 B|$ & & $(\mathrm{C}_{1,2})$ \\
\hline
\parbox[c][\myheight][c]{0cm}{} $\msp_4 = \frac{1}{5} (1,1,4)$ &  & Q.I. & \\
\hline
\end{tabular}
\end{center}

\begin{center}
\begin{flushleft}
No. 38: $X_{9,12} \subset \mbP (1,2,3,4,5,7)$; 
$(A^3) = 9/70$, Case 2.
\end{flushleft} \nopagebreak
\begin{tabular}{|p{103pt}|p{112pt}|p{103pt}|>{\centering\arraybackslash}p{26pt}|}
\hline
\parbox[c][\myheight][c]{0cm}{} $\msp_{1,3} = 3 \times \frac{1}{2} (1,1,1)$ &  $3 B + E$, $\{x,z,s',u'\}_{\sharp}$ & & $(\mathrm{C}_2)$ \\
\hline
\parbox[c][\myheight][c]{0cm}{} $\msp_4 = \frac{1}{5} (1,2,3)$ &  & $X'_{12} \subset (1,2,3,4,3)$ & \\
\hline
\parbox[c][\myheight][c]{0cm}{} $\msp_5 = \frac{1}{7} (1,3,4)$ &  & $X'_{12} \subset (1,2,3,4,3)$ & \\
\hline
\end{tabular}
\end{center}

\begin{center}
\begin{flushleft}
No. 39: $X_{10,11} \subset \mbP (1,2,3,4,5,7)$; 
$(A^3) = 11/84$, Case 2.
\end{flushleft} \nopagebreak
\begin{tabular}{|p{103pt}|p{112pt}|p{103pt}|>{\centering\arraybackslash}p{26pt}|}
\hline
\parbox[c][\myheight][c]{0cm}{} $\msp_{1,3} = 2 \times \frac{1}{2} (1,1,1)$ &  $3 B + E$, $\{x,z,s,u'\}_{\sharp}$ & & $(\mathrm{C}_2)$ \\
\hline
\parbox[c][\myheight][c]{0cm}{} $\msp_2 = \frac{1}{3} (1,1,2)$ & $T \in |2 B|$ & & $(\mathrm{C}_1)$ \\
\hline
\parbox[c][\myheight][c]{0cm}{} $\msp_3 = \frac{1}{4} (1,1,3)$ &  & Q.I. & \\
\hline
\parbox[c][\myheight][c]{0cm}{} $\msp_5 = \frac{1}{7} (1,2,5)$ &  & $X'_{11} \subset (1,2,3,5,1)$ & \\
\hline
\end{tabular}
\end{center}

\begin{center}
\begin{flushleft}
No. 40: $X_{10,12} \subset \mbP (1,1,3,4,5,9)$; 
$(A^3) = 2/9$, Case 5.
\end{flushleft} \nopagebreak
\begin{tabular}{|p{103pt}|p{112pt}|p{103pt}|>{\centering\arraybackslash}p{26pt}|}
\hline
\parbox[c][\myheight][c]{0cm}{} $\msp_{2,5} = \frac{1}{3} (1,1,2)$ &  & Q.I. & $\dagger y^2 z$ \\
\hline
\parbox[c][\myheight][c]{0cm}{} $\msp_5 = \frac{1}{9} (1,4,5)$ &  & $X'_{12} \subset (1,1,4,5,2)$ & \\
\hline
\end{tabular}
\end{center}

\begin{center}
\begin{flushleft}
No. 41: $X_{10,12} \subset \mbP (1,1,3,5,6,7)$; 
$(A^3) = 4/21$, Case 3.
\end{flushleft} \nopagebreak
\begin{tabular}{|p{103pt}|p{112pt}|p{103pt}|>{\centering\arraybackslash}p{26pt}|}
\hline
\parbox[c][\myheight][c]{0cm}{} $\msp_{2,4} = 2 \times \frac{1}{3} (1,1,2)$ &  & E.I. & \\
\hline
\parbox[c][\myheight][c]{0cm}{} $\msp_5 = \frac{1}{7} (1,1,6)$ &  & $X'_{12} \subset (1,1,3,6,2)$ & \\
\hline 
\end{tabular}
\end{center}

\begin{center}
\begin{flushleft}
No. 42: $X_{10, 12} \subset \mbP (1,1,4,5,6,6)$; 
$(A^3) = 1/6$, Case 3.
\end{flushleft} \nopagebreak
\begin{tabular}{|p{103pt}|p{112pt}|p{103pt}|>{\centering\arraybackslash}p{26pt}|}
\hline
\parbox[c][\myheight][c]{0cm}{} $\msp_{2,4,5} = \frac{1}{2} (1,1,1)$ & $T \in |B|$ & & \\
\hline
\parbox[c][\myheight][c]{0cm}{} $\msp_{4,5} = 2 \times \frac{1}{6} (1,1,5)$ &  & $X'_{12} \subset (1,1,4,5,2)$ & \\
\hline
\end{tabular}
\end{center}

\begin{center}
\begin{flushleft}
No. 43: $X_{10,12} \subset \mbP (1,2,3,4,5,8)$; 
$(A^3) = 1/8$, Case 2.
\end{flushleft} \nopagebreak
\begin{tabular}{|p{103pt}|p{112pt}|p{103pt}|>{\centering\arraybackslash}p{26pt}|}
\hline
\parbox[c][\myheight][c]{0cm}{} $\msp_{1,2,5} \!=\! 3 \!\times \! \frac{1}{2} (1,1,1)$ & $8 B + 3 E$, $\{x,z,s',u'\}_{\sharp}$ & & \\
\hline
\parbox[c][\myheight][c]{0cm}{} $\msp_{2,5} = \frac{1}{4} (1,1,3)$ &  & Q.I.  &\\
\hline
\parbox[c][\myheight][c]{0cm}{} $\msp_5 = \frac{1}{8} (1,1,7)$ &  & $X'_{12} \subset (1,2,3,5,2)$ & \\
\hline
\end{tabular}
\end{center}

\begin{center}
\begin{flushleft}
No. 44: $X_{10,12} \subset \mbP (1,2,3,5,5,7)$; 
$(A^3) = 4/35$, Case 1.
\end{flushleft} \nopagebreak
\begin{tabular}{|p{103pt}|p{112pt}|p{103pt}|>{\centering\arraybackslash}p{26pt}|}
\hline
\parbox[c][\myheight][c]{0cm}{} $\msp_{3,4} = 2 \times \frac{1}{5} (1,2,3)$ &  & $X'_{12} \subset (1,2,3,5,2)$ & $(\mathrm{C}_2)$ \\
\hline
\parbox[c][\myheight][c]{0cm}{} $\msp_5 = \frac{1}{7} (1,2,5)$ &  & $X'_{12} \subset (1,2,3,5,2)$ & \\
\hline
\end{tabular}
\end{center}

\begin{center}
\begin{flushleft}
No. 45: $X_{10,12} \subset \mbP (1,2,4,5,5,6)$; 
$(A^3) = 1/10$, Case 1.
\end{flushleft} \nopagebreak
\begin{tabular}{|p{103pt}|p{112pt}|p{103pt}|>{\centering\arraybackslash}p{26pt}|}
\hline
\parbox[c][\myheight][c]{0cm}{} $\msp_{1,2,5} \!=\! 5 \!\times \! \frac{1}{2} (1,1,1)$ & $5 B + 2 E$, $\{x,s_0,s_1\}$ & & \\
\hline
\parbox[c][\myheight][c]{0cm}{} $\msp_{3,4} = 2 \times \frac{1}{5} (1,1,4)$ &  & Q.I. & \\
\hline
\end{tabular}
\end{center}

\begin{center}
\begin{flushleft}
No. 46: $X_{10,12} \subset \mbP (1,3,3,4,5,7)$; 
$(A^3) = 2/21$, Case 1.
\end{flushleft} \nopagebreak
\begin{tabular}{|p{103pt}|p{112pt}|p{103pt}|>{\centering\arraybackslash}p{26pt}|}
\hline
\parbox[c][\myheight][c]{0cm}{} $\msp_{1,2} = 4 \times \frac{1}{3} (1,1,2)$ &  $7 B + E$, $\{x,y'_1,t'\}_{\sharp}$ & & $(\mathrm{C}_2)$ \\
\hline
\parbox[c][\myheight][c]{0cm}{} $\msp_5 = \frac{1}{7} (1,3,4)$ &  & $X'_{12} \subset (1,3,3,4,2)$ & \\
\hline
\end{tabular}
\end{center}

\begin{center}
\begin{flushleft}
No. 47: $X_{10,12} \subset \mbP (1,3,4,4,5,6)$; 
$(A^3) = 1/12$, Case 1.
\end{flushleft} \nopagebreak
\begin{tabular}{|p{103pt}|p{112pt}|p{103pt}|>{\centering\arraybackslash}p{26pt}|}
\hline
\parbox[c][\myheight][c]{0cm}{} $\msp_{1,5} = 2 \times \frac{1}{3} (1,1,2)$ &  $6 B + E$, $\{x, z'_1, t'\}_{\sharp}$ & & $(\mathrm{C}_2)$ \\
\hline
\parbox[c][\myheight][c]{0cm}{} $\msp_{2,3} = 3 \times \frac{1}{4} (1,1,3)$ & $B$, $\{x,y,z'_1\}_{\sharp}$ & & \\
\hline
\parbox[c][\myheight][c]{0cm}{} $\msp_{2,3,5} = \frac{1}{2} (1,1,1)$ &  $5 B + 2 E$, $\{x,y,s\}$ & & \\
\hline
\end{tabular}
\end{center}

\begin{center}
\begin{flushleft}
No. 48: $X_{11,12} \subset \mbP (1,1,4,5,6,7)$; 
$(A^3) = 11/70$, Case 5.
\end{flushleft} \nopagebreak
\begin{tabular}{|p{103pt}|p{112pt}|p{103pt}|>{\centering\arraybackslash}p{26pt}|}
\hline
\parbox[c][\myheight][c]{0cm}{} $\msp_{2,4} = \frac{1}{2} (1,1,1)$ & $T \in |B|$ & & $(\mathrm{C}_1)$ \\
\hline
\parbox[c][\myheight][c]{0cm}{} $\msp_3 = \frac{1}{5} (1,1,4)$ &  & $X'_{12} \subset (1,1,4,6,1)$ & $(\mathrm{C}_1)$ \\
\hline
\parbox[c][\myheight][c]{0cm}{} $\msp_5 = \frac{1}{7} (1,1,6)$ &  & $X'_{12} \subset (1,1,4,6,1)$ & \\
\hline
\end{tabular}
\end{center}

\begin{center}
\begin{flushleft}
No. 49: $X_{10,14} \subset \mbP (1,1,2,5,7,9)$; 
$(A^3) = 2/9$, Case 3.
\end{flushleft} \nopagebreak
\begin{tabular}{|p{103pt}|p{112pt}|p{103pt}|>{\centering\arraybackslash}p{26pt}|}
\hline
\parbox[c][\myheight][c]{0cm}{} $\msp_5 = \frac{1}{9} (1,2,7)$ &  & $X'_{14} \subset (1,1,2,7,4)$ & \\
\hline
\end{tabular}
\end{center}

\begin{center}
\begin{flushleft}
No. 50: $X_{10,14} \subset \mbP (1,2,3,5,7,7)$; 
$(A^3) = 2/21$, Case 2.
\end{flushleft} \nopagebreak
\begin{tabular}{|p{103pt}|p{112pt}|p{103pt}|>{\centering\arraybackslash}p{26pt}|}
\hline
\parbox[c][\myheight][c]{0cm}{} $\msp_2 = \frac{1}{3} (1,1,2)$ &  $7 B + E$, $\{x,y,t'_i\}_{\sharp}$ & & \\
\hline
\parbox[c][\myheight][c]{0cm}{} $\msp_{4,5} = 2 \times \frac{1}{7} (1,2,5)$ &  & $X'_{14} \subset (1,2,3,5,4)$ & \\
\hline
\end{tabular}
\end{center}

\begin{center}
\begin{flushleft}
No. 51: $X_{10,14} \subset \mbP (1,2,4,5,6,7)$; 
$(A^3) = 1/12$, Case 1.
\end{flushleft} \nopagebreak
\begin{tabular}{|p{103pt}|p{112pt}|p{103pt}|>{\centering\arraybackslash}p{26pt}|}
\hline
\parbox[c][\myheight][c]{0cm}{} $\msp_{1,2,4} \!=\! 5 \!\times \! \frac{1}{2} (1,1,1)$ &  $5 B + 2 E$, $\{x,z',s,t'\}_{\sharp}$ & & \\
\hline
\parbox[c][\myheight][c]{0cm}{} $\msp_2 = \frac{1}{4} (1,1,3)$ & $T \in |2 B|$ & & $(\mathrm{C}_3)$ \\
\hline
\parbox[c][\myheight][c]{0cm}{} $\msp_4 = \frac{1}{6} (1,1,5)$ &  & Q.I. & \\
\hline
\end{tabular}
\end{center}

\begin{center}
\begin{flushleft}
No. 52: $X_{10,15} \subset \mbP (1,2,3,5,7,8)$; 
$(A^3) = 5/56$, Case 2.
\end{flushleft} \nopagebreak
\begin{tabular}{|p{103pt}|p{112pt}|p{103pt}|>{\centering\arraybackslash}p{26pt}|}
\hline
\parbox[c][\myheight][c]{0cm}{} $\msp_{1,5} = \frac{1}{2} (1,1,1)$ &  $5 B + 2 E$, $\{x,z,s\}$ & & $(\mathrm{C}_2)$ \\
\hline
\parbox[c][\myheight][c]{0cm}{} $\msp_4 = \frac{1}{7} (1,2,5)$ & & $X'_{15} \subset (1,2,3,5,5)$ & \\
\hline
\parbox[c][\myheight][c]{0cm}{} $\msp_5 = \frac{1}{8} (1,3,5)$ &  & $X'_{15} \subset (1,2,3,5,5)$ & \\
\hline
\end{tabular}
\end{center}

\begin{center}
\begin{flushleft}
No. 53: $X_{12,13} \subset \mbP (1,3,4,5,6,7)$; 
$(A^3) = 13/210$, Case 1.
\end{flushleft} \nopagebreak
\begin{tabular}{|p{103pt}|p{112pt}|p{103pt}|>{\centering\arraybackslash}p{26pt}|}
\hline
\parbox[c][\myheight][c]{0cm}{} $\msp_{1,4} = 2 \times \frac{1}{3} (1,1,2)$ &  $5 B + E$, $\{x,s,t,u'\}_{\sharp}$ & & $(\mathrm{C}_2)$ \\
\hline
\parbox[c][\myheight][c]{0cm}{} $\msp_{2,4} = \frac{1}{2} (1,1,1)$ &  $7 B + 3 B$, $\{x,y,s,u\}$ & & \\
\hline
\parbox[c][\myheight][c]{0cm}{} $\msp_3 = \frac{1}{5} (1,1,4)$ &  & E.I. & \\
\hline
\parbox[c][\myheight][c]{0cm}{} $\msp_5 = \frac{1}{7} (1,3,4)$ &  & $X'_{13} \subset (1,3,4,5,1)$ & \\
\hline
\end{tabular}
\end{center}

\begin{center}
\begin{flushleft}
No. 54: $X_{12,14} \subset \mbP (1,1,3,4,7,11)$; 
$(A^3) = 2/11$, Case 5.
\end{flushleft} \nopagebreak
\begin{tabular}{|p{103pt}|p{112pt}|p{103pt}|>{\centering\arraybackslash}p{26pt}|}
\hline
\parbox[c][\myheight][c]{0cm}{} $\msp_5 = \frac{1}{11} (1,4,7)$ &  & $X'_{14} \subset (1,1,4,7,2)$ & \\
\hline
\end{tabular}
\end{center}

\begin{center}
\begin{flushleft}
No. 55: $X_{12,14} \subset \mbP (1,1,4,6,7,8)$; 
$(A^3) = 1/8$, Case 3.
\end{flushleft} \nopagebreak
\begin{tabular}{|p{103pt}|p{112pt}|p{103pt}|>{\centering\arraybackslash}p{26pt}|}
\hline
\parbox[c][\myheight][c]{0cm}{} $\msp_{2,3,5} = \frac{1}{2} (1,1,1)$ & $T \in |B|$ & & \\
\hline
\parbox[c][\myheight][c]{0cm}{} $\msp_{2,5} = \frac{1}{4} (1,1,3)$ &  & Q.I.  & \\
\hline
\parbox[c][\myheight][c]{0cm}{} $\msp_5 = \frac{1}{8} (1,1,7)$ &  & $X'_{14} \subset (1,1,4,7,2)$ & \\
\hline
\end{tabular}
\end{center}

\begin{center}
\begin{flushleft}
No. 56: $X_{12,14} \subset \mbP (1,2,3,4,7,10)$; 
$(A^3) = 1/10$, Case 2.
\end{flushleft} \nopagebreak
\begin{tabular}{|p{103pt}|p{112pt}|p{103pt}|>{\centering\arraybackslash}p{26pt}|}
\hline
\parbox[c][\myheight][c]{0cm}{} $\msp_{1,3,5} \!=\! 4 \!\times \! \frac{1}{2} (1,1,1)$ &  $5 B + 2 E$, $\{x,z,s',u'\}_{\sharp}$ & & \\
\hline
\parbox[c][\myheight][c]{0cm}{} $\msp_5 = \frac{1}{10} (1,3,7)$ &  & $X'_{14} \subset (1,2,3,7,2)$ & \\
\hline
\end{tabular}
\end{center}

\begin{center}
\begin{flushleft}
No. 57: $X_{12,14} \subset \mbP (1,2,3,5,7,9)$; 
$(A^3) = 4/45$, Case 2.
\end{flushleft} \nopagebreak
\begin{tabular}{|p{103pt}|p{112pt}|p{103pt}|>{\centering\arraybackslash}p{26pt}|}
\hline
\parbox[c][\myheight][c]{0cm}{} $\msp_{2,5} = \frac{1}{3} (1,1,2)$ & $T \in |2 B|$ & & $(\mathrm{C}_{1,2})$ \\
\hline
\parbox[c][\myheight][c]{0cm}{} $\msp_3 = \frac{1}{5} (1,2,3)$ &  & $X'_{14} \subset (1,2,3,7,2)$ & $(\mathrm{C}_1)$ \\
\hline
\parbox[c][\myheight][c]{0cm}{} $\msp_5 = \frac{1}{9} (1,2,7)$ &  & $X'_{14} \subset (1,2,3,7,2)$ & \\
\hline
\end{tabular}
\end{center}

\begin{center}
\begin{flushleft}
No. 58: $X_{12,14} \subset \mbP (1,3,4,5,7,7)$; 
$(A^3) = 2/35$, Case 1.
\end{flushleft} \nopagebreak
\begin{tabular}{|p{103pt}|p{112pt}|p{103pt}|>{\centering\arraybackslash}p{26pt}|}
\hline
\parbox[c][\myheight][c]{0cm}{} $\msp_3 = \frac{1}{5} (1,2,3)$ &  & Q.I. & \\
\hline
\parbox[c][\myheight][c]{0cm}{} $\msp_{4,5} = 2 \times \frac{1}{7} (1,3,4)$ &  & $X'_{14} \subset (1,3,4,5,2)$ & \\
\hline
\end{tabular}
\end{center}

\begin{center}
\begin{flushleft}
No. 59: $X_{12,14} \subset \mbP (1,4,4,5,6,7)$; 
$(A^3) = 1/20$, Case 1.
\end{flushleft} \nopagebreak
\begin{tabular}{|p{103pt}|p{112pt}|p{103pt}|>{\centering\arraybackslash}p{26pt}|}
\hline
\parbox[c][\myheight][c]{0cm}{} $\msp_{1,2} = 3 \times \frac{1}{4} (1,1,3)$ & $T \in |4 B|$ & & $(\mathrm{C}_2)$ \\
\hline
\parbox[c][\myheight][c]{0cm}{} $\msp_{1,2,4} \!=\! 2 \!\times \! \frac{1}{2} (1,1,1)$ &  $7 B + 3 E$, $\{x,z,t\}$ & & \\
\hline
\parbox[c][\myheight][c]{0cm}{} $\msp_3 = \frac{1}{5} (1,1,4)$ &  $B$, $\{x,y_0,y_1\}$ & & \\
\hline
\end{tabular}
\end{center}

\begin{center}
\begin{flushleft}
No. 60: $X_{12,14} \subset \mbP (2,3,4,5,6,7)$; 
$(A^3) = 1/30$, Case 1.
\end{flushleft} \nopagebreak
\begin{tabular}{|p{103pt}|p{112pt}|p{103pt}|>{\centering\arraybackslash}p{26pt}|}
\hline
\parbox[c][\myheight][c]{0cm}{} $\msp_{0,2,4} \!=\! 7 \!\times \! \frac{1}{2} (1,1,1)$ &  $7 B + 3 E$, $\{y,s,u\}$ & & \\
\hline
\parbox[c][\myheight][c]{0cm}{} $\msp_{1,4} = 2 \times \frac{1}{3} (1,1,2)$ &  $4 B + E$, $\{x,z,s\}$ & & \\
\hline
\parbox[c][\myheight][c]{0cm}{} $\msp_3 = \frac{1}{5} (1,2,3)$ &  $2 B$, $\{x,y,z\}$ & & \\
\hline
\end{tabular}
\end{center}

\begin{center}
\begin{flushleft}
No. 61: $X_{12,15} \subset \mbP (1,1,4,5,6,11)$; 
$(A^3) = 3/22$, Case 5.
\end{flushleft} \nopagebreak
\begin{tabular}{|p{103pt}|p{112pt}|p{103pt}|>{\centering\arraybackslash}p{26pt}|}
\hline
\parbox[c][\myheight][c]{0cm}{} $\msp_{2,4} = \frac{1}{2} (1,1,1)$ & $T \in |B|$ & & \\
\hline
\parbox[c][\myheight][c]{0cm}{} $\msp_5 = \frac{1}{11} (1,5,6)$ &  & $X'_{15} \subset (1,1,5,6,3)$ & \\
\hline
\end{tabular}
\end{center}

\begin{center}
\begin{flushleft}
No. 62: $X_{12,15} \subset \mbP (1,3,4,5,6,9)$; 
$(A^3) = 1/18$, Case 1.
\end{flushleft} \nopagebreak
\begin{tabular}{|p{103pt}|p{112pt}|p{103pt}|>{\centering\arraybackslash}p{26pt}|}
\hline
\parbox[c][\myheight][c]{0cm}{} $\msp_{1,4,5} \!=\! 3 \!\times \! \frac{1}{3} (1,1,2)$ &  $9 B + 2 E$, $\{x,s,t',u'\}_{\sharp}$ & & \\
\hline
\parbox[c][\myheight][c]{0cm}{} $\msp_{2,4} = \frac{1}{2} (1,1,1)$ &  $9 B + 4 E$, $\{x,y,s,u\}$ & & \\
\hline
\parbox[c][\myheight][c]{0cm}{} $\msp_5 = \frac{1}{9} (1,4,5)$ &  & $X'_{15} \subset (1,3,4,5,3)$ & \\
\hline
\end{tabular}
\end{center}

\begin{center}
\begin{flushleft}
No. 63: $X_{12,15} \subset \mbP (1,3,4,5,7,8)$; 
$(A^3) = 3/56$, Case 1.
\end{flushleft} \nopagebreak
\begin{tabular}{|p{103pt}|p{112pt}|p{103pt}|>{\centering\arraybackslash}p{26pt}|}
\hline
\parbox[c][\myheight][c]{0cm}{} $\msp_{2,5} = \frac{1}{4} (1,1,3)$ & $T \in |3 B|$ & & $(\mathrm{C}_2)$ \\
\hline
\parbox[c][\myheight][c]{0cm}{} $\msp_4 = \frac{1}{7} (1,3,4)$ & & $X'_{15} \subset (1,3,4,5,3)$ & \\
\hline
\parbox[c][\myheight][c]{0cm}{} $\msp_5 = \frac{1}{8} (1,3,5)$ &  & $X'_{15} \subset (1,3,4,5,3)$ & \\
\hline
\end{tabular}
\end{center}

\begin{center}
\begin{flushleft}
No. 64: $X_{12,16} \subset \mbP (1,2,5,6,7,8)$; 
$(A^3) = 2/35$, Case 1.
\end{flushleft} \nopagebreak
\begin{tabular}{|p{103pt}|p{112pt}|p{103pt}|>{\centering\arraybackslash}p{26pt}|}
\hline
\parbox[c][\myheight][c]{0cm}{} $\msp_{1,3,5} \!=\! 4 \!\times \! \frac{1}{2} (1,1,1)$ &  $7 B + 3 E$, $\{x,z,t\}$ & & \\
\hline
\parbox[c][\myheight][c]{0cm}{} $\msp_2 = \frac{1}{5} (1,2,3)$ &  & Q.I. & $\dagger z^2 s$ \\
\hline
\parbox[c][\myheight][c]{0cm}{} $\msp_4 = \frac{1}{7} (1,1,6)$ &  & Q.I. & \\
\hline
\end{tabular}
\end{center}

\begin{center}
\begin{flushleft}
No. 65: $X_{14,15} \subset \mbP (1,2,3,5,7,12)$; 
$(A^3) = 1/12$, Case 3.
\end{flushleft} \nopagebreak
\begin{tabular}{|p{103pt}|p{112pt}|p{103pt}|>{\centering\arraybackslash}p{26pt}|}
\hline
\parbox[c][\myheight][c]{0cm}{} $\msp_{1,5} = \frac{1}{2} (1,1,1)$ &  $12 B + 5 E$, $\{x,z,s,u'\}_{\sharp}$ & & \\
\hline
\parbox[c][\myheight][c]{0cm}{} $\msp_{2,5} = \frac{1}{3} (1,1,2)$ & $T \in |2 B|$ & & $\dagger z^3 s$ \\
\hline
\parbox[c][\myheight][c]{0cm}{} $\msp_5 = \frac{1}{12} (1,5,7)$ &  & $X'_{15} \subset (1,2,5,7,1)$ & \\
\hline
\end{tabular}
\end{center}

\begin{center}
\begin{flushleft}
No. 66: $X_{14,15} \subset \mbP (1,2,5,6,7,9)$; 
$(A^3) = 1/18$, Case 1.
\end{flushleft} \nopagebreak
\begin{tabular}{|p{103pt}|p{112pt}|p{103pt}|>{\centering\arraybackslash}p{26pt}|}
\hline
\parbox[c][\myheight][c]{0cm}{} $\msp_{1,3} = 2 \times \frac{1}{2} (1,1,1)$ & $9 B + 4 E$, $\{x,z,t,u\}$ & & \\
\hline
\parbox[c][\myheight][c]{0cm}{} $\msp_3 = \frac{1}{6} (1,1,5)$ &  & Q.I. & \\
\hline
\parbox[c][\myheight][c]{0cm}{} $\msp_5 = \frac{1}{9} (1,2,7)$ &  & $X'_{15} \subset (1,2,5,7,1)$ & \\
\hline
\end{tabular}
\end{center}

\begin{center}
\begin{flushleft}
No. 67: $X_{14,15} \subset \mbP (1,3,4,5,7,10)$; 
$(A^3) = 1/20$, Case 1.
\end{flushleft} \nopagebreak
\begin{tabular}{|p{103pt}|p{112pt}|p{103pt}|>{\centering\arraybackslash}p{26pt}|}
\hline
\parbox[c][\myheight][c]{0cm}{} $\msp_2 = \frac{1}{4} (1,1,3)$ & $10B+E$, $\{x,y,u'\}_{\sharp}$ & & \\
\hline
\parbox[c][\myheight][c]{0cm}{} $\msp_{2,5} = \frac{1}{2} (1,1,1)$ &  $7B+3E$, $\{x,y,s,t\}$ & & \\
\hline
\parbox[c][\myheight][c]{0cm}{} $\msp_{3,5} = \frac{1}{5} (1,2,3)$ &  & Q.I. & \\
\hline
\parbox[c][\myheight][c]{0cm}{} $\msp_5 = \frac{1}{10} (1,3,7)$ &  & $X'_{15} \subset (1,3,4,7,1)$ & \\
\hline
\end{tabular}
\end{center}

\begin{center}
\begin{flushleft}
No. 68: $X_{14,15} \subset \mbP (1,3,5,6,7,8)$; 
$(A^3) = 1/24$, Case 1.
\end{flushleft} \nopagebreak
\begin{tabular}{|p{103pt}|p{112pt}|p{103pt}|>{\centering\arraybackslash}p{26pt}|}
\hline
\parbox[c][\myheight][c]{0cm}{} $\msp_{1,3} = 2 \times \frac{1}{3} (1,1,2)$ &  $4 B + E$, $\{x,z,s',u\}_{\sharp}$ & & \\
\hline
\parbox[c][\myheight][c]{0cm}{} $\msp_3 = \frac{1}{6} (1,1,5)$ &  & E.I. & \\
\hline
\parbox[c][\myheight][c]{0cm}{} $\msp_5 = \frac{1}{8} (1,3,5)$ &  & $X'_{15} \subset (1,3,5,6,1)$ & \\
\hline
\end{tabular}
\end{center}

\begin{center}
\begin{flushleft}
No. 69: $X_{14,16} \subset \mbP (1,1,5,7,8,9)$; 
$(A^3) = 4/45$, Case 3.
\end{flushleft} \nopagebreak
\begin{tabular}{|p{103pt}|p{112pt}|p{103pt}|>{\centering\arraybackslash}p{26pt}|}
\hline
\parbox[c][\myheight][c]{0cm}{} $\msp_2 = \frac{1}{5} (1,2,3)$ & $T = B - E$  & & \\
\hline
\parbox[c][\myheight][c]{0cm}{} $\msp_5 = \frac{1}{9} (1,1,8)$ &  & $X'_{16} \subset (1,1,5,8,2)$ & \\
\hline
\end{tabular}
\end{center}

\begin{center}
\begin{flushleft}
No. 70: $X_{14,16} \subset \mbP (1,3,4,5,7,11)$; 
$(A^3) = 8/165$, Case 1.
\end{flushleft} \nopagebreak
\begin{tabular}{|p{103pt}|p{112pt}|p{103pt}|>{\centering\arraybackslash}p{26pt}|}
\hline
\parbox[c][\myheight][c]{0cm}{} $\msp_1 = \frac{1}{3} (1,1,2)$ & $5 B + E$, $\{x,s,t',u'\}$ & & $(\mathrm{C}_1)$ \\
\hline
\parbox[c][\myheight][c]{0cm}{} $\msp_3 = \frac{1}{5} (1,2,3)$ &  & Q.I. & \\
\hline
\parbox[c][\myheight][c]{0cm}{} $\msp_5 = \frac{1}{11} (1,4,7)$ &  & $X'_{16} \subset (1,3,4,7,2)$ & \\
\hline
\end{tabular}
\end{center}

\begin{center}
\begin{flushleft}
No. 71: $X_{14,16} \subset \mbP (1,4,5,6,7,8)$; 
$(A^3) = 1/30$, Case 1.
\end{flushleft} \nopagebreak
\begin{tabular}{|p{103pt}|p{112pt}|p{103pt}|>{\centering\arraybackslash}p{26pt}|}
\hline
\parbox[c][\myheight][c]{0cm}{} $\msp_{1,3,5} = \frac{1}{2} (1,1,1)$ &  $7 B + 3 E$, $\{x,z,t\}$ & & \\
\hline
\parbox[c][\myheight][c]{0cm}{} $\msp_{1,5} = 2 \times \frac{1}{4} (1,1,3)$ &  $7 B + E$, $\{x,t,u'\}_{\sharp}$ & & $(\mathrm{C}_2)$ \\
\hline
\parbox[c][\myheight][c]{0cm}{} $\msp_2 = \frac{1}{5} (1,2,3)$ &  $B$, $\{x,y,s'\}$ & & $\dagger z^2 s$ \\
\hline
\parbox[c][\myheight][c]{0cm}{} $\msp_3 = \frac{1}{6} (1,1,5)$ &  $B$, $\{x,y,z\}$ & & \\
\hline
\end{tabular}
\end{center}

\begin{center}
\begin{flushleft}
No. 72: $X_{15,16} \subset \mbP (1,2,3,5,8,13)$; 
$(A^3) = 1/13$, Case 3.
\end{flushleft} \nopagebreak
\begin{tabular}{|p{103pt}|p{112pt}|p{103pt}|>{\centering\arraybackslash}p{26pt}|}
\hline
\parbox[c][\myheight][c]{0cm}{} $\msp_{1,4} = 2 \times \frac{1}{2} (1,1,1)$ &  $5B + 2 E$, $\{x,z,s,t'\}_{\sharp}$ & & \\
\hline
\parbox[c][\myheight][c]{0cm}{} $\msp_5 = \frac{1}{13} (1,5,8)$ &  & $X'_{16} \subset (1,2,5,8, 1)$ & \\
\hline
\end{tabular}
\end{center}

\begin{center}
\begin{flushleft}
No. 73: $X_{15,16} \subset \mbP (1,3,4,5,8,11)$; 
$(A^3) = 1/22$, Case 1.
\end{flushleft} \nopagebreak
\begin{tabular}{|p{103pt}|p{112pt}|p{103pt}|>{\centering\arraybackslash}p{26pt}|}
\hline
\parbox[c][\myheight][c]{0cm}{} $\msp_{2,4} = 2 \times \frac{1}{4} (1,1,3)$ & $T \in |3 B|$ & & \\
\hline
\parbox[c][\myheight][c]{0cm}{} $\msp_5 = \frac{1}{11} (1,3,8)$ &  & $X'_{16} \subset (1,3,4,8, 1)$ & \\
\hline
\end{tabular}
\end{center}

\begin{center}
\begin{flushleft}
No. 74: $X_{14,18} \subset \mbP (1,2,3,7,9,11)$; 
$(A^3) = 2/33$, Case 3.
\end{flushleft} \nopagebreak
\begin{tabular}{|p{103pt}|p{112pt}|p{103pt}|>{\centering\arraybackslash}p{26pt}|}
\hline
\parbox[c][\myheight][c]{0cm}{} $\msp_{2,4} = 2 \times \frac{1}{3} (1,1,2)$ & $T \in |2 B|$ & & \\
\hline
\parbox[c][\myheight][c]{0cm}{} $\msp_5 = \frac{1}{11} (1,2,9)$ &  & $X'_{18} \subset (1,2,3,9,4)$ & \\
\hline
\end{tabular}
\end{center}

\begin{center}
\begin{flushleft}
No. 75: $X_{14,18} \subset \mbP (1,2,6,7,8,9)$; 
$(A^3) = 1/24$, Case 1.
\end{flushleft} \nopagebreak
\begin{tabular}{|p{103pt}|p{112pt}|p{103pt}|>{\centering\arraybackslash}p{26pt}|}
\hline
\parbox[c][\myheight][c]{0cm}{} $\msp_{1,2,4} \!=\! 5 \!\times \! \frac{1}{2} (1,1,1)$ &  $9 B + 4 E$, $\{x,s,u\}$ & & \\
\hline
\parbox[c][\myheight][c]{0cm}{} $\msp_{2,5} = \frac{1}{3} (1,1,2)$ &  $4 B + E$, $\{x,y,t\}$ & & \\
\hline
\parbox[c][\myheight][c]{0cm}{} $\msp_4 = \frac{1}{8} (1,1,7)$ &  & Q.I. & \\
\hline
\end{tabular}
\end{center}

\begin{center}
\begin{flushleft}
No. 76: $X_{12,20} \subset \mbP (1,4,5,6,7,10)$; 
$(A^3) = 1/35$, Case 1.
\end{flushleft} \nopagebreak
\begin{tabular}{|p{103pt}|p{112pt}|p{103pt}|>{\centering\arraybackslash}p{26pt}|}
\hline
\parbox[c][\myheight][c]{0cm}{} $\msp_{1,3,5} \!=\! 2 \!\times \! \frac{1}{2} (1,1,1)$ &  $7 B + 3 E$, $\{x,z,t\}$ & & \\
\hline
\parbox[c][\myheight][c]{0cm}{} $\msp_{2,5} = 2 \times \frac{1}{5} (1,1,4)$ & $T \in |4 B|$ & & \\
\hline
\parbox[c][\myheight][c]{0cm}{} $\msp_4 = \frac{1}{7} (1,3,4)$ & & Q.I. & \\
\hline
\end{tabular}
\end{center}

\begin{center}
\begin{flushleft}
No. 77: $X_{16,18} \subset \mbP (1,1,6,8,9,10)$; 
$(A^3) = 1/15$, Case 3.
\end{flushleft} \nopagebreak
\begin{tabular}{|p{103pt}|p{112pt}|p{103pt}|>{\centering\arraybackslash}p{26pt}|}
\hline
\parbox[c][\myheight][c]{0cm}{} $\msp_{2,3,5} = \frac{1}{2} (1,1,1)$ & $T \in |B|$ & & \\
\hline
\parbox[c][\myheight][c]{0cm}{} $\msp_{2,4} = \frac{1}{3} (1,1,2)$ & $T \in |B|$ & & \\
\hline
\parbox[c][\myheight][c]{0cm}{} $\msp_5 = \frac{1}{10} (1,1,9)$ &  & $X'_{18} \subset (1,1,6,9,2)$ & \\
\hline
\end{tabular}
\end{center}

\begin{center}
\begin{flushleft}
No. 78: $X_{16,18} \subset \mbP (1,4,6,7,8,9)$; 
$(A^3) = 1/42$, Case 1.
\end{flushleft} \nopagebreak
\begin{tabular}{|p{103pt}|p{112pt}|p{103pt}|>{\centering\arraybackslash}p{26pt}|}
\hline
\parbox[c][\myheight][c]{0cm}{} $\msp_{1,2,4} \!=\! 2 \!\times \! \frac{1}{2} (1,1,1)$ &  $9 B + 4 E$, $\{x,s,u\}$ & & \\
\hline
\parbox[c][\myheight][c]{0cm}{} $\msp_{1,4} = 2 \times \frac{1}{4} (1,1,3)$ &  $6B + E$, $\{x,z,s,t'\}_{\sharp}$ & & \\
\hline
\parbox[c][\myheight][c]{0cm}{} $\msp_{2,5} = \frac{1}{3} (1,1,2)$ &  $7 B + 2 E$, $\{x,y,s,t\}$ & & \\
\hline
\parbox[c][\myheight][c]{0cm}{} $\msp_3 = \frac{1}{7} (1,1,6)$ &  $B$, $\{x,y,z\}$ & & \\
\hline
\end{tabular}
\end{center}

\begin{center}
\begin{flushleft}
No. 79: $X_{18,20} \subset \mbP (1,4,5,6,9,14)$; 
$(A^3) = 1/42$, Case 1.
\end{flushleft} \nopagebreak
\begin{tabular}{|p{103pt}|p{112pt}|p{103pt}|>{\centering\arraybackslash}p{26pt}|}
\hline
\parbox[c][\myheight][c]{0cm}{} $\msp_{1,3,5} \!=\! 2 \!\times \! \frac{1}{2} (1,1,1)$ &  $9B + 4 E$, $\{x,z,t\}$ & & \\
\hline
\parbox[c][\myheight][c]{0cm}{} $\msp_{3,4} = 2 \times \frac{1}{3} (1,1,2)$ & $14B + 4 E$, $\{x,y,z,u\}$ & & \\
\hline
\parbox[c][\myheight][c]{0cm}{} $\msp_5 = \frac{1}{14} (1,5,9)$ &  & $X'_{20} \subset (1,4,5,9,2)$ & \\
\hline
\end{tabular}
\end{center}

\begin{center}
\begin{flushleft}
No. 80: $X_{18,20} \subset \mbP (1,4,5,7,9,13)$; 
$(A^3) = 2/91$, Case 1.
\end{flushleft} \nopagebreak
\begin{tabular}{|p{103pt}|p{112pt}|p{103pt}|>{\centering\arraybackslash}p{26pt}|}
\hline
\parbox[c][\myheight][c]{0cm}{} $\msp_3 = \frac{1}{7} (1,2,5)$ &  & Q.I. & \\
\hline
\parbox[c][\myheight][c]{0cm}{} $\msp_5 = \frac{1}{13} (1,4,9)$ &  & $X'_{20} \subset (1,4,5,9,2)$ & \\
\hline
\end{tabular}
\end{center}

\begin{center}
\begin{flushleft}
No. 81: $X_{18,20} \subset \mbP (1,5,6,7,9,11)$; 
$(A^3) = 4/231$, Case 1.
\end{flushleft} \nopagebreak
\begin{tabular}{|p{103pt}|p{112pt}|p{103pt}|>{\centering\arraybackslash}p{26pt}|}
\hline
\parbox[c][\myheight][c]{0cm}{} $\msp_{2,4} = \frac{1}{3} (1,1,2)$ & $7B + 2 E$, $\{x,y,s,u\}$ & & \\
\hline
\parbox[c][\myheight][c]{0cm}{} $\msp_3 = \frac{1}{7} (1,2,5)$ &  & E.I. & \\
\hline
\parbox[c][\myheight][c]{0cm}{} $\msp_5 = \frac{1}{11} (1,5,6)$ &  & $X'_{20} \subset (1,5,6,7,2)$ & \\
\hline
\end{tabular}
\end{center}

\begin{center}
\begin{flushleft}
No. 82: $X_{18,22} \subset \mbP (1,2,5,9,11,13)$; 
$(A^3) = 2/65$, Case 2.
\end{flushleft} \nopagebreak
\begin{tabular}{|p{103pt}|p{112pt}|p{103pt}|>{\centering\arraybackslash}p{26pt}|}
\hline
\parbox[c][\myheight][c]{0cm}{} $\msp_2 = \frac{1}{5} (1,1,4)$ & $13 B + E$, $\{x,y,u'\}$ & & \\
\hline
\parbox[c][\myheight][c]{0cm}{} $\msp_5 = \frac{1}{13} (1,2,11)$ & & $X'_{22} \subset (1,2,5,11,4)$ & \\
\hline
\end{tabular}
\end{center}

\begin{center}
\begin{flushleft}
No. 83: $X_{20,21} \subset \mbP (1,3,4,7,10,17)$; 
$(A^3) = 1/34$, Case 2.
\end{flushleft} \nopagebreak
\begin{tabular}{|p{103pt}|p{112pt}|p{103pt}|>{\centering\arraybackslash}p{26pt}|}
\hline
\parbox[c][\myheight][c]{0cm}{} $\msp_{2,4} = \frac{1}{2} (1,1,1)$ & $17 B + 8 E$, $\{x,y,s,u\}$ & & \\
\hline
\parbox[c][\myheight][c]{0cm}{} $\msp_5 = \frac{1}{17} (1,7,8)$ &  & $X'_{21} \subset (1,3,7,10,1)$ & \\
\hline
\end{tabular}
\end{center}

\begin{center}
\begin{flushleft}
No. 84: $X_{18,30} \subset \mbP (1,6,8,9,10,15)$; 
$(A^3) = 1/120$, Case 1.
\end{flushleft} \nopagebreak
\begin{tabular}{|p{103pt}|p{112pt}|p{103pt}|>{\centering\arraybackslash}p{26pt}|}
\hline
\parbox[c][\myheight][c]{0cm}{} $\msp_{1,2,4} \!=\! 2 \!\times \! \frac{1}{2} (1,1,1)$ & $15B + 7 E$, $\{x,s,u\}$ & & \\
\hline
\parbox[c][\myheight][c]{0cm}{} $\msp_{1,3,5} \!=\! 2 \!\times \! \frac{1}{3} (1,1,2)$ & $10 B + 3 E$, $\{x,z,t\}$ & & \\
\hline
\parbox[c][\myheight][c]{0cm}{} $\msp_2 = \frac{1}{8} (1,1,7)$ & $15B + E$, $\{x,y,u\}$ & & \\
\hline
\parbox[c][\myheight][c]{0cm}{} $\msp_{4,5} = \frac{1}{5} (1,1,4)$ & $6 B + E$, $\{x,y,z,s\}$ & & \\
\hline
\end{tabular}
\end{center}

\begin{center}
\begin{flushleft}
No. 85: $X_{24,30} \subset \mbP (1,8,9,10,12,15)$; 
$(A^3) = 1/180$, Case 1.
\end{flushleft} \nopagebreak
\begin{tabular}{|p{103pt}|p{112pt}|p{103pt}|>{\centering\arraybackslash}p{26pt}|}
\hline
\parbox[c][\myheight][c]{0cm}{} $\msp_{1,4} = \frac{1}{4} (1,1,3)$ & $9 B + 2 E$, $\{x,z,s,u\}$ & & \\
\hline
\parbox[c][\myheight][c]{0cm}{} $\msp_{1,3,4} = \frac{1}{2} (1,1,1)$ & $15 B + 7 E$, $\{x,z,u\}$ & & \\
\hline
\parbox[c][\myheight][c]{0cm}{} $\msp_2 = \frac{1}{9} (1,1,8)$ & $15 B + E$, $\{x,y,u\}$ & & \\
\hline
\parbox[c][\myheight][c]{0cm}{} $\msp_{2,4,5} = \frac{1}{3} (1,1,2)$ & $10 B + 3 E$, $\{x,y,s\}$ & & \\
\hline
\parbox[c][\myheight][c]{0cm}{} $\msp_{3,5} = \frac{1}{5} (1,2,3)$ & $6 B + E$, $\{x,y,z,t\}$ & & \\
\hline
\end{tabular}
\end{center}

\end{document}